\definecolor{darkgreen}{rgb}{0.0, 0.6, 0.0}
\def\C{\mathcal{C}}
\def\D{\mathcal{D}}
\DeclareMathOperator{\Mod}{\mathsf{Mod}}
\DeclareMathOperator{\md}{\mathsf{mod}}
\renewcommand{\mod}{\md}
\DeclareMathOperator{\proj}{\mathsf{proj}}
\DeclareMathOperator{\fl}{\mathsf{fl}}
\DeclareMathOperator{\CM}{\mathsf{CM}}
\DeclareMathOperator{\SCM}{\mathsf{SCM}}
\DeclareMathOperator{\add}{\mathsf{add}}
\DeclareMathOperator{\refl}{\mathsf{ref}}
\DeclareMathOperator{\ind}{ind}
\DeclareMathOperator{\simp}{sim}
\DeclareMathOperator{\AR}{AR}
\DeclareMathOperator{\rank}{rank}
\DeclareMathOperator{\Hom}{Hom}
\DeclareMathOperator{\End}{End}
\DeclareMathOperator{\Ext}{Ext}
\DeclareMathOperator{\rad}{rad}
\DeclareMathOperator{\Spec}{Spec}
\DeclareMathOperator{\height}{ht}
\DeclareMathOperator{\depth}{depth}
\DeclareMathOperator{\Frac}{Frac}
\DeclareMathOperator{\ch}{char}
\DeclareMathOperator{\Ker}{Ker}
\DeclareMathOperator{\im}{Im}
\renewcommand{\Im}{\im}
\DeclareMathOperator{\op}{op}
\DeclareMathOperator{\Gal}{Gal}
\DeclareMathOperator{\Cl}{Cl}
\DeclareMathOperator{\soc}{soc}
\DeclareMathOperator{\RHom}{\mathbb{R}Hom}
\DeclareMathOperator{\MK}{MK}
\def\gl{\mathop{\rm gl.dim}\nolimits}
\def\lgl{\mathop{\rm l.gl.dim}\nolimits}
\def\pd{\mathop{\rm proj.dim}\nolimits}
\def\id{\mathop{\rm inj.dim}\nolimits}
\DeclareMathOperator{\Ind}{Ind_{\it H}^{\it G}}
\DeclareMathOperator{\Res}{Res_{\it H}}
\newtheorem{Thm}{Theorem}[section]
\newtheorem{Lem}[Thm]{Lemma}
\newtheorem{Prop}[Thm]{Proposition}
\newtheorem{Cor}[Thm]{Corollary}
\theoremstyle{definition}
\newtheorem{Def}[Thm]{Definition}
\newtheorem{Ex}[Thm]{Example}
\newtheorem{Rem}[Thm]{Remark}
\newtheorem{DefProp}[Thm]{Definition-Proposition}
\newcommand{\FRAC}[2]{\leavevmode\kern.1em\raise.5ex\hbox{\the\scriptfont0 #1}\kern-.1em/\kern-.15em\lower.25ex\hbox{\the\scriptfont0 #2}}
\title{Cohen-Macaulay representations of invariant subrings}
\author{Ryu Tomonaga}
\address{Graduate School of Mathematical Sciences, The University of Tokyo, 3-8-1 Komaba, Meguro-ku, Tokyo, 153-8914, Japan}
\email{ryu-tomonaga@g.ecc.u-tokyo.ac.jp}
\begin{document}
\begin{abstract}
We classify two-dimensional complete local rings $(R,\mathfrak{m},k)$ of finite Cohen-Macaulay type where $k$ is an arbitrary field of characteristic zero, generalizing works of Auslander and Esnault for algebraically closed case. Our main result shows that they are precisely of the form $R=l[[x_1,x_2]]^G$ where $l/k$ is a finite Galois extension and $G$ is a finite group acting on $l[[x_1,x_2]]$ as a $k$-algebra. In fact, $G$ can be linearized to become a subgroup of $GL_2(l)\rtimes\Gal(l/k)$. Moreover, we establish algebraic McKay correspondence in this general setting and completely describe its McKay quiver, which is often non-simply laced, as a quotient of another certain McKay quiver. Combining these results, we classify the quivers that may arise as the Auslander-Reiten quivers of two-dimensional Gorenstein rings of finite Cohen-Macaulay type of equicharacteristic zero. These are shown to be either doubles of (not necessarily simply-laced!) extended Dynkin diagrams or of type $\widetilde{A}_0$ or $\widetilde{CL}_n$ having loops.

More generally, we consider higher dimensional $R=l[[x_1,\cdots,x_d]]^G\ (G\subseteq GL_d(l)\rtimes\Gal(l/k))$ and show they have non-commutative crepant resolutions (NCCRs). Furthermore, we explicitely determine the quivers of the NCCRs as quotients of another certain quivers.

To accomplish these, we establish two results which are of independent interest. First, we prove the existence of $(d-1)$-almost split sequences for arbitrary $d$-dimensional Cohen-Macaulay rings having NCCR, even when their singularities are not isolated. Second, we give an explicit recipe to determine irreducible representations of skew group algebras $l*G$ in terms of those over the group algebras $lH$ where $H$ is the kernel of the action of $G$ on $l$.
\end{abstract}

\maketitle
\tableofcontents

\section*{Introduction}

\subsection{Classical results}

Auslander-Reiten theory, which gives Auslander-Reiten quivers for certain categories, is an indispensable tool to study categories of finitely generated modules over finite dimensional algebras \cite{ASS,ARS} and Cohen-Macaulay modules over orders and commutative rings \cite{I18,LW,Y}. On the other hand, quotient singularities form a class of rings which has interesting geometrical properties known as McKay correspondence \cite{AV85,Mc}. In 1980's, Auslander developed theories of Cohen-Macaulay representations for two-dimensional quotient singularities \cite{Aus86}. This says that the Auslander-Reiten quiver of a quotient singularity given by a finite group $G$ coincides with the McKay quiver of $G$, which is called algebraic McKay correspondence. In 2000's, Iyama gave its higher dimensional generalization by developing higher dimensional Auslander-Reiten theory, where modules called cluster tilting play an essential role \cite{Iya07b,Iya07a}. This concept of cluster tilting turns out to have strong relationships with cluster theories through additive categorifications of cluster algebras \cite{Am09,BMRRT,Kel} and algebraic geometry through non-commutative crepant resolutions (NCCR) \cite{IW,VdB}. Recently, NCCRs are constructed for several examples \cite{Bro,BIKR,Han23,Har,HIMO,SVdB} which often produce higher representation-infinite algebras \cite{HIO} and induce triangle equivalences between singularity categories and cluster categories \cite{AIR15,Han24a,HI22}.

The table below is a summary of the known classification results of Cohen-Macaulay complete local rings $(R,\mathfrak{m},k)$ of finite Cohen-Macaulay type where $k$ is algebraically closed with $\ch k=0$ (see \cite{BGS,LW,Y} for details). Recall that $R$ is said to be {\it of finite Cohen-Macaulay type} if the category $\CM R$ of maximal Cohen-Macaulay modules has only finitely many indecomposable objects. By the Krull-Schmidt theorem, this is equivalent to that $\CM R$ has an additive generator. Here, we call $M\in\CM R$ an additive generator if $\add_RM=\CM R$ holds where $\add_RM=\{X\in\mod R\mid X\text{ is a direct summand of }M^{\oplus n}\text{ for some }n\in\mathbb{N}\}\subseteq\CM R$. When $R$ is Gorenstein, then these rings are precisely simple singularities. For general $R$, the complete classification is known when $\dim R\leq2$. If $\dim R\geq3$ and $R$ is non-Gorenstein, then the classification problem is still open and only two examples are known.

\[
\begin{array}{c|c|c}
\dim R&\text{Gorenstein}&\text{Cohen-Macaulay}\\ \hline\hline
0&k[x]/(x^n)&k[x]/(x^n)\\
1&\text{simple singularities}&\text{rings dominating simple singularities}\\
2&\rule[1mm]{1.4cm}{0.2mm}''\rule[1mm]{1.4cm}{0.2mm}&\text{quotient singularities}\\
\geq3&\rule[1mm]{1.4cm}{0.2mm}''\rule[1mm]{1.4cm}{0.2mm}&\text{open (only two non-Gorenstein examples are known)}
\end{array}
\]

Now we focus on the case $\dim R=2$ and $R$ is not necessarily Gorenstein. The known results are summarized in Theorem \ref{classical} and \ref{classicalfrt} below.

\begin{Thm}\cite[2.2, 3.3]{Aus86}\label{classical}
Let $k$ be a field, $G$ a finite subgroup of $GL_2(k)$ with $|G|$ not divided by $\ch k$, $S:=k[[x,y]]$ the formal power series ring and $\mathfrak{n}:=(x, y)$ the maximal ideal of $S$. Let $S^G\subseteq S$ be the invariant subring.
\begin{enumerate}
\item The ring $(S^G, \mathfrak{n}\cap S^G, k)$ is a two-dimensional complete Cohen-Macaulay local ring and $\CM S^G$ has $S$ as an additive generator. In particular, $S^G$ is of finite Cohen-Macaulay type.
\item If $G\subseteq GL_2(k)$ is small (see Definition \ref{small}), then we have $\End_{S^G}(S)\cong S*G$.
\item The Auslander-Reiten quiver $\AR(\CM S^G)$ coincides with the McKay quiver of $kG$.
\end{enumerate}
\end{Thm}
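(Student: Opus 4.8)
\emph{Plan.} Set $R:=S^G$ with maximal ideal $\mathfrak{m}:=\mathfrak{n}\cap R$; throughout I use that the Reynolds operator $\rho:=\frac1{|G|}\sum_{g\in G}g\colon S\to R$ is an $R$-linear retraction of $R\hookrightarrow S$, since $|G|$ is invertible. For (1): first collect the structural facts --- $R$ is a normal domain (invariants of a normal domain), $S$ is module-finite over $R$ (Noether), so $\dim R=\dim S=2$ and $R$ is Noetherian, local and $\mathfrak{m}$-adically complete; and the splitting $\rho$ gives $\depth_R R\ge\depth_R S=2$, so $R$ is Cohen--Macaulay and $S\in\CM R$. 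The inclusion $\add_R S\subseteq\CM R$ is clear ($\CM R$ being closed under summands and finite direct sums). For the converse, let $M\in\CM R$: since $R$ is a two-dimensional normal domain, $\CM R$ is exactly the category of reflexive $R$-modules, so $\Hom_R(S,M)$ is reflexive, hence MCM over $R$, hence MCM over the regular local ring $S$ (depth is unchanged under the finite extension $R\subseteq S$), hence $S$-free, say $\Hom_R(S,M)\cong S^{\oplus n}$; applying $\Hom_R(-,M)$ to $R\hookrightarrow S\xrightarrow{\rho}R$ realizes $M$ as an $R$-direct summand of $\Hom_R(S,M)\cong S^{\oplus n}$, so $M\in\add_R S$.

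\emph{For (2):} the skew group ring $S*G$ acts $R$-linearly on $S$ (by multiplication and by the group action), giving an algebra map $\varphi\colon S*G\to\End_R(S)$. Since $G\subseteq GL_2(k)$ acts faithfully, $\Frac(S)/\Frac(R)$ is Galois with group $G$, so Dedekind's lemma on linear independence of distinct automorphisms makes $\varphi$ injective, with $\varphi\otimes_R\Frac(R)$ the classical isomorphism $\Frac(S)*G\xrightarrow{\sim}\End_{\Frac(R)}(\Frac(S))$. Both $S*G$ (which is $S$-free, hence $R$-reflexive, because $S$ is $R$-reflexive) and $\End_R(S)=\Hom_R(S,S)$ are reflexive $R$-modules compatibly embedded in $\End_{\Frac(R)}(\Frac(S))$, and a reflexive module over the normal domain $R$ equals the intersection of its localizations at the height-one primes; so it suffices to show $\varphi_{\mathfrak p}$ is an isomorphism for each height-one prime $\mathfrak p$ of $R$. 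This is the only point where smallness enters: $G$ small means precisely that every $g\ne1$ fixes a locus of codimension $\ge2$ (using that the elements of $G$ are diagonalizable, as $|G|$ is invertible), equivalently that $\Spec S\to\Spec R$ is unramified in codimension one; hence $R_{\mathfrak p}\to S\otimes_R R_{\mathfrak p}$ is a $G$-Galois extension of rings in the sense of Auslander--Goldman and Chase--Harrison--Rosenberg, for which $\varphi_{\mathfrak p}$ is an isomorphism by Galois descent. Therefore $\varphi$ is an isomorphism.

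\emph{For (3):} first reduce to $G$ small --- letting $N\trianglelefteq G$ be the normal subgroup generated by the pseudo-reflections, Chevalley--Shephard--Todd ($|G|$ invertible) gives that $S^N$ is again a formal power series ring on which $G/N$ acts as a small subgroup of $GL_2(k)$, with $S^G=(S^N)^{G/N}$, so replacing $(S,G)$ by $(S^N,G/N)$ I may assume $G$ is small, whence $R$ is an isolated singularity and $\CM R$ admits almost split sequences. By (1), $\CM R=\add_R S$, and projectivization $X\mapsto\Hom_R(S,X)$ gives an equivalence $\CM R\simeq\proj\Lambda$ with $\Lambda:=\End_R(S)\cong S*G$ by (2); hence $\AR(\CM R)$ coincides with the Gabriel quiver of $\Lambda$. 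To identify it: $\mathfrak n=(x,y)$ is $G$-stable (as $G$ acts linearly), so $\mathfrak n\Lambda$ is a two-sided ideal with $\Lambda/\mathfrak n\Lambda\cong kG$, which is semisimple by Maschke; since $\mathfrak n\Lambda$ is nilpotent modulo $\mathfrak m\Lambda\subseteq\rad\Lambda$, this forces $\rad\Lambda=\mathfrak n\Lambda$, whence $\rad\Lambda/\rad^2\Lambda\cong(\mathfrak n/\mathfrak n^2)\otimes_k kG$ as a $kG$-bimodule, with $\mathfrak n/\mathfrak n^2=V$ the given two-dimensional representation. Thus the Gabriel quiver of $\Lambda$ has vertex set the isomorphism classes of irreducible representations of $G$, and the number of arrows $\rho\to\rho'$ equals the multiplicity of $\rho'$ in $V\otimes_k\rho$; that is, it is the McKay quiver of $kG$.

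The step I expect to be the main obstacle is the second half of (2): the passage to codimension one via reflexivity, together with the ramification analysis identifying smallness with unramifiedness in codimension one and producing a $G$-Galois extension of local rings there. Once that is in place, everything else reduces to standard commutative algebra and the semisimplicity of $kG$.
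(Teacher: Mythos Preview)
Your proof is correct and follows essentially the same path as the paper's own proofs of the generalizations (Theorems \ref{addgen}, \ref{Auslanderalgebra}, \ref{ARMcKay}): the Reynolds splitting together with the depth argument on $\Hom_R(S,M)$ for (1), reflexivity plus unramifiedness in codimension one (your Galois-descent step is exactly the paper's Proposition \ref{sep}/Theorem \ref{end}) for (2), and the identification $\rad(S*G)=\mathfrak n(S*G)$ combined with the Koszul description of $\rad/\rad^2$ for (3). One small caution: your Chevalley--Shephard--Todd reduction in (3) replaces $(S,G)$ by $(S^N,G/N)$ and hence changes the McKay quiver being compared to that of $k(G/N)$, not $kG$; part (3) should simply be read under the smallness hypothesis already present in (2) (as the paper does in Theorem \ref{ARMcKay}), which makes the reduction vacuous.
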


This (3) says that information about Cohen-Macaulay representations of $S^G$ can be obtained from irreducible representations of $G$ over $k$. The converse to (1) holds in the following sense.

\begin{Thm}\cite[4.9]{Aus86}\cite{Esn}\label{classicalfrt}
Let $(R, \mathfrak{m}, k)$ be a two-dimensional Cohen-Macaulay complete local ring of finite Cohen-Macaulay type. If $k$ is an algebraically closed field of characteristic zero, then there exists a finite subgroup $G\subseteq GL_2(k)$ such that $R\cong k[[x,y]]^G$ holds.
\end{Thm}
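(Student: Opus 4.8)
The plan is to recover the group $G$ from the singularity by analyzing the structure forced by finite Cohen-Macaulay type. First I would reduce to the case where $R$ is normal: a two-dimensional Cohen-Macaulay local ring of finite CM type has an isolated singularity (this is a standard consequence of the existence of almost split sequences, going back to Auslander), and for isolated singularities in dimension two, finiteness of $\CM R$ forces $R$ to be normal — indeed if $R$ were not normal one can produce infinitely many indecomposable MCM modules supported on the non-normal locus, contradicting finite type. So $R$ is a two-dimensional complete normal local domain with residue field $k=\bar k$ of characteristic zero.

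Next I would invoke Esnault's theorem (this is the content cited as \cite{Esn}): a two-dimensional normal complete local ring over $\mathbb{C}$ (or any algebraically closed field of characteristic zero) with only finitely many indecomposable reflexive modules is a quotient singularity, i.e.\ $R\cong S^G$ with $S=k[[x,y]]$ and $G$ a finite subgroup of $GL_2(k)$ acting freely on the punctured disc. The geometric input here is that the finiteness of reflexive (equivalently, in dimension two, MCM) modules translates via the correspondence between reflexive $R$-modules and vector bundles on the punctured spectrum into a finiteness statement about the fundamental group of $\Spec R\setminus\{\mathfrak{m}\}$: the universal cover must be smooth, so $R$ is the invariant ring of $\pi_1$ acting linearly. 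Concretely one takes $G$ to be (the image of) $\pi_1^{\mathrm{loc}}(\Spec R)$ and $S$ the normalization of $R$ in the corresponding extension of $\Frac(R)$; that $S$ is a power series ring is exactly Esnault's resolution-of-singularities argument (or alternatively Brieskorn's / Prill's characterization of quotient singularities).

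I would then finish by noting that $k[[x,y]]^G$ for this $G$ does have finite CM type by Theorem \ref{classical}(1), so no further constraint on $G$ is needed beyond finiteness — the correspondence is clean. It is worth remarking that one may arrange $G$ to be small (acting without pseudo-reflections) by replacing $G$ with $G/N$ where $N$ is generated by pseudo-reflections, using the Chevalley–Shephard–Todd theorem so that $S^N$ is again a power series ring; this is not needed for the bare statement but makes the identification $R\cong k[[x,y]]^G$ canonical.

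The main obstacle is the normality reduction together with the hard analytic/geometric core of Esnault's theorem, namely showing that finiteness of the class of reflexive modules forces the local fundamental group to be finite and its action linearizable — this is where the characteristic-zero and algebraically-closed hypotheses are genuinely used (via resolution of singularities and the structure of $\pi_1$ of a smooth punctured surface germ), and it is precisely the part that the present paper must generalize to non-closed fields by different, more algebraic means.
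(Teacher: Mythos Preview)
The paper does not give its own proof of Theorem~\ref{classicalfrt}; it is quoted as a classical result from \cite{Aus86} and \cite{Esn}. Your sketch is a faithful summary of the Esnault argument and is correct as far as it goes, but since the theorem is literally attributed to Esnault, saying ``invoke Esnault's theorem'' is close to circular as a proof of the statement being cited.

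What is more interesting is to compare your outline with the paper's proof of the generalization, Theorem~\ref{frt}, which of course specializes to the classical case when $k=\bar k$. That proof is genuinely different from the geometric fundamental-group argument you describe. Rather than passing to the universal cover in one step, the paper climbs through a partially ordered set of finite Galois extensions $L/K$ whose integral closures $S_L$ are unramified over $R$ in codimension one, and uses a counting lemma (Lemma~\ref{ineq}, proved via the representation theory of skew group algebras developed in Theorem~\ref{tab}) to show that each strict step in this tower strictly increases $\sharp\ind\add_R S_L$. Finite CM type then forces a maximal element $\Omega$, and Flenner's purity criterion (Theorem~\ref{Flenner}) shows $S_\Omega$ is regular. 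This replaces the analytic input (resolution of singularities, topological $\pi_1$) by a purely algebraic finiteness/counting mechanism together with Flenner's theorem. The payoff is that the argument works over any residue field of characteristic zero, whereas the Esnault-style argument you sketch relies on $k$ being algebraically closed to identify the local fundamental group and linearize its action; your last paragraph already anticipates that this is exactly the obstacle the paper must overcome.
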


\subsection{Finite Cohen-Macaulay type over arbitrary fields in dimension two}

In this subsection, we restrict ourselves to the two-dimensional case. Our main aim of this paper is to study two-dimensional rings $(R, \mathfrak{m}, k)$ of finite Cohen-Macaulay type where $k$ is not necessarily algebraically closed. Actually, there are many examples not covered by Theorem \ref{classical} and \ref{classicalfrt}. We investigate more general invariant subrings and see that these rings give plenty of examples of rings of finite Cohen-Macaulay type (Theorem \ref{introcmfin}). Moreover, we prove that all two-dimensional rings of finite Cohen-Macaulay type of equicharacteristic zero can be obtained as our invariant subrings, which is the answer to the classification problem (Theorem \ref{introfrt}).

Let $l$ be a field, $S:=l[[x, y]]$ the formal power series ring, $\mathfrak{n}:=(x, y)$ the maximal ideal of $S$ and $G$ a finite group acting on $S$ as a ring, not necessarily as an $l$-algebra, with $|G|$ not divided by $\ch l$. Then we obtain the ring $S^G$ of invariants. We prove the following analogue of Theorem \ref{classical}(1).

\begin{Thm}{\rm(Theorem \ref{addgen})}\label{introcmfin}
The ring $(S^G, \mathfrak{n}\cap S^G, l^G)$ is a two-dimensional Cohen-Macaulay complete local normal domain which is of finite Cohen-Macaulay type. More strongly, $S\in\CM S^G$ gives an additive generator.
\end{Thm}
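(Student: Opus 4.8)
The plan is to establish first the ring-theoretic assertions — that $R:=S^G$ is a two-dimensional complete Noetherian local normal domain, Cohen–Macaulay, with $S$ a module-finite $R$-module — and then to deduce $\CM R=\add_R S$ by a double-dual argument. The hypothesis $\ch l\nmid|G|$ enters only through the Reynolds operator $\rho:=\tfrac1{|G|}\sum_{g\in G}g\colon S\to R$, an $R$-linear retraction of the inclusion $R\hookrightarrow S$.

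\emph{Step 1 (ring theory).} Since $G$ permutes the non-units of $S$, the ring $R$ is local with maximal ideal $\mathfrak m:=\mathfrak n\cap R$ and residue field $l^G$ (surjectivity of $R\to l^G$ uses $\rho$), and $R=S\cap\Frac(S)^G$ is integrally closed in $\Frac(R)=\Frac(S)^G$ because $S$ is a normal domain; hence $R$ is a normal domain. For $a\in\mathfrak n$ the norm polynomial $\prod_{g\in G}(T-g(a))$ is monic of degree $|G|$ over $R$ with all non-leading coefficients in $\mathfrak n\cap R=\mathfrak m$, so $a^{|G|}\in\mathfrak m S$ and therefore $\mathfrak n^{2|G|}\subseteq\mathfrak m S$. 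Filter $R$ by $F^n:=\mathfrak n^n\cap R$; averaging via $\rho$ identifies the graded subring $\mathrm{gr}_F(R)\hookrightarrow\mathrm{gr}_{\mathfrak n}(S)$ with the invariant subring $(\mathrm{gr}_{\mathfrak n}S)^G=(l[\bar x,\bar y])^G$. As $l[\bar x,\bar y]$ is a finitely generated algebra over the field $l^G$ (note $[l:l^G]<\infty$) on which $G$ acts by $l^G$-algebra automorphisms, Noether's finiteness theorem shows $(l[\bar x,\bar y])^G$ is Noetherian and $l[\bar x,\bar y]$ is a finite module over it. Since $R$ is complete for $F$ with Noetherian associated graded ring, $R$ is Noetherian; since $\mathrm{gr}_{\mathfrak n}(S)$ is finite over $\mathrm{gr}_F(R)$, a standard successive-approximation argument then shows $S$ is a finite $R$-module. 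Hence $\dim R=\dim S=2$, so $R$ is a two-dimensional normal domain, thus Cohen–Macaulay, and complete local with residue field $l^G$; as $\mathfrak m S$ is $\mathfrak n$-primary, $\depth_R S=\depth_S S=2=\dim R$, i.e.\ $S\in\CM R$.

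\emph{Step 2 ($\CM R=\add_R S$).} The inclusion $\add_R S\subseteq\CM R$ is clear. For the converse, observe: (i) for any $N\in\CM R$ the $S$-module $\Hom_R(N,S)$ has $R$-depth $\ge\min(2,\depth_R S)=2$, hence $S$-depth $2=\dim S$, so it is maximal Cohen–Macaulay over the regular local ring $S$ and therefore free; (ii) $\omega:=\Hom_R(S,R)$ is likewise MCM over $S$ (now using $\depth_R R=2$), hence free, and it has rank one over $S$ since $\Frac(S)/\Frac(R)$ is Galois of degree $|G|$, so $\omega\otimes_S\Frac(S)=\Hom_{\Frac(R)}(\Frac(S),\Frac(R))$ is one-dimensional — thus $\omega\cong S$. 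Now let $M\in\CM R$. Being MCM over the two-dimensional normal domain $R$, $M$ is reflexive: $M\cong\Hom_R(\Hom_R(M,R),R)$. Splitting $S=R\oplus C$ as $R$-modules via $\rho$ exhibits $\Hom_R(M,R)$ as a direct summand of $\Hom_R(M,S)\cong S^{\oplus n}$, so $\Hom_R(M,R)\in\add_R S$; applying $\Hom_R(-,R)$ to a splitting $\Hom_R(M,R)\oplus M''\cong S^{\oplus n}$ then realizes $M\cong\Hom_R(\Hom_R(M,R),R)$ as a direct summand of $\omega^{\oplus n}\cong S^{\oplus n}$. Hence $M\in\add_R S$.

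\emph{Main obstacle.} The heart of the matter is Step 1 — in particular that $R=S^G$ is Noetherian and that $S$ is a finite $R$-module: because $G$ need not preserve any coefficient field of $S$, one cannot simply Noether-normalize $R$ inside $S$, and I reduce the point instead to Noether's classical theorem applied to the polynomial ring $\mathrm{gr}_{\mathfrak n}S$ via the filtration above. (Alternatively one could first linearize $G$ into $GL_2(l)\rtimes\Gal(l/l^G)$ and reduce in two steps — through $S^{G\cap GL_2(l)}$, handled by Theorem \ref{classical}(1), and then Galois descent along $l^G\to l$ — but the direct argument sidesteps both the linearization and the descent bookkeeping.) Step 2 is then formal, the only non-obvious input being the identification $\Hom_R(S,R)\cong S$, which holds because $S$ is regular, hence a domain with trivial divisor class group.
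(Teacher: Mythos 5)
Your proof is correct, but it takes a genuinely different route from the paper's in both steps.

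For the ring-theoretic facts (your Step 1), the paper first invokes Proposition \ref{linear} to linearize the $G$-action so that $G\subseteq GL_2(l)\rtimes\Gal(l/k)$, and can then treat the assertions as routine, citing \cite[Chapter 5]{LW} and sketching only Cohen--Macaulayness, completeness, and $S\in\CM R$. You instead work directly with the unlinearized action and derive Noetherianity of $R$ and module-finiteness of $S$ from the associated graded ring $(\mathrm{gr}_{\mathfrak n}S)^G$ together with Noether's finiteness theorem. This is self-contained and avoids the coefficient-field bookkeeping of linearization, at the cost of being considerably longer than what the paper records. One small note: you need $\mathfrak m^j S\cap R=\mathfrak m^j$ (via the Reynolds operator applied to $\mathfrak m^j S$) to see that the $F$-filtration is cofinal with the $\mathfrak m$-adic one and hence that $R$ is $\mathfrak m$-adically complete; this is implicitly used but not spelled out.

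For Step 2, the paper's argument is shorter and more direct than yours. Given $M\in\CM R$, the Reynolds splitting $S\cong R\oplus C$ makes $M\cong\Hom_R(R,M)$ a direct summand of $\Hom_R(S,M)$ via $\Hom_R(\rho,M)$; the depth lemma (Lemma \ref{homdepth}) gives $\Hom_R(S,M)\in\CM R$, hence, as an $S$-module, $\Hom_R(S,M)\in\CM S=\proj S$, and $M\in\add_R S$ follows in one stroke. You reach the same conclusion by a double-dual detour: showing $\Hom_R(M,R)\in\add_R S$ as a summand of $\Hom_R(M,S)\cong S^{\oplus n}$, identifying $\Hom_R(S,R)\cong S$, and then applying $\Hom_R(-,R)$ and reflexivity of $M$. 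Both arguments rest on the same two inputs (the Reynolds splitting, and the regularity of $S$ forcing modules of full $R$-depth with $S$-structure to be $S$-free), but the paper's single application of $\Hom_R(S,-)$ avoids the reflexivity of $M$ and the auxiliary identification $\omega\cong S$, and it also surfaces the functor $\Hom_R(S,-)\colon\CM R\to\proj S*G$ that the rest of Section 3 builds on.
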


We put $k:=l^G$. Then we can linearize $G$, that is, we can assume that $G$ is a subgroup of the semidirect product $GL_2(l)\rtimes\Gal(l/k)\subseteq{\rm Aut}_k^{\rm al}(S)$ (see Proposition \ref{linear}).

We prove that a converse to Theorem \ref{introcmfin} holds in the following sense, which guarantees that our new setting is essential in the representation theory of Cohen-Macaulay rings with non-algebraically closed residue fields.

\begin{Thm}{\rm(Theorem \ref{frt})}\label{introfrt}
Let $(R, \mathfrak{m}, k)$ be a two-dimensional Cohen-Macaulay complete local ring of finite Cohen-Macaulay type with $\ch k=0$. Then there exists a finite Galois extension $l/k$ and a finite subgroup $G\subseteq GL_2(l)\rtimes\Gal(l/k)$ such that $R\cong l[[x,y]]^G$ holds.
\end{Thm}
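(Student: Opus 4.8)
The plan is to reduce Theorem~\ref{introfrt} to the algebraically closed case (Theorem~\ref{classicalfrt}) by passing to the completion $\widehat{R}:=R\otimes_k\bar k$, or more carefully to $R\otimes_k l$ for a suitable finite Galois $l/k$, and then to descend the resulting group action. The first step is to base change: since $R$ is a complete local ring with residue field $k$ of characteristic zero, one can apply Cohen's structure theorem so that $R$ contains a coefficient field $k$, and then consider $R':=R\mathbin{\widehat\otimes}_k\bar k$ (completed tensor product). One must check that $R'$ is again a two-dimensional Cohen--Macaulay complete local ring — this is a faithfully flat base change, so Cohen--Macaulayness and dimension are preserved — but $R'$ need not be local in general; it is a finite product of complete local rings permuted transitively (up to the point where the residue field splits). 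The key point is that $R'$ is still of finite Cohen--Macaulay type: an additive generator $M$ of $\CM R$ base changes to an additive generator $M\otimes_k\bar k$ of $\CM R'$, because $\operatorname{Hom}$ and hence $\operatorname{End}$ commute with the flat base change and idempotents lift. Since finite CM type passes to (and from) factors of a product, each local factor $R'_0$ of $R'$ is a two-dimensional complete Cohen--Macaulay local ring with algebraically closed residue field $\bar k$ of finite CM type, hence by Theorem~\ref{classicalfrt} we have $R'_0\cong\bar k[[x,y]]^{G_0}$ for some finite $G_0\subseteq GL_2(\bar k)$.

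The second step is Galois descent. The extension $\bar k/k$ is (a filtered colimit of) Galois extensions, and $\Gamma:=\Gal(\bar k/k)$ acts $k$-linearly on $R'=R\mathbin{\widehat\otimes}_k\bar k$ through the second factor with $(R')^\Gamma=R$. This $\Gamma$-action permutes the local factors of $R'$, transitively by connectedness of $\Spec R$, and the stabilizer of the chosen factor $R'_0$ is an open subgroup corresponding to a finite subextension $l/k$; replacing $\bar k$ by $l$ throughout, one arranges that $R\mathbin{\widehat\otimes}_k l=R'_0$ is already local with residue field $l$ and $R=(R'_0)^{\Gal(l/k)}$ (one must enlarge $l$ a second time so that the splitting field of $R'$ and the field over which $G_0$ is defined are both contained in it — this is where finiteness of $G_0$ is used). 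So we are reduced to the situation $R=T^{\Gal(l/k)}$ where $T=l[[x,y]]^{G_0}$ with $G_0\subseteq GL_2(l)$ a finite group, and $\Gamma_0:=\Gal(l/k)$ acts on $T$ compatibly with its action on $l$.

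The third step is to lift the $\Gamma_0$-action from $T=l[[x,y]]^{G_0}$ to $S=l[[x,y]]$ and to linearize. Here one uses that $S$ is the normalization of $T$ — or, more precisely, that $S\in\CM T$ is distinguished as the unique indecomposable $T$-module that is a reflexive $T$-lattice on which the "right" endomorphism structure lives (by Theorem~\ref{classical}(1), or by identifying $S$ with $\operatorname{End}_T$-theoretic data) — so any $k$-algebra automorphism of $T$ extends uniquely to a $k$-algebra automorphism of $S$ normalizing $G_0$. This produces a finite group $G\subseteq\operatorname{Aut}_k^{\mathrm{al}}(S)$, an extension $1\to G_0\to G\to\Gamma_0\to1$, acting on $S=l[[x,y]]$ with $S^G=(S^{G_0})^{\Gamma_0}=T^{\Gamma_0}=R$ and with $l^G=k$. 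By Theorem~\ref{introcmfin} the ring $S^G$ is of finite CM type (consistent with the hypothesis), and by Proposition~\ref{linear} the group $G$ can be linearized, i.e.\ conjugated inside $\operatorname{Aut}_k^{\mathrm{al}}(S)$ into $GL_2(l)\rtimes\Gal(l/k)$. This gives the desired presentation $R\cong l[[x,y]]^G$.

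The main obstacle I expect is the descent/connectedness bookkeeping in steps one and two: controlling how $\Gamma$ permutes the local factors of the non-local ring $R\mathbin{\widehat\otimes}_k\bar k$, verifying that the stabilizer is open so that a \emph{finite} Galois $l/k$ suffices, and checking that finite CM type is inherited by the local factors and is insensitive to the faithfully flat base change (the latter requires that $\operatorname{End}_{R'_0}$ of the base-changed generator is still a "good" order — this is exactly the kind of statement one proves via $\operatorname{Hom}$ commuting with completed tensor product and Krull--Schmidt for complete local rings). A secondary subtlety is the uniqueness in step three of the extension of automorphisms from $T$ to $S$: one must argue that the embedding $T\hookrightarrow S$ is canonical enough (e.g.\ $S$ is the normalization of $T$ in a fixed algebraic closure of $\operatorname{Frac}T$, or $S$ is characterized as the endomorphism-ring-theoretic "point" module) that $\operatorname{Aut}_k(T)$ acts on it, and that this action is by $k$-algebra (not merely ring) automorphisms so that Proposition~\ref{linear} applies.
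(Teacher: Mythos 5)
Your strategy is genuinely different from the paper's: you reduce to the algebraically closed case by extending the residue field and invoking Theorem~\ref{classicalfrt} (Auslander--Esnault) as a black box, then descend. The paper never uses Theorem~\ref{classicalfrt}; instead it works on the fraction-field side, building the poset $X$ of finite Galois covers $S_L$ of $R$ unramified in codimension one, proving via the new Lemma~\ref{ineq} that $\sharp\ind\add_RS_L\cong\sharp\simp(l_L*\Gal(L/K))$ strictly increases along $X$, using $\sharp\ind\CM R<\infty$ to obtain a maximal $S_\Omega$, and concluding that $S_\Omega$ is regular by the hard direction of Flenner's theorem (Theorem~\ref{Flenner}). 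Your plan, if completed, would give a proof modulo Esnault's theorem; the paper's is self-contained, and this is precisely why it needs Flenner and Lemma~\ref{ineq}.

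The genuine gap is in your Step~3, and both justifications you offer for it are false. Setting $T=l[[x,y]]^{G_0}$ and $S=l[[x,y]]$: $S$ is \emph{not} the normalization of $T$ ($T$ is already normal), and $S$ is \emph{not} a unique indecomposable reflexive $T$-lattice --- by Theorem~\ref{classical}(1) it is the additive generator of $\CM T$, a direct sum of \emph{all} the indecomposables $M(V)$. Neither characterization produces a canonical $T$-\emph{algebra} structure on $S$ for $\operatorname{Aut}_k(T)$ to act on, and even when a ring-automorphism lift of $\sigma\in\Gal(l/k)$ exists, it is determined only up to pre-composition by $G_0$, so ``extends uniquely'' cannot hold. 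A correct repair bypasses the direct lift: arrange $G_0$ small (Theorem~\ref{makesmall}); then $T/R$ is \'etale (as $l/k$ is separable and $\mathfrak{m}T$ is the maximal ideal of $T$) and $S/T$ is unramified in codimension one by Proposition~\ref{whenunram}, so $S/R$ is unramified in codimension one. Take the Galois closure $L'$ of $\Frac(S)/\Frac(R)$ and the integral closure $S'$ of $R$ in $L'$. Then $S'/S$ is still unramified in codimension one, hence unramified everywhere by the classical purity of the branch locus over the regular ring $S$ \cite[1.4]{Au62}, hence \'etale, so $S'\cong l'[[x,y]]$ with $l'/l$ finite separable, and $\Gal(L'/\Frac(R))$ acts on $S'$ with $R=(S')^{\Gal(L'/\Frac(R))}$; finish with Proposition~\ref{linear}. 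Two further points should be nailed down: for finite $l/k$ the ring $R\otimes_kl$ is automatically local (its residue ring modulo $\mathfrak{m}$ is the field $l$), so the ``permuted local factors'' bookkeeping in your Step~2 is moot; and showing that $R\mathbin{\widehat\otimes}_k\bar k$ has finite CM type --- not merely $R\otimes_kl$ for finite $l$ --- and that the resulting presentation descends to a finite subextension, needs a separate argument, for instance by base-changing the regular $R$-order $\End_R(M)$ and applying the characterization of additive generators in \cite[5.4]{IW}.
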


Observe that this is a generalization of Theorem \ref{classicalfrt} admitting non-algebraically closed residue fields. Later, we explain how to draw the Auslander-Reiten quiver of the category $\CM S^G$ (Theorem \ref{introARMcKay}, \ref{introalg}). Combining these results, we can characterize quivers which may appear as $\AR(\CM R)$ where $(R,\mathfrak{m},k)$ is a two-dimensional Gorenstein complete local ring of finite Cohen-Macaulay type with $\ch k=0$. When $k$ is algebraically closed, these quivers are just doubles of simply laced extended Dynkin diagrams. Admitting non-algebraically closed fields, these are mostly doubles of extended Dynkin diagrams. However, in fact, some of them have loops. In addition, not all the double of extended Dynkin diagrams appear.

\begin{Thm}{\rm(Theorem \ref{classfyquiver})}\label{introclassfyquiver}
Let $(R,\mathfrak{m},k)$ be a two-dimensional Gorenstein complete local ring of finite Cohen-Macaulay type with $\ch k=0$. Then quivers which may appear as $\AR(\CM R)$ are precisely listed below. 
\[
\begin{array}{c c c c c}
\xymatrix{
R \ar@(ur,dr)^{(2,2)}
}&\xymatrix{
R \ar@<0.5ex>[r]^-{(2,2)} & \circ \ar@<0.5ex>[l]^-{(2,2)}
}&\xymatrix{
R \ar@<0.5ex>[r]^-{(4,1)} & \circ \ar@<0.5ex>[l]^-{(1,4)}
}&\xymatrix{
R \ar@<0.5ex>[r] & \circ \ar@<0.5ex>[r]^{(3,1)} \ar@<0.5ex>[l] & \circ \ar@<0.5ex>[l]^{(1,3)}
}&\xymatrix{
R \ar@<0.5ex>[r]^-{(2,1)} & \circ \ar@<0.5ex>[l]^-{(1,2)} \ar@<0.5ex>[r] & \cdots \ar@<0.5ex>[l] \ar@<0.5ex>[r] & \circ \ar@<0.5ex>[l] \ar@(ur,dr)
}
\end{array}
\]
\[
\begin{array}{c c}
\xymatrix@R=5mm{
& & R \ar@<0.5ex>[dll] \ar@<0.5ex>[drr] & & \\
\circ \ar@<0.5ex>[urr] \ar@<0.5ex>[r] & \circ \ar@<0.5ex>[l] \ar@<0.5ex>[r] & \cdots \ar@<0.5ex>[l] \ar@<0.5ex>[r] & \circ \ar@<0.5ex>[l] \ar@<0.5ex>[r] & \circ \ar@<0.5ex>[ull] \ar@<0.5ex>[l]
}&\xymatrix@R=1mm{
R \ar@<0.5ex>[dr] & & & & \circ \ar@<0.5ex>[dl]\\
& \circ \ar@<0.5ex>[ul] \ar@<0.5ex>[dl] \ar@<0.5ex>[r] & \cdots \ar@<0.5ex>[l] \ar@<0.5ex>[r] & \circ \ar@<0.5ex>[l]\ar@<0.5ex>[ur] \ar@<0.5ex>[dr] & \\
\circ \ar@<0.5ex>[ur] & & & & \circ \ar@<0.5ex>[ul]
}\\
\xymatrix{
R \ar@<0.5ex>[r]^-{(2,1)} & \circ \ar@<0.5ex>[l]^-{(1,2)} \ar@<0.5ex>[r] & \cdots \ar@<0.5ex>[l] \ar@<0.5ex>[r] & \circ \ar@<0.5ex>[l] \ar@<0.5ex>[r]^-{(1,2)} & \circ \ar@<0.5ex>[l]^-{(2,1)}
}&\xymatrix{
R \ar@<0.5ex>[r]^-{(2,1)} & \circ \ar@<0.5ex>[l]^-{(1,2)} \ar@<0.5ex>[r] & \cdots \ar@<0.5ex>[l] \ar@<0.5ex>[r] & \circ \ar@<0.5ex>[l] \ar@<0.5ex>[r]^-{(2,1)} & \circ \ar@<0.5ex>[l]^-{(1,2)}
}\\
\xymatrix@R=1mm{
R \ar@<0.5ex>[dr] & & & & \\
& \circ \ar@<0.5ex>[ul] \ar@<0.5ex>[dl] \ar@<0.5ex>[r] & \cdots \ar@<0.5ex>[l] \ar@<0.5ex>[r] & \circ \ar@<0.5ex>[l]\ar@<0.5ex>[r]^-{(2,1)} & \circ \ar@<0.5ex>[l]^-{(1,2)}\\
\circ \ar@<0.5ex>[ur] & & & &
}&\xymatrix@R=5mm{
&&\circ \ar@<0.5ex>[d]&&\\
&&\circ \ar@<0.5ex>[d] \ar@<0.5ex>[u]&&\\
R \ar@<0.5ex>[r] & \circ \ar@<0.5ex>[l] \ar@<0.5ex>[r] & \circ \ar@<0.5ex>[l] \ar@<0.5ex>[r] \ar@<0.5ex>[u] & \circ \ar@<0.5ex>[l] \ar@<0.5ex>[r] & \circ \ar@<0.5ex>[l]
}\\
\xymatrix{
R \ar@<0.5ex>[r] & \circ \ar@<0.5ex>[l] \ar@<0.5ex>[r] & \circ \ar@<0.5ex>[l] \ar@<0.5ex>[r]^-{(2,1)} & \circ \ar@<0.5ex>[l]^-{(1,2)} \ar@<0.5ex>[r] &\circ \ar@<0.5ex>[l]
}&\xymatrix@R=5mm{
&&&\circ \ar@<0.5ex>[d]&&&\\
R \ar@<0.5ex>[r] & \circ \ar@<0.5ex>[l] \ar@<0.5ex>[r] & \circ \ar@<0.5ex>[l] \ar@<0.5ex>[r] & \circ \ar@<0.5ex>[l] \ar@<0.5ex>[r] \ar@<0.5ex>[u] & \circ \ar@<0.5ex>[l] \ar@<0.5ex>[r] & \circ \ar@<0.5ex>[l] \ar@<0.5ex>[r] & \circ \ar@<0.5ex>[l]
}
\end{array}
\]

\[\xymatrix@R=5mm{
&&&&&\circ \ar@<0.5ex>[d]&&\\
R \ar@<0.5ex>[r] & \circ \ar@<0.5ex>[l] \ar@<0.5ex>[r] & \circ \ar@<0.5ex>[l] \ar@<0.5ex>[r] & \circ \ar@<0.5ex>[l] \ar@<0.5ex>[r] & \circ \ar@<0.5ex>[l] \ar@<0.5ex>[r] & \circ \ar@<0.5ex>[l] \ar@<0.5ex>[r] \ar@<0.5ex>[u] & \circ \ar@<0.5ex>[l] \ar@<0.5ex>[r] & \circ \ar@<0.5ex>[l]
}\]
Thus doubles of all extended Dynkin diagrams except for type $\widetilde{B}_n, \widetilde{CD}_n, \widetilde{F}_{41}$ and $\widetilde{G}_{21}$ appear. In addition, two of them have loops.

\end{Thm}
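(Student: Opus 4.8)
The plan is to feed the structure theorem \ref{introfrt} into the McKay-correspondence machinery developed earlier in the paper. By Theorem \ref{introfrt} any such $R$ equals $l[[x,y]]^G$ for a finite Galois extension $l/k$ and a finite $G\subseteq GL_2(l)\rtimes\Gal(l/k)$; since $R$ is Gorenstein, a twisted Watanabe-type criterion forces $H:=G\cap GL_2(l)=\ker(G\to\Gal(l/k))$ to lie in $SL_2(l)$ (the determinant character $g\mapsto\det A_g$ must restrict trivially to $H$, the Galois part being cohomologically irrelevant by Hilbert~90). Base-changing to an algebraic closure, $\overline H\subseteq SL_2(\overline l)$ is one of the Klein groups — cyclic, binary dihedral, binary polyhedral — so by the classical McKay correspondence its McKay quiver is the double $\Delta$ of an extended Dynkin diagram of type $\widetilde A$, $\widetilde D$ or $\widetilde E$. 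By the algebraic McKay correspondence proved earlier (Theorems \ref{introARMcKay} and \ref{introalg}), $\AR(\CM R)$ is the McKay quiver of $G$, which is $\Delta$ folded by the finite group $\Gamma$ of quiver automorphisms induced by $\Gal(\overline l/l)$ together with $G/H\hookrightarrow\Gal(l/k)$ (concretely, the image of $\Gal(\overline l/k)$ acting via Galois twists and normalizer conjugation), carrying along edge valuations and with loops appearing exactly when an edge of $\Delta$ is stabilized but not fixed pointwise.

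So the problem becomes: enumerate, for each Klein group $\overline H$, the groups $\Gamma$ that can occur, and compute $\Delta/\Gamma$ as a valued quiver. The decisive structural fact is that $\Gamma$ must fix the vertex of $\Delta$ corresponding to the trivial representation (the extending vertex) — both conjugation and Galois twists fix the trivial representation — so $\Gamma$ is a subgroup of the stabilizer of the affine vertex in $\mathrm{Aut}(\Delta)$, and must moreover respect the distinguished $2$-dimensional representation $U$ coming from $\overline H\hookrightarrow SL_2$. Running through Klein's list: for $\widetilde A_{2n-1}$ the only possibility is the reflection through the affine vertex, folding to a $\widetilde C_n$-type double; for $\widetilde A_{2n}$ (cyclic $H$ whose eigenvalues are not rational over $l$) one gets a $\widetilde{CL}_n$-type double with a loop at the far end, with $\widetilde A_0$ (a single vertex with a $(2,2)$-loop, namely $R$ regular) as the base case; for $\widetilde D_n$ only the reflection swapping the fork \emph{not} containing the affine vertex is allowed, giving a fork–path–double-bond diagram; for $\widetilde D_4$ the allowed $\Gamma\le S_3$ yield the displayed $\widetilde G_2$- and fork-with-double-bond types; for $\widetilde E_6$ the order-two automorphism fixing the affine vertex folds to a $\widetilde F_4$-type double; and $\widetilde E_7$, $\widetilde E_8$ admit no nontrivial $\Gamma$. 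Keeping careful track of the endomorphism-ring dimensions at the folded vertices (which determine the valuations $(a,b)$ on the edges) produces exactly the displayed list; in particular it explains why $\widetilde B_n$, $\widetilde{CD}_n$, $\widetilde F_{41}$ and $\widetilde G_{21}$ never occur: these are precisely the valued affine diagrams whose orientation of the multiple bonds is incompatible with the symmetry that a folding of $\Delta$ must exhibit.

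For the converse I would realize each quiver on the list: the simply-laced $\widetilde A_n$, $\widetilde D_n$, $\widetilde E_n$ doubles over $l=k=\mathbb C$ (Klein's groups); the loop cases $\widetilde A_0$ and $\widetilde{CL}_n$ over a field not containing the eigenvalues of a suitable cyclic $H\subseteq SL_2(l)$, so that the Galois descent built into Theorem \ref{introalg} itself produces the loop; and the remaining non-simply-laced cases by adjoining a small Galois group — $\mathbb Z/2$, $\mathbb Z/3$ or $S_3$, each of which arises as $\Gal(l/k)$ for suitable characteristic-zero $l/k$ — acting so as to effect the required folding, after which one checks $\det A_g=1$ for $g\in H$ to confirm Gorensteinness via the Watanabe-type criterion.

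The step I expect to be the main obstacle is the valuation/loop bookkeeping in the folding together with the exact realizability analysis. The combinatorics of folding extended Dynkin diagrams is classical, but here one must prove two nontrivial things: first, that an abstract automorphism of $\Delta$ fixing the affine vertex and respecting $U$ is actually induced by an element of $N_{GL_2(\overline l)}(\overline H)\rtimes\Gal(\overline l/k)$ of the correct shape, and conversely that every such $G$ produces a folding already on the list — this is what pins down which valued orientation of each multiple bond appears and rules out $\widetilde B_n$, $\widetilde{CD}_n$, $\widetilde F_{41}$, $\widetilde G_{21}$; second, that the Clifford-theoretic description of the irreducibles of $l*G$ (the second independent-interest result advertised in the abstract) translates correctly into arrow multiplicities, loops included, so that $\Delta/\Gamma$ can be read off unambiguously.
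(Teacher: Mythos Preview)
Your overall strategy matches the paper's: invoke Theorem~\ref{frt} to write $R=l[[x,y]]^G$, use the Gorenstein criterion (Theorem~\ref{Gor}) to get $H\subseteq SL_2(l)$, enlarge $l$ so that $H$ is a standard Klein group splitting over $l$, observe that the $\Gal(l/k)$-action on $\MK(lH)$ must fix the affine vertex, and then run a case analysis. The realizability direction via explicit examples is also how the paper proceeds.

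There is, however, a genuine gap in your case analysis. You treat the outcome as determined by the combinatorial fold $\Gamma\le\mathrm{Aut}(\Delta)$ together with orbit/stabilizer data (``endomorphism-ring dimensions at the folded vertices''), but this is not enough. By Theorem~\ref{alg}(2) the valuation on an edge from orbit $i$ to orbit $i'$ is $d_G(V_i,V_{i'})=(a_{i'}/a_i)\,d_H(i,i')$, and the integers $a_i$ from Theorem~\ref{tab} are \emph{not} determined by $\Gamma$: they record whether the fixed $lH$-simples extend to $l*G$-modules, which depends on the specific group $G$ and not merely on its image in $\mathrm{Aut}(\Delta)$. Concretely, for $H$ of type $A_{2n-1}$ with the swap involution, the fixed simple $W_n$ may or may not extend (compare Examples~\ref{typeC} and~\ref{typeBC}), and one obtains $\widetilde{C}_n$ when $a_n=1$ but $\widetilde{BC}_n$ when $a_n=2$. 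Likewise, for $H$ of type $A_1$ with trivial $\Gamma$, the two possibilities $a_1=1$ and $a_1=2$ yield $\widetilde{A}_{12}$ and $\widetilde{A}_{11}$ respectively. Your sentence ``for $\widetilde A_{2n-1}$ the only possibility is the reflection through the affine vertex, folding to a $\widetilde C_n$-type double'' misses both the trivial-action branch and the $\widetilde{BC}_n$ branch of the nontrivial fold.

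A secondary point: working over $\overline l$ and invoking ``$\Gal(\overline l/l)$ together with $G/H$'' muddies the picture. The paper instead enlarges $l$ to a finite Galois extension $l'/k$ over which $H$ both takes standard form and splits; then the action on $\simp l'H$ is cleanly that of $\Gal(l'/k)\cong G'/H$, with no residual absolute-Galois bookkeeping. This makes the $a_i$ computations (via Proposition~\ref{existV} and tensor-product tricks as in Example~\ref{nongor}) tractable, and is what lets the paper pin down, in each case, exactly which valued quivers occur.
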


We explain details of Theorems \ref{introcmfin} and \ref{introclassfyquiver} in the rest of this introduction.

\subsection{Auslander-Reiten theory for invariant subrings}

In this subsection, we consider the general case $d\geq2$. First, as a preliminary, we develop the higher-dimensional Auslander-Reiten theory for NCCRs. Here NCCR, which was introduced in \cite[4.1]{VdB}, is what gives a virtual space of crepant resolutions of a given singularity $\Spec R$.

\begin{Def}\cite[4.1]{VdB}\cite[5.1]{IW}
Let $R$ be a Cohen-Macaulay complete local normal domain with $d:=\dim R\geq2$.
\begin{enumerate}
\item A module-finite $R$-algebra $\Gamma$ is called a {\it regular order} if $\Gamma\in\CM R$ and $\gl\Gamma=d$ hold.
\item Take a reflexive $R$-module $M\neq0$. We say $M$ gives an {\it NCCR} if $\End_R(M)$ is a regular $R$-order.
\end{enumerate}
\end{Def}

For a two-dimensional local normal domain $R$, it is classical that $0\neq M\in\CM R$ is an additive generator if and only if $\End_R(M)$ is a regular $R$-order \cite{RVdB} (see \cite[5.4]{IW} for a complete proof). Therefore this concept of NCCRs is a higher-dimensional analogue of additive generators of $\CM R$.

In the representation theory, homological properties of endomorphism rings lead to representation theoretic results. For example, if $R$ is a two-dimensional local normal domain, then a Cohen-Macaulay $R$-module whose endomorphism ring is a regular order becomes an additive generator of $\CM R$ as we stated above. We further remark that the Auslander-Iyama correspondence realizes this philosophy \cite{Au71,Iya07a}. In this paper, we prove the following theorem which deduces a representation theoretic property (the existence of $(d-1)$-almost split sequences, see Definition \ref{ARseq}) from a homological property (NCCR). 

\begin{Thm}{\rm(Theorem \ref{existAR}, Proposition \ref{ARforCT})}\label{introexistAR}
Let $R$ be a Cohen-Macaulay complete local normal domain with $d:=\dim R\geq2$, $M$ a reflexive $R$-module giving an NCCR and $X\in\ind\add_RM$.
\begin{enumerate}
\item There exists a $(d-1)$-almost split sequence in $\add_RM$ of the following form.
\[\nu X\longrightarrow C_{d-2}\longrightarrow\cdots\longrightarrow C_0\longrightarrow X\]
\item Assume $M\in\CM R$. If $X\neq R$ (respectively, $X=R$), then the complex $0\to\nu X\to C_{d-2}\to\cdots\to C_0\to X\to 0$ (respectively, $0\to\nu X\to C_{d-2}\to\cdots\to C_0\to X$) is exact.
\end{enumerate}
\end{Thm}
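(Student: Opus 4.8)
The plan is to deduce both statements from the theory of $d$-cluster tilting objects and the Auslander-Reiten theory of the regular order $\Lambda := \End_R(M)$, transported back to $\add_R M$ via the functor $\Hom_R(M,-)\colon \add_R M \to \proj\Lambda$. First I would recall that, since $M$ gives an NCCR, $\Lambda$ is a regular $R$-order, so $\gl\Lambda = d$ and $\Lambda \in \CM R$; in particular $\Lambda$ is an Iwanaga-Gorenstein (indeed a nonsingular) $R$-order, and the category $\proj\Lambda$ has a Serre-type functor $\nu = \Hom_R(-,\omega_R)\otimes_\Lambda^{\mathbf L}(-)$ (or rather its restriction) governing Auslander-Reiten duality. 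The key structural input is that $M$, viewed inside $\CM R$, is a $d$-cluster tilting-like object in the sense of higher Auslander-Reiten theory: because $\gl\Lambda = d$, every $\Lambda$-module has projective dimension at most $d$, and minimal projective resolutions of simple $\Lambda$-modules have length exactly $d$ at the non-free vertices. This is the mechanism that produces complexes of length $d$.

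The main steps are as follows. (1) For $X \in \ind\add_R M$ with $P := \Hom_R(M,X) \in \ind\proj\Lambda$ the corresponding indecomposable projective, consider the minimal projective resolution of the simple top $S_X$ of $P$: since $\gl\Lambda = d$ we get an exact sequence $0 \to Q_d \to Q_{d-1} \to \cdots \to Q_0 = P \to S_X \to 0$ with each $Q_i \in \proj\Lambda$. (2) Identify $Q_d$ with $\nu P$ up to the twist coming from the order structure — this is where one invokes that $\Lambda$ is a nonsingular order so that the Nakayama-type functor on $\proj\Lambda$ sends the top term of the resolution of $S_X$ back to a projective, and Auslander-Reiten duality for orders identifies it with $\nu X$; concretely $\Ext^d_\Lambda(S_X, \Lambda) $ is the simple top of $\nu P$. (3) Apply $\Hom_R(M,-)^{-1}$, i.e. use that $\Hom_R(M,-)$ is an equivalence $\add_R M \xrightarrow{\sim} \proj\Lambda$, to transport the resolution to a complex $\nu X \to C_{d-2} \to \cdots \to C_0 \to X$ in $\add_R M$, and check it has the defining exactness/minimality properties of a $(d-1)$-almost split sequence (the "almost split" part being exactly the statement that the connecting map realizes the socle of the relevant $\Ext$). (4) For part (2), when $M \in \CM R$ one upgrades to honest exactness in $\CM R$: applying $\Hom_R(M,-)$ to the complex recovers the projective resolution above, which is exact; since $\Hom_R(M,-)$ is fully faithful on $\add_R M$ and $\CM R$ is closed under kernels of epimorphisms between maximal Cohen-Macaulay modules along depth arguments, the sequence $0 \to \nu X \to C_{d-2} \to \cdots \to C_0 \to X \to 0$ is exact when $X \neq R$ (so that $S_X$ is not the "free" simple and the resolution genuinely has length $d$), while for $X = R$ the simple $S_R$ may have a shorter resolution and one only gets exactness of $0 \to \nu X \to C_{d-2} \to \cdots \to C_0 \to X$, the map $C_0 \to X$ failing to be surjective — the cokernel being a finite length module. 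One checks the depth/Cohen-Macaulayness of the syzygies by the depth lemma, using $\Lambda \in \CM R$ and $d \geq 2$.

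The main obstacle I anticipate is step (2)–(3): making the identification of the top term $Q_d$ of the projective resolution of $S_X$ with $\nu X$ precise, and verifying that the resulting complex genuinely satisfies the axioms of a $(d-1)$-almost split sequence (Definition \ref{ARseq}) rather than merely being "a complex of the right shape." This requires a careful version of Auslander-Reiten duality valid for non-isolated singularities — the usual statements assume $\Sing R$ is isolated so that the relevant $\Ext$ and $\underline{\Hom}$ groups are finite length — so one must instead work relative to the order $\Lambda$, where $\gl\Lambda = d$ forces the needed finiteness at the level of $\Lambda$-modules (the simples $S_X$ are finite length over $\Lambda$ regardless of the singular locus of $R$). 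Establishing the correct Serre functor / Nakayama functor formalism for the category $\proj\Lambda$ over a not-necessarily-isolated-singularity base, and checking its compatibility with the equivalence $\add_R M \simeq \proj\Lambda$, is the technical heart; once that is in place, steps (1) and (4) are routine homological algebra with the depth lemma.
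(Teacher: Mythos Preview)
Your proposal is correct and takes essentially the same route as the paper: build the minimal projective resolution of the simple $\Gamma$-module $S_X$ (length exactly $d$ since $\gl\Gamma=d$), use the Gorenstein condition for regular orders (the paper's Proposition~\ref{Gorcond}) together with the codimension-one duality $\Hom(X,\nu Y)\cong\Hom(Y,X)^\vee$ (Lemma~\ref{homcheck}) to pin down the last term as $\nu X$, and transport back through $\add_RM\simeq\proj\Gamma$. Your diagnosis of the technical heart---making the Nakayama identification work without an isolated-singularity hypothesis---is exactly right, and the paper resolves it precisely by the $(\mathrm{R}_1)$-reduction you anticipate.

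One small correction on part~(2): the resolution of $S_R$ is \emph{not} shorter---every simple over a regular $R$-order has projective dimension exactly $d$ (combine $\gl\Gamma=d$ with $\depth_R\Gamma=d$). The distinction between $X=R$ and $X\neq R$ arises more simply: once $M$ is CT one has $R\in\add_RM=\C$, so one may evaluate the exact sequence of functors
\[
0\to\C(-,\nu X)\to\C(-,C_{d-2})\to\cdots\to\C(-,C_0)\to\C(-,X)\to S_X\to 0
\]
at $R$ directly; the cokernel $S_X(R)=\C(R,X)/J_\C(R,X)$ is $R/\mathfrak m$ when $X\cong R$ and zero otherwise. No depth-lemma argument is needed.
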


We remark that if $R$ is an isolated singularity and $M\in\CM R$, then $M$ gives an NCCR if and only if it is a $(d-1)$-cluster tilting object in $\CM R$, where the concept of cluster tilting is what gives a deep connection between representation theories and cluster theories \cite{BMRRT,Iya07b}. In \cite{Iya07b}, the existence of higher almost split sequences was proved for cluster tilting modules over isolated singularities. Thus Theorem \ref{introexistAR} can be viewed as a generalization of the result of \cite{Iya07b} to arbitrary Cohen-Macaulay normal domains which are not necessarily isolated singularities.

Now we start to explain our results about quotient singularities in arbitrary dimensions. We consider the ring $l[[x_1,\cdots,x_d]]^G$ of invariants where $l$ is a field and $G$ is a finite group acting on the formal power series ring $l[[x_1,\cdots,x_d]]$ as a ring, not necessarily as an $l$-algebra, with $|G|$ not divided by $\ch l$. If the action of $G$ is $l$-linear, then it is well-known that we can linearize $G$ so that $G$ is a finite subgroup of $GL_d(l)$ \cite{Car}. In the general case, thanks to the following result, we can assume $G$ to be a finite subgroup of a certain explicit group.

\begin{Prop}{\rm(Proposition \ref{linear})}
Let $l$ be a field, $G$ a finite group acting on $S:=l[[x_1,\ldots , x_d]]$ as a ring with $|G|$ not divided by $\ch l$. Observe that $G$ acts on the residue field $l$ naturally and put $k:=l^G$. Then there exists a group homomorphism $\phi\colon G\to GL_d(l)\rtimes\Gal(l/k)$ and a ring automorphism $\theta\colon S\to S$ such that for all $g\in G$, the following diagram commutes.
\[\xymatrix{
S \ar[r]^\theta \ar[d]_{\phi(g)} & S\ar[d]^{g}\\
S\ar[r]_\theta & S
}\]
In particular, we have a ring isomorphism $S^G\cong S^{\phi(G)}$.
\end{Prop}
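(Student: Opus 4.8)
The plan is to linearize the $G$-action in two stages: first fix up the action on the residue field, then fix up the action on the cotangent space $\mathfrak{n}/\mathfrak{n}^2$, using averaging arguments over $G$ (legitimate since $|G|$ is invertible in $l$). For the first stage, note that $G$ acts on $S=l[[x_1,\dots,x_d]]$ by ring automorphisms, hence permutes the maximal ideal $\mathfrak{n}$ (the unique maximal ideal), so it acts on $l=S/\mathfrak{n}$. Put $k:=l^G$. By standard Galois theory $l/k$ is finite Galois and the induced map $G\to\Gal(l/k)$ is surjective with some kernel $H$; I would first construct, via Hensel/Cohen structure theory, a $G$-equivariant coefficient field, i.e.\ an embedding $l\hookrightarrow S$ whose image is stable under $G$ and on which $G$ acts through $\Gal(l/k)$. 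Concretely: any coefficient field gives $S\cong l[[x_1,\dots,x_d]]$, and the obstruction to $G$-stability is a cocycle which one kills by an averaging argument (or one invokes the Cohen structure theorem over the complete local ring $S^G$, whose residue field is $k$, to get $S^G\cong k[[\,\cdot\,]]$-algebra structure and tensor up). This produces a ring automorphism $\theta_1$ of $S$ after which $G$ genuinely acts $k$-linearly on $l$ inside $S$, i.e.\ each $g$ acts as $\sigma_g\otimes(\text{something})$ where $\sigma_g\in\Gal(l/k)$.

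For the second stage, we may now assume $G$ acts $k$-semilinearly: $g(c\cdot s)=\sigma_g(c)\,g(s)$ for $c\in l$. Then $G$ acts $k$-linearly on $\mathfrak{n}/\mathfrak{n}^2$, which is a $d$-dimensional $l$-vector space, and this gives a homomorphism $G\to GL_d(l)\rtimes\Gal(l/k)$ recording the semilinear action on $\mathfrak{n}/\mathfrak{n}^2$; call it $\phi$. The remaining task is to choose new coordinates $\theta_2$ so that the honest action of $G$ on $S$ agrees with the action of $\phi(G)$ through these coordinates. This is the classical linearization trick: pick any lift $\overline{x}_i\in\mathfrak{n}$ of a basis of $\mathfrak{n}/\mathfrak{n}^2$ and replace it by the ``averaged'' lift
\[
y_i \;:=\; \frac{1}{|G|}\sum_{g\in G}\;\big(\text{coordinate change}\big)_g\big(g^{-1}(\overline{x}_i)\big),
\]
i.e.\ symmetrize the coordinates against the linear representation $\phi$, so that the $y_i$ transform under $G$ exactly by $\phi$. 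Because $S$ is $\mathfrak{n}$-adically complete and the $y_i$ still form a regular system of parameters (they reduce to a basis mod $\mathfrak{n}^2$), the assignment $x_i\mapsto y_i$ extends to a ring automorphism $\theta:=\theta_2\circ\theta_1$ of $S$, and by construction $\theta\circ\phi(g)=g\circ\theta$ for all $g$.

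Finally, the commuting square immediately yields $\theta(S^{\phi(G)})=S^G$, hence the ring isomorphism $S^G\cong S^{\phi(G)}$. The main obstacle I anticipate is the first stage: producing a $G$-stable coefficient field is not entirely formal because $G$ does not act $l$-linearly to begin with, so one cannot simply average an $l$-linear section; the clean way is to descend to $S^G$, which is a complete local ring with residue field $k$ (this uses that $S^G$ is local — $S$ is module-finite over $S^G$ by Noether, so there is a unique maximal ideal — and complete), apply the Cohen structure theorem to get a coefficient field $k\hookrightarrow S^G\hookrightarrow S$, and then use that $S/\mathfrak{n}S^G$-type considerations together with separability of $l/k$ (characteristic hypotheses, or at worst $\ch l\nmid|G|$) let one lift $l$ over $k$ inside $S$ compatibly; here one must be slightly careful when $\ch l>0$ to ensure the needed separability, but the coprimality of $|G|$ with $\ch l$ suffices. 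Everything after that is the standard averaging/completeness argument and is routine.
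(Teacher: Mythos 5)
Your proposal follows essentially the same two-stage plan as the paper's proof: first construct a $G$-equivariant coefficient field by descending to the complete local ring $S^G$, applying the Cohen structure theorem to get a $k$-coefficient field, and then lifting to an $l$-coefficient field via Hensel's lemma (this is exactly the paper's Lemma 1.1); then record the semilinear action on $\mathfrak{n}/\mathfrak{n}^2$ to obtain $\phi\colon G\to GL_d(l)\rtimes\Gal(l/k)$ and linearize by the averaging substitution $\theta(x_i)=\frac{1}{|G|}\sum_{g\in G}g^{-1}\phi(g)(x_i)$, using completeness and the fact that the new generators still lift a basis of $\mathfrak{n}/\mathfrak{n}^2$. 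The argument is correct and matches the paper's route in all essential respects.
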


In conclusion, our setting is the following.
\begin{enumerate}
\item[(Q1)] $l/k$ is a finite Galois extension of fields.
\item[(Q2)] $G$ is a finite subgroup of $GL_d(l)\rtimes\Gal(l/k)$ with $|G|$ not divided by $\ch k$ and $k=l^G$.
\item[(Q3)] $S:=l[[x_1,\ldots , x_d]]$ is the formal power series ring and $\mathfrak{n}:=(x_1,\ldots , x_d)\subseteq S$ is the maximal ideal.
\end{enumerate}

We consider the ring $S^G$ of invariants. Our next result is a higher-dimensional generalization of Theorem \ref{introcmfin}.

\begin{Thm}{\rm(Theorem \ref{CT})}
Under the setting $(Q1),(Q2)$ and $(Q3)$, the ring $(S^G, \mathfrak{n}\cap S^G, k)$ is a $d$-dimensional Cohen-Macaulay complete local normal domain and $S\in\CM S^G$ gives an NCCR.
\end{Thm}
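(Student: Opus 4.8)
The plan is to reduce the whole statement to the skew group algebra $S*G$, using crucially that $|G|$ is a unit of $S$. First, the assertions about $S^G$ alone are the standard ones for finite group actions on complete local rings (cf.\ \cite{Aus86,LW}). The ring $S$ is integral over $S^G$ — each $s\in S$ kills $\prod_{g\in G}(X-g(s))\in S^G[X]$ — and in fact module-finite over it, so $S^G$ is a Noetherian complete local ring with maximal ideal $\mathfrak n\cap S^G$ and residue field $l^G=k$, and $\dim S^G=\dim S=d$. Moreover $S^G=S\cap(\Frac S)^G$ with $(\Frac S)^G=\Frac(S^G)$, so $S^G$ is integrally closed in its fraction field (anything in $(\Frac S)^G$ integral over $S^G$ is integral over the normal ring $S$, hence lies in $S\cap(\Frac S)^G=S^G$); thus $S^G$ is a normal domain. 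Since $|G|$ is invertible, the Reynolds operator $\frac1{|G|}\sum_{g\in G}g$ exhibits $S^G$ as an $S^G$-module direct summand of $S$, and $\depth_{S^G}S=\depth_SS=d$, so ${}_{S^G}S$ is maximal Cohen-Macaulay and therefore so is its summand $S^G$; hence $S^G$ is Cohen-Macaulay and $S\in\CM S^G$. Being maximal Cohen-Macaulay over the normal domain $S^G$, the module $S$ is reflexive, so it is eligible to give an NCCR.

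Next, $\Gamma:=S*G=\bigoplus_{g\in G}Sg$ is a regular $S^G$-order. It is $S$-free of rank $|G|$, hence $\cong S^{|G|}$ as an $S^G$-module, so $\Gamma\in\CM S^G$. For the global dimension, the restriction functor $\md\Gamma\to\md S$ is exact and faithful and preserves projectives, induction is exact, and invertibility of $|G|$ makes $\mathrm{id}_{\md\Gamma}$ a direct summand of $\mathrm{Ind}\circ\mathrm{Res}$; this forces $\gl\Gamma=\gl S=d$ (the inequality $\gl\Gamma\ge d$ follows because $\Gamma/\mathfrak n\Gamma=l*G$ is semisimple, so every simple $\Gamma$-module is annihilated by $\mathfrak n$ and restricts over $S$ to a sum of copies of $S/\mathfrak n$, of projective dimension $d$). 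Hence $\Gamma=S*G$ is a regular $S^G$-order.

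There is a natural $S^G$-algebra homomorphism $\Gamma=S*G\to\End_{S^G}(S)$, $sg\mapsto\bigl(t\mapsto s\,g(t)\bigr)$, which is injective (Dedekind independence of the distinct automorphisms $g$ of $\Frac S$) and becomes an isomorphism after tensoring with $\Frac(S^G)$. I would first handle the case where $G$ acts freely in codimension one — equivalently, $H:=G\cap GL_d(l)$ contains no pseudo-reflection, the $\Gal(l/k)$-part being harmless since $l/k$ is separable. Then $S*G$ and $\End_{S^G}(S)$ are reflexive $S^G$-modules that coincide after localization at every height-one prime (where $S/S^G$ is étale $G$-Galois, so the isomorphism is the classical one), hence the map above is an isomorphism and, by the previous step, $S$ gives an NCCR. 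For arbitrary $G$, let $G'\trianglelefteq G$ be the normal subgroup generated by the pseudo-reflections of $H$ and set $T:=S^{G'}$; by the Chevalley--Shephard--Todd theorem $T$ is again a formal power series ring over $l$, the quotient $\bar G:=G/G'$ acts on $T$ with $T^{\bar G}=S^G$, and $\bar G$ acts freely in codimension one, so the previous case shows that $T$ gives an NCCR of $S^G$. Finally $\add_{S^G}S=\add_{S^G}T$: the Reynolds operator of $G'$ makes $T=S^{G'}$ an $S^G$-module direct summand of $S$, while $S$ is a maximal Cohen-Macaulay module over the regular ring $T$, hence $T$-free, so $S\cong T^{|G'|}$ as $T$-modules and a fortiori as $S^G$-modules. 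Therefore $\End_{S^G}(S)$ is Morita equivalent to the regular order $\End_{S^G}(T)\cong T*\bar G$, and regularity of an order is a Morita invariant, so $\End_{S^G}(S)$ is a regular $S^G$-order; i.e.\ $S$ gives an NCCR.

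The delicate point is the non-small case, and specifically pinning down the right notion of \emph{pseudo-reflection} when $G$ acts only semilinearly on $\mathfrak n/\mathfrak n^2\cong l^d$: one has to argue, via inertia groups of height-one primes of $S$ over $S^G$, that the codimension-one ramification of $S/S^G$ is generated exactly by the pseudo-reflections contained in $H$, so that $T=S^{G'}$ is regular and $\bar G$ acts freely in codimension one. The codimension-one comparison $S*G\cong\End_{S^G}(S)$ in the free case — a twisted analogue of \cite[3.3]{Aus86} — and the module-finiteness of $S$ over $S^G$ are standard in spirit but still need to be set up with care in this mixed linear/Galois situation.
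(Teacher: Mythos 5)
Your proposal is correct and follows essentially the same route as the paper: reduce to the case where $H=G\cap GL_d(l)$ is small (via Chevalley--Shephard--Todd), prove $S*G\cong\End_{S^G}(S)$ in that case by a codimension-one reflexive comparison (this is the paper's Theorem \ref{Auslanderalgebra}, built on Propositions \ref{invinertia}, \ref{whenunram} and Theorem \ref{end}), use $\gl(S*G)=\gl S=d$ (Proposition \ref{gldim}), and conclude via the Reynolds operator that $S$ is a generator with regular endomorphism ring. One small point of difference: the paper's Theorem \ref{makesmall} carries out the pseudo-reflection quotient by induction on $|G|$, whereas you argue that a \emph{single} quotient by $G'=\langle\text{pseudo-reflections in }H\rangle$ already yields an extension $T=S^{G'}\supseteq S^G$ that is unramified in codimension one; this is valid — since $T_G(\mathfrak P)\subseteq G'$ for every height-one prime $\mathfrak P$ of $S$, the exact sequence of inertia groups forces $T_{\bar G}(\mathfrak Q)=1$ — but it rests precisely on the inertia-group computation you flag at the end, which is where the technical content of Propositions \ref{invinertia}--\ref{whenunram} lives. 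Your Morita-equivalence step $\add_{S^G}S=\add_{S^G}T$ (Reynolds operator one way, $T$-freeness of $S$ the other) is likewise the same bookkeeping the paper performs with $\Im\theta=S^{\Ker\phi}$.
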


Therefore, thanks to Theorem \ref{introexistAR}, we have $(d-1)$-almost split sequences in 
\[\C(S^G):=\add_{S^G}S,\] 
where $\C(S^G)=\CM S^G$ holds when $d=2$. Next, we give a complete description of the Auslander-Reiten quiver of $\C(S^G)$. As we have already stated, homological properties of endomorphism rings play an important role in the representation theory. In fact, a key step to draw $\AR(\C(S^G))$ is to determine the ring $\End_{S^G}(S)$. This ring is isomorphic to the skew group ring under a certain condition as in the classical case (Theorem \ref{classical}(2)).

\begin{Thm}{\rm(Theorem \ref{Auslanderalgebra})}
Under the setting $(Q1),(Q2)$ and $(Q3)$, let $H:=G\cap GL_d(l)$ be a normal subgroup of $G$. If $H\subseteq GL_d(l)$ is small, then we have 
\[\End_{S^G}(S)\cong S*G.\]
\end{Thm}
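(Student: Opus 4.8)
The plan is to show that the canonical ring homomorphism
\[
\Phi\colon S*G\longrightarrow\End_{S^G}(S),\qquad s*g\longmapsto\bigl(t\mapsto s\cdot g(t)\bigr),
\]
is an isomorphism. First I would note that $\Phi$ is injective: if $\sum_i s_i\,g_i(t)=0$ for all $t\in S$ with the $g_i\in G$ pairwise distinct, then extending the $g_i$ to pairwise distinct automorphisms of $\Frac(S)$ and using Dedekind's independence of automorphisms forces all $s_i=0$. Next I would reduce to codimension one. By Theorem \ref{CT} we have $S\in\CM S^G$, so $S*G\cong S^{\oplus|G|}$ is a maximal Cohen--Macaulay $S^G$-module, hence reflexive since $S^G$ is a normal Cohen--Macaulay domain; likewise $\End_{S^G}(S)=\Hom_{S^G}(S,S)$ is reflexive, being $\Hom$ into a module satisfying Serre's $(S_2)$ over a normal domain. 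A reflexive module over a normal domain is the intersection of its localizations at height-one primes inside its generic fibre, so an injective morphism of reflexive $S^G$-modules is an isomorphism as soon as it becomes one after localizing at every height-one prime (and this automatically matches the generic ranks). Thus it suffices to prove that $\Phi_{\mathfrak p}$ is an isomorphism for every height-one prime $\mathfrak p$ of $S^G$.

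The heart of the argument is then to show that $S^G\subseteq S$ is \'etale in codimension one, and for this I would factor it as $S^G\subseteq S^H\subseteq S$ where $H:=G\cap GL_d(l)$ is normal in $G$ with $G/H\cong\Gal(l/k)$ (since $l^G=k$ and $l/k$ is Galois). For the first inclusion I claim $S^H=S^G\otimes_k l$. Indeed $l\subseteq S^H$ gives a natural map $S^G\otimes_k l\to S^H$, which is injective because $S^G\otimes_k l$ is $S^G$-free and hence embeds into $\Frac(S^G)\otimes_k l$; and $\Frac(S^G)\otimes_k l=\Frac(S^H)$, because $\Frac(S^G)=\Frac(S)^G$ and $l$ are linearly disjoint over $k$ ($l/k$ being Galois with $\Frac(S^G)\cap l=l^G=k$) and a degree count gives $\Frac(S^G)\cdot l=\Frac(S)^H$. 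Since $S^G\otimes_k l$ is finite \'etale over $S^G$ it is normal, and $S^H$ is integral over it with the same fraction field, whence $S^H=S^G\otimes_k l$ is finite \'etale over $S^G$, in particular unramified in every codimension. For the second inclusion, since $H\subseteq GL_d(l)$ is small it contains no pseudo-reflections, so, exactly as in the classical case (Theorem \ref{classical}), ramification of $S^H\subseteq S$ in codimension one is governed by pseudo-reflections and therefore does not occur; hence $S^H\subseteq S$, and so $S^G\subseteq S$, is \'etale in codimension one.

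Finally, fix a height-one prime $\mathfrak p$ of $S^G$ and set $R:=(S^G)_{\mathfrak p}$, a discrete valuation ring, and $T:=S_{\mathfrak p}=S\otimes_{S^G}R$ (with $T^G=R$, as the localizing set lies in $S^G$). Then $T$ is a module-finite, torsion-free (hence free) and unramified $R$-algebra, a domain, with $\Frac(T)/\Frac(R)$ Galois of group $G$; consequently $T/R$ is a $G$-Galois extension of commutative rings, for which the canonical map from the twisted group ring $T*G\xrightarrow{\sim}\End_R(T)$ is an isomorphism (Auslander--Goldman), and this map is precisely $\Phi_{\mathfrak p}$. Combined with the reduction above, $\Phi$ is an isomorphism. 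The main obstacle I anticipate is the codimension-one \'etaleness: one must cleanly separate the two sources of ramification, disposing of the $H$-part by the classical pseudo-reflection argument (this is where smallness of $H$ is used) and of the ``arithmetic'' part $S^G\subseteq S^H$ via the identification $S^H=S^G\otimes_k l$, where the hypothesis that $l/k$ be Galois is what makes $\Frac(S^G)$ and $l$ linearly disjoint over $k$.
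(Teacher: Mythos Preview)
Your argument is correct, with one minor bookkeeping issue: you invoke Theorem~\ref{CT} to get $S\in\CM S^G$, but in the paper Theorem~\ref{CT} is deduced \emph{from} Theorem~\ref{Auslanderalgebra}, so that citation is circular. The fact you need is the elementary observation that $\depth_{S^G}S=\depth_SS=d$ (since $S$ is module-finite over $S^G$), which the paper records separately before Theorem~\ref{Auslanderalgebra}.

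Your route to codimension-one unramifiedness is genuinely different from the paper's. The paper does not factor through $S^H$; instead it computes the inertia group $T(\mathfrak{P})$ directly for a height-one prime $\mathfrak{P}\subset S$ (Proposition~\ref{invinertia}): if $(A,\sigma)\in T(\mathfrak{P})$ then $\sigma(\lambda)-\lambda\in\mathfrak{P}\cap l=0$ for all $\lambda\in l$, forcing $\sigma=\mathrm{id}$, and then $U_AS\subseteq\mathfrak{P}$ forces $A$ to be a pseudo-reflection; smallness of $H$ gives $T(\mathfrak{P})=1$. This handles the ``arithmetic'' and ``geometric'' ramification in one stroke. Your approach instead separates them by the factorization $S^G\subseteq S^H\subseteq S$, disposing of the arithmetic part via the identification $S^H\cong S^G\otimes_kl$ and of the geometric part by the classical pseudo-reflection argument. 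The paper's route is more economical, but yours yields the structural isomorphism $S^H\cong S^G\otimes_kl$ as a by-product (the paper obtains the weaker consequence $S^H\cong(S^G)^{\oplus[l:k]}$ as $S^G$-modules only later, by representation-theoretic means). The endgame is the same in both: once codimension-one unramifiedness is in hand, the separable/Galois description gives $(S*G)_{\mathfrak p}\cong\End_{(S^G)_{\mathfrak p}}(S_{\mathfrak p})$ at height one, and the $(S_2)$/reflexivity reduction (Lemma~\ref{s2isom} in the paper) finishes.
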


The assumption that $H$ is small is not restrictive because without changing $S^G$, we can replace $G$ so that $H$ is small (see Theorem \ref{makesmall}). Using this theorem, we obtain the following bijections:
\[\xymatrix@C=6em{
\ind\C(S^G)\ar@<.3ex>^-{\Hom_R(S,-)}[r]&\ind\proj S*G\ar@<.3ex>^-{(-)^G}[l]\ar@<.3ex>^-{(S/\mathfrak{n})\otimes_S-}[r]&\simp l*G\ar@<.3ex>^-{S\otimes_l-}[l]
}.\]
For $V\in\mod l*G$, we denote by
\[M(V):=(S\otimes_lV)^G\in\C(S^G)\]
the correspondent to $V$. Remark that $\ind\C(S^G)=\{M(V)\mid V\in\simp l*G\}$ holds. We can describe all $(d-1)$-almost split sequences in $\C(S^G)$ explicitly.

\begin{Thm}{\rm(Theorem \ref{ARMcKay})}\label{introARMcKay}
Under the setting $(Q1),(Q2)$ and $(Q3)$, we assume $H:=G\cap GL_d(l)\subseteq GL_d(l)$ is small.
\begin{enumerate}
\item The Auslander-Reiten quiver $\AR(\C(S^G))$ coincides with the McKay quiver of $l*G$ which we denote by $\MK(l*G)$ (see Definition \ref{McKay})$\colon$
\[\AR(\C(S^G))=\MK(l*G).\]
\item More precisely, let $V\in\simp l*G$ and $U:=\mathfrak{n}/\mathfrak{n}^2\in\mod l*G$. Then we have a $(d-1)$-almost split sequence of the following form.
\[M(\bigwedge^dU\otimes_lV)\longrightarrow M(\bigwedge^{d-1}U\otimes_lV)\longrightarrow\cdots\longrightarrow M(U\otimes_lV)\longrightarrow M(V)\]
\end{enumerate}
\end{Thm}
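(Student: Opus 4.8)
The plan is to transport the entire problem, via Theorem \ref{Auslanderalgebra}, into the module category of the skew group ring. Set $R:=S^G$ and $\Gamma:=\End_R(S)$; since $H=G\cap GL_d(l)$ is small we have $\Gamma\cong S*G$ by Theorem \ref{Auslanderalgebra}, and since $S$ gives an NCCR over $R$ (Theorem \ref{CT}) the algebra $\Gamma$ is a regular $R$-order of global dimension $d$. The functor $\Hom_R(S,-)$ restricts to an equivalence $\C(S^G)=\add_R S\xrightarrow{\ \sim\ }\proj\Gamma$ with quasi-inverse $(-)^G$, under which $M(V)=(S\otimes_l V)^G$ corresponds to the indecomposable projective $P(V)$ whose top is the simple $l*G$-module $V$ (here $\Gamma=S*G$ surjects onto $(S/\mathfrak n)*G=l*G$, whose simple modules are exactly the simple $\Gamma$-modules). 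Unwinding the construction behind Theorem \ref{introexistAR} (Proposition \ref{ARforCT}), applying $\Hom_R(S,-)$ to the $(d-1)$-almost split sequence ending at an indecomposable $X\in\add_R S$ and appending the top $S_X$ of $\Hom_R(S,X)$ produces the minimal projective $\Gamma$-resolution of $S_X$. Thus both assertions reduce to computing this minimal projective resolution of the simple $\Gamma$-module $V$, for each $V\in\simp(l*G)$.

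First I would write that resolution as a twisted Koszul complex. The maximal ideal $\mathfrak n=(x_1,\dots,x_d)$ gives the Koszul resolution of $l=S/\mathfrak n$, whose $p$-th term is $S\otimes_l\bigwedge^p U$ with $U=\mathfrak n/\mathfrak n^2$; since this complex is functorial in $S$, it carries a compatible $G$-action (twisted, as $G$ need not act $l$-linearly), hence is a complex of $\Gamma$-modules, and its $p$-th term is the projective corresponding to $M(\bigwedge^p U)$. Tensoring over $l$ with $V$ yields an exact sequence of $\Gamma$-modules whose $p$-th term is the projective corresponding to $M(\bigwedge^p U\otimes_l V)$ and whose augmentation is onto $V$. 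It is the minimal projective resolution: the Koszul differentials have entries in $\mathfrak n$, and since $S*G$ is module-finite over the complete local ring $R$ and $(S*G)/\mathfrak n(S*G)=l*G$ is semisimple (the averaging idempotent $\tfrac1{|G|}\sum_{g\in G}g$ lies in $l*G$ because $|G|$ is invertible in $k$), we have $\mathfrak n(S*G)=\rad(S*G)$, so the differentials lie in the radical.

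Applying $(-)^G$ to the projective part of this resolution now identifies the $(d-1)$-almost split sequence ending at $M(V)$ with
\[
M(\bigwedge^d U\otimes_l V)\longrightarrow M(\bigwedge^{d-1}U\otimes_l V)\longrightarrow\cdots\longrightarrow M(U\otimes_l V)\longrightarrow M(V),
\]
which proves (2); its stated exactness properties are then immediate from Theorem \ref{introexistAR}(2) (and consistent with exactness of the Koszul complex). For (1): the Auslander–Reiten quiver of $\C(S^G)$ has vertex set $\{M(V)\mid V\in\simp(l*G)\}$, and the arrows into $M(V)$, together with their valuations, are read off from the minimal right almost split map $M(U\otimes_l V)\to M(V)$, hence are governed by the decomposition of the $l*G$-module $U\otimes_l V$ together with the division rings $\End_{\C(S^G)}(M(W))/\rad\cong\End_{l*G}(W)$. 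This is exactly the combinatorial datum defining $\MK(l*G)$ in Definition \ref{McKay}, so $\AR(\C(S^G))=\MK(l*G)$.

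The homological algebra is straightforward; the delicate points are verifying that the Koszul complex is genuinely $G$-equivariant and minimal in this twisted setting, and — for part (1) — matching the \emph{valuations} of the two quivers. The latter is precisely where the non-simply-laced phenomena central to this paper enter: one must compare the $\End_{l*G}(W)$-$\End_{l*G}(V)$-bimodule $\Hom_{l*G}(W,U\otimes_l V)$ with the bimodule of irreducible maps $M(W)\to M(V)$ and check that the associated valuations coincide. I expect this valuation bookkeeping, rather than any step of the homological argument, to be the main obstacle; everything else is transport of structure along the equivalence $\add_R S\simeq\proj(S*G)$.
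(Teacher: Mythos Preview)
Your approach is essentially the same as the paper's: transport to $\proj(S*G)$ via the equivalence $\Hom_R(S,-)$, identify the minimal projective resolution of each simple $V$ with the Koszul complex tensored by $V$ (this is the paper's Proposition \ref{minproj}, together with $J_{S*G}=\mathfrak n(S*G)$ from Lemma \ref{Jac}), and then invoke Corollary \ref{sinksource} to conclude that the resulting sink sequence is automatically a $(d-1)$-almost split sequence.

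One remark on the point you flag as the main obstacle: in the paper the valuation matching is not delicate at all, because Definition \ref{McKay} is rigged for it. The McKay valuations $d_G(V_1,V_2)$ and $d'_G(V_1,V_2)$ are defined as \emph{multiplicities} of simples in $U\otimes_lV_2$ and in $\bigwedge^{d-1}U\otimes_lV'_1$, not as bimodule dimensions. On the AR side, the Proposition cited from \cite{INP} says that if $Z\to X$ is a sink map then $Z\cong\bigoplus_W W^{\oplus d_{WX}}$, so the AR valuations are also read off as multiplicities in the sink and source maps. Once (2) is established, the sink map into $M(V)$ is $M(U\otimes_l V)\to M(V)$ and the source map out of $M(\bigwedge^dU\otimes_lV)$ is $M(\bigwedge^dU\otimes_lV)\to M(\bigwedge^{d-1}U\otimes_lV)$, and the two sets of multiplicities agree on the nose. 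So the bimodule comparison you anticipate is unnecessary; part (1) follows from part (2) by definition-chasing.
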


Here, the McKay quiver $\MK(l*G)$ is a valued quiver whose vertex set is $\simp l*G$ and whose valuations are determined by information about $\mod l*G$.

\subsection{McKay quivers of skew group algebras}

In the previous subsection, we see that the McKay quiver $\MK(l*G)$ coincides with the Auslander-Reiten quiver $\AR(\C(S^G))$.  In this subsection, we explain how to draw $\MK(l*G)$. For this purpose, we introduce a general method for determining irreducible representations of skew group algebras. Our method is to describe $\simp l*G$ by using $\simp lH$.

We discuss in the following general setting: let $l$ be a field, $G$ a finite group acting on $l$ as a field and $k:=l^G$. Now $l/k$ is a finite Galois extension and we have the natural group isomorphism $G/H\cong\Gal(l/k)$ where $H:=\Ker(G\to\Gal(l/k))$. To determine $\simp l*G$, we use the natural action of $\Gal(l/k)$ on the set $\simp lH$. Observe that we have the restriction functor and the induction functor
\[\Res\colon\mod l*G\to\mod lH,\ \Ind\colon\mod lH\to\mod l*G.\]

\begin{Thm}{\rm(Theorem \ref{tab})}\label{introsab}
Under the setting above, the following assertions hold.
\begin{enumerate}
\item There exists a bijection
\[\Gal(l/k)\backslash\simp lH\cong\simp l*G.\]
\item Decompose $\simp lH$ into the disjoint union of the orbits with respect the $\Gal(l/k)$-action$\colon$
\[\simp lH=\bigsqcup_{i=1}^{s}\{W_{i1},\cdots, W_{it_i}\}.\]
We denote by $V_i\in\simp l*G$ the simple left $l*G$-module corresponding to $\{W_{i1},\cdots, W_{it_i}\}$. Then for each $1\leq i\leq s$, there exist integers $a_i, b_i\geq1$ satisfying
\begin{equation*}
\begin{split}
\Res V_i&\cong (W_{i1}\oplus\cdots\oplus W_{it_i})^{\oplus a_i},\\
\Ind W_{ij}&\cong V_i^{\oplus b_i}\text{for each $1\leq j\leq t_i$ and}\\
t_ia_ib_i&=[l:k].
\end{split}
\end{equation*}
\end{enumerate}
\end{Thm}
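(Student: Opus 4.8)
The plan is to realise $l*G$ as a strongly $\Gamma$-graded algebra, $\Gamma:=\Gal(l/k)$, with identity component $lH$, and then to run Clifford theory; the single non-formal input will be a lemma on crossed products of division rings. First I would fix a set-theoretic section $\gamma\mapsto\sigma_\gamma$ of $G\twoheadrightarrow G/H\cong\Gamma$ and set $u_\gamma:=1\cdot\sigma_\gamma\in(l*G)^\times$. Since $H$ acts trivially on $l$ we have $l*H=lH$, and $l*G=\bigoplus_{\gamma\in\Gamma}(lH)u_\gamma$; this grading is strong because each $u_\gamma$ is invertible and $u_\gamma(lH)u_\gamma^{-1}=lH$ (as $H\trianglelefteq G$). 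For $W\in\mod lH$ let ${}^\gamma W$ be the $lH$-module with underlying group $W$ and action $x\cdot w:=(u_\gamma^{-1}xu_\gamma)w$; replacing $\sigma_\gamma$ inside its $H$-coset changes ${}^\gamma W$ only by an inner automorphism of $lH$, so $[W]\mapsto[{}^\gamma W]$ is a well-defined left $\Gamma$-action on $\simp lH$ — the natural one referred to before the theorem. Finally, $lH$ and $l*G$ are semisimple (Maschke, since $|G|$ is invertible in $l$), so all modules in sight are semisimple and I may speak freely of isotypic components.

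Step 1 is the Clifford-type statement. For $V\in\simp l*G$, the module $\Res V$ is semisimple, each $u_\gamma$ carries the $W$-isotypic component of $\Res V$ isomorphically onto the ${}^\gamma W$-isotypic component, and the sum of the isotypic components lying in a fixed $\Gamma$-orbit is an $l*G$-submodule of $V$, hence all of $V$; therefore $\Res V\cong\bigl(\bigoplus_{[W]\in\mathcal O}W\bigr)^{\oplus a}$ for a single orbit $\mathcal O=\mathcal O(V)$ and a uniform $a\geq1$. Dually, since $\Res\Ind W\cong\bigoplus_{\gamma\in\Gamma}{}^\gamma W$, Frobenius reciprocity shows a simple $V'$ is a summand of $\Ind W$ iff $[W]\in\mathcal O(V')$, and $\Ind({}^\gamma W)\cong\Ind W$, so the isomorphism class of the semisimple module $\Ind W$ depends only on the orbit of $W$.

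Step 2 is the crux: each orbit is hit by a \emph{unique} simple, i.e.\ $V\mapsto\mathcal O(V)$ is injective, equivalently $\Ind W$ is isotypic for every $W\in\simp lH$. Using the tensor--hom adjunction and $\Res\Ind W\cong\bigoplus_{\gamma\in\Gamma}{}^\gamma W$ one computes $\End_{l*G}(\Ind W)\cong\bigoplus_{\gamma\in\Gamma_W}\Hom_{lH}(W,{}^\gamma W)$, where $\Gamma_W\leq\Gamma$ is the stabiliser of $[W]$ and the terms with $\gamma\notin\Gamma_W$ vanish; choosing isomorphisms $v_\gamma\colon W\xrightarrow{\sim}{}^\gamma W$ identifies this with a crossed product $\bigoplus_{\gamma\in\Gamma_W}D v_\gamma$ of the division ring $D:=\End_{lH}(W)$ with $\Gamma_W$, in which the induced $\Gamma_W$-action on $Z(D)\supseteq l$ restricts on $l$ to the inclusion $\Gamma_W\hookrightarrow\Gal(l/k)$, hence is faithful. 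The lemma I would then prove is: \emph{a crossed product of a division ring $D$ with a finite group acting faithfully on $Z(D)$ is simple}. The proof is the usual shortest-support argument — in a nonzero ideal pick $0\neq x=\sum_{g\in T}d_gv_g$ with $|T|$ minimal, and for $z\in Z(D)$ observe that $xz-\bigl(d_{g_0}(v_{g_0}zv_{g_0}^{-1})d_{g_0}^{-1}\bigr)x$ has strictly smaller support, so it vanishes, which forces $v_gzv_g^{-1}=v_{g_0}zv_{g_0}^{-1}$ for all $g\in T$ and all $z\in Z(D)$; faithfulness gives $T=\{g_0\}$, so $x$ is a unit and the ideal is everything. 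Granting the lemma, $\End_{l*G}(\Ind W)$ is simple artinian, so $\Ind W\cong V^{\oplus b}$ for a single simple $V=:V(\mathcal O)$; with Step 1 this makes $\mathcal O\mapsto V(\mathcal O)$ and $V\mapsto\mathcal O(V)$ mutually inverse, which is (1).

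For the numerics in (2), write $V_i:=V(\mathcal O_i)$ and $\mathcal O_i=\{[W_{i1}],\dots,[W_{it_i}]\}$. The relation $\Res V_i\cong(W_{i1}\oplus\cdots\oplus W_{it_i})^{\oplus a_i}$ is Step 1, and $\Ind W_{ij}\cong V_i^{\oplus b_i}$ with $b_i$ independent of $j$ follows from $\Ind W_{ij}$ being isotypic with $V_i$ its only constituent and from $\Ind({}^\gamma W_{i1})\cong\Ind W_{i1}$. The last relation drops out of a dimension count: $\Res\Ind W_{i1}$ equals $(\Res V_i)^{\oplus b_i}\cong(W_{i1}\oplus\cdots\oplus W_{it_i})^{\oplus a_ib_i}$ on one hand and $\bigoplus_{\gamma\in\Gamma}{}^\gamma W_{i1}$ on the other, where each orbit member occurs $|\Gamma|/t_i$ times and $\dim_l {}^\gamma W_{i1}=\dim_l W_{i1}$, so $a_ib_it_i=|\Gamma|=[l:k]$. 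The only genuinely delicate step is the crossed-product lemma in Step 2; its point is to collapse the family of lifts one meets in general Clifford theory down to a single lift, which happens precisely because $\Gal(l/k)$ acts faithfully on the central subfield $l$ of each relevant division ring.
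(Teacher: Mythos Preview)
Your proof is correct once $l*G$ is semisimple, but you obtain this from Maschke by assuming $|G|$ is invertible in $l$, and the theorem carries no such hypothesis --- the paper states and proves it explicitly with no assumption on $\ch l$ or $|G|$. Without semisimplicity the crucial step in your Step~2 breaks: from ``$\End_{l*G}(\Ind W)$ is simple artinian'' you cannot conclude ``$\Ind W\cong V^{\oplus b}$ for some simple $V$''. For instance, over the $k$-algebra of upper-triangular $2\times2$ matrices the length-two indecomposable projective $P$ has $\End(P)=k$, so $\End(P^{\oplus2})=M_2(k)$ is simple while $P^{\oplus2}$ is not semisimple. Your crossed-product lemma itself and the surrounding Clifford bookkeeping are fine; under your extra hypothesis the argument is a legitimate alternative proof.

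The paper's route is different and characteristic-free. Its one non-formal input is the functorial identity $\Ind\Res\cong(-)^{\oplus[l:k]}$ on $\mod l*G$, proved by showing $l*(G/H)\cong l^{\oplus[l:k]}$ as left $l*G$-modules --- equivalently, that $l*\Gal(l/k)\cong\End_k(l)$. Starting from a simple $V$, Clifford gives $\Res V\cong(W_1\oplus\cdots\oplus W_t)^{\oplus a}$; then $V^{\oplus[l:k]}\cong\Ind\Res V\cong\bigoplus_j(\Ind W_j)^{\oplus a}$ forces each $\Ind W_j$, as a direct summand of $V^{\oplus[l:k]}$, to be isomorphic to $V^{\oplus b_j}$ by Krull--Schmidt. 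So $\Ind W_j$ is semisimple \emph{a posteriori}, no global Maschke needed. Both approaches ultimately rest on the faithfulness of $\Gal(l/k)$ on $l$: you encode it as simplicity of the crossed product $D*\Gamma_W$, the paper encodes it once as the ring isomorphism $l*\Gamma\cong\End_k(l)$ and thereby gets a shorter argument that also covers the modular case.
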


Now we return to the setting $(Q1),(Q2)$ and $(Q3)$. In this setting, $\Gal(l/k)$ also acts on the $\MK(lH)$. Using this theorem, we can draw $\MK(l*G)$ as the quotient quiver of $\MK(lH)$ except the valuations of arrows. Moreover, we can exhibit valuations of $\MK(l*G)$ explicitly.

\begin{Thm}{\rm(Theorem \ref{alg})}\label{introalg}
Under the setting $(Q1),(Q2)$ and $(Q3)$, if $H\subseteq GL_d(l)$ is small, then the following assertions hold.
\begin{enumerate}
\item The McKay quiver $\MK(l*G)$ coincides with the quotient quiver of $\MK(lH)$ with respect to the action of $\Gal(l/k)$ (see Definition \ref{quot}) except for valuations$\colon$
\[\MK(l*G)=\Gal(l/k)\backslash\MK(lH)\]
\item The valuations of $\MK(l*G)$ are determined by the $a_i$'s appeared in Theorem \ref{introsab}(2) (see Theorem \ref{alg} for the details).
\end{enumerate}
\end{Thm}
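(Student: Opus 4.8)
The plan is to compare the two valued quivers via the restriction and induction functors $\Res\colon\mod l*G\to\mod lH$ and $\Ind\colon\mod lH\to\mod l*G$, using Theorem \ref{tab} as the dictionary between $\simp l*G$ and the $\Gal(l/k)$-orbits in $\simp lH$. Write $U=\mathfrak{n}/\mathfrak{n}^2$; then $U\in\mod l*G$, its restriction $U_H:=\Res U\in\mod lH$ is the tautological $d$-dimensional representation of $H\subseteq GL_d(l)$, and by definition $\MK(lH)$ records the multiplicities of simples in $U_H\otimes_l(-)$ while $\MK(l*G)$ records those in $U\otimes_l(-)$; both $lH$ and $l*G$ are semisimple since $|G|$, hence $|H|$ and $[l:k]$, are invertible in $k$. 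We keep the notation of Theorem \ref{tab}: $\simp lH=\bigsqcup_{i=1}^{s}\{W_{i1},\dots,W_{it_i}\}$ is the $\Gal(l/k)$-orbit decomposition, $V_i\in\simp l*G$ corresponds to the $i$-th orbit, and $a_i,b_i\ge1$ satisfy $\Res V_i\cong(\bigoplus_jW_{ij})^{\oplus a_i}$, $\Ind W_{ij}\cong V_i^{\oplus b_i}$ and $t_ia_ib_i=[l:k]$.

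The technical core consists of three compatibilities. First, $U_H$ is stable under the $\Gal(l/k)$-action on $\mod lH$, being the restriction of the $l*G$-module $U$; consequently tensoring with $U_H$ commutes with that action, so the McKay numbers of $\MK(lH)$ are constant along $\Gal(l/k)$-orbits — this is precisely what makes the quotient quiver $\Gal(l/k)\backslash\MK(lH)$ of Definition \ref{quot} well defined. Second, $\Res(U\otimes_lV)\cong U_H\otimes_l\Res V$ for all $V\in\mod l*G$, since $\otimes_l$ commutes with restriction. Third, the projection formula $\Ind(U_H\otimes_lW)\cong U\otimes_l\Ind W$ for all $W\in\mod lH$, realized by the map $l*G\otimes_{lH}(U_H\otimes_lW)\to U\otimes_l(l*G\otimes_{lH}W)$, $g\otimes(u\otimes w)\mapsto gu\otimes(g\otimes w)$, which one checks is well defined over $lH$ and $l$-balanced even though $l*G$ is merely a crossed product of $lH$ with $\Gal(l/k)$.

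Combining these yields (1). Applying $\Res$ to $U\otimes_lV_i\cong\bigoplus_{i'}V_{i'}^{\oplus m_{ii'}}$, where the $m_{ii'}$ are the McKay numbers of $\MK(l*G)$, and invoking Theorem \ref{tab} with the first two compatibilities, one equates the total multiplicity of the $i'$-th orbit on the two sides to get $a_it_iN_{ii'}=a_{i'}t_{i'}m_{ii'}$, where $N_{ii'}$ is the number of arrows from the $i$-th to the $i'$-th orbit in $\Gal(l/k)\backslash\MK(lH)$, i.e. the multiplicity of $\bigoplus_{j'}W_{i'j'}$ in $U_H\otimes_lW_{ij}$ (independent of $j$ by the first compatibility). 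Dually, $\Ind$ and the projection formula give $b_im_{ii'}=b_{i'}N_{ii'}$; the two relations are consistent via $t_ia_ib_i=[l:k]$, and either one shows $m_{ii'}\neq0\iff N_{ii'}\neq0$, so the underlying quiver of $\MK(l*G)$ is $\Gal(l/k)\backslash\MK(lH)$. For (2) one combines $m_{ii'}=\tfrac{a_it_i}{a_{i'}t_{i'}}N_{ii'}$ with the computation $\dim_k\End_{l*G}(V_i)=a_i^2t_i\dim_l\End_{lH}(W_{ij})$ — obtained from Frobenius reciprocity applied to $\Res V_i$ and $\Ind W_{ij}$ together with $t_ia_ib_i=[l:k]$ — substitutes these into the definition of the valuation of a valued quiver, and matches the outcome with the valuation prescribed for $\Gal(l/k)\backslash\MK(lH)$ in Definition \ref{quot}, reading off the correction by the $a_i$'s.

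The hard part is step (2): following the valuations, not merely the arrow counts. A valuation is a pair of dimensions over the (generally non-commutative) division algebras $\End_{l*G}(V_i)$, so one must track the precise relation between $\End_{l*G}(V_i)$ and the $\Gal(l/k)$-conjugate division rings $\End_{lH}(W_{ij})$, viewing $\Hom_{l*G}(V_{i'},U\otimes_lV_i)$ as a bimodule over these; this is exactly where the integers $a_i$, measuring the failure of $\Res V_i$ to be multiplicity-free, intervene, and reconciling the valuation conventions with Definition \ref{quot} is the delicate point. The projection formula over the crossed product $lH\subseteq l*G$ is a secondary technicality, routine once set up.
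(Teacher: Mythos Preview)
Your core computation for part (1) is exactly the paper's argument: apply $\Res$ to the decomposition $U\otimes_lV_{i'}\cong\bigoplus_i V_i^{\oplus d_G(V_i,V_{i'})}$, expand both sides using Theorem~\ref{tab} and the compatibility $\Res(U\otimes_l-)\cong U_H\otimes_l\Res(-)$, and read off the multiplicities. Your relation $a_it_iN_{ii'}=a_{i'}t_{i'}m_{ii'}$ is equivalent, after the double-counting identity $t_{i'}\,d_H(i',i)=t_i\,N_{ii'}$, to the paper's formula $d_G(V_i,V_{i'})=\tfrac{a_{i'}}{a_i}d_H(i,i')$. The extra route via $\Ind$ and the projection formula is a pleasant cross-check but not needed.

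Where you go astray is in part (2). You treat the valuations of $\MK(l*G)$ as dimensions of irreducible-map spaces over the division rings $\End_{l*G}(V_i)$, and then launch into computing $\dim_k\End_{l*G}(V_i)$. But look at Definition~\ref{McKay}: the valuation of $\MK(l*G)$ is the pair $(d_G,d'_G)$ of \emph{multiplicity counts}, with $d_G(V_1,V_2)$ the number of copies of $V_1$ in $U\otimes_lV_2$ and $d'_G(V_1,V_2)$ the number of copies of $V_2$ in $\bigwedge^{d-1}U\otimes_lV'_1$. So your $m_{ii'}$ already \emph{is} one of the two valuation entries, and your formula from part (1) already proves half of (2); the other half is obtained by literally repeating the same $\Res$-computation with $\bigwedge^{d-1}U$ in place of $U$. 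That is all the paper does. Your ``hard part'' does not exist here, and the endomorphism-dimension formula, while correct, plays no role. (It is Theorem~\ref{ARMcKay} that identifies these combinatorial valuations with the division-algebra valuations of the AR quiver, but that theorem is an input to the story, not something to be reproved inside Theorem~\ref{alg}.)

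A minor point: you appeal to ``the valuation prescribed for $\Gal(l/k)\backslash\MK(lH)$ in Definition~\ref{quot}'', but Definition~\ref{quot} assigns no valuations to the quotient quiver; it only records the presence or absence of arrows. The valuation formulas in Theorem~\ref{alg}(2) are stated directly in terms of $d_H(i,i')$ and the $a_i$'s, not via any valuation on the quotient.
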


In fact, more strongly, we can determine each term of $(d-1)$-almost split (fundamental) sequences of $\C(S^G)$ in terms of those of $\C(S^H)$ (see Theorem \ref{ARseqsum}). In Section \ref{Examples}, we frequently use these results to draw $\AR(\C(S^G))$.

In addition to Auslander-Reiten quivers, we give a way to determine the divisor class group $\Cl(S^G)$. A key point is that $\Cl(S^G)$ can be identified with $\{V\in\simp l*G\mid\dim_lV=1\}$.

\begin{Thm}{\rm(Lemma \ref{refaddS}, Theorem \ref{divgrp})}
Under the setting $(Q1),(Q2)$ and $(Q3)$, if $H\subseteq GL_d(l)$ is small, then the following assertions hold.
\begin{enumerate}
\item There exists a bijection $\Cl(S^G)\cong\{V\in\simp l*G\mid\dim_lV=1\}$.
\item The map
\[\{V\in\simp l*G\mid\dim_lV=1\}\to\{W\in\simp lH\mid\dim_lW=1\};V\mapsto\Res V\]
gives an injective group homomorphism $\Cl(S^G)\to\Cl(S^H)$.
\item The group $\Cl(S^H)$ is isomorphic to the group $\Hom_\mathbb{Z}(H^{\rm ab},l^\times)$ where $H^{\rm ab}=H/[H,H]$ is the Abelianization of $H$.
\end{enumerate}
\end{Thm}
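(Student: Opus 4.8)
The plan is to identify $\Cl(S^G)$ with a group cohomology group and then translate that into the language of one-dimensional $l*G$-modules. The starting point is that $\Spec S\to\Spec S^G$ is \'etale in codimension one: for any height-one prime $\mathfrak{q}$ of $S$ the residue field $\kappa(\mathfrak{q})$ contains $l$, so every element of the inertia group at $\mathfrak{q}$ acts trivially on $l$ and hence lies in $H$; since $H\subseteq GL_d(l)$ is small (Definition \ref{small}), i.e.\ the $H$-action is \'etale in codimension one, all these inertia groups are trivial (here $|G|$ is invertible in $k$). As $S$ is a regular local ring, hence a UFD, we have $\Cl(S)=0$, so the Galois-descent exact sequence for divisor class groups of rings of invariants gives a canonical isomorphism $\Cl(S^G)\cong H^1(G,S^\times)$; concretely, a divisorial ideal $I$ of $S^G$ satisfies $IS=fS$ for some $f$, and one attaches to $I$ the class of the cocycle $g\mapsto g(f)/f$.

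For part (1) I would then compute $H^1(G,S^\times)$. Reduction modulo $\mathfrak{n}$ gives an exact sequence of $G$-modules $1\to 1+\mathfrak{n}\to S^\times\to l^\times\to 1$ (with $G$ acting on $l^\times$ through $\Gal(l/k)$); filtering $1+\mathfrak{n}$ by the subgroups $1+\mathfrak{n}^i$, whose successive quotients are the $l[G]$-modules $\mathfrak{n}^i/\mathfrak{n}^{i+1}$ on which $|G|$ is invertible, Maschke's theorem together with a d\'evissage and passage to the inverse limit give $H^{>0}(G,1+\mathfrak{n})=0$, whence $H^1(G,S^\times)\cong H^1(G,l^\times)$. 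Finally, a one-dimensional $l*G$-module is precisely a semilinear $G$-action on $l$, i.e.\ a $1$-cocycle of $G$ with values in $l^\times$, two such being isomorphic exactly when the cocycles are cohomologous, with $\otimes_l$ corresponding to the product of cocycles; thus $H^1(G,l^\times)$ is, canonically and compatibly with the group laws, the group $\{V\in\simp l*G\mid\dim_l V=1\}$ (a one-dimensional module is automatically simple). Tracing the isomorphisms backwards, the class of $\Cl(S^G)$ attached to such a $V$ is $[M(V)]=[(S\otimes_l V)^G]$ --- the $G$-invariants of the rank-one reflexive $S$-module $S$ with its standard $G$-action twisted by the cocycle of $V$ --- which recovers the description in the statement; in particular every rank-one reflexive $S^G$-module is isomorphic to some $M(V)$ with $\dim_l V=1$, equivalently lies in $\add_{S^G}S$, which is the content of Lemma \ref{refaddS}.

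For part (2), recall that $H\trianglelefteq G$ with $G/H\cong\Gal(l/k)$. Applying (1) to $H$ itself (the case of the trivial extension $l/l$, with $H\subseteq GL_d(l)$ small and $l^H=l$) gives $\Cl(S^H)\cong H^1(H,l^\times)$, and since $H$ acts trivially on $l^\times$ this is $\Hom_{\rm grp}(H,l^\times)$. The homomorphism $\Cl(S^G)\to\Cl(S^H)$ induced by the inclusion $S^G\subseteq S^H$ corresponds, under these identifications, to the restriction map $H^1(G,l^\times)\to H^1(H,l^\times)$, i.e.\ to $V\mapsto\Res V$ on one-dimensional modules (restriction of cocycles is restriction of semilinear actions). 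Its kernel is computed by the inflation--restriction sequence: it equals $H^1(G/H,(l^\times)^H)=H^1(\Gal(l/k),l^\times)$, which vanishes by Hilbert's Theorem 90. Hence $\Cl(S^G)\to\Cl(S^H)$ is injective and equals the asserted map. Part (3) is then immediate from (1) applied to $H$: $\Cl(S^H)\cong H^1(H,l^\times)=\Hom_{\rm grp}(H,l^\times)=\Hom_{\mathbb Z}(H^{\rm ab},l^\times)$.

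The step I expect to be the main obstacle is the first one: correctly checking that $\Spec S\to\Spec S^G$ is \'etale in codimension one --- accounting both for the role of the smallness of $H$ and for why no element of $G\setminus H$ contributes ramification --- and pinning down the descent isomorphism $\Cl(S^G)\cong H^1(G,S^\times)$ in the complete-local setting. Everything afterwards is group-cohomology bookkeeping: Maschke for $H^{>0}(G,1+\mathfrak{n})=0$, Hilbert 90 for the injectivity in (2), and the dictionary between $1$-cocycles valued in $l^\times$ and one-dimensional $l*G$-modules. A secondary point requiring care is verifying that the abstract descent isomorphism really sends $[M(V)]$ to $V$, so that the result is compatible with the McKay-correspondence framework developed earlier in the paper.
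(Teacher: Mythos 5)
Your proof is correct, but it is a genuinely different argument from the paper's. You compute $\Cl(S^G)$ via Galois descent for class groups and group cohomology: $\Cl(S^G)\cong H^1(G,S^\times)\cong H^1(G,l^\times)$ (using unramifiedness in codimension one, $\Cl(S)=0$, and a d\'evissage on $1+\mathfrak n$ killed by invertibility of $|G|$), you identify $H^1(G,l^\times)$ with isomorphism classes of one-dimensional semilinear $l*G$-modules with tensor product matching cocycle multiplication, and for (2) you deduce injectivity of $\Res$ from the inflation--restriction sequence and Hilbert~90. The paper instead stays entirely inside its categorical framework: Lemma~\ref{refaddS} shows directly (using $\End_R(S)\cong S*G$ and the Reynolds operator) that every rank-one reflexive $R$-module is a direct summand of $S$, hence lies in $\C(R)\simeq\proj S*G$; Remark~\ref{rank} identifies rank with $\dim_l$; Proposition~\ref{tenhomcor} shows $[M(V)]\cdot[M(V')]=[M(V\otimes_lV')]$; and injectivity of $\Res$ in (2) is immediate because Theorem~\ref{tab} already makes $\simp l*G\cong\Gal(l/k)\backslash\simp lH$ a bijection, so distinct one-dimensional $V$'s restrict to distinct $W$'s with no need for Hilbert~90. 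Your cohomological route is more classical and self-contained, and arguably gives a cleaner conceptual picture of $\Cl(S^G)$ as $H^1(G,l^\times)$; the paper's route is more uniform with the rest of the article (it reuses the equivalence $\C(R)\simeq\proj S*G$, and the fact that rank-one reflexives are summands of $S$ is itself wanted later). One small caveat in your write-up: the parenthetical ``here $|G|$ is invertible in $k$'' is attached to the wrong step --- triviality of inertia at height-one primes follows from smallness of $H$ together with the observation that inertia elements act trivially on $l$ (so lie in $H$), whereas invertibility of $|G|$ enters in the d\'evissage for $H^{>0}(G,1+\mathfrak n)=0$ and, in the paper, in Theorem~\ref{end} (separability in codimension one).
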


For $V\in\simp l*G$, whether $\dim_lV=1$ holds or not can be determined by its decomposition law. In conclusion, we can calculate $\Cl(S^G)$ as a subgroup of $\Cl(S^H)\cong\Hom_\mathbb{Z}(H^{\rm ab},l^\times)$ by using the decomposition laws.

\begin{Ex}(Examples \ref{typeC})
Let $n\geq2$ be an integer and $l/k$ a field extension with $\ch k$ not dividing $2n$ and $[l:k]=2$. We assume $\zeta:=\zeta_{2n}\in l$, where $\zeta_{2n}$ is the primitive $2n$-th root of unity. For example, $\mathbb{C}/\mathbb{R}$ and $\mathbb{Q}(\zeta)/\mathbb{Q}(\cos\frac{\pi}{n})$ satisfy these conditions. We denote $\Gal(l/k)=\{{\rm id},\sigma\}$. Let $G:=\Big\langle\Big(\Big[\begin{smallmatrix}\zeta&0\\0&\zeta^{-1}\end{smallmatrix}\Big], {\rm id}\Big), \Big(\Big[\begin{smallmatrix}0&1\\1&0\end{smallmatrix}\Big],\sigma\Big)\Big\rangle\subseteq GL_2(l)\rtimes\Gal(l/k)$. It is easy to see that $H=\Big\langle\Big[\begin{smallmatrix}\zeta&0\\0&\zeta^{-1}\end{smallmatrix}\Big]\Big\rangle\cong C_{2n}$. Thus we can write $\simp lH=\{W_j\}_{j\in\mathbb{Z}/{2n}\mathbb{Z}}$ and draw $\AR(\C(S^H))$ as the McKay quiver of $H$ as follows.
\[\xymatrix@R=1mm{
& W_1 \ar@<0.5ex>[dl] \ar@<0.5ex>[r] & \cdots \ar@<0.5ex>[l] \ar@<0.5ex>[r] & W_{n-1} \ar@<0.5ex>[l] \ar@<0.5ex>[dr] \\
W_0 \ar@<0.5ex>[ur] \ar@<0.5ex>[dr] & & & &W_n \ar@<0.5ex>[ul] \ar@<0.5ex>[dl] \\
& W_{-1} \ar@<0.5ex>[ul] \ar@<0.5ex>[r] & \cdots \ar@<0.5ex>[l] \ar@<0.5ex>[r] & W_{-n+1} \ar@<0.5ex>[l] \ar@<0.5ex>[ur]
}\]
Here, one can show $\sigma\cdot W_j=W_{-j}$. Thus $\Gal(l/k)$ acts on $\AR(\C(S^H))$ by swapping the top and bottom and we can write $\simp l*G=\{V_0, V_{\pm1}, \cdots, V_{\pm (n-1)},V_n\}$. The decomposition law is as follows:
\[\Res V_0=W_0,\ \Res V_{\pm j}=W_j\oplus W_{-j},\ \Res V_n=W_n.\]
Therefore we can draw $\AR(\C(S^G))$ as the quotient below. In addition, $\Cl(S^G)=\{M(V_0),M(V_n)\}\cong C_2$ holds.
\[\xymatrix{
V_0 \ar@<0.5ex>[r]^-{(2,1)} & V_{\pm1} \ar@<0.5ex>[l]^-{(1,2)} \ar@<0.5ex>[r] & \cdots \ar@<0.5ex>[l] \ar@<0.5ex>[r] & V_{\pm(n-1)} \ar@<0.5ex>[l] \ar@<0.5ex>[r]^-{(1,2)} & V_n \ar@<0.5ex>[l]^-{(2,1)}
}\]

Observe that this Auslander-Reiten quiver, which is the double of type $\widetilde{C}_n$, has non-trivial valuations which do not appear in the classical case.

\end{Ex}

\section*{Conventions}
For a ring $A$, we denote by $\Mod A$ the category of left $A$-modules, $\mod A$ the category of finitely generated left $A$-modules, $\proj A$ the category of finitely generated projective left $A$-modules, $\fl A$ the category of left $A$-modules of finite length, $\refl A$ the category of finitely generated reflexive left $A$-modules and $\simp A$ the set of isomorphism classes of simple left $A$-modules. For a Cohen-Macaulay local ring $R$, we write $\CM R$ for the category of maximal Cohen-Macaulay $R$-modules. For a full subcategory $\D$ of an additive category $\C$, we denote by $\add\D$ the smallest full subcategory of $\C$ containing $\D$ closed under isomorphisms, finite direct sums, and direct summands. For a skeletally small Krull-Schmidt category $\C$, we write $\ind\C$ for the set of isomorphism classes of indecomposable objects of $\C$.

\section*{Acknowledgements}
The author expresses his sincere gratitude to his supervisor Osamu Iyama for his valuable advice, suggestions and patient guidance in improving this paper. He is grateful to Norihiro Hanihara for fruitful discussions and refining some proofs. He also thanks Yuji Yoshino for providing valuable information. He is indebted to Yasuaki Gyoda and Yuya Goto for their careful reading and extensive corrections of this paper. He wishes to thank Issei Enomoto for his interests and constant encouragements. This work was supported by the WINGS-FMSP program at the Graduate School of Mathematical Sciences, the University of Tokyo.

\section{Preliminaries}

\subsection{Linearization}

In this subsection, we make some preparations about linearization of group actions on the power series rings, which justify our setting $(Q1),(Q2),(Q3)$ and $(Q4)$ in Section \ref{ARquotient}. 

First, we see that if a finite group acts on a complete local ring, then we can take a coefficient field which is compatible with the action.

\begin{Lem}\label{coeff}
Let $(S,\mathfrak{n},l)$ be an equicharacteristic Noetherian complete local ring and $G$ a finite group acting on $S$ as a ring. Assume $|G|$ is not divided by $\ch l$. Observe that $G$ naturally acts on the residue field $l$. Then there exists a ring homomorphism $\phi\colon l\to S$ such that
\begin{enumerate}
\item The composition $l\to S\to l$ is the identity map.
\item The ring homomorphism $\phi$ commutes with the action of $G$.
\end{enumerate}
\end{Lem}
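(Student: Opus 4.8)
The plan is to produce the coefficient field one truncation at a time and pass to the limit. Write $\bar g$ for the automorphism of $l$ induced by $g\in G$ (note that each $g$, being a ring automorphism of the local ring $S$, preserves $\mathfrak{n}$, hence acts on every quotient $S/\mathfrak{n}^n$). I would construct, by induction on $n\geq1$, ring homomorphisms $\phi_n\colon l\to S/\mathfrak{n}^n$ such that (a) the composite $l\xrightarrow{\phi_n}S/\mathfrak{n}^n\to l$ is the identity; (b) $g\circ\phi_n=\phi_n\circ\bar g$ for all $g\in G$; and (c) $\phi_{n+1}$ lifts $\phi_n$ along the canonical surjection $S/\mathfrak{n}^{n+1}\twoheadrightarrow S/\mathfrak{n}^n$. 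One starts with $\phi_1=\mathrm{id}_l$. Since $S$ is complete, $S=\varprojlim_n S/\mathfrak{n}^n$, so the compatible family $(\phi_n)_n$ assembles into a ring homomorphism $\phi\colon l\to S$ satisfying (1) and (2) of the lemma.

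For the inductive step, fix $\phi_n$ and write $A:=S/\mathfrak{n}^n$, $B:=S/\mathfrak{n}^{n+1}$ and $I:=\mathfrak{n}^n/\mathfrak{n}^{n+1}$, so that $0\to I\to B\to A\to0$ is a $G$-equivariant square-zero extension (indeed $I^2=0$, as $n\geq1$) and $I$ is an $l$-vector space via $\phi_n$ on which $G$ acts semilinearly. Let $L$ be the set of ring homomorphisms $\psi\colon l\to B$ reducing to $\phi_n$. I would use two facts. First, $L\neq\emptyset$: the prime field $k_0$ of $l$ is perfect (we are in the equicharacteristic situation), hence $l/k_0$ is separable, hence formally smooth, so $\phi_n$ lifts along the square-zero surjection $B\twoheadrightarrow A$. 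Second, $L$ is a torsor under the $l$-vector space $D:=\operatorname{Der}_{k_0}(l,I)\ (=\Hom_l(\Omega_{l/k_0},I))$ — the difference of two elements of $L$ is a $k_0$-derivation precisely because $I^2=0$, and adding an element of $D$ to an element of $L$ gives an element of $L$. In particular $L$ is stable under affine combinations $\sum_i\lambda_i\psi_i$ with $\sum_i\lambda_i=1$.

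Now I would average. The hypothesis $\ch l\nmid|G|$ gives $\tfrac1{|G|}\in k_0\subseteq l$. The group $G$ acts on $L$ by $g\cdot\psi:=g\circ\psi\circ\bar g^{-1}$ (this lands in $L$ because $\phi_n$ is $G$-equivariant). For any $\psi\in L$ put
\[\phi_{n+1}:=\frac{1}{|G|}\sum_{g\in G}g\cdot\psi .\]
By the torsor property $\phi_{n+1}\in L$, so it is an honest ring homomorphism lifting $\phi_n$; and it is $G$-fixed, since $\tfrac1{|G|}$ is fixed by every $\bar h$ and $h\circ(g\cdot\psi)\circ\bar h^{-1}=(hg)\cdot\psi$, whence $(h\cdot\phi_{n+1})(x)=\frac1{|G|}\sum_{g}(hg)\big(\psi(\overline{hg}^{-1}x)\big)=\phi_{n+1}(x)$ for all $h\in G$ by re-indexing. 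This closes the induction and proves the lemma.

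The only genuine obstacle is the non-emptiness of $L$ at each stage — that is, the existence of a coefficient field of $S/\mathfrak{n}^{n+1}$ refining the one already chosen on $S/\mathfrak{n}^n$ — which is where the structure theory of complete local rings enters; I would deduce it from formal smoothness of $l$ over its prime field, observing that the possible subtlety (namely $l$ imperfect of characteristic $p$) causes no trouble because $\mathbb{F}_p$ is perfect, so $l/\mathbb{F}_p$ is separable. Everything else — the torsor structure of $L$, that $g\cdot\psi$ is again a lift, the compatibility of the $\phi_n$, and the $G$-invariance of the barycenter — is routine; the conceptual point is that, although a pointwise average of ring homomorphisms $l\to S$ need not be a ring homomorphism, a barycenter formed \emph{inside} the torsor $L$ of truncated lifts automatically is one.
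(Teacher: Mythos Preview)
Your proof is correct and takes a genuinely different route from the paper's. The paper first notes that $S^G$ is itself a complete local ring with residue field $l^G$, invokes Cohen's theorem to pick a coefficient field $\psi\colon l^G\hookrightarrow S^G$, then extends $\psi$ to all of $l$ by choosing a primitive element $x$ for the finite Galois extension $l/l^G$ and lifting $x$ to $S$ via Hensel's lemma; $G$-equivariance is then deduced from the \emph{uniqueness} clause of Hensel applied to the roots $g\cdot s$ and $\phi(g\cdot x)$. Your approach bypasses $S^G$ entirely and works directly with $S$: you build the equivariant coefficient field as an inverse limit of truncated lifts, using formal smoothness of $l$ over its perfect prime field for existence at each stage and a Reynolds-type barycenter inside the torsor of lifts for equivariance. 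The paper's argument is shorter and more concrete, leaning on the arithmetic of $l/l^G$; yours is more uniform, makes the role of the hypothesis $\ch l\nmid|G|$ completely transparent (it enters only to form $\tfrac1{|G|}$), and does not require knowing in advance that $S^G$ is complete or invoking the primitive element theorem.
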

\begin{proof}
Observe that the local ring $(S^G,\mathfrak{n}\cap S^G, l^G)$ is complete. Thus we can take a coefficient field $\psi\colon l^G\to S^G$ \cite[Theorem 60]{Mat}. Since the field extension $l/l^G$ is finite Galois, $l$ is generated by one element, say $x\in l$, over $l^G$. Take the minimal polynomial $f\in l^G[T]$ of $x\in l$. Then by the Hensel's lemma, there exists a root $s\in S$ of $\psi(f)\in S^G[T]$ whose image in $l$ is $x$. We can extend $\psi\colon l^G\to S^G$ to $\phi\colon l\to S$ by the equation $\phi(x)=s$. This $\phi$ obviously satisfies the condition (1). To see the condition (2), by the uniqueness of the lift \cite[Lemma 06RR]{stacks-project}, it is sufficient to check that for any $g\in G$, $g\cdot s=\phi(g\cdot x)$ holds. Now the images of $g\cdot s$ and $\phi(g\cdot x)$, both of which are roots of $\psi(f)$, in $l$ are both $g\cdot x$. Therefore by the uniqueness of the lifting, we get the desired result.
\end{proof}

Let $G$ be a finite group acting on $S:=l[[x_1,\ldots , x_d]]$ as a ring with $|G|$ not divided by $\ch l$. If the action of $G$ on $S$ is $l$-linear, then it is known that we can linearize this action, that is, we may assume $G\subseteq GL_d(l)$ \cite{Car}. In general case, in fact, $G$ can be assumed to be a subgroup of $GL_d(l)\rtimes\Gal(l/k)$ where $k=l^G$.

\begin{Prop}\label{linear}
Let $l$ be a field, $G$ a finite group acting on $S:=l[[x_1,\ldots , x_d]]$ as a ring with $|G|$ not divided by $\ch l$. Observe that $G$ acts on the residue field $l$ naturally and put $k:=l^G$. Then there exists a group homomorphism $\phi\colon G\to GL_d(l)\rtimes\Gal(l/k)$ and a ring automorphism $\theta\colon S\to S$ such that for all $g\in G$, the following diagram commutes.
\[\xymatrix{
S \ar[r]^\theta \ar[d]_{\phi(g)} & S\ar[d]^{g}\\
S\ar[r]_\theta & S
}\]
In particular, we have a ring isomorphism $S^G\cong S^{\phi(G)}$.
\end{Prop}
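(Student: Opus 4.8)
The plan is to linearise the $G$-action by passing to a regular system of parameters that spans a $G$-stable $l$-subspace of the maximal ideal, extending the classical linear case to our non-$l$-linear setting. First I would apply Lemma~\ref{coeff} to fix a $G$-equivariant coefficient field $\iota\colon l\hookrightarrow S$; identifying $l$ with $\iota(l)$, the given action of $G$ on $S$ then restricts on $l$ to the tautological one. Since $G$ is finite, its image $\overline{G}$ in $\operatorname{Aut}(l)$ is finite with $l^{\overline{G}}=l^{G}=k$, so $l/k$ is finite Galois and $\Gal(l/k)=\overline{G}$. As $G$ acts by ring automorphisms on the local ring $S$, it preserves $\mathfrak n$ and hence $\mathfrak n^{2}$, so it acts on the $d$-dimensional $l$-vector space $V:=\mathfrak n/\mathfrak n^{2}$, semilinearly over the action on $l$: one has $g(av)=g(a)\,g(v)$ for $a\in l$ and $v\in V$.

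The heart of the proof is to produce a $G$-stable $l$-subspace $W\subseteq\mathfrak n$ that maps isomorphically onto $V$ under the projection $p\colon\mathfrak n\twoheadrightarrow V$; equivalently, an $l$-linear, $G$-equivariant section $s$ of $p$. I would take any $l$-linear section $s_{0}$ (a lift of an $l$-basis of $V$ to $\mathfrak n$) and average it,
\[
s(v):=\frac{1}{|G|}\sum_{g\in G}g\bigl(s_{0}(g^{-1}v)\bigr),
\]
which is legitimate because $|G|$ is invertible in $l$. Using the $l$-linearity of $s_{0}$ and the $G$-equivariance of $\iota$ one checks that $s$ is again $l$-linear; reindexing the sum by $g\mapsto hg$ shows it is $G$-equivariant; and the $G$-equivariance of $p$ together with $p\circ s_{0}=\mathrm{id}_{V}$ gives $p\circ s=\mathrm{id}_{V}$. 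Hence $W:=\operatorname{im}s$ is $l$- and $G$-stable with $p|_{W}\colon W\xrightarrow{\ \sim\ }V$.

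It then remains to package this. Any $l$-basis $y_{1},\dots,y_{d}$ of $W$ maps to an $l$-basis of $\mathfrak n/\mathfrak n^{2}$, hence by Nakayama's lemma generates $\mathfrak n$ and is a regular system of parameters; by the Cohen structure theorem $S=l[[y_{1},\dots,y_{d}]]$, so $x_{i}\mapsto y_{i}$ together with $a\mapsto\iota(a)$ on coefficients defines a ring automorphism $\theta$ of $S$. Since $W$ and $l$ are $G$-stable, for each $g$ the conjugate $\theta^{-1}\circ g\circ\theta$ sends every $x_{i}$ to an $l$-linear form in $x_{1},\dots,x_{d}$ and acts on $l$ through $\Gal(l/k)$; thus it is the action on $S$ of a unique element $\phi(g)\in GL_{d}(l)\rtimes\Gal(l/k)$, and by construction $\theta\circ\phi(g)=g\circ\theta$. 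That $\phi$ is a group homomorphism follows since conjugation by $\theta$ is and since $GL_{d}(l)\rtimes\Gal(l/k)$ acts faithfully on $S$. Finally $f\mapsto\theta(f)$ carries $S^{\phi(G)}$ onto $S^{G}$, since $f$ is $\phi(G)$-invariant iff $g(\theta(f))=\theta(\phi(g)(f))=\theta(f)$ for all $g$; this yields $S^{G}\cong S^{\phi(G)}$.

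The one genuinely delicate point is the construction of $W$: the averaged operator must be shown to be simultaneously $l$-linear, $G$-equivariant, and a section of $p$, and the interaction of the semilinear $G$-action with the $l$-structure is exactly where a careless argument can fail. Everything else — the dictionary between $\mathfrak n/\mathfrak n^{2}$ and regular systems of parameters, the recognition of $\theta^{-1}g\theta$ as an element of $GL_{d}(l)\rtimes\Gal(l/k)$, and the invariant-ring statement — is routine. Note that the hypothesis $\ch l\nmid|G|$ is used both inside Lemma~\ref{coeff} and to form the averaging operator.
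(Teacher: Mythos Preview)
Your proof is correct and is essentially the same as the paper's: both apply Lemma~\ref{coeff} to make the coefficient field $G$-equivariant and then average to linearise the action on $\mathfrak n/\mathfrak n^2$. Your $G$-equivariant section $s$ is exactly the paper's construction in disguise---taking $s_0(\overline{x_i})=x_i$, your $y_i=s(\overline{x_i})$ coincides with the paper's $\theta(x_i)=\frac{1}{|G|}\sum_{g}g^{-1}\phi(g)(x_i)$ after reindexing $g\mapsto g^{-1}$, so the two arguments differ only in that you phrase the averaging conceptually (as splitting a $G$-equivariant surjection) while the paper writes the same formula directly.
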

\begin{proof}
Note that the group $G$ acts on $l$, which is the residue field of $S$. We denote this action by $\rho\colon G\to\Gal(l/k)$. By Lemma \ref{coeff}, we can replace $S$ by an isomorphic one so that the natural inclusion $l\to S$ commutes with the action of $G$. Let $\mathfrak{n}:=(x_1,\ldots,x_d)\subseteq S$ be the maximal ideal. Take $g\in G$. Since $g\colon S\to S$ is a $k$-algebra automorphism, it induces a $k$-linear automorphism $\mathfrak{n}/\mathfrak{n}^2\to\mathfrak{n}/\mathfrak{n}^2$. Remark that $\rho(g)$ acts on $\mathfrak{n}/\mathfrak{n}^2$ as $\rho(g)(\lambda x_i)=\rho(g)(\lambda) x_i$. Then for $\lambda\in l$ and $v\in\mathfrak{n}/\mathfrak{n}^2$,
\[(g\rho(g)^{-1})(\lambda v)=(g\rho(g)^{-1})(\lambda)(g\rho(g)^{-1})(v)=\lambda (g\rho(g)^{-1})(v)\]
holds, so we obtain $g\rho(g)^{-1}\in GL_d(l)$.

We define $\phi\colon G\to GL_d(l)\rtimes\Gal(l/k)$ by $\phi(g):=(g\rho(g)^{-1},\rho(g))$. Then it can be easily checked that $\phi$ is a group homomorphism. We define an $l$-algebra homomorphism $\theta\colon S\to S$ by
\[\theta(x_i):=\frac{1}{|G|}\sum_{g\in G}g^{-1}\phi(g)(x_i).\]
Since $\theta(x_i)\in x_i+\mathfrak{n}^2$ holds, $\theta$ is an isomorphism. Moreover, it is easy to see that $\theta(\phi(g)(x_i))=g(\theta(x_i))$ and $\theta(\phi(g)(\lambda))=g(\theta(\lambda))\ (\lambda\in l)$ hold. Thus the assertion follows.
\end{proof}

From this proposition, we can say that it is natural to assume $G$ to be a finite subgroup of $GL_d(l)\rtimes\Gal(l/k)$.

Next, we define the smallness for subgroups of $GL_d(l)$. Let $\mathfrak{n}:=(x_1,\ldots , x_d)\subseteq S$ be the maximal ideal and $U:=\mathfrak{n}/\mathfrak{n}^2$ a $d$-dimensional $l$-linear space which can be viewed as an $l$-linear subspace of $S$. For $A\in GL_d(l)$, we let
\[U_A:=\Im(A-I_d\colon U\to U)\subseteq U\]
be an $l$-linear subspace of $U$ where $I_d\in GL_d(l)$ is the unit matrix.

\begin{Def}\label{small}
We say that $A\in GL_d(l)$ is a {\it pseudo-reflection} if $\dim_lU_A\leq 1$. A finite subgroup $H\subseteq GL_d(l)$ is called {\it small} if $H$ contains no pseudo-reflection except for $I_d$.
\end{Def}

For example, all finite subgroups of $SL_d(l)$ are small. Now we exhibit a well-known fact.

\begin{Prop}\label{allref}{\rm(Chevalley-Shephard-Todd theorem)}\cite[B.27]{LW}
Let $N\subseteq GL_d(l)$ be a finite subgroup generated by pseudo-reflections. Then the ring of invariants $l[[x_1,\ldots , x_d]]^{N}$ is regular.
\end{Prop}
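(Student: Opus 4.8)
The plan is to reduce to the classical Chevalley--Shephard--Todd theorem for polynomial rings and then transfer the conclusion to the power series ring by completion. Throughout, $|N|$ is invertible in $l$ (in every application $N$ lies in a group whose order is prime to $\ch l$), so whenever $N$ acts on a ring $B$ fixing a subring $A$, the Reynolds operator $\rho=\frac{1}{|N|}\sum_{g\in N}g$ is an $A$-linear idempotent with $B^N=\rho(B)$; the same formula exhibits $M^N=\rho(M)$ as an $A$-direct summand of $M$ for every $N$-equivariant $A$-module $M$.

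First I would work with the polynomial ring $R:=l[x_1,\ldots,x_d]$ with its standard grading and the induced linear $N$-action. Here the classical Chevalley--Shephard--Todd theorem (see \cite[B.27]{LW} and the references therein) applies and gives that $R^N=l[f_1,\ldots,f_d]$ is again a polynomial ring, with $f_1,\ldots,f_d$ algebraically independent homogeneous invariants of positive degree. The substantive content here is Chevalley's syzygy lemma: the hypothesis that $N$ is generated by pseudo-reflections enters precisely there, to control $R$-linear relations among a minimal homogeneous generating set of the ideal $R^N_+\subseteq R^N$, which one then combines with the chain-rule identities $\sum_i g_i\,\partial f_i/\partial x_j=0$ coming from a hypothetical algebraic dependence to force independence of the $f_i$. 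I would either quote this or reproduce Chevalley's argument.

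Next I would pass to completions. Since $R$ is module-finite over $R^N$ and the $f_i$ have positive degree, the $(x_1,\ldots,x_d)$-adic topology on $R$ coincides with the $(f_1,\ldots,f_d)R$-adic one, so $S=l[[x_1,\ldots,x_d]]$ is the completion $\widehat R=R\otimes_{R^N}\widehat{R^N}$, where $\widehat{R^N}=l[[f_1,\ldots,f_d]]$ is a formal power series ring in $d$ variables and hence regular. It then remains to identify $S^N$ with $\widehat{R^N}$, which is the step I would treat most carefully: since $\widehat{R^N}$ is flat over $R^N$ with trivial $N$-action and $M^N$ is the image of the $R^N$-linear idempotent $\rho$ for any $N$-equivariant $R^N$-module $M$, the functor $(-)^N$ commutes with the base change $-\otimes_{R^N}\widehat{R^N}$; applying this to $M=R$ gives $S^N=\widehat R^{\,N}=R^N\otimes_{R^N}\widehat{R^N}=\widehat{R^N}$. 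Hence $S^N\cong l[[f_1,\ldots,f_d]]$ is regular.

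Alternatively one can avoid the graded picture and argue directly in the complete local setting via a power-series analogue of Chevalley's syzygy lemma. In either route the pseudo-reflection hypothesis is consumed only in that lemma, so I expect it to be the sole genuine obstacle, everything else being formal manipulation with the Reynolds operator and flat base change.
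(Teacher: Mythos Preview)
The paper does not prove this proposition; it simply cites it from \cite[B.27]{LW} as a known fact and moves on. So there is no ``paper's proof'' to compare against.

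Your argument is correct and supplies exactly what the paper omits. You reduce to the classical graded Chevalley--Shephard--Todd theorem for the polynomial ring $R=l[x_1,\ldots,x_d]$, obtaining $R^N=l[f_1,\ldots,f_d]$, and then transfer this to the completion. The only step requiring care is the identification $S^N=\widehat{R^N}$, and you handle it cleanly: since $R$ is module-finite over $R^N$ and the $f_i$ have positive degree, $S=R\otimes_{R^N}\widehat{R^N}$; and because the Reynolds operator is an $R^N$-linear idempotent, taking $N$-invariants commutes with the flat base change $-\otimes_{R^N}\widehat{R^N}$, yielding $S^N\cong l[[f_1,\ldots,f_d]]$. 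Your parenthetical that $|N|$ is invertible in $l$ is indeed implicit in the paper's standing hypotheses (condition (Q2)) and is needed for both the Reynolds operator and the non-modular form of Chevalley--Shephard--Todd.
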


Using this proposition, we can prove that $G\subseteq GL_d(l)\rtimes\Gal(l/k)$ can be replaced so that $G\cap GL_d(l)\subseteq GL_d(l)$ is small. This result can be viewed as a refined version of Proposition \ref{linear}.

\begin{Thm}\label{makesmall}
Let $l$ be a field, $G$ a finite group acting on $S:=l[[x_1,\ldots , x_d]]$ as a ring with $|G|$ not divided by $\ch l$ and $k:=l^G$ the invariant subfield of the residue field $l$ of $S$. Then there exist a group homomorphism $\phi\colon G\to GL_d(l)\rtimes\Gal(l/k)$ and a ring monomorphism $\theta\colon S\to S$ such that the following conditions are satisfied.
\begin{enumerate}
\item The subgroup $\phi(G)\cap GL_d(l)\subseteq GL_d(l)$ is small.
\item The image of $\theta$ coincides with $S^{\Ker\phi}$.
\item For all $g\in G$, the following diagram commutes.
\[\xymatrix{
S \ar[r]^\theta \ar[d]_{\phi(g)} & S\ar[d]^{g}\\
S\ar[r]_\theta & S
}\]
\end{enumerate}
In particular, we have a ring isomorphism $S^G\cong S^{\phi(G)}$.
\end{Thm}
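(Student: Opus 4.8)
The plan is to iterate a "shrinking" construction using pseudo-reflections to replace $S$ by a smaller regular subring on which a correspondingly smaller group acts, stopping when the linear part becomes small. First I would apply Proposition \ref{linear} to reduce to the case where $G$ is already a finite subgroup of $GL_d(l)\rtimes\Gal(l/k)$ and the inclusion $l\hookrightarrow S$ is $G$-equivariant; write $H:=G\cap GL_d(l)$, a normal subgroup of $G$ with $G/H\hookrightarrow\Gal(l/k)$. Let $N\subseteq H$ be the subgroup generated by all pseudo-reflections contained in $H$. The key observation is that $N$ is normal in $G$: conjugation by any $g\in G$ permutes $GL_d(l)$ (since $GL_d(l)$ is normal in $GL_d(l)\rtimes\Gal(l/k)$) and sends pseudo-reflections to pseudo-reflections, because $U_{gAg^{-1}} = g\cdot U_A$ as $l$-semilinear-twisted subspaces have the same dimension over $l$; hence $gNg^{-1}=N$.

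Next I would invoke Proposition \ref{allref} (Chevalley–Shephard–Todd): since $N$ is generated by pseudo-reflections, $S^N = l[[y_1,\dots,y_d]]$ is again a formal power series ring over $l$ in $d$ new variables. The quotient group $\overline{G}:=G/N$ acts on $S^N$ (using normality of $N$ in $G$), with $|\overline{G}|$ still prime to $\ch l$, and $(S^N)^{\overline G} = S^G$. Moreover the action of $\overline{G}$ on the residue field $l$ is unchanged, so $l^{\overline G}=k$ still. I would then apply Proposition \ref{linear} again to $\overline{G}$ acting on $S^N$: this produces $\overline\phi\colon\overline G\to GL_d(l)\rtimes\Gal(l/k)$ and an isomorphism $S^N\cong l[[z_1,\dots,z_d]]$ intertwining the actions. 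Composing, we obtain a group homomorphism $\phi\colon G\to GL_d(l)\rtimes\Gal(l/k)$ (namely $G\twoheadrightarrow\overline G\xrightarrow{\overline\phi}GL_d(l)\rtimes\Gal(l/k)$) and a ring monomorphism $\theta\colon S\to S$ whose image is $S^N$, satisfying the commuting square, with $\Ker\phi\supseteq N$. The point is that $\overline\phi(\overline G)\cap GL_d(l)$ has strictly fewer pseudo-reflections than $H$ did: every pseudo-reflection in $\overline\phi(\overline G)\cap GL_d(l)$ lifts (after adjusting by $\Ker\overline\phi$, which I must check acts trivially enough) to an element of $H/N$, but the images of the generating pseudo-reflections of $N$ have been killed. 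Since $H$ is finite, after finitely many iterations the linear part $\phi(G)\cap GL_d(l)$ contains no pseudo-reflection other than $I_d$, i.e.\ is small; composing all the intermediate $\theta$'s and the surjection of $G$ onto the final quotient gives the desired $\phi$ and $\theta$ with $\Im\theta = S^{\Ker\phi}$ and the commuting square, whence $S^G\cong S^{\phi(G)}$.

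The main obstacle I anticipate is the bookkeeping needed to guarantee that the process strictly decreases a well-defined quantity and therefore terminates: one must argue carefully that $\phi(G)\cap GL_d(l)$ at each stage is a genuine quotient of a subgroup of the previous $H$ with the pseudo-reflection subgroup $N$ collapsed, and that no \emph{new} pseudo-reflections are created in passing to $S^N$ and relinearizing. The cleanest way is to track the integer $\log_2|N_{\mathrm{prel}}|$ where $N_{\mathrm{prel}}$ is the pseudo-reflection subgroup of the current linear part: it is nonzero exactly when the current linear part is not small, and each iteration replaces the current linear part $H$ by a group in which (the image of) $N_{\mathrm{prel}}$ is trivial, so $|H|$ drops by a factor of at least $|N_{\mathrm{prel}}|\geq2$, forcing termination. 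A secondary subtlety is verifying the equality $\Im\theta=S^{\Ker\phi}$ through the composition: this follows because at each step the image is the fixed ring of the kernel of that step's homomorphism, and kernels compose correctly under the surjections $G\twoheadrightarrow\overline G$, using that $N$ acts trivially on $S^N$ by definition. Everything else — the semidirect-product normality facts, the behavior of $U_A$ under conjugation, and the invocation of the two cited results — is routine.
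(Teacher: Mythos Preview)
Your proposal is correct and follows essentially the same route as the paper: linearize via Proposition~\ref{linear}, take the normal subgroup $N\trianglelefteq G$ generated by the pseudo-reflections in $H=G\cap GL_d(l)$, apply Chevalley--Shephard--Todd to get $S^N\cong l[[f_1,\dots,f_d]]$, and recurse on $G/N$ acting on $S^N$. The paper phrases the recursion as induction on $|G|$, which immediately dispatches your worry about ``new pseudo-reflections'': since $|G/N|<|G|$ whenever $N\neq1$, the induction hypothesis applies regardless of what pseudo-reflections the relinearized $G/N$ may have, so no separate tracking of $|H|$ or $|N_{\mathrm{prel}}|$ is needed.
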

\begin{proof}
We show by induction on $|G|$. By Proposition \ref{linear}, we may assume $G\subseteq GL_d(l)\rtimes \Gal(l/k)$. Let $N:=\langle(A, {\rm id})\in G\mid A\in GL_d(l) \text{ is a pseudo-reflection}\rangle$ be a group generated by all pseudo-reflections in $G\cap GL_d(l)$. Then $N$ is a normal subgroup of $G$. If $|N|=1$, there is nothing to prove. We assume $|N|>1$. By Lemma \ref{allref}, the ring of invariants $S^N$ is regular, so there exist $f_1,\cdots, f_d\in S$ such that $S^{N}=l[[f_1,\ldots , f_d]]$ holds. Observe that $G/N$ acts on $l[[f_1,\ldots , f_d]]$ as a $k$-algebra. Since $|G/N|<|G|,$ the assertion follows by the induction hypothesis.
\end{proof}

\subsection{Ramification theory of invariant subrings}

In this subsection, we review some basic facts on rings of invariants, which will be used to prove Theorems \ref{Auslanderalgebra} and \ref{invisol}, for the convenience of the reader. Throughout this section, we consider the following settings.
\begin{enumerate}
\item[(I1)] $A$ is a commutative Noetherian normal domain.
\item[(I2)]  $K:=\Frac(A)$ and $L/K$ is a finite Galois extension.
\item[(I3)]  $B$ is the integral closure of $A$ in $L$.
\end{enumerate}
Then $B$ is finitely generated as an $A$-module and thus a Noetherian normal domain. Put $G:=\Gal(L/K)$. In this setting, we have a ring homomorphism
\[\gamma\colon B*G\to\End_A(B);\ b*g\mapsto(b'\mapsto bg(b')).\]
First, we study sufficient conditions for $\gamma$ to be an isomorphism. Recall that the extension $A\subseteq B$ is called {\it separable} if $B$ is projective as a $B\otimes_AB$-module.

\begin{Prop}\label{sep}
If the extension $A\subseteq B$ is separable, then $\gamma$ is an isomorphism.
\end{Prop}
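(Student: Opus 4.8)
The plan is to construct an explicit two-sided inverse of $\gamma$ out of a separability idempotent, so that no localization is needed. Because $A\subseteq B$ is separable, the multiplication map $B\otimes_AB\to B$ splits as a homomorphism of $B\otimes_AB$-modules; equivalently there is an element $e=\sum_{i=1}^{n}u_i\otimes v_i\in B\otimes_AB$ with $\sum_i u_iv_i=1$ and $(b\otimes 1)e=(1\otimes b)e$ for every $b\in B$. Since $B$ is stable under $G$, the formula
\[\psi\colon\End_A(B)\longrightarrow B*G,\qquad \psi(\phi):=\sum_{g\in G}\Bigl(\sum_{i=1}^{n}\phi(u_i)\,g(v_i)\Bigr)*g\]
makes sense, and I claim $\psi$ is inverse to $\gamma$. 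Everything will follow from the single ``Galois coordinate'' identity $\sum_{i=1}^{n}h(u_i)\,g(v_i)=\delta_{g,h}$ in $B$, for all $g,h\in G$.

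To establish this identity it suffices (apply the automorphism $h$ to the case $h=1$) to show $\sum_i u_i\,g(v_i)=\delta_{g,1}$. Consider the image $\bar e\in L\otimes_KL$ of $e$ under the natural map $B\otimes_AB\to L\otimes_KL$. One checks directly that $m(\bar e)=1$ and $(a\otimes 1-1\otimes a)\bar e=0$ for every $a\in L$ (the latter is given for $a\in B$ and extends to $L$ since $L=B\otimes_AK$). Now $L/K$ is Galois with group $G$, so $L\otimes_KL\xrightarrow{\ \sim\ }\prod_{g\in G}L$, $a\otimes b\mapsto(a\,g(b))_g$, is an isomorphism; under it the condition $(a\otimes 1-1\otimes a)\bar e=0$ forces the $g$-component of $\bar e$ to vanish for $g\neq1$ (choose $a$ with $g(a)\neq a$), while $m(\bar e)=1$ says the $1$-component equals $1$. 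Comparing components yields $\sum_i u_i\,g(v_i)=\delta_{g,1}$, as wanted. This step is the conceptual heart of the proposition, and I expect it to be the main obstacle to formalize cleanly: separability is exactly what makes the idempotent $e$ ``diagonal'' over $L$, and it is precisely this property that breaks down at ramified primes, where $\gamma$ genuinely fails to be surjective.

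It then remains to run two short verifications. For $\psi\circ\gamma=\mathrm{id}_{B*G}$, by additivity it is enough to evaluate on $c*h$ with $c\in B$, $h\in G$: since $\gamma(c*h)=(x\mapsto c\,h(x))$, the identity gives $\psi(\gamma(c*h))=\sum_g\bigl(\sum_i c\,h(u_i)\,g(v_i)\bigr)*g=\sum_g c\,\delta_{g,h}*g=c*h$. For $\gamma\circ\psi=\mathrm{id}_{\End_A(B)}$, use that $B$ is finitely generated over $A$, so $B\otimes_AK=L$ and every $\phi\in\End_A(B)$ extends to $\phi\in\End_K(L)$; by Dedekind's linear independence of automorphisms the $|G|=[L:K]=\dim_L\End_K(L)$ elements of $G$ form an $L$-basis of $\End_K(L)$, so $\phi=\sum_{h\in G}b_h\,h$ with $b_h\in L$. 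Substituting and using the identity once more,
\[\gamma(\psi(\phi))(x)=\sum_{g,i}\phi(u_i)\,g(v_i)\,g(x)=\sum_{g,h}b_h\Bigl(\sum_i h(u_i)\,g(v_i)\Bigr)g(x)=\sum_{g}b_g\,g(x)=\phi(x)\]
for all $x\in L$, hence $\gamma(\psi(\phi))=\phi$. Thus $\gamma$ is an isomorphism. (An alternative route, should one wish to avoid the idempotent: $B*G\cong B^{\oplus|G|}$ and $\End_A(B)=\Hom_A(B,B)$ are reflexive $A$-modules and $\gamma$ is injective and generically bijective, so it suffices to check $\gamma_{\mathfrak p}$ for $\height\mathfrak p\le 1$; there $B_{\mathfrak p}$ is free over the discrete valuation ring $A_{\mathfrak p}$ and separability makes $A_{\mathfrak p}\subseteq B_{\mathfrak p}$ finite \'etale, i.e. unramified, reducing to the classical Galois theory of discrete valuation rings and fields.)
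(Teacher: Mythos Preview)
Your argument is correct and gives a fully self-contained proof. The paper's own proof is much terser and structurally different: it cites \cite[3.6]{IT} for the surjectivity of $\gamma$ and then observes that $B*G$ and $\End_A(B)$ are torsion-free $A$-modules of the same rank (namely $|G|^2$), so injectivity is automatic. Your construction of an explicit two-sided inverse via the separability idempotent buys you more: it exhibits the inverse concretely and explains \emph{why} separability is the right hypothesis---the identity $\sum_i h(u_i)g(v_i)=\delta_{g,h}$ is precisely the statement that the idempotent becomes the diagonal in $L\otimes_KL\cong\prod_{g\in G}L$, and this is what fails without separability. The paper's approach is quicker to write but outsources the substance; yours is longer but independent of external references and arguably more illuminating. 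Your parenthetical alternative at the end (reduce to height $\leq 1$ via reflexivity) is essentially how the paper exploits the proposition in the next result (Theorem~\ref{end}), so you have in fact anticipated that step as well.
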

\begin{proof}
The surjectivity follows from \cite[3.6]{IT}. The injectivity also follows since the ranks of $B*G$ and $\End_A(B)$ are equal.
\end{proof}

We recall the definition of unramifiedness.

\begin{Def}
\begin{enumerate}
\item Take $\mathfrak{P}\in\Spec B$ and $\mathfrak{p}:=\mathfrak{P}\cap A\in\Spec A$. We say the extension $B/A$ is {\it unramified} at $\mathfrak{P}$ if the following two conditions hold.
\begin{enumerate}
\item $\mathfrak{p}B_\mathfrak{P}=\mathfrak{P}B_\mathfrak{P}$ holds.
\item The extension $\kappa(\mathfrak{P})/\kappa(\mathfrak{p})$ is separable.
\end{enumerate}
Here, $\kappa(\mathfrak{p})$ (respectively $\kappa(\mathfrak{P})$) denotes the residue field of the local ring $A_\mathfrak{p}$ (respectively $B_\mathfrak{P}$).
\item We say the extension $B/A$ is {\it unramified} if it is unramified at all $\mathfrak{P}\in\Spec B$.
\end{enumerate}
\end{Def}

Combining with Lemma \ref{s2isom}, we obtain the following immediately.

\begin{Thm}\label{end}
Under the settings $(I1),(I2)$ and $(I3)$, if the extension $A\subseteq B$ is unramified in codimension one, then $\gamma$ is an isomorphism.
\end{Thm}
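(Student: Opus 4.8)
The plan is to reduce the statement to Proposition \ref{sep} by showing that, under the hypothesis that $A\subseteq B$ is unramified in codimension one, the map $\gamma\colon B*G\to\End_A(B)$ becomes an isomorphism after invoking a purity/reflexivity argument in codimension one (this is presumably the content of the Lemma \ref{s2isom} referenced just before the statement). Concretely, both $B*G$ and $\End_A(B)$ are finitely generated $A$-modules, and since $B$ is a normal domain that is reflexive (indeed finite and torsion-free) over the normal domain $A$, both modules satisfy Serre's condition $(S_2)$ as $A$-modules; hence a homomorphism between them is an isomorphism as soon as it is an isomorphism in codimension $\leq 1$. So it suffices to check that $\gamma_\mathfrak{p}$ is an isomorphism for every height-one prime $\mathfrak{p}\in\Spec A$ (the height-zero case being the classical fact that $L*G\xrightarrow{\sim}\End_K(L)$ for a Galois extension, which underlies Proposition \ref{sep} anyway).

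For a height-one prime $\mathfrak{p}$ of $A$, I would localize: $A_\mathfrak{p}$ is a one-dimensional normal domain, hence a DVR, and $B_\mathfrak{p}:=B\otim_A A_\mathfrak{p}$ is its integral closure in $L$, a semilocal Dedekind domain finite over $A_\mathfrak{p}$. The hypothesis that the extension is unramified in codimension one says precisely that $B_\mathfrak{p}/A_\mathfrak{p}$ is unramified at every maximal ideal of $B_\mathfrak{p}$ (ramification points form a closed subset of codimension $\geq 2$ in $\Spec A$, so height-one primes avoid it). For a finite extension of Dedekind domains, being unramified is equivalent to being separable (étale), so $B_\mathfrak{p}$ is projective as a $B_\mathfrak{p}\otimes_{A_\mathfrak{p}}B_\mathfrak{p}$-module. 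Now apply Proposition \ref{sep} with $A_\mathfrak{p}$ in place of $A$: it gives that $\gamma_\mathfrak{p}\colon B_\mathfrak{p}*G\to\End_{A_\mathfrak{p}}(B_\mathfrak{p})$ is an isomorphism. Note $\gamma$ commutes with localization since $B$ is $A$-finite, so $\gamma_\mathfrak{p}$ is exactly the localization of $\gamma$.

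Having shown $\gamma$ is an isomorphism in codimension $\leq 1$, the $(S_2)$ argument closes the proof: $\Coker\gamma$ and $\Ker\gamma$ are $A$-modules supported in codimension $\geq 2$, but $B*G$ is a reflexive (even maximal Cohen–Macaulay, if $A$ is) $A$-module with no such submodules, and $\End_A(B)=\Hom_A(B,B)$ is likewise a second-syzygy $A$-module, so a map between them vanishing nowhere in codimension $\leq 1$ is bijective. The main obstacle I anticipate is not any single step but making the codimension-one/$(S_2)$ bookkeeping precise — in particular confirming that $\End_A(B)$ genuinely satisfies $(S_2)$ over $A$ and that the locus where $B/A$ ramifies really is closed of codimension $\geq 2$ given only "unramified in codimension one" (one needs the ramification locus to be closed, which holds because $A\subseteq B$ is finite and $B$ is $A$-flat in codimension one, or more simply because the different is an ideal defining that locus). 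This is exactly where the earlier Lemma \ref{s2isom} is designed to do the heavy lifting, so in the write-up I would simply cite it after establishing the codimension-one isomorphism.
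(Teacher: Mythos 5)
Your proposal is correct and follows essentially the same route as the paper: localize at height-one primes, observe that unramified implies separable there, invoke Proposition \ref{sep} to get the isomorphism after localization, and then apply Lemma \ref{s2isom} to globalize. The extra worry in your last paragraph about closedness of the ramification locus is unnecessary, since the hypothesis ``unramified in codimension one'' already hands you unramifiedness at every height-one prime directly, and Lemma \ref{s2isom} only needs $(S_2)$ for the source and $(S_1)$ for the target, both of which hold here.
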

\begin{proof}
Take $\mathfrak{p}\in\Spec A$ with $\height\mathfrak{p}=1$. Since the extension $A_\mathfrak{p}\subseteq B_\mathfrak{p}$ is unramified, it is separable (see, for example, \cite[3.8]{IT}). Thus $\gamma_\mathfrak{p}\colon B_\mathfrak{p}*G\to\End_{A_{\mathfrak{p}}}(B_\mathfrak{p})$ is an isomorphism by Proposition \ref{sep}. Therefore $\gamma$ is an isomorphism by Lemma \ref{s2isom}.
\end{proof}

Second, we see how we can judge whether the extension $A\subseteq B$ or its localization is unramified. Let $\mathfrak{P}\in\Spec B$ and $\mathfrak{p}:=\mathfrak{P}\cap A\in\Spec A$. We let $\mathfrak{P}=\mathfrak{P}_1,\ldots ,\mathfrak{P}_n$ be all the prime ideals of $S$ lying over $\mathfrak{p}$. We define the {\it decomposition group} $D(\mathfrak{P})$ and the {\it inertia group} $T(\mathfrak{P})$.
\[D(\mathfrak{P}):=\{g\in G\mid g(\mathfrak{P})=\mathfrak{P}\},\ T(\mathfrak{P}):=\{g\in G\mid g(x)-x\in\mathfrak{P}\ \text{ for all }x\in B )\}\] 
Let $f:=[\kappa(\mathfrak{P}):\kappa(\mathfrak{p})]$ and $e:={\rm length}_{B_\mathfrak{P}}(B_\mathfrak{P}/\mathfrak{p}B_\mathfrak{P})$. Now we have the following standard statements.

\begin{Prop}\cite{Bou,S}
\begin{enumerate}
\item The action of $G$ on the set $\{\mathfrak{P}_1,\ldots ,\mathfrak{P}_n\}$ is transitive. In particular, $n=[G:D(\mathfrak{P})]$ holds. 
\item The field extension $\kappa(\mathfrak{P})/\kappa(\mathfrak{p})$ is normal.
\item The natural group homomorphism $D(\mathfrak{P})\to{\rm Aut}_{\kappa(\mathfrak{p})}^{\rm al}(\kappa(\mathfrak{P}))$ is surjective.
\item The equation $\dim_{\kappa(\mathfrak{p})}(B_\mathfrak{p}/\mathfrak{p}B_\mathfrak{p})=nef$ holds.
\end{enumerate}
\end{Prop}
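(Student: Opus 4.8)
The plan is to prove the four assertions in order, after first reducing to the semilocal situation. Replacing $A$ by $A_\mathfrak{p}$ and $B$ by $B_\mathfrak{p}$ changes none of $D(\mathfrak{P})$, $T(\mathfrak{P})$, $e$, $f$, $n$, nor the dimension in (4); and it makes $B_\mathfrak{p}$ a finite $A_\mathfrak{p}$-algebra whose maximal ideals are exactly $\mathfrak{P}_1,\dots,\mathfrak{P}_n$ and for which $B_\mathfrak{p}/\mathfrak{p}B_\mathfrak{p}$ is Artinian. This reduction is what allows Chinese Remainder Theorem arguments to go through in (3), and exhibits $n$ as finite.

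For (1), I would argue by contradiction: if $\mathfrak{P}'$ lies over $\mathfrak{p}$ but equals no $g(\mathfrak{P})$, then (the $g(\mathfrak{P})$ being incomparable with $\mathfrak{P}'$) prime avoidance produces $x\in\mathfrak{P}'$ lying in no $g(\mathfrak{P})$; then $\prod_{g\in G}g(x)$ is $G$-invariant and integral over $A$, hence lies in $B\cap K=A$ by normality of $A$, and it lies in $\mathfrak{P}'\cap A=\mathfrak{p}\subseteq\mathfrak{P}$, so primality of $\mathfrak{P}$ forces some $g(x)\in\mathfrak{P}$, contradicting the choice of $x$. Transitivity plus the orbit–stabilizer theorem gives $n=[G:D(\mathfrak{P})]$. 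For (2), since $\kappa(\mathfrak{P})=\Frac(B/\mathfrak{P})$ is generated over $\kappa(\mathfrak{p})$ by images of elements of $B$ and a compositum of normal subextensions is again normal, it suffices to show that for $\alpha\in B$ the minimal polynomial over $\kappa(\mathfrak{p})$ of the image $\bar\alpha\in\kappa(\mathfrak{P})$ splits in $\kappa(\mathfrak{P})$; this is immediate, since $\prod_{g\in G}(T-g(\alpha))$ has coefficients in $A$, and its reduction modulo $\mathfrak{P}$ is a monic polynomial over $\kappa(\mathfrak{p})$ that splits completely in $\kappa(\mathfrak{P})[T]$ as $\prod_{g}(T-\overline{g(\alpha)})$ and is annihilated by $\bar\alpha$.

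For (3), the map $D(\mathfrak{P})\to{\rm Aut}_{\kappa(\mathfrak{p})}^{\rm al}(\kappa(\mathfrak{P}))$ is well defined because each $g\in D(\mathfrak{P})$ fixes $A$ and stabilizes $\mathfrak{P}$. For surjectivity, if $\kappa(\mathfrak{P})/\kappa(\mathfrak{p})$ is purely inseparable the target is trivial; otherwise write its maximal separable subextension as $\kappa(\mathfrak{p})(\bar\alpha)$ with $\bar\alpha\neq0$, and using the Chinese Remainder Theorem in the semilocal ring $B$ lift $\bar\alpha$ to $\alpha\in B$ with $\alpha\in\mathfrak{P}_i$ for every $i$ with $\mathfrak{P}_i\neq\mathfrak{P}$. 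Given $\bar\theta\in{\rm Aut}_{\kappa(\mathfrak{p})}^{\rm al}(\kappa(\mathfrak{P}))$, the element $\bar\theta(\bar\alpha)$ is a root of the reduction modulo $\mathfrak{P}$ of $\prod_{g\in G}(T-g(\alpha))$, so $\bar\theta(\bar\alpha)=\overline{g(\alpha)}$ for some $g\in G$; since $\bar\theta(\bar\alpha)\neq0$, the element $g(\alpha)$ lies outside $\mathfrak{P}$, so the choice of $\alpha$ forces $g^{-1}(\mathfrak{P})=\mathfrak{P}$, i.e. $g\in D(\mathfrak{P})$, and $\bar g$ agrees with $\bar\theta$ on $\bar\alpha$, hence on the whole separable subextension, hence on all of $\kappa(\mathfrak{P})$ because an automorphism of a normal extension is determined by its restriction to the separable closure. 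For (4), using (1) all $\mathfrak{P}_i$ are $G$-conjugate and therefore share the same $e$ and $f$; decomposing $B_\mathfrak{p}/\mathfrak{p}B_\mathfrak{p}\cong\prod_{i=1}^n B_{\mathfrak{P}_i}/\mathfrak{p}B_{\mathfrak{P}_i}$ into Artinian local factors, each factor has a composition series of length $e$ with every quotient isomorphic to $\kappa(\mathfrak{P})$, hence $\kappa(\mathfrak{p})$-dimension $ef$, so summing over the $n$ factors gives $nef$.

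The step I expect to be the main obstacle is part (3): one must handle the possible inseparability of $\kappa(\mathfrak{P})/\kappa(\mathfrak{p})$, which forces the argument to pass through the maximal separable subextension, invoke the primitive element theorem there, and use that automorphisms of a normal extension are pinned down by their restriction to the separable part; this has to be combined with the simultaneous Chinese Remainder Theorem lift, which is precisely why the reduction to the semilocal case is carried out first. Parts (1), (2) and (4) are then largely bookkeeping once (1) provides the conjugacy of the $\mathfrak{P}_i$.
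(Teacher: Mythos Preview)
The paper does not give its own proof of this proposition; it simply cites it from Bourbaki and Serre's \emph{Local Fields} as a standard fact. (The LaTeX source does contain a commented-out proof of part~(1) only, via prime avoidance and the product $\prod_{g\in G}g(x)$, which is exactly your argument for~(1).) Your proof is correct and follows the classical route found in those references: prime avoidance for transitivity, the split product $\prod_g(T-g(\alpha))$ for normality, the Chinese Remainder lift combined with a primitive element for the separable part to get surjectivity onto ${\rm Aut}_{\kappa(\mathfrak{p})}^{\rm al}(\kappa(\mathfrak{P}))$, and the Artinian decomposition for the dimension count. There is nothing to compare against beyond noting that your write-up supplies precisely what the paper outsourced.
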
 

Remark that the inertia group $T(\mathfrak{P})$ is the kernel of the group homomorphism $D(\mathfrak{P})\to{\rm Aut}_{\kappa(\mathfrak{p})}^{\rm al}(\kappa(\mathfrak{P}))$. Let ${\kappa(\mathfrak{p})}_s$ be the separable closure of ${\kappa(\mathfrak{p})}$ in ${\kappa(\mathfrak{P})}$ and put $f_0:=[\kappa(\mathfrak{p})_s:\kappa(\mathfrak{p})], q:=[\kappa(\mathfrak{P}):\kappa(\mathfrak{p})_s]$. Then we have the equations $f=f_0q$ and $[D(\mathfrak{P}):T(\mathfrak{P})]=f_0$.

\begin{Prop}\label{whenfree}
We have inequalities
\[|G|\leq nef, |D(\mathfrak{P})|\leq ef\text{ and }|T(\mathfrak{P})|\leq eq.\]
The equalities hold if and only if $B_\mathfrak{p}$ is $A_\mathfrak{p}$-free.
\end{Prop}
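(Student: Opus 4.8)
The plan is to deduce all three inequalities at once from a single commutative-algebra observation, the numerology already recorded in the preceding proposition, and elementary index counting. First I would isolate the key fact: for a finitely generated torsion-free module $M$ over a Noetherian local domain $(R,\mathfrak{m})$ with fraction field $K$, the minimal number of generators $\mu_R(M)=\dim_{R/\mathfrak{m}}(M/\mathfrak{m}M)$ satisfies $\mu_R(M)\ge\rank_R M:=\dim_K(M\otimes_R K)$, with equality if and only if $M$ is $R$-free. Indeed, a surjection $R^{\mu_R(M)}\twoheadrightarrow M$ tensored with $K$ gives the inequality, and if it is an equality the kernel has rank $0$, hence is torsion, hence vanishes because $R^{\mu_R(M)}$ is torsion-free.

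Next I would apply this with $R=A_\mathfrak{p}$ and $M=B_\mathfrak{p}$. This module is finitely generated since $B$ is module-finite over $A$, and torsion-free as a submodule of $L$. Because $B\otimes_A K=L$, base change along $A\to A_\mathfrak{p}$ gives $\rank_{A_\mathfrak{p}}B_\mathfrak{p}=[L:K]=|G|$, while part (4) of the preceding proposition gives $\dim_{\kappa(\mathfrak{p})}(B_\mathfrak{p}/\mathfrak{p}B_\mathfrak{p})=nef$. Thus $|G|\le nef$, with equality if and only if $B_\mathfrak{p}$ is $A_\mathfrak{p}$-free; this is the only substantive point.

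The other two inequalities are pure arithmetic of indices. By part (1) of the preceding proposition, $n=[G:D(\mathfrak{P})]$, so $|G|=n\,|D(\mathfrak{P})|$ and dividing $|G|\le nef$ by $n$ yields $|D(\mathfrak{P})|\le ef$. Since $[D(\mathfrak{P}):T(\mathfrak{P})]=f_0$ and $f=f_0q$, dividing $|D(\mathfrak{P})|\le ef$ by $f_0$ yields $|T(\mathfrak{P})|\le ef/f_0=eq$. As each step divides by a fixed positive integer, the three inequalities become equalities simultaneously, and by the first step this happens exactly when $B_\mathfrak{p}$ is $A_\mathfrak{p}$-free.

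I do not expect a genuine obstacle: the proposition merely repackages the standard statement that the fibre of a module-finite extension of normal domains over a prime has length at least the generic rank (with equality detecting local freeness), together with the count $\dim_{\kappa(\mathfrak{p})}(B_\mathfrak{p}/\mathfrak{p}B_\mathfrak{p})=nef$ already proved. The only things needing a line of care are the identification $\rank_{A_\mathfrak{p}}B_\mathfrak{p}=|G|$ via $B\otimes_A K=L$ and flat base change, and the precise equivalence in the freeness criterion, where the torsion-freeness of $B_\mathfrak{p}$ over $A_\mathfrak{p}$ is what does the work.
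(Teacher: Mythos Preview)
Your proof is correct and follows essentially the same approach as the paper: both reduce to the single inequality $|G|\le nef$ via the index relations $n=[G:D(\mathfrak{P})]$ and $[D(\mathfrak{P}):T(\mathfrak{P})]=f_0$, and both establish that inequality by comparing the minimal number of generators $\dim_{\kappa(\mathfrak{p})}(B_\mathfrak{p}/\mathfrak{p}B_\mathfrak{p})=nef$ with the generic rank $[L:K]=|G|$ via Nakayama and tensoring with $K$. Your treatment of the equality case (using torsion-freeness of $B_\mathfrak{p}$ to kill the kernel) is more explicit than the paper's, which simply appeals to ``similar arguments.''
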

\begin{proof}
By $[G:D(\mathfrak{P})]=n$ and $[D(\mathfrak{P}):T(\mathfrak{P})]=f_0$, these three inequalities are equivalent to each other. Thus we only remark on the first one. Since $\dim_{\kappa(\mathfrak{p})}(B_\mathfrak{p}/\mathfrak{p}B_\mathfrak{p})=nef$ holds, we have a surjection $A_\mathfrak{p}^{\oplus nef}\to B_\mathfrak{p}$ by the Nakayama's lemma. This extends to a surjection $K^{\oplus nef}\to L$. By comparing the dimensions over $K$, we obtain $|G|\leq nef$. The equality part also follows from similar arguments. 
\end{proof}

Here we exhibit the following fundamental statement. Remark that this is nothing but \cite[I.4.10]{S} when $\height\mathfrak{p}=1$.

\begin{Thm}\label{inertia}\cite[3.2.5(i)]{Fu}
Under the settings $(I1),(I2)$ and $(I3)$, the following conditions are equivalent.
\begin{enumerate}
\item The extension $A\subseteq B$ is unramified at $\mathfrak{P}$.
\item $|T(\mathfrak{P})|=1$ holds.
\end{enumerate}
If this is the case, then $B_\mathfrak{p}$ is $A_\mathfrak{p}$-free.
\end{Thm}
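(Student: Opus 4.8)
The plan is to localize at $\mathfrak p$ and reduce the equivalence to a numerical comparison, the only substantial input being the link between triviality of the inertia group and freeness of $B$. First I would replace $A$ by $A_{\mathfrak p}$ and $B$ by $B_{\mathfrak p}$; this affects none of the relevant data (the degrees $f,f_0,q$, the ramification index $e$, the number $n$ of primes over $\mathfrak p$, the groups $D(\mathfrak P)$ and $T(\mathfrak P)$, and whether $B$ is $A$-free), so from now on $(A,\mathfrak p)$ is local and $B$ is semilocal with maximal ideals $\mathfrak P=\mathfrak P_1,\dots,\mathfrak P_n$, permuted transitively by $G$. I then record the identities already in hand: $[G:D(\mathfrak P)]=n$, $[D(\mathfrak P):T(\mathfrak P)]=f_0$, $f=f_0q$ and $\dim_{\kappa(\mathfrak p)}(B/\mathfrak pB)=nef$, which combine to $|T(\mathfrak P)|=|G|/(nf_0)$; together with Proposition \ref{whenfree} this gives $|T(\mathfrak P)|\le eq$, with equality precisely when $B$ is $A$-free.

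For (1) $\Rightarrow$ (2) this is now immediate. If $A\subseteq B$ is unramified at $\mathfrak P$, then $\mathfrak pB_{\mathfrak P}=\mathfrak PB_{\mathfrak P}$ forces $e=1$ and separability of $\kappa(\mathfrak P)/\kappa(\mathfrak p)$ forces $q=1$, so $1\le|T(\mathfrak P)|\le eq=1$ and hence $|T(\mathfrak P)|=1$; being in the equality case of Proposition \ref{whenfree}, $B$ is moreover $A$-free, which is the last clause.

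The substance is (2) $\Rightarrow$ (1), and here the crux is to deduce freeness of $B$ over $A$ from $|T(\mathfrak P)|=1$: once that is known, Proposition \ref{whenfree} forces $|T(\mathfrak P)|=eq$, hence $eq=1$, i.e. $\mathfrak pB_{\mathfrak P}=\mathfrak PB_{\mathfrak P}$ together with separability of $\kappa(\mathfrak P)/\kappa(\mathfrak p)$ — exactly unramifiedness at $\mathfrak P$. To obtain freeness I would argue: since $T(g\mathfrak P)=gT(\mathfrak P)g^{-1}$ and $G$ permutes $\mathfrak P_1,\dots,\mathfrak P_n$ transitively, the inertia group at every maximal ideal of $B$ is trivial; combined with $B^G=A$ (as $A$ is integrally closed in $K=L^G$), this exhibits $B$ as a Galois extension of $A$ with group $G$ in the sense of Chase-Harrison-Rosenberg, so that $B$ is $A$-separable and in particular a finitely generated projective — hence free — $A$-module (and separability over $A$ also yields unramifiedness directly). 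I expect this very step to be the main obstacle: over a non-regular normal base $A$ the freeness of $B_{\mathfrak p}$ is not formal, and a naive completion-and-lifting argument would be circular since it presupposes $e=q=1$; one genuinely needs the theory of separable and Galois extensions of commutative rings. When $\height\mathfrak p=1$ — the classical situation of \cite[I.4.10]{S} — $B_{\mathfrak p}$ is automatically free over the DVR $A_{\mathfrak p}$, the relation $|T(\mathfrak P)|=eq$ holds unconditionally, and the argument collapses to the familiar one.
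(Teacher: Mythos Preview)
Your argument is correct. For $(1)\Rightarrow(2)$ and the freeness conclusion you proceed exactly as the paper does, via the inequality $|T(\mathfrak P)|\le eq$ of Proposition~\ref{whenfree} and its equality case. For $(2)\Rightarrow(1)$, however, the paper gives no argument of its own and simply cites \cite[3.2.5(i)]{Fu}; your route through the Chase--Harrison--Rosenberg characterization of Galois ring extensions (trivial inertia at every maximal ideal of $B_\mathfrak{p}$ is equivalent to $B_\mathfrak{p}/A_\mathfrak{p}$ being Galois, hence separable and $A_\mathfrak{p}$-projective) is a genuine, self-contained alternative. It has the pleasant side effect of deriving freeness of $B_\mathfrak{p}$ from hypothesis $(2)$ directly, so the final clause follows from either condition symmetrically. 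Your parenthetical remark is in fact the cleaner finish: once CHR gives separability of $B_\mathfrak{p}$ over $A_\mathfrak{p}$, unramifiedness at $\mathfrak P$ is immediate, and the detour through $eq=1$ via the equality case of Proposition~\ref{whenfree} becomes optional.
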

\begin{proof}
If the extension $A\subseteq B$ is unramified at $\mathfrak{P}$, then we have $e=1$ and $q=1$. By the inequality in Proposition \ref{whenfree}, we obtain $|T(\mathfrak{P})|=1$. By the same corollary, $B_\mathfrak{p}$ is $A_\mathfrak{p}$-free. The other direction is \cite[3.2.5(i)]{Fu}.
\end{proof}

Finally, we see that if the extension $A\subseteq B$ is unramified at $\mathfrak{P}$, then $A_\mathfrak{p}$ is regular if and only if so is $B_\mathfrak{P}$.

\begin{Thm}\label{regular}
Under the settings $(I1),(I2)$ and $(I3)$, we assume the extension $A\subseteq B$ is unramified at $\mathfrak{P}$. Then the following conditions are equivalent.
\begin{enumerate}
\item The local ring $B_\mathfrak{P}$ is regular.
\item The local ring $A_\mathfrak{p}$ is regular.
\end{enumerate}
\end{Thm}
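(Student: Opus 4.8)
The plan is to reduce the statement to the standard behaviour of regularity under flat local homomorphisms. First I would invoke Theorem \ref{inertia}: since $A\subseteq B$ is unramified at $\mathfrak{P}$, the localization $B_\mathfrak{p}$ is $A_\mathfrak{p}$-free, hence flat over $A_\mathfrak{p}$. Localizing further at the prime $\mathfrak{P}B_\mathfrak{p}$ of $B_\mathfrak{p}$ (which contracts to $\mathfrak{p}A_\mathfrak{p}$), we obtain a flat local homomorphism of Noetherian local rings $A_\mathfrak{p}\to B_\mathfrak{P}$.

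Next I would identify the closed fibre of this homomorphism. It is $B_\mathfrak{P}\otimes_{A_\mathfrak{p}}\kappa(\mathfrak{p})=B_\mathfrak{P}/\mathfrak{p}B_\mathfrak{P}$, and by condition (a) in the definition of unramifiedness we have $\mathfrak{p}B_\mathfrak{P}=\mathfrak{P}B_\mathfrak{P}$, so the closed fibre equals $\kappa(\mathfrak{P})$. This is a field, in particular a regular local ring.

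Then I would apply the standard result on flat local homomorphisms (for instance \cite[Theorem 23.7]{Mat}): for a flat local homomorphism $(R,\mathfrak{m})\to(T,\mathfrak{n})$ of Noetherian local rings, if $T$ is regular then $R$ is regular, and conversely if $R$ and the closed fibre $T/\mathfrak{m}T$ are both regular then $T$ is regular. Applying this with $R=A_\mathfrak{p}$, $T=B_\mathfrak{P}$, and using that the closed fibre $\kappa(\mathfrak{P})$ is regular, we conclude that $B_\mathfrak{P}$ is regular if and only if $A_\mathfrak{p}$ is regular, which is precisely the asserted equivalence.

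The argument is essentially formal once the inputs are in place, so I do not anticipate a substantive obstacle; the only two things to be careful about are the two hypotheses feeding into the flat-local-homomorphism theorem. Flatness of $A_\mathfrak{p}\to B_\mathfrak{P}$ is exactly what Theorem \ref{inertia} guarantees under the unramifiedness assumption, and regularity of the closed fibre comes for free from condition (a) alone (so, interestingly, the separability condition (b) is not needed for this particular statement, only for the freeness statement of Theorem \ref{inertia}). I would therefore expect the proof to be short, with the main step being the reduction to a flat local homomorphism with field fibre.
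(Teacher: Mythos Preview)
Your proof is correct. The paper's own proof rests on the same key input (Theorem \ref{inertia} giving freeness of $B_\mathfrak{p}$ over $A_\mathfrak{p}$) but handles the two implications separately and more by hand: for $(1)\Rightarrow(2)$ it uses the global-dimension inequality $\gl A_\mathfrak{p}\leq\gl B_\mathfrak{p}<\infty$ (implicitly invoking Galois transitivity on the primes above $\mathfrak{p}$ to pass from regularity of $B_\mathfrak{P}$ to finite global dimension of the semilocal ring $B_\mathfrak{p}$), and for $(2)\Rightarrow(1)$ it simply observes that $\mathfrak{P}B_\mathfrak{P}=\mathfrak{p}B_\mathfrak{P}$ is generated by $\height\mathfrak{p}=\height\mathfrak{P}$ elements. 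Your route through the packaged theorem on flat local homomorphisms with regular closed fibre is more uniform and sidesteps the transitivity step; the paper's argument is slightly more elementary in that it does not invoke a black-box theorem. One small caveat on your parenthetical remark: in your argument condition (b) is still used, since you appeal to Theorem \ref{inertia} for freeness, and that theorem is stated under the full unramifiedness hypothesis.
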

\begin{proof}
(1)$\Rightarrow$(2) By Theorem \ref{inertia}, we know $B_\mathfrak{p}$ is $A_\mathfrak{p}$-free. Thus we obtain $\gl A_\mathfrak{p}\leq\gl B_\mathfrak{p}<\infty$.

(2)$\Rightarrow$(1) The ideal $\mathfrak{P}B_\mathfrak{P}=\mathfrak{p}B_\mathfrak{P}$ is generated by elements whose number is $\height\mathfrak{p}=\height\mathfrak{P}$, which means that $B_\mathfrak{P}$ is regular.
\end{proof}

\section{Auslander-Reiten theory for NCCRs}\label{ARCT}

In this section, we prove the existence of higher almost split sequences for NCCRs (Theorem \ref{existAR}, Proposition \ref{ARforCT}). Toward this, we recall basic concepts in Cohen-Macaulay representations.

Let $(R,\mathfrak{m})$ be a $d$-dimensional Cohen-Macaulay local ring having the canonical module $\omega\in\CM R$. An $R$-{\it order} is a module-finite $R$-algebra $\Lambda$ satisfying $\Lambda\in\CM R$. We define $\CM\Lambda:=\{M\in\mod\Lambda\mid M_R\in\CM R\}$. In the branch of Cohen-Macaulay representations, the categorical structure of the category of Cohen-Macaulay modules $\CM\Lambda$ over orders is the main interest. We let
\[(-)^*:=\Hom_\Lambda(-,\Lambda),(-)^\vee:=\Hom_R(-,\omega)\colon\mod\Lambda\to\mod\Lambda^{\op}.\]
Remark that $(-)^\vee\colon\CM\Lambda\to\CM\Lambda^{\op}$ gives an anti-equivalence. 

\subsection{Preliminary on orders}

In this subsection, we summarize basic facts about orders for the convenience of the reader. First, we introduce some classes of orders using homological dimensions.

\begin{Def}
\begin{enumerate}
\item An $R$-order $\Lambda$ is called {\it regular} if $\gl\Lambda=d$ holds.
\item An $R$-order $\Lambda$ has {\it isolated singularities} if $\Lambda_\mathfrak{p}$ is a regular $R_\mathfrak{p}$-order for all $\mathfrak{p}\in\Spec R\backslash\{\mathfrak{m}\}$.
\item An $R$-order $\Lambda$ is called {\it Gorenstein} if $\id\Lambda_\Lambda=\id{}_\Lambda\Lambda=d$ holds.
\end{enumerate}
\end{Def}

Remark that $R$ itself is a regular (respectively, Gorenstein) $R$-order if and only if $R$ is a regular (respectively, Gorenstein) local ring. We have a useful criterion for $\Lambda$ to be regular. This is a consequence of Auslander-Buchsbaum formula.

\begin{Prop}\cite[2.17]{IW}\label{regproj}
An $R$-order $\Lambda$ is regular if and only if $\CM\Lambda=\proj\Lambda$ holds.
\end{Prop}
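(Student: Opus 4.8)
The plan is to prove both implications, using as the engine the Auslander--Buchsbaum equality for the order $\Lambda$: for $M\in\mod\Lambda$ with $\pd_\Lambda M<\infty$ one has $\pd_\Lambda M+\depth_R M=\depth_R\Lambda=d$. As a preliminary I would record the trivial inclusion $\proj\Lambda\subseteq\CM\Lambda$: a projective $P$ is a direct summand of some $\Lambda^{\oplus n}$, and since $\Lambda\in\CM R$ we have $\depth_R\Lambda^{\oplus n}=d$, whence $d\le\depth_R P\le\dim R=d$, so $P\in\CM R$ and thus $P\in\CM\Lambda$.

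For ``$\Lambda$ regular $\Rightarrow\CM\Lambda=\proj\Lambda$'', take $M\in\CM\Lambda$; then $\pd_\Lambda M\le\gl\Lambda=d<\infty$, and since $\depth_R M=d=\depth_R\Lambda$ the Auslander--Buchsbaum equality forces $\pd_\Lambda M=0$, i.e.\ $M\in\proj\Lambda$. For the converse, assume $\CM\Lambda=\proj\Lambda$. Given an arbitrary $M\in\mod\Lambda$, choose a projective resolution; iterating the depth lemma along $0\to\Omega^{i+1}_\Lambda M\to P_i\to\Omega^i_\Lambda M\to0$ (each $P_i\in\CM R$ by the previous paragraph) yields $\depth_R\Omega^i_\Lambda M\ge\min(i,d)$, so the $d$-th syzygy $\Omega^d_\Lambda M$ lies in $\CM\Lambda=\proj\Lambda$; hence $\pd_\Lambda M\le d$ and $\gl\Lambda\le d$. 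Conversely, for a simple $\Lambda$-module $S$ we have $\depth_R S=0$ (it has finite length over $R$) and $\pd_\Lambda S<\infty$, so Auslander--Buchsbaum gives $\pd_\Lambda S=d$; therefore $\gl\Lambda=d$ and $\Lambda$ is regular.

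The step requiring genuine care is the Auslander--Buchsbaum equality for the (possibly non-commutative) order $\Lambda$, which cannot be reduced to the commutative case directly since $\Lambda$ need not be $R$-free, so a finite $\Lambda$-projective resolution need not be $R$-projective. I would prove it by induction on $n=\pd_\Lambda M$: the case $n=0$ is the direct-summand depth computation above, and for $n\ge 1$ one applies the depth lemma to $0\to\Omega_\Lambda M\to P_0\to M\to 0$ together with $\pd_\Lambda\Omega_\Lambda M=n-1$ and the inductive hypothesis; the only subtle case is $n=1$, where one must rule out a non-split presentation $0\to P_1\to P_0\to M\to 0$ with $M$ maximal Cohen--Macaulay, equivalently show that a maximal Cohen--Macaulay $\Lambda$-module of finite projective dimension is already projective. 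This I would obtain by dualizing such a presentation with the exact duality $(-)^{\vee}=\Hom_R(-,\omega)$ on $\CM$ (or with $\Hom_\Lambda(-,\Lambda)$) and using reflexivity of maximal Cohen--Macaulay $\Lambda$-modules, which is where the normal-domain hypothesis on $R$ and the existence of $\omega$ enter. Modulo this homological input, which is standard, both directions are formal; in the write-up I would simply invoke the Auslander--Buchsbaum formula for orders as in \cite{IW}.
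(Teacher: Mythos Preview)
The paper does not give its own proof of this proposition; it is quoted directly from \cite[2.17]{IW}. Your outline via the Auslander--Buchsbaum formula for orders is exactly the standard argument and is what lies behind the cited reference, so at that level there is nothing to compare.

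One correction to your sketch: the normal-domain hypothesis on $R$ plays no role here. The proposition (and the Auslander--Buchsbaum formula for orders) holds for any Cohen--Macaulay local ring $R$; normality and the canonical module are only imposed later in the paper. Your proposed treatment of the $n=1$ case via $\Hom_\Lambda(-,\Lambda)$-reflexivity of Cohen--Macaulay $\Lambda$-modules would indeed require the $(\mathrm{R}_1)$-condition, but that is the wrong tool, and it is not clear what dualizing with $(-)^\vee$ alone would buy you, since the $P_i^\vee$ are not projective $\Lambda^{\op}$-modules in general. A clean argument needing neither normality nor $\omega$: choose $x\in\mathfrak{m}$ regular on $R$ (hence on $\Lambda$ and on $M$), reduce the length-one resolution $0\to P_1\to P_0\to M\to 0$ modulo $x$, and induct on $d$; since $\bar P_\bullet$ is a projective $\bar\Lambda$-resolution of $\bar M$, one obtains a surjection $\Ext^1_\Lambda(M,P_1)\xrightarrow{x}\Ext^1_\Lambda(M,P_1)$ from $\Ext^1_{\bar\Lambda}(\bar M,\bar P_1)=0$, and Nakayama finishes. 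Since you plan to cite \cite{IW} for the formula anyway this is a minor point, but do drop the remark about normality.
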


Next, we see that Gorenstein orders satisfy a condition called {\it Gorenstein condition}. We give a categorical proof using the concept of cotilting bimodules (see \cite{Mi}).

\begin{Prop}\cite[4.13]{GN}\label{Gorcond}
Let $\Lambda$ be a Gorenstein $R$-order. For $S\in\simp\Lambda$, we have
\[\Ext^i_\Lambda(S,\Lambda)=0\ (i\neq d),\ \Ext^d_\Lambda(S,\Lambda)\in\simp\Lambda^{\op}.\]
\end{Prop}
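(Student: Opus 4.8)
The plan is to realize ${}_\Lambda\Lambda_\Lambda$ as a cotilting bimodule and apply Miyashita's cotilting duality \cite{Mi}. Since $\Lambda$ is a Gorenstein $R$-order we have $\id{}_\Lambda\Lambda=\id\Lambda_\Lambda=d$, and moreover $\Ext^i_\Lambda(\Lambda,\Lambda)=0=\Ext^i_{\Lambda^{\op}}(\Lambda,\Lambda)$ for $i>0$, $\End_\Lambda(\Lambda)=\Lambda$, while the $\add\Lambda$-coresolutions of ${}_\Lambda\Lambda$ and $\Lambda_\Lambda$ required in the definition are the trivial ones; hence ${}_\Lambda\Lambda_\Lambda$ is a cotilting bimodule of injective dimension $d$. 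Writing $\mathcal{X}_d:=\{M\in\mod\Lambda\mid\Ext^j_\Lambda(M,\Lambda)=0\text{ for all }j\neq d\}$ and $\mathcal{Y}_d:=\{N\in\mod\Lambda^{\op}\mid\Ext^j_{\Lambda^{\op}}(N,\Lambda)=0\text{ for all }j\neq d\}$, Miyashita's theorem provides mutually quasi-inverse dualities $\Ext^d_\Lambda(-,\Lambda)\colon\mathcal{X}_d\to\mathcal{Y}_d$ and $\Ext^d_{\Lambda^{\op}}(-,\Lambda)\colon\mathcal{Y}_d\to\mathcal{X}_d$. The proposition will follow once we know $\simp\Lambda\subseteq\mathcal{X}_d$ (and, symmetrically, $\simp\Lambda^{\op}\subseteq\mathcal{Y}_d$), since dualities of abelian categories send simples to simples.

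The key step is therefore the vanishing $\Ext^j_\Lambda(M,\Lambda)=0$ for $j\neq d$ whenever $M\in\fl\Lambda$. The case $j>d$ is immediate from $\id{}_\Lambda\Lambda=d$. For $j<d$: since $\Lambda\in\CM R$ with $\dim R=d$ and $\depth_R M=0$, the depth lemma yields $N:=\Omega^d_\Lambda M\in\CM\Lambda$; over a Gorenstein order one has $\Ext^{>0}_\Lambda(N,\Lambda)=0$ and $N^*:=\Hom_\Lambda(N,\Lambda)\in\CM\Lambda^{\op}$, $(-)^*$ being the duality on maximal Cohen-Macaulay modules (see \cite{IW}). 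Hence the exact sequence $0\to N\to P_{d-1}\to\cdots\to P_0\to M\to 0$ with the $P_i$ projective has all terms $\Hom_\Lambda(-,\Lambda)$-acyclic, so $\RHom_\Lambda(M,\Lambda)$ is computed by the complex $0\to P_0^*\to\cdots\to P_{d-1}^*\to N^*\to 0$ placed in degrees $0,\dots,d$, whose terms all lie in $\CM\Lambda^{\op}\subseteq\CM R$; on the other hand each cohomology $\Ext^j_\Lambda(M,\Lambda)$ has finite length, as one sees by localizing at the primes $\neq\mathfrak{m}$. The Peskine--Szpiro acyclicity lemma, applied after reindexing this complex, forces $\Ext^j_\Lambda(M,\Lambda)=0$ for $j<d$. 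Thus $\fl\Lambda\subseteq\mathcal{X}_d$ and $\Ext^d_\Lambda(-,\Lambda)$ carries $\fl\Lambda$ into $\fl\Lambda^{\op}$; symmetrically $\fl\Lambda^{\op}\subseteq\mathcal{Y}_d$.

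To conclude, the Miyashita dualities restrict to mutually quasi-inverse dualities between $\fl\Lambda$ and $\fl\Lambda^{\op}$. A contravariant equivalence between abelian categories is automatically exact, hence it reverses the lattice of subobjects and therefore sends simple objects to simple objects. Taking $M=S\in\simp\Lambda$ then gives $\Ext^j_\Lambda(S,\Lambda)=0$ for $j\neq d$ and $\Ext^d_\Lambda(S,\Lambda)\in\simp\Lambda^{\op}$, as claimed.

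I expect the main obstacle to be precisely the vanishing $\Ext^j_\Lambda(S,\Lambda)=0$ for $j<d$: this is the one point requiring genuine commutative-algebra input (the depth estimate $\Omega^d_\Lambda M\in\CM\Lambda$ together with the acyclicity lemma), whereas the passage ``simple $\mapsto$ simple'' is formal once the cotilting framework is set up. An alternative route would compute $\RHom_\Lambda(M,\Lambda)$ through the canonical bimodule $\omega_\Lambda=\Hom_R(\Lambda,\omega)$, which is projective on both sides because $\Lambda$ is Gorenstein, using the identification $\RHom_\Lambda(M,\omega_\Lambda)\cong\RHom_R(M,\omega)$ and local duality over $R$; but recovering the correct $\Lambda^{\op}$-module structure on $\Ext^d$ along that path is more delicate, so the acyclicity argument seems preferable.
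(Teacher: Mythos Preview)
Your proof is correct and follows the same architecture as the paper's: show $\Ext^i_\Lambda(S,\Lambda)=0$ for $i\neq d$, then use that $\Lambda$ is a cotilting bimodule so that the resulting duality on finite-length modules carries simples to simples. The paper phrases the duality via Miyachi's derived anti-equivalence $\RHom_\Lambda(-,\Lambda)\colon D^b(\mod\Lambda)\to D^b(\mod\Lambda^{\op})$ rather than the Miyashita $\mathcal{X}_d\leftrightarrow\mathcal{Y}_d$ picture, but this is cosmetic.

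The one real difference is the low-degree vanishing, which you single out as ``the main obstacle'' and prove via syzygies and the Peskine--Szpiro acyclicity lemma. The paper does it in half a sentence: ``By $\depth_R\Lambda=d$, we have $\Ext^i_\Lambda(S,\Lambda)=0$ $(i<d)$.'' The point is that $S$ is killed by $\mathfrak{m}$, and $\mathfrak{m}$ contains a central $\Lambda$-regular sequence of length $d$; the Rees-type shift $\Ext^i_\Lambda(S,\Lambda)\cong\Ext^{i-1}_{\Lambda/x\Lambda}(S,\Lambda/x\Lambda)$ then gives the vanishing by induction on $d$. Your argument is sound, but the depth hypothesis you invoke to place $\Omega^d_\Lambda M$ in $\CM\Lambda$ is already strong enough to conclude directly, so the detour through the dualized complex and the acyclicity lemma is unnecessary.
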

\begin{proof}
By $\depth_R\Lambda=d$, we have $\Ext^i_\Lambda(S,\Lambda)=0\ (i<d)$. Since $\id\Lambda_\Lambda=d$ holds, we have $\Ext^i_\Lambda(S,\Lambda)=0\ (i>d)$. Now since $\Lambda$ is a cotilting $(\Lambda,\Lambda)$-bimodule, $\RHom_{\Lambda}(-,\Lambda)\colon D^b(\mod\Lambda)\to D^b(\mod\Lambda^{\op})$ gives an anti-equivalence (see \cite[2.11]{Mi}). By the discussion above, we have $\RHom_{\Lambda}(S,\Lambda)\in\fl\Lambda^{\op}[-d]$. Thus $\RHom_{\Lambda}(-,\Lambda)$ restricts to an anti-equivalence $\fl\Lambda\to\fl\Lambda^{\op}[-d]$. Therefore we obtain an anti-equivalence $\Ext^d_\Lambda(-,\Lambda)\colon\fl\Lambda\to\fl\Lambda^{\op}$ and the assertion follows.
\end{proof}

Here, we see a useful basic lemma which will be used repeatedly.

\begin{Lem}\label{homdepth}
Let $X,Y\in\mod\Lambda$. If $\depth_RY\geq2$, then $\depth_R(\Hom_\Lambda(X,Y))\geq2$ holds.
\end{Lem}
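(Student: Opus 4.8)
\emph{Proposal.} The plan is to replace $X$ by a finite free presentation over $\Lambda$ and then push depth information through the resulting left-exact sequence using the depth lemma. Since $R$ is Noetherian and $\Lambda$ is module-finite over $R$, the ring $\Lambda$ is Noetherian, so $X$ admits a presentation $\Lambda^{\oplus m}\to\Lambda^{\oplus n}\to X\to 0$. Applying the left-exact functor $\Hom_\Lambda(-,Y)$ and using $\Hom_\Lambda(\Lambda^{\oplus j},Y)\cong Y^{\oplus j}$ yields an exact sequence of $R$-modules
\[0\longrightarrow\Hom_\Lambda(X,Y)\longrightarrow Y^{\oplus n}\longrightarrow Y^{\oplus m}.\]
In particular $\Hom_\Lambda(X,Y)$ is a submodule of the finitely generated $R$-module $Y^{\oplus n}$, hence itself finitely generated.

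Next I would set $Z:=\Im(Y^{\oplus n}\to Y^{\oplus m})$, giving a short exact sequence $0\to\Hom_\Lambda(X,Y)\to Y^{\oplus n}\to Z\to 0$ with $Z$ an $R$-submodule of $Y^{\oplus m}$. The hypothesis $\depth_RY\geq 2$ gives $\depth_R(Y^{\oplus n})\geq 2$ and $\depth_R(Y^{\oplus m})\geq 2$, so $\mathfrak{m}\notin\operatorname{Ass}_R(Y^{\oplus m})$; since $\operatorname{Ass}_R(Z)\subseteq\operatorname{Ass}_R(Y^{\oplus m})$, we get $\depth_RZ\geq 1$ (this also covers the degenerate case $Z=0$). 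Applying the depth lemma to the short exact sequence then gives
\[\depth_R\Hom_\Lambda(X,Y)\ \geq\ \min\{\depth_R(Y^{\oplus n}),\ \depth_RZ+1\}\ \geq\ \min\{2,2\}\ =\ 2,\]
as desired.

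The only points needing a word of care are the degenerate cases: if $X=0$ then $\Hom_\Lambda(X,Y)=0$ has infinite depth and there is nothing to prove, and if $m=0$ then $\Hom_\Lambda(X,Y)\cong Y^{\oplus n}$ directly. I do not anticipate a genuine obstacle here: the statement is a routine consequence of the depth lemma once $X$ is resolved one step by frees, and it is recorded precisely because this reduction will be invoked repeatedly later.
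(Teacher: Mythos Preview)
Your proof is correct and follows essentially the same approach as the paper: take a finite projective (free) presentation of $X$, apply $\Hom_\Lambda(-,Y)$, and invoke the depth lemma on the resulting left-exact sequence. The paper's version is terser (it uses projective $P_i$ and simply states that the conclusion follows from the depth lemma), while you spell out the intermediate image $Z$ and the associated-prime argument, but the substance is identical.
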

\begin{proof}
Let $P_1\to P_0\to X\to 0$ be a projective presentation of $X$ in $\mod\Lambda$. Then we have an exact sequence $0\to\Hom_\Lambda(X,Y)\to\Hom_\Lambda(P_0,Y)\to\Hom_\Lambda(P_1,Y)$. Since $\depth_R(\Hom_\Lambda(P_i,Y))\geq2$ holds, the assertion follows by using the depth lemma.
\end{proof}

In the rest, we assume $d=\dim R\geq2$. We consider the following $({\rm R}_1)$-condition for $\Lambda$.
\[({\rm R}_1)\colon\Lambda_\mathfrak{p}\text{ is a regular }R_\mathfrak{p}\text{-order for all }\mathfrak{p}\in\Spec R\ \text{with}\height\mathfrak{p}\leq1\]
For example, if $\Lambda=R$, then $R$ satisfies the $({\rm R}_1)$-condition if and only if $R$ is normal (since $R$ is Cohen-Macaulay with $d\geq2$ by our assumption). Observe that if $\Lambda$ satisfies the $({\rm R}_1)$-condition, then for all $M\in\refl\Lambda$ and $\mathfrak{p}\in\Spec R$ with $\height\mathfrak{p}\leq1$, we have $M_\mathfrak{p}\in\proj\Lambda_\mathfrak{p}$ by Proposition \ref{regproj}.

Next, we consider the following $({\rm S}_n)$-condition for a module $X\in\mod R$.
\[({\rm S}_n)\colon\depth_{R_\mathfrak{p}}X_\mathfrak{p}\geq\min\{n,\height\mathfrak{p}\}\text{ for all }\mathfrak{p}\in\Spec R\]
We have the following useful lemma which states that whether a homomorphism between certain modules is an isomorphism is determined only by codimension-one information.

\begin{Lem}\cite[5.11]{LW}\label{s2isom}
Let $R$ be a commutative Noetherian ring and $f\colon M\to N$ a morphism in $\mod R$. We assume that $M$ satisfies $({\rm S}_2)$ and that $N$ satisfies $({\rm S}_1)$. If $f_\mathfrak{p}\colon M_\mathfrak{p}\to N_\mathfrak{p}$ is an isomorphism for all $\mathfrak{p}\in\Spec R$ with $\height\mathfrak{p}\leq1$, then $f$ is an isomorphism.
\end{Lem}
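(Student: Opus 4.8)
The plan is to reduce the statement to showing that the kernel $K:=\Ker f$ and the cokernel $C:=\operatorname{Coker}f$ both vanish, and to control their supports and depths using the $(\mathrm{S}_2)$ and $(\mathrm{S}_1)$ hypotheses. The guiding slogan is that for such modules an isomorphism "in codimension one" propagates to everything, because the only obstruction would live in codimension $\geq 2$, which the depth conditions forbid.

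First I would observe that localization is exact, so for every $\mathfrak{p}\in\Spec R$ with $\height\mathfrak{p}\leq1$ the hypothesis that $f_\mathfrak{p}$ is an isomorphism gives $K_\mathfrak{p}=0=C_\mathfrak{p}$; hence every prime in $\operatorname{Supp}K\cup\operatorname{Supp}C$ has height $\geq2$. Next I handle the kernel. Since $K\subseteq M$ we have $\operatorname{Ass}_RK\subseteq\operatorname{Ass}_RM$, and because $M$ satisfies $(\mathrm{S}_2)$ (in particular $(\mathrm{S}_1)$), any $\mathfrak{p}\in\operatorname{Ass}_RM$ satisfies $\depth M_\mathfrak{p}=0$, which forces $\min\{1,\height\mathfrak{p}\}=0$, i.e.\ $\height\mathfrak{p}=0$. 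Thus every associated prime of $K$ has height $0$; but $K_\mathfrak{p}=0$ at all height-$0$ primes, so $\operatorname{Ass}_RK=\emptyset$ and $K=0$. Consequently $f$ is injective and we obtain a short exact sequence $0\to M\xrightarrow{f}N\to C\to 0$.

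For the cokernel, suppose $C\neq0$. Since $C$ is finitely generated over the Noetherian ring $R$, we may pick $\mathfrak{p}\in\operatorname{Ass}_RC$, so that $\depth_{R_\mathfrak{p}}C_\mathfrak{p}=0$ and, by the first step, $\height\mathfrak{p}\geq2$. Localizing the short exact sequence at $\mathfrak{p}$ and applying the depth lemma to $0\to M_\mathfrak{p}\to N_\mathfrak{p}\to C_\mathfrak{p}\to0$ gives $\depth C_\mathfrak{p}\geq\min\{\depth M_\mathfrak{p}-1,\ \depth N_\mathfrak{p}\}$. Now $(\mathrm{S}_2)$ for $M$ yields $\depth M_\mathfrak{p}\geq\min\{2,\height\mathfrak{p}\}=2$, and $(\mathrm{S}_1)$ for $N$ together with $\height\mathfrak{p}\geq1$ yields $\depth N_\mathfrak{p}\geq1$, so $\depth C_\mathfrak{p}\geq1$, contradicting $\depth C_\mathfrak{p}=0$. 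Hence $C=0$ and $f$ is an isomorphism.

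I do not expect any genuine obstacle here: the proof is bookkeeping with associated primes and the depth lemma, exactly the standard "codimension one suffices" argument (this is $\cite[5.11]{LW}$). The only points needing minor care are invoking each $(\mathrm{S}_n)$ condition at a prime of the correct height and remembering that in a Noetherian ring the height-$0$ primes are precisely the minimal primes, which is what makes the kernel's associated-prime set empty once $K$ vanishes in codimension $\leq1$.
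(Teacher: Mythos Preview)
The paper does not supply its own proof of this lemma; it simply quotes \cite[5.11]{LW}. Your argument is correct and is the standard one: kill $\Ker f$ using $\operatorname{Ass}_R K\subseteq\operatorname{Ass}_R M$ together with $(\mathrm{S}_1)$ for $M$, then apply the depth lemma to $0\to M_\mathfrak{p}\to N_\mathfrak{p}\to C_\mathfrak{p}\to0$ at an associated prime of $C$ (necessarily of height $\geq 2$) to derive a contradiction from $(\mathrm{S}_2)$ for $M$ and $(\mathrm{S}_1)$ for $N$.
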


Using this lemma, we can prove the following basic facts for orders satisfying the $({\rm R}_1)$-condition. Remark that this (2) says that reflexive $\Lambda$-modules are also reflexive with respect to the canonical dual $(-)^\vee=\Hom_R(-,\omega)$.

\begin{Prop}
Let $\Lambda$ be an $R$-order satisfying the $({\rm R}_1)$-condition.
\begin{enumerate}
\item We have $\CM\Lambda\subseteq\refl\Lambda$.
\item For $X\in\refl\Lambda$, we have $X^\vee\in\refl\Lambda$ and $X\cong X^{\vee\vee}$.
\end{enumerate}
\end{Prop}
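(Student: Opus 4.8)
The plan is to prove both statements by reducing to codimension one via Lemma \ref{s2isom}, using the $({\rm R}_1)$-condition to control what happens at height $\leq 1$ primes. For (1), the key observation is that $\CM\Lambda$ consists of modules satisfying $({\rm S}_2)$ over $R$ (indeed $\depth_{R_\mathfrak{p}}X_\mathfrak{p}=\height\mathfrak{p}$ for all $\mathfrak{p}$ when $X\in\CM\Lambda$), so it suffices to show $\refl\Lambda$ is exactly the subcategory of $\mod\Lambda$ consisting of modules that are $({\rm S}_2)$ over $R$ — or at least that $({\rm S}_2)$ modules are reflexive. First I would recall the standard fact that for $X\in\mod\Lambda$, the natural map $X\to X^{**}$ is an isomorphism if and only if $X$ satisfies $({\rm S}_2)$ as an $R$-module, provided $\Lambda$ is an $R$-order satisfying $({\rm R}_1)$; the forward direction of this uses that $X^{**}$ always satisfies $({\rm S}_2)$ (by Lemma \ref{homdepth}, applied twice, since $\depth_R\Lambda = d\geq 2$), while the reverse direction uses Lemma \ref{s2isom}: the map $X\to X^{**}$ is an isomorphism at all height $\leq 1$ primes because there $\Lambda_\mathfrak{p}$ is regular, hence $X_\mathfrak{p}$ is projective (as $X_\mathfrak{p}\in\CM\Lambda_\mathfrak{p}=\proj\Lambda_\mathfrak{p}$ by Proposition \ref{regproj}), and projective modules are reflexive. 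Since every $X\in\CM\Lambda$ is $({\rm S}_2)$ over $R$, this gives $\CM\Lambda\subseteq\refl\Lambda$.

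For (2), let $X\in\refl\Lambda$. By the discussion just above, $X$ satisfies $({\rm S}_2)$ over $R$. First I would check that $X^\vee=\Hom_R(X,\omega)$ satisfies $({\rm S}_2)$ over $R$: this is a local computation, using that $\omega$ is the canonical module so that $(X^\vee)_\mathfrak{p}=(X_\mathfrak{p})^{\vee}$ and local duality controls the depth of $\Hom_{R_\mathfrak{p}}(X_\mathfrak{p},\omega_\mathfrak{p})$ — concretely, $\Hom_R(-,\omega)$ sends modules to modules satisfying $({\rm S}_2)$, which is a well-known property of the canonical dual over a Cohen-Macaulay ring (alternatively argue as in Lemma \ref{homdepth} after taking a presentation, since $\depth_R\omega=d\geq 2$). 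Hence $X^\vee$ is $({\rm S}_2)$ over $R$, so applying the height $\leq 1$ criterion again (where $X^\vee_\mathfrak{p}=(X_\mathfrak{p})^\vee$ is the canonical dual of a projective $\Lambda_\mathfrak{p}$-module, hence again in $\CM\Lambda_\mathfrak{p}^{\op}=\proj\Lambda^{\op}_\mathfrak{p}$, hence reflexive) we get $X^\vee\in\refl\Lambda$ — wait, more carefully: being $({\rm S}_2)$ over $R$ together with $({\rm R}_1)$ for $\Lambda$ gives reflexivity over $\Lambda$ by the characterization in the first paragraph, so $X^\vee\in\refl\Lambda$.

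For the isomorphism $X\cong X^{\vee\vee}$, I would use the natural evaluation morphism $\eta_X\colon X\to X^{\vee\vee}$ and invoke Lemma \ref{s2isom} once more: $X$ satisfies $({\rm S}_2)$ and $X^{\vee\vee}$ satisfies $({\rm S}_2)$ (in particular $({\rm S}_1)$) since it is a canonical dual, so it suffices to check $\eta_X$ is an isomorphism at primes $\mathfrak{p}$ of height $\leq 1$. There $X_\mathfrak{p}\in\proj\Lambda_\mathfrak{p}$ is a summand of a free module and $(-)^\vee$ over the Gorenstein local ring $R_\mathfrak{p}$ (which is regular, hence Gorenstein, so $\omega_\mathfrak{p}\cong R_\mathfrak{p}$) is a genuine duality on finitely generated modules, making $\eta_{X_\mathfrak{p}}$ an isomorphism. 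Hence $\eta_X$ is an isomorphism globally.

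The main obstacle I anticipate is the bookkeeping around the canonical dual: one must be careful that $(-)^\vee$ localizes correctly (i.e.\ $(X^\vee)_\mathfrak{p}\cong\Hom_{R_\mathfrak{p}}(X_\mathfrak{p},\omega_\mathfrak{p})$, which holds since $X$ is finitely generated) and that $\omega_\mathfrak{p}$ is a canonical module for $R_\mathfrak{p}$, so that at height $\leq 1$ primes — where $R_\mathfrak{p}$ is regular — it is free of rank one and $(-)^\vee$ behaves like $(-)^*$. Everything else is a routine application of the depth lemma and Lemma \ref{s2isom}; the conceptual content is entirely that the $({\rm R}_1)$-condition lets one reduce reflexivity questions to codimension one, where modules in $\CM\Lambda$ become projective.
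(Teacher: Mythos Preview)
Your proposal is correct and follows essentially the same route as the paper: reduce everything to codimension $\leq 1$ via Lemma \ref{s2isom}, where the $({\rm R}_1)$-condition forces $X_\mathfrak{p}\in\proj\Lambda_\mathfrak{p}$, and use Lemma \ref{homdepth} (and its analogue with $\omega$ in place of $\Lambda$) to get the required $({\rm S}_2)$ conditions on $X^{**}$, $X^\vee$, and $X^{\vee\vee}$. The paper phrases (2) as applying ``the same argument as in (1)'' directly to the evaluation maps $X^\vee\to X^{\vee**}$ and $X\to X^{\vee\vee}$, which is exactly your characterization that $({\rm S}_2)$ over $R$ plus $({\rm R}_1)$ for $\Lambda$ implies $\Lambda$-reflexivity.
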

\begin{proof}
(1) Take $X\in\CM\Lambda$. We consider the evaluation map $f\colon X\to X^{**}$. For $\mathfrak{p}\in\Spec R$ with $\height\mathfrak{p}\leq1$, $f_\mathfrak{p}$ is an isomorphism since $X_\mathfrak{p}\in\proj\Lambda_\mathfrak{p}$ holds. Remark that not only $X$ but also $X^{**}$ satisfies $({\rm S}_2)$ by Lemma \ref{homdepth}. Thus $f$ is an isomorphism by Lemma \ref{s2isom}.

(2) Take $X\in\refl\Lambda$. First, we consider the evaluation map $g\colon X^\vee\to X^{\vee**}$. By the same argument as in (1), we can see that $g$ is an isomorphism. Thus $X^\vee\in\refl\Lambda$ holds. In the same way, we can see that the evaluation map $X\to X^{\vee\vee}$ is also an isomorphism.
\end{proof}

Thanks to this proposition, we can define the {\it Nakayama functor}
\[\nu:=(-)^{*\vee}\colon\refl\Lambda\longrightarrow\refl\Lambda.\]
This is a categorical equivalence with quasi-inverse $\nu^-:=(-)^{\vee*}$. Finally, we see the following lemma which plays a crucial role in the next subsection.

\begin{Lem}\cite[3.3]{DITW}\label{homcheck}
Let $\Lambda$ be an $R$-order satisfying the $({\rm R}_1)$-condition. For $X,Y\in\refl\Lambda$, we have
\[\Hom_\Lambda(X,\nu Y)\cong\Hom_\Lambda(Y,X)^\vee.\]
\end{Lem}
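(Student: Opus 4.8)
The plan is to realize both sides of the claimed isomorphism as the canonical dual $(-)^{\vee}=\Hom_R(-,\omega)$ of a common third module, construct a natural map between them, and then reduce the comparison to codimension one, where the $(\mathrm{R}_1)$-condition makes every reflexive module projective. Throughout, $Y^{*}=\Hom_\Lambda(Y,\Lambda)$ is regarded as a right $\Lambda$-module in the usual way, so that $Y^{*}\otimes_\Lambda X$ is a finitely generated $R$-module.

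First I would record the hom-tensor adjunction
\[
\Hom_\Lambda\bigl(X,\Hom_R(Y^{*},\omega)\bigr)\;\cong\;\Hom_R\bigl(Y^{*}\otimes_\Lambda X,\;\omega\bigr),
\]
natural in $X$ and $Y$. Since $\nu Y=Y^{*\vee}=\Hom_R(Y^{*},\omega)$, the left-hand side is exactly $\Hom_\Lambda(X,\nu Y)$, so this reads $\Hom_\Lambda(X,\nu Y)\cong(Y^{*}\otimes_\Lambda X)^{\vee}$. Next I would introduce the natural evaluation morphism
\[
\eta_{Y,X}\colon\;Y^{*}\otimes_\Lambda X\longrightarrow\Hom_\Lambda(Y,X),\qquad \phi\otimes x\longmapsto\bigl(y\mapsto\phi(y)x\bigr),
\]
which is $\Lambda$-linear and $\Lambda$-balanced, functorial in both arguments, and an isomorphism whenever $Y$ is finitely generated projective (check on $Y=\Lambda$ and invoke additivity). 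Applying $(-)^{\vee}$ and composing with the adjunction isomorphism yields a natural map
\[
\eta_{Y,X}^{\vee}\colon\;\Hom_\Lambda(Y,X)^{\vee}\longrightarrow(Y^{*}\otimes_\Lambda X)^{\vee}\;\cong\;\Hom_\Lambda(X,\nu Y),
\]
and it remains to show that $\eta_{Y,X}^{\vee}$ is an isomorphism.

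This is where Lemma \ref{s2isom} enters. Both the source and the target of $\eta_{Y,X}^{\vee}$ are canonical duals of finitely generated $R$-modules, hence satisfy $(\mathrm{S}_2)$ over $R$ (equivalently, $\Hom_\Lambda(X,\nu Y)$ satisfies $(\mathrm{S}_2)$ by Lemma \ref{homdepth}, since $\nu Y\in\refl\Lambda$ has $\depth_R\geq 2$). Therefore it suffices to check that $\eta_{Y,X}^{\vee}$ becomes an isomorphism after localizing at every $\mathfrak p\in\Spec R$ with $\height\mathfrak p\leq 1$. Both $(-)^{*}$ and $(-)^{\vee}$ commute with such localizations (using that $\omega_\mathfrak p$ is the canonical module of $R_\mathfrak p$), so $(\eta_{Y,X}^{\vee})_\mathfrak p$ is identified with $(\eta_{Y_\mathfrak p,X_\mathfrak p})^{\vee}$; and because $\Lambda$ satisfies $(\mathrm{R}_1)$ and $Y\in\refl\Lambda$, we have $Y_\mathfrak p\in\proj\Lambda_\mathfrak p$, so $\eta_{Y_\mathfrak p,X_\mathfrak p}$ — and hence its dual — is an isomorphism. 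Lemma \ref{s2isom} then gives that $\eta_{Y,X}^{\vee}$ is an isomorphism, establishing $\Hom_\Lambda(X,\nu Y)\cong\Hom_\Lambda(Y,X)^{\vee}$.

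The only genuinely delicate points are bookkeeping: keeping the left/right $\Lambda$-module structures straight so that the hom-tensor adjunction used is the correct one, and verifying that $\eta$ is well defined. The substantive inputs — reflexive modules are projective in codimension one under $(\mathrm{R}_1)$, canonical duals satisfy $(\mathrm{S}_2)$, and the codimension-one rigidity of Lemma \ref{s2isom} — are all already available in the surrounding text, so no real obstacle is expected.
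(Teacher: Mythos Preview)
Your proof is correct and follows essentially the same route as the paper: both identify $\Hom_\Lambda(X,\nu Y)\cong (Y^{*}\otimes_\Lambda X)^{\vee}$ via hom--tensor adjunction, introduce the evaluation map to $\Hom_\Lambda(Y,X)$, dualize, and then apply Lemma~\ref{s2isom} together with the $(\mathrm{R}_1)$-condition to reduce to codimension $\leq 1$ where reflexives are projective. Your bookkeeping of left/right structures is in fact slightly cleaner than the paper's (which writes $X\otimes_\Lambda Y^{*}$ and the formula $xf(y)$), and you correctly note that only projectivity of $Y_{\mathfrak p}$ is needed for the evaluation map to be an isomorphism.
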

\begin{proof}
We give a complete proof here since our setting is slightly more general than \cite[3.3]{DITW}. Observe that $\Hom_\Lambda(X,\nu Y)\cong(X\otimes_\Lambda Y^*)^\vee$ holds. We have a natural $R$-homomorphism $\phi\colon X\otimes_\Lambda Y^*\to\Hom_\Lambda(Y,X);x\otimes f\mapsto(y\mapsto xf(y))$. For $\mathfrak{p}\in\Spec R$ with $\height\mathfrak{p}\leq1$, $\phi_\mathfrak{p}$ is an isomorphism since $X_\mathfrak{p},Y_\mathfrak{p}\in\proj\Lambda_\mathfrak{p}$ holds. Since $Z^\vee\in\mod R$ satisfies $({\rm S}_2)$-condition for all $Z\in\mod R$ by Lemma \ref{homdepth}, we can conclude that $\phi^\vee\colon\Hom_\Lambda(Y,X)^\vee\to(X\otimes_\Lambda Y^*)^\vee$ is an isomorphism by Lemma \ref{s2isom}.
\end{proof}

\subsection{Auslander-Reiten theory for NCCRs}

In the rest of this section, we assume $d=\dim R\geq2$ and $R$ is normal. We introduce the concept of non-commutative crepant resolutions (NCCR).

\begin{Def}\cite[4.1]{VdB}\cite{IR}\label{defNCCR}
Let $\Lambda$ be an $R$-order and $M\in\refl\Lambda$. We say $\End_\Lambda(M)$ is an {\it NCCR} of $\Lambda$ or $M$ gives an {\it NCCR} of $\Lambda$ if the following two conditions are satisfied.
\begin{enumerate}
\item The ring $\End_\Lambda(M)$ is a regular $R$-order.
\item The module $M_\mathfrak{p}\in\mod\Lambda_\mathfrak{p}$ is a generator for all $\mathfrak{p}\in\Spec R$ with $\height\mathfrak{p}\leq1$.
\end{enumerate}
\end{Def}

Observe that orders having an NCCR must satisfy the $({\rm R}_1)$-condition.

\begin{Prop}\cite[8.1]{IR}\label{NCCRR1}
If an $R$-order $\Lambda$ has an NCCR, then $\Lambda$ satisfies the $({\rm R}_1)$-condition.
\end{Prop}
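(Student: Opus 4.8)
The plan is to reduce, by localization, to a statement about orders over a discrete valuation ring. Fix $\mathfrak{p}\in\Spec R$ with $\height\mathfrak{p}\leq 1$. Since $R$ is a Cohen--Macaulay normal domain with $\dim R\geq 2$, the ring $R_\mathfrak{p}$ is regular local of dimension $\height\mathfrak{p}\in\{0,1\}$. As $\Lambda\in\CM R$ we automatically have $\Lambda_\mathfrak{p}\in\CM R_\mathfrak{p}$, so by Proposition \ref{regproj}, together with the Auslander--Buchsbaum formula applied to the simple $\Lambda_\mathfrak{p}$-modules (which forces $\gl\Lambda_\mathfrak{p}=\dim R_\mathfrak{p}$ as soon as $\gl\Lambda_\mathfrak{p}<\infty$), it suffices to show that $\gl\Lambda_\mathfrak{p}<\infty$, i.e. that $\Lambda_\mathfrak{p}$ is a regular $R_\mathfrak{p}$-order.

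Put $\Gamma:=\End_\Lambda(M)$, so that $\Gamma_\mathfrak{p}\cong\End_{\Lambda_\mathfrak{p}}(M_\mathfrak{p})$ because $M$ is finitely generated. Since $\Gamma$ is a regular $R$-order we get $\gl\Gamma_\mathfrak{p}\leq\gl\Gamma=d<\infty$ and $\Gamma_\mathfrak{p}\in\CM R_\mathfrak{p}$, hence $\Gamma_\mathfrak{p}$ is itself a regular $R_\mathfrak{p}$-order by the same argument as above, and $\CM\Gamma_\mathfrak{p}=\proj\Gamma_\mathfrak{p}$ by Proposition \ref{regproj}. Next I would observe that $M_\mathfrak{p}$ is maximal Cohen--Macaulay over $R_\mathfrak{p}$: since $M\in\refl\Lambda$, the module $M_\mathfrak{p}$ is reflexive over $\Lambda_\mathfrak{p}$, hence isomorphic to a $\Lambda_\mathfrak{p}$-dual $\Hom_{\Lambda_\mathfrak{p}}(-,\Lambda_\mathfrak{p})$, which embeds into a finite free $\Lambda_\mathfrak{p}$-module and is therefore $R_\mathfrak{p}$-torsion-free, hence free over the discrete valuation ring (or field) $R_\mathfrak{p}$. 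Consequently $M_\mathfrak{p}\in\CM\Gamma_\mathfrak{p}=\proj\Gamma_\mathfrak{p}$; that is, $M_\mathfrak{p}$ is a finitely generated projective $\Gamma_\mathfrak{p}$-module.

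Now condition (2) of Definition \ref{defNCCR} enters: $M_\mathfrak{p}$ is a generator of $\mod\Lambda_\mathfrak{p}$, so the double centralizer theorem realizes $\Lambda_\mathfrak{p}$ — up to passing to the opposite ring, which does not affect regularity — as the endomorphism ring of $M_\mathfrak{p}$ regarded as a module over $\Gamma_\mathfrak{p}$. It therefore remains to see that the endomorphism ring of a finitely generated projective module over a regular $R_\mathfrak{p}$-order of dimension $\leq 1$ is again a regular $R_\mathfrak{p}$-order. When $\dim R_\mathfrak{p}=0$ this is immediate, since $\Gamma_\mathfrak{p}$ is then semisimple and the endomorphism ring of a finitely generated module over a semisimple ring is semisimple. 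When $\dim R_\mathfrak{p}=1$, $\Gamma_\mathfrak{p}$ is a hereditary order; after a Morita reduction to a basic direct summand of $M_\mathfrak{p}$ one is reduced to showing that a corner ring $e\Gamma_\mathfrak{p} e$ of a hereditary order is again hereditary, which belongs to the structure theory of hereditary orders over a discrete valuation ring (one may first pass to the completion of $R_\mathfrak{p}$ to invoke it).

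I expect the dimension-one case of this last step to be the only genuine obstacle. It really does use that $\Gamma_\mathfrak{p}$ is \emph{regular}, i.e. $\gl\Gamma_\mathfrak{p}=\dim R_\mathfrak{p}$, and not merely that it has finite global dimension: the endomorphism ring of a generator over a ring of finite global dimension can have infinite global dimension (as the Auslander algebras of $k[x]/(x^2)$ show), so no purely formal idempotent-reduction argument can suffice and one must exploit the special behaviour of hereditary orders. An alternative is to first prove, using the generator hypothesis, that $\Hom_{\Lambda_\mathfrak{p}}(M_\mathfrak{p},-)$ restricts to an equivalence $\CM\Lambda_\mathfrak{p}\xrightarrow{\sim}\proj\Gamma_\mathfrak{p}$ (full faithfulness from $M_\mathfrak{p}$ being a generator, essential surjectivity from $\add_{\Lambda_\mathfrak{p}}M_\mathfrak{p}\subseteq\CM\Lambda_\mathfrak{p}$), so that $\CM\Lambda_\mathfrak{p}=\add_{\Lambda_\mathfrak{p}}M_\mathfrak{p}$, and then to deduce that $M_\mathfrak{p}$ is projective over $\Lambda_\mathfrak{p}$; but extracting this last projectivity again requires the heredity of $\Gamma_\mathfrak{p}$, so this merely relocates the difficulty rather than removing it.
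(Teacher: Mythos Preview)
The paper does not prove this proposition at all; it simply records it as a citation of \cite[8.1]{IR}. Your outline is essentially the standard argument one finds in that reference: localize at a height-$\le 1$ prime, use that $\Gamma_\mathfrak{p}$ is a regular (hence hereditary or semisimple) $R_\mathfrak{p}$-order so that $M_\mathfrak{p}$ is $\Gamma_\mathfrak{p}$-projective, invoke the generator hypothesis to identify $\Lambda_\mathfrak{p}$ with an idempotent corner of (a matrix ring over) $\Gamma_\mathfrak{p}$, and then appeal to the fact that idempotent corners of hereditary orders over a DVR are again hereditary. Your diagnosis of where the genuine content lies---the dimension-one step, requiring heredity rather than merely finite global dimension, with the Auslander-algebra counterexample showing why---is exactly right. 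The only cosmetic point is the left/right bookkeeping: $M_\mathfrak{p}$ is naturally a \emph{right} $\Gamma_\mathfrak{p}$-module, so one is really working with $\Gamma_\mathfrak{p}^{\op}$, but as you note this is harmless for regularity.
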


Before discussing NCCR, we give a quick review on functor categories. For a preadditive category $\C$, a {\it left module} over $\C$ means a covariant additive functor from $\C$ to the category of Abelian groups. We denote by $\Mod\C$ the Abelian category of left modules over $\C$.

\begin{Prop}\cite[2.5]{Au74}\label{projectivization}
Let $\C$ be an additive category having an additive generator $X\in\C$. Then the substitution functor
\[{\rm ev}_X\colon\Mod\C^{\op}\to\Mod\End_\C(X)^{\op}\]
is a categorical equivalence. Moreover, if $\C$ is idempotent complete, then this restricts to an equivalence
\[\C(X,-)\colon\C\to\proj\End_\C(X)^{\op}.\]
\end{Prop}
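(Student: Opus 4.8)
The plan is to treat this as an instance of Auslander's ``projectivization'' and to construct an explicit quasi-inverse to ${\rm ev}_X$. Set $\Gamma:=\End_\C(X)$ and let $P:=\C(-,X)\in\Mod\C^{\op}$ be the functor represented by $X$. The Yoneda lemma gives a natural isomorphism ${\rm ev}_X\cong\Hom_{\Mod\C^{\op}}(P,-)$ together with a ring isomorphism $\End_{\Mod\C^{\op}}(P)\cong\Gamma$, so that ${\rm ev}_X$ indeed takes values in $\Mod\Gamma^{\op}$; moreover $P$ is projective, and it is a generator of $\Mod\C^{\op}$ exactly because $\add X=\C$ --- if $F(X)=0$ then $F(Y)=0$ for every direct summand $Y$ of any $X^{\oplus n}$, hence for every object of $\C$. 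From here one could simply cite the Morita-type characterisation of module categories (an abelian category with a compact projective generator $P$ is equivalent to $\Mod\End(P)^{\op}$ via $\Hom(P,-)$); I would instead keep the argument self-contained, as follows.

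Define $\Phi\colon\Mod\Gamma^{\op}\to\Mod\C^{\op}$ by $\Phi(M):=M\otimes_\Gamma\C(-,X)$, using that $\C(Y,X)$ is a left $\Gamma$-module via post-composition. Then ${\rm ev}_X\circ\Phi(M)=M\otimes_\Gamma\C(X,X)=M\otimes_\Gamma\Gamma\cong M$ naturally, which handles one composite. For the other, there is a natural transformation $\Phi\circ{\rm ev}_X(F)\Rightarrow F$, namely on each $Y$ the map $F(X)\otimes_\Gamma\C(Y,X)\to F(Y)$, $m\otimes g\mapsto F(g)(m)$ (well defined because it is $\Gamma$-balanced, and natural in $Y$); it is an isomorphism at $Y=X$ by the previous computation, and since both sides are additive functors of $Y$, it is then an isomorphism on every $X^{\oplus n}$ and, via the splitting idempotents, on every retract of such an object --- here one uses that a block-diagonal direct summand of an isomorphism is again an isomorphism --- hence on all of $\add X=\C$. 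The only delicate point in this step is the bookkeeping of the left/right and ``$\op$'' decorations on the various $\Gamma$-actions and checking that these assignments really are morphisms of functors; I do not expect a substantive obstacle. This proves ${\rm ev}_X$ is an equivalence.

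For the second assertion, compose the Yoneda embedding $h\colon\C\to\Mod\C^{\op}$, $Y\mapsto\C(-,Y)$, which is additive and fully faithful, with the equivalence ${\rm ev}_X$. Since ${\rm ev}_X(\C(-,Y))=\C(X,Y)$ compatibly with the module structures, we get ${\rm ev}_X\circ h\cong\C(X,-)$, which is therefore fully faithful; it remains to compute its essential image. On one side, $\C(X,X^{\oplus n})\cong\Gamma^{\oplus n}$, so $\C(X,Y)$ is a direct summand of a free $\Gamma$-module, hence finitely generated projective, for every $Y\in\C=\add X$. On the other side, given $P\in\proj\Gamma^{\op}$, write $P=\im\bar e$ for an idempotent $\bar e\in\End_\Gamma(\Gamma^{\oplus n})=\End_\C(X^{\oplus n})$, lift $\bar e$ through full faithfulness to an idempotent $e\in\End_\C(X^{\oplus n})$, and split $e$ to obtain $Y\in\C$ with $\C(X,Y)\cong\im\bar e=P$, using uniqueness of idempotent splittings. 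It is precisely this last step --- and nowhere else --- that uses the idempotent-completeness of $\C$, which is why it is the hypothesis added for the second half of the statement, and it is the one place I would monitor carefully. Hence $\C(X,-)\colon\C\xrightarrow{\ \sim\ }\proj\Gamma^{\op}$ is an equivalence.
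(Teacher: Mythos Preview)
The paper does not supply its own proof of this proposition; it simply cites \cite[2.5]{Au74} and uses the result as known. Your argument is a correct and self-contained proof along the standard Morita/projectivization lines --- constructing the tensor quasi-inverse $M\mapsto M\otimes_\Gamma\C(-,X)$, verifying both triangle identities (the second by reducing to $Y=X$ and then propagating through $\add X$), and for the restricted equivalence using full faithfulness of Yoneda together with idempotent splitting for essential surjectivity. This is exactly the approach one finds in Auslander's original treatment, so there is nothing to compare.
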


Next, we give a definition of higher almost-split sequences.

\begin{Def}\label{ARseq}
Let $\C$ be a Krull-Schmidt category.
\begin{enumerate}
\item A morphism $Z\to X$ in $\C$ is called a {\it sink map} if it induces a projective cover $\C(-,Z)\to J_\C(-,X)$ in $\Mod\C^{\op}$. A complex $\cdots\to C_1\to C_0\to X$ in $\C$ is called {\it a sink sequence of $X$} if $\cdots\to\C(-,C_1)\to\C(-,C_0)\to J_\C(-,X)\to0$ gives a minimal projective resolution of $J_\C(-,X)\in\Mod\C^{\op}$.
\item Dually, a morphism $Y\to Z$ in $\C$ is called a {\it source map} if it induces a projective cover $\C(Z,-)\to J_\C(Y,-)$ in $\Mod\C$. A complex $Y\to C^0\to C^1\to\cdots$ in $\C$ is called {\it a source sequence of $Y$} if $\cdots\to\C(C^1,-)\to\C(C^0,-)\to J_\C(Y,-)\to0$ gives a minimal projective resolution of $J_\C(Y,-)\in\Mod\C$.
\item A complex $Y\to C_{d-2}\to\cdots\to C_0\to X$ in $\C$ is called a {\it $(d-1)$-almost split sequence} if it is a sink sequence of $X$ and a source sequence of $Y$ simultaneously. 
\end{enumerate}
\end{Def}

Remark that these sequences are unique up to isomorphisms if exist.

From now on, we assume that an $R$-order $\Lambda$ has a module $M\in\refl\Lambda$ giving an NCCR. Then by Proposition \ref{NCCRR1}, $\Lambda$ satisfies the $({\rm R}_1)$-condition. Let $\C:=\add M\subseteq\refl\Lambda$ and $\Gamma:=\End_\Lambda(M)$. Observe that the functors ${\rm ev}_M\colon\Mod\C\to\Mod\Gamma$ and ${\rm ev}_M\colon\Mod\C^{\op}\to\Mod\Gamma^{\op}$ are categorical equivalences by Proposition \ref{projectivization}. First, we see that the Nakayama functor gives an auto-equivalence of $\C$.

\begin{Prop}\cite[3.4]{DITW}
The Nakayama functor $\nu\colon\refl\Lambda\to\refl\Lambda$ restricts to an auto-equivalence $\nu\colon\C\to\C$.
\end{Prop}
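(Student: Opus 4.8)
The plan is to prove that $\nu$ and its quasi-inverse $\nu^-=(-)^{\vee*}$ each carry $\C=\add M$ into itself; since they are mutually quasi-inverse auto-equivalences of $\refl\Lambda$, this immediately gives that $\nu|_\C\colon\C\to\C$ is an equivalence. Thus everything reduces to showing $\nu M\in\add M$, the assertion for $\nu^-M$ being entirely symmetric (interchange the roles of $(-)^*$ and $(-)^\vee$ and of $\Lambda$ and $\Lambda^{\op}$).

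First I would compute $\Hom_\Lambda(M,\nu M)$ as a module over $\Gamma:=\End_\Lambda(M)$. Lemma \ref{homcheck} applied with $X=Y=M$ yields a natural isomorphism
\[
\Hom_\Lambda(M,\nu M)\;\cong\;\Hom_\Lambda(M,M)^\vee\;=\;\Gamma^\vee
\]
of right $\Gamma$-modules (one checks that the $(\Gamma,\Gamma)$-bimodule structures match by following the proof of Lemma \ref{homcheck}, where $M\otimes_\Lambda M^*\cong\Gamma$ as bimodules). Now by Definition \ref{defNCCR}, $\Gamma$ is a regular $R$-order; hence so is $\Gamma^{\op}$, and since $\Gamma\in\CM R$ its canonical dual $\Gamma^\vee=\Hom_R(\Gamma,\omega)$ again lies in $\CM R$. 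Therefore $\Gamma^\vee\in\CM\Gamma^{\op}=\proj\Gamma^{\op}$ by Proposition \ref{regproj}, i.e.\ $\Hom_\Lambda(M,\nu M)$ is a projective right $\Gamma$-module.

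Next I would feed this back through the projectivization equivalence. The category $\C=\add M$ is idempotent complete and $M$ is an additive generator of it, so Proposition \ref{projectivization} gives an equivalence $\Hom_\Lambda(M,-)\colon\C\xrightarrow{\ \sim\ }\proj\Gamma^{\op}$ with quasi-inverse $-\otimes_\Gamma M$; in particular there is $N\in\C$ with $\Hom_\Lambda(M,N)\cong\Gamma^\vee\cong\Hom_\Lambda(M,\nu M)$. It remains to identify $N$ with $\nu M$, and for this I would show that the natural evaluation map
\[
\epsilon\colon\Hom_\Lambda(M,\nu M)\otimes_\Gamma M\longrightarrow\nu M
\]
is an isomorphism. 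Its source is $\Gamma^\vee\otimes_\Gamma M\in\add M\subseteq\refl\Lambda$ (as $\Gamma^\vee$ is projective), and its target $\nu M$ is reflexive, so both satisfy $(\mathrm S_2)$ over $R$; by Lemma \ref{s2isom} it is then enough to check that $\epsilon_\mathfrak p$ is an isomorphism for every $\mathfrak p\in\Spec R$ with $\height\mathfrak p\le1$. At such a prime $\Lambda$ satisfies the $(\mathrm R_1)$-condition by Proposition \ref{NCCRR1}, so $M_\mathfrak p\in\proj\Lambda_\mathfrak p$, while $M_\mathfrak p$ is a generator by Definition \ref{defNCCR}; hence $M_\mathfrak p$ is a progenerator over $\Lambda_\mathfrak p$, $\Hom_{\Lambda_\mathfrak p}(M_\mathfrak p,-)$ is a Morita equivalence with quasi-inverse $-\otimes_{\Gamma_\mathfrak p}M_\mathfrak p$, and $\epsilon_\mathfrak p$ is its counit, hence an isomorphism. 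Therefore $\nu M\cong\Hom_\Lambda(M,\nu M)\otimes_\Gamma M\cong N\in\add M$, as desired.

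I expect the main obstacle to be bookkeeping rather than conceptual: one must keep track of the one-sided $\Gamma$-module structures carefully to see that the isomorphism $\Hom_\Lambda(M,\nu M)\cong\Gamma^\vee$ coming from Lemma \ref{homcheck} is compatible with the projectivization equivalence, and that the map $\epsilon$ above is genuinely the counit of that equivalence so that its localizations are Morita counits. The substantive point is the codimension-one reduction in the last step: the NCCR hypothesis — generation of $M$ in codimension one — is precisely what, via Lemma \ref{s2isom}, upgrades the a priori weaker conclusion ``$\nu M$ reflexive'' to ``$\nu M\in\add M$''. (If one additionally assumes $R$ complete local, so that $\add M$ has only finitely many indecomposable objects, the claim $\nu^-M\in\add M$ can alternatively be deduced from $\nu(\add M)\subseteq\add M$ together with full faithfulness of $\nu|_{\add M}$.)
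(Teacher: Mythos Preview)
Your proof is correct and follows essentially the same strategy as the paper: both show $\Hom_\Lambda(M,\nu X)\in\CM R$ via Lemma~\ref{homcheck}, deduce it lies in $\proj\Gamma^{\op}$ since $\Gamma$ is a regular order, and conclude $\nu X\in\add M$. The one difference is in this last step. The paper invokes the categorical equivalence $\Hom_\Lambda(M,-)\colon\refl\Lambda\to\refl\Gamma^{\op}$ (quoted from \cite[2.4]{IR} and \cite[6.2]{Han24b}), which restricts to $\add M\simeq\proj\Gamma^{\op}$ and hence immediately yields that anything with image in $\proj\Gamma^{\op}$ already lies in $\add M$. You instead prove this directly for $\nu M$ by showing the evaluation map $\Hom_\Lambda(M,\nu M)\otimes_\Gamma M\to\nu M$ is an isomorphism, using Lemma~\ref{s2isom} together with the progenerator property of $M$ in codimension one. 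Your route is more self-contained and effectively reproves the relevant special case of that reflexive equivalence by hand; the paper's is shorter but leans on the external citation.
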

\begin{proof}
Observe that the functor $F:=\Hom_\Lambda(M,-)\colon\refl\Lambda\to\refl\Gamma^{\op}$ is a categorical equivalence (see \cite[2.4]{IR}\cite[6.2]{Han24b}). Take $X\in\C$. Now as an $R$-module, we have $F(\nu X)\cong\Hom_\Lambda(X,M)^\vee\in\CM R$ by Lemma \ref{homcheck}. This means $F(\nu X)\in\CM\Gamma^{\op}=\proj\Gamma^{\op}$ by Proposition \ref{regproj}. Since the equivalence $F\colon\refl\Lambda\to\refl\Gamma^{\op}$ restricts to an equivalence $F\colon\add M\to\proj\Gamma^{\op}$ by Proposition \ref{projectivization}, we obtain $\nu X\in\C$.
\end{proof}

In the rest, we assume that $R$ is complete local. We prove the following main theorem in this section which is a partial generalization of \cite[3.3.1,3.4.4]{Iya07b}. The proof is very different from the classical case even when $d=2$.

\begin{Thm}\label{existAR}
Let $R$ be a Cohen-Macaulay complete local normal domain with $d:=\dim R\geq2$, $\Lambda$ an $R$-order and $M\in\refl\Lambda$ with $\Gamma:=\End_\Lambda(M)$ is an NCCR. We let $\C:=\add M$. Then for any $X\in\ind\C$, there exists a $(d-1)$-almost split sequence in $\C$ of the following form.
\[\nu X\longrightarrow C_{d-2}\longrightarrow\cdots\longrightarrow C_0\longrightarrow X.\]
\end{Thm}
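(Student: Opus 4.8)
The plan is to transport the problem through the equivalence $F := \Hom_\Lambda(M,-)\colon \refl\Lambda \to \refl\Gamma^{\op}$ (which restricts to $\C \xrightarrow{\sim} \proj\Gamma^{\op}$ by Proposition \ref{projectivization}) and to construct the desired sequence as the image, under a suitable inverse, of a minimal projective resolution of a simple $\Gamma^{\op}$-module. Concretely: since $\Gamma$ is a regular $R$-order, $\gl\Gamma^{\op} = d$, so for $X \in \ind\C$ the simple top $T_X := \proj\Gamma^{\op}(-,F(X))/J$ of the indecomposable projective $P_X := F(X)$ has a minimal projective resolution of length exactly $d$ in $\mod\Gamma^{\op}$ (length $\le d$ by regularity, and $= d$ when $X$ ranges over $\ind\C$; for $X = R$ one gets the boundary behaviour handled in Proposition \ref{ARforCT}, but here we only need the weaker statement giving a complex of the stated shape). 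Write this resolution as
\[
0 \longrightarrow P_{d} \longrightarrow P_{d-1} \longrightarrow \cdots \longrightarrow P_1 \longrightarrow P_0 = P_X \longrightarrow T_X \longrightarrow 0 .
\]
Pulling back along $\C \simeq \proj\Gamma^{\op}$ gives a complex $M_d \to \cdots \to M_1 \to X$ in $\C$ which is, by construction, a sink sequence of $X$ in the sense of Definition \ref{ARseq}(1): the syzygy truncations realize $\C(-,M_1)\to J_\C(-,X)\to 0$ as a projective cover, and continuing gives the minimal projective resolution of $J_\C(-,X) \in \Mod\C^{\op}$.

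The key remaining point is to identify the top term $M_d$ with $\nu X$ and to show that the same complex is simultaneously a source sequence of $\nu X$, i.e. a genuine $(d-1)$-almost split sequence. For the identification of $M_d$: the projective resolution of $T_X$ has length $d$, so $P_d \cong \Ext^d_{\Gamma^{\op}}(T_X, \Gamma^{\op})^*$-type data, and one computes the top term via the duality coming from the Nakayama functor. The clean way is to use Lemma \ref{homcheck}: for $Y, X \in \C$ one has $\Hom_\Lambda(X,\nu Y) \cong \Hom_\Lambda(Y,X)^\vee$, hence applying $(-)^\vee$ identifies the $\Gamma$-dual of the minimal projective resolution of $T_X$ over $\Gamma^{\op}$ with (a shift of) the minimal projective resolution of the corresponding simple over $\Gamma$, and the extreme term picks out $\nu X$. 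More precisely, one should show $P_d^* \cong F'(\nu X)$ where $F' := \Hom_\Lambda(M,-)$ viewed into $\refl\Gamma$, using that $\Gamma$ is a Gorenstein $R$-order (it is even regular) so that $\Ext^d_\Gamma(S,\Gamma) \in \simp\Gamma^{\op}$ for $S \in \simp\Gamma$ by Proposition \ref{Gorcond}, and that $(-)^\vee$ exchanges the two sides. Dualizing the whole resolution then exhibits the complex as the minimal projective resolution of $J_{\C}(\nu X, -) \in \Mod\C$, which is exactly the source-sequence condition.

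The main obstacle I expect is the second half: verifying that the sink sequence of $X$ coincides, after the Nakayama twist, with the source sequence of $\nu X$ — equivalently, that $\RHom_\Gamma(-,\Gamma)$ sends the minimal projective resolution of $T_X$ to the minimal projective resolution of $\Ext^d_\Gamma(T_X,\Gamma)$ concentrated in a single degree, with no lower-degree contributions. This is where regularity of $\Gamma$ (giving $\CM\Gamma = \proj\Gamma$, Proposition \ref{regproj}) and the Gorenstein/cotilting formalism of Proposition \ref{Gorcond} do the real work: one restricts the anti-equivalence $\RHom_\Gamma(-,\Gamma)\colon D^b(\mod\Gamma)\to D^b(\mod\Gamma^{\op})$ to finite-length modules to see that $\RHom_\Gamma(T_X,\Gamma)$ is a simple module placed in degree $d$, so dualizing a length-$d$ projective resolution produces again a length-$d$ projective resolution (of that simple), with minimality preserved because $\RHom_\Gamma(-,\Gamma)$ takes radical maps to radical maps between projectives over a semiperfect ring. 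Combining this with Lemma \ref{homcheck} to translate the $\Gamma$-side duality back to the $\C = \add M$ side — where $(-)^\vee$ and the Nakayama functor $\nu$ live — yields that the single complex $\nu X \to C_{d-2} \to \cdots \to C_0 \to X$ is both a sink sequence of $X$ and a source sequence of $\nu X$, which is the assertion.
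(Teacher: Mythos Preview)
Your proposal is correct and is essentially the paper's proof: construct the sink sequence as the minimal projective resolution of $S_X=\C(-,X)/J_\C(-,X)$, dualize via $\Hom_{\C^{\op}}(-,\C)$, use Proposition \ref{Gorcond} to see the result is a minimal projective resolution of the simple $\Ext^d_{\C^{\op}}(S_X,\C)$, and identify this simple with $S_{\nu X}$ using Lemma \ref{homcheck}. The paper makes that last identification explicit by the chain $\Ext^d_{\C^{\op}}(S_X,\C(-,\nu X))\cong H^d(\C(D_\bullet,\nu X))\cong H^d(\C(X,D_\bullet)^\vee)\cong \Ext^d_R(D_X,\omega)\neq 0$, where $D_X=\C(X,X)/J_\C(X,X)$, the middle isomorphism is Lemma \ref{homcheck}, and the last uses that $\C(X,D_i)\in\CM R$; this is exactly the computation your sketch points to but does not write out.
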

\begin{proof}
Let $Y\longrightarrow C_{d-2}\longrightarrow\cdots\longrightarrow C_0\longrightarrow X$ be a complex which induces a minimal projective resolution
\begin{gather}
0\longrightarrow\C(-,Y)\longrightarrow\C(-,C_{d-2})\longrightarrow\cdots\longrightarrow\C(-,C_0)\longrightarrow\C(-,X)\longrightarrow S_X\longrightarrow0\label{simminproj}
\end{gather}
of $S_X:=\C(-,X)/J_\C(-,X)\in\simp\C^{\op}$. Remark that this sequence exists since $\gl\Gamma=d$ holds and $\Gamma$ is semiperfect by the completeness of $R$. By Proposition \ref{Gorcond}, we have
\[\Ext^i_{\C^{\op}}(S_X,\C)=0\ (i\neq d),\ \Ext^d_{\C^{\op}}(S_X,\C)\in\simp\C.\]
Thus we obtain the following exact sequence.
\[0\longrightarrow\C(X,-)\longrightarrow\C(C_0,-)\longrightarrow\cdots\longrightarrow\C(C_{d-2},-)\longrightarrow\C(Y,-)\longrightarrow\Ext^d_{\C^{\op}}(S_X,\C)\longrightarrow0\]
This is the minimal projective resolution of $\Ext^d_{\C^{\op}}(S_X,\C)\in\mod\C$. Thus if we can prove $\Ext^d_{\C^{\op}}(S_X,\C)\cong\C(\nu X,-)/J_\C(\nu X,-)$, then $Y\cong\nu X$ holds and so the assertion follows. Since $\Ext^d_{\C^{\op}}(S_X,\C)\in\simp\C$ holds, it is enough to show $\Ext^d_{\C^{\op}}(S_X,\C(-,\nu X))\neq0$. If we rewrite the minimal projective resolution \eqref{simminproj} of $S_X$ as $\C(-,D_\bullet)\longrightarrow S_X\longrightarrow0$, we have
\begin{equation*}
\begin{split}
\Ext^d_{\C^{\op}}(S_X,\C(-,\nu X))&\cong H^d(\Hom_{\C^{\op}}(\C(-,D_\bullet),\C(-,\nu X)))\cong H^d(\C(D.,\nu X))\\
&\cong H^d(\C(X,D_\bullet)^\vee)\ \text{($\because$ Lemma \ref{homcheck})}\\
&\cong\Ext^d_R(\C(X,X)/J_\C(X,X),\omega)\ (\because\C(X,D_\bullet)\in\CM R).
\end{split}
\end{equation*}
Since $\C(X,X)/J_\C(X,X)\in\fl R$ holds, we have $\Ext^d_R(\C(X,X)/J_\C(X,X),\omega)\neq0$. This completes the proof.
\end{proof}

By the uniqueness of sink and source sequences, we obtain the following corollary.

\begin{Cor}\label{sinksource}
Let $Y\to C_{d-2}\to\cdots\to C_0\to X$ be a complex in $\C$ with $X\in\ind\C$. Then it is a sink sequence of $X$ if and only if it is a source sequence of $Y$ if and only if it is a $(d-1)$-almost split sequence. If these conditions hold, then we have $Y=\nu X$.
\end{Cor}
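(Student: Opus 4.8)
The plan is to deduce all the asserted equivalences formally from Theorem~\ref{existAR} together with the uniqueness up to isomorphism of sink and source sequences noted right after Definition~\ref{ARseq}. By Definition~\ref{ARseq}(3), a $(d-1)$-almost split sequence is simultaneously a sink sequence of its term $X$ and a source sequence of its term $Y$; hence it suffices to prove the two converse implications---that a sink sequence of $X$, respectively a source sequence of $Y$, is a $(d-1)$-almost split sequence---and, along the way, to identify $Y$ with $\nu X$.

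For the first converse, I would invoke Theorem~\ref{existAR} to obtain a $(d-1)$-almost split sequence $\nu X\to C'_{d-2}\to\cdots\to C'_0\to X$; this is in particular a sink sequence of $X$, so by uniqueness of sink sequences it is isomorphic, as a complex over $\C$, to the given complex. Therefore the given complex is itself a $(d-1)$-almost split sequence and $Y\cong\nu X$.

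For the second converse, the additional subtlety is that a source sequence of $Y$, unlike a $(d-1)$-almost split sequence, does not a priori force any object to be indecomposable, so I would first show that $Y$ is indecomposable---this is the main obstacle. Since the Nakayama functor restricts to an autoequivalence of $\C$, write $Y\cong\nu X'$ and decompose $X'$ into indecomposables. For each indecomposable summand $X''$ of $X'$, Theorem~\ref{existAR} produces a $(d-1)$-almost split sequence $\nu X''\to\cdots\to X''$, which is a source sequence of $\nu X''$; by uniqueness it is \emph{the} source sequence of $\nu X''$, whose last term is the nonzero indecomposable $X''$. Since $J_\C(Y_1\oplus Y_2,-)=J_\C(Y_1,-)\oplus J_\C(Y_2,-)$, minimal projective resolutions of direct sums split as direct sums of those of the summands, hence so do source sequences; so the last term of the given complex is the direct sum of the corresponding $X''$'s. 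As this last term equals the indecomposable object $X$, there can be only one summand, so $Y$ is indecomposable. Now apply Theorem~\ref{existAR} to the indecomposable object $X'$: the resulting $(d-1)$-almost split sequence $\nu X'=Y\to\cdots\to X'$ is a source sequence of $Y$, hence isomorphic to the given complex by uniqueness of source sequences. Consequently the given complex is a $(d-1)$-almost split sequence, $X\cong X'$, and $Y\cong\nu X$.

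Apart from the indecomposability of $Y$, which is where the actual work lies, the argument is pure bookkeeping with Theorem~\ref{existAR} and the uniqueness of sink and source sequences.
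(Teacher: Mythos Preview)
Your argument is correct and follows the same route the paper has in mind: deduce everything from Theorem~\ref{existAR} together with the uniqueness of sink and source sequences. The paper's own proof is the single sentence ``By the uniqueness of sink and source sequences, we obtain the following corollary,'' so you are simply spelling out what that sentence encodes. In particular, your treatment of the implication ``source sequence of $Y$ $\Rightarrow$ $(d-1)$-almost split'' is more careful than the paper's: you correctly note that the hypothesis $X\in\ind\C$ does not a priori force $Y$ to be indecomposable, and you supply the missing step by decomposing $\nu^{-1}Y$, taking the direct sum of the $(d-1)$-almost split sequences of the summands (which is again a source sequence, since minimal projective resolutions over the semiperfect ring $\Gamma$ are additive), and comparing last terms. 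This is a genuine detail the paper's one-line proof leaves to the reader.
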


\subsection{CT modules}

In this section, we introduce the concept of CT modules as \cite[5.1]{IW}. Later, we see that every CT module gives an NCCR (Theorem \ref{CTEnd}(1)).

\begin{Def}\cite[5.1]{IW}\label{CTdef}
Let $R$ be a Cohen-Macaulay local normal domain and $\Lambda$ an $R$-order. A module $M\in\CM\Lambda$ is called a {\it CT module} if the following holds.
\[\add M=\{X\in\CM\Lambda\mid\Hom_\Lambda(M,X)\in\CM R\}=\{X\in\CM\Lambda\mid\Hom_\Lambda(X,M)\in\CM R\}\]
\end{Def}

Remark that if $M\in\CM\Lambda$ is a CT module, then $\add M$ contains $\Lambda$ and $\Lambda^\vee$. When $d=2$, a module $M\in\CM\Lambda$ is CT if and only if it is an additive generator of $\CM\Lambda$ by Lemma \ref{homdepth}.

If $\Lambda$ is an isolated singularity, then the concept of CT modules coincides with that of $(d-1)$-cluster tilting modules \cite[2.2]{Iya07b}, i.e. a module $M\in\CM\Lambda$ satisfying
\begin{equation*}
\begin{split}
\add M&=\{X\in\CM\Lambda\mid\Ext_\Lambda^i(M,X)=0\ (1\leq\forall i\leq d-2)\}\\&=\{X\in\CM\Lambda\mid\Ext_\Lambda^i(X,M)=0\ (1\leq\forall i\leq d-2)\}.
\end{split}
\end{equation*}

\begin{Prop}\cite[2.5.1]{Iya07b}
Assume that $\Lambda$ is an isolated singularity. For $X,Y\in\CM\Lambda$, the condition $\Hom_\Lambda(X,Y)\in\CM R$ is equivalent to $\Ext_\Lambda^i(X,Y)=0\ (1\leq\forall i\leq d-2)$. In particular, a module $M\in\CM\Lambda$ is CT if and only if it is $(d-1)$-cluster tilting.
\end{Prop}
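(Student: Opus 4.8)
The ``in particular'' clause follows at once from the first assertion: substituting the equivalence $\Hom_\Lambda(X,Y)\in\CM R\iff\Ext^i_\Lambda(X,Y)=0\ (1\le i\le d-2)$ into Definition \ref{CTdef}, applied to the pairs $(M,X)$ and $(X,M)$, rewrites the defining condition of a CT module as exactly the defining condition of a $(d-1)$-cluster tilting module. So the real task is to prove the $\Hom$--$\Ext$ equivalence for $X,Y\in\CM\Lambda$.

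The plan rests on two preliminary observations. First, since $\Lambda$ is an isolated singularity, $\Ext^i_\Lambda(X,Y)$ has finite length for every $i\ge1$: for $\mathfrak p\ne\mathfrak m$ one has $\Ext^i_\Lambda(X,Y)_\mathfrak p\cong\Ext^i_{\Lambda_\mathfrak p}(X_\mathfrak p,Y_\mathfrak p)$, and $\Lambda_\mathfrak p$ is a regular order, whence $X_\mathfrak p\in\CM\Lambda_\mathfrak p=\proj\Lambda_\mathfrak p$ by Proposition \ref{regproj} and this $\Ext$ vanishes after localizing. Second, choosing an exact sequence $0\to\Omega X\to P\to X\to0$ with $P\in\proj\Lambda$, the depth lemma gives $\Omega X\in\CM\Lambda$, one has $\Ext^i_\Lambda(\Omega X,Y)\cong\Ext^{i+1}_\Lambda(X,Y)$ for $i\ge1$, and applying $\Hom_\Lambda(-,Y)$ yields short exact sequences
\[0\to\Hom_\Lambda(X,Y)\to\Hom_\Lambda(P,Y)\to M\to0,\qquad 0\to M\to\Hom_\Lambda(\Omega X,Y)\to\Ext^1_\Lambda(X,Y)\to0,\]
in which $\Hom_\Lambda(P,Y)\in\CM R$ and $\depth_R\Hom_\Lambda(\Omega X,Y)\ge2$ by Lemma \ref{homdepth} (one may assume $d\ge3$, the case $d=2$ being vacuously true on both sides).

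The core is to prove, by induction on $j$ with $0\le j\le d-2$, that for all $X,Y\in\CM\Lambda$
\[\depth_R\Hom_\Lambda(X,Y)\ge j+2\iff\Ext^i_\Lambda(X,Y)=0\text{ for }1\le i\le j;\]
taking $j=d-2$ then finishes, since $\Hom_\Lambda(X,Y)\in\CM R$ is equivalent to $\depth_R\Hom_\Lambda(X,Y)\ge d$, and $j=0$ is the always-true bound of Lemma \ref{homdepth}. In the inductive step ($j\ge1$) one feeds the two short exact sequences above into the depth lemma. For ``$\Leftarrow$'': from $\Ext^1_\Lambda(X,Y)=0$ one gets $M\cong\Hom_\Lambda(\Omega X,Y)$, the induction hypothesis applied to $\Omega X$ (using $\Ext^i_\Lambda(\Omega X,Y)\cong\Ext^{i+1}_\Lambda(X,Y)=0$ for $1\le i\le j-1$) bounds $\depth_R\Hom_\Lambda(\Omega X,Y)\ge j+1$, and one application of the depth lemma to the first sequence gives $\depth_R\Hom_\Lambda(X,Y)\ge\min\{d,j+2\}=j+2$. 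For ``$\Rightarrow$'': if $\Ext^1_\Lambda(X,Y)\ne0$ it has depth $0$, while $\Hom_\Lambda(\Omega X,Y)$ has depth $\ge2$, so the second sequence forces $\depth_R M\le1$ and then the first sequence forces $\depth_R\Hom_\Lambda(X,Y)\le2<j+2$; hence $\depth_R\Hom_\Lambda(X,Y)\ge j+2\ge3$ already implies $\Ext^1_\Lambda(X,Y)=0$, after which $M\cong\Hom_\Lambda(\Omega X,Y)$ has depth $\ge j+1$ and the induction hypothesis on $\Omega X$ kills $\Ext^2_\Lambda(X,Y),\dots,\Ext^j_\Lambda(X,Y)$.

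The main obstacle is the ``$\Rightarrow$'' direction, and this is the only point where the isolated-singularity hypothesis is genuinely used: a single nonvanishing $\Ext^i_\Lambda(X,Y)$ pins down the depth of the relevant $\Hom$ to exactly $2$, precisely because $\Hom$'s into an MCM module always have depth $\ge2$ (Lemma \ref{homdepth}) while a nonzero finite-length module has depth $0$, so no module of positive depth can sit between them in a short exact sequence. Beyond this, the argument is a careful but routine run through the depth lemma; the only numerical point to watch is that $j+2\le d$ is what lets one simplify the various $\min\{d,-\}$ that appear.
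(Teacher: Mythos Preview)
The paper does not give its own proof of this proposition; it simply records the statement with the citation \cite[2.5.1]{Iya07b}. Your argument is correct and is essentially the standard one found in that reference: use the isolated-singularity hypothesis to force each nonzero $\Ext^i_\Lambda(X,Y)$ to have finite length (hence depth zero), then run the depth lemma through the two short exact sequences coming from applying $\Hom_\Lambda(-,Y)$ to a syzygy sequence $0\to\Omega X\to P\to X\to0$, inducting on the depth threshold. The only point worth double-checking in your write-up is the application of the depth lemma in the ``$\Rightarrow$'' direction: from $0\to M\to\Hom_\Lambda(\Omega X,Y)\to\Ext^1_\Lambda(X,Y)\to0$ with the middle term of depth $\ge2$ and the right term of depth $0$, the inequality $\depth\Ext^1\ge\min\{\depth M-1,\depth\Hom_\Lambda(\Omega X,Y)\}$ indeed forces $\depth M\le1$, and then the first sequence gives $\depth\Hom_\Lambda(X,Y)\le2$ as you claim. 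So there is nothing to compare, and your proof stands on its own.
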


We see a characterization of CT modules by a homological property of its endomorphism ring. In particular, we can say that every CT module gives an NCCR.

\begin{Thm}\cite[5.4]{IW}\label{CTEnd}
A module $M\in\CM\Lambda$ is CT if and only if $\Lambda\in\add M$ holds and $\End_\Lambda(M)$ is a regular $R$-order.
\end{Thm}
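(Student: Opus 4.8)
The plan is to treat the two implications separately; the genuinely substantial one is that a CT module has regular endomorphism ring. Write $\C:=\add M$ and $\Gamma:=\End_\Lambda(M)$. Since $R$ is complete local, $\Gamma$ is semiperfect, and by Proposition \ref{projectivization} the functor $\Hom_\Lambda(M,-)$ induces equivalences $\C\xrightarrow{\sim}\proj\Gamma^{\op}$ and $\mathrm{ev}_M\colon\Mod\C^{\op}\xrightarrow{\sim}\Mod\Gamma^{\op}$.

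For the implication ``$\Lambda\in\add M$ and $\Gamma$ a regular $R$-order $\Rightarrow$ $M$ is CT'', I would first note that $M$ gives an NCCR of $\Lambda$: condition (1) of Definition \ref{defNCCR} is the hypothesis on $\Gamma$, while condition (2) holds because $\Lambda\in\add M$ forces $\Lambda_\mathfrak{p}\in\add_{\Lambda_\mathfrak{p}}M_\mathfrak{p}$ for all $\mathfrak{p}$. Hence $\Lambda$ satisfies the $(\mathrm{R}_1)$-condition by Proposition \ref{NCCRR1}, so $\CM\Lambda\subseteq\refl\Lambda$, the functor $\Hom_\Lambda(M,-)\colon\refl\Lambda\xrightarrow{\sim}\refl\Gamma^{\op}$ is an equivalence, and $\nu$ restricts to an auto-equivalence of $\C$ with $\add\nu^{-}M=\add M$. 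The first CT equality is then easy: for $X\in\add M$ one has $\Hom_\Lambda(M,X)\in\add\Gamma\subseteq\CM R$, and conversely if $X\in\CM\Lambda$ with $\Hom_\Lambda(M,X)\in\CM R$ then $\Hom_\Lambda(M,X)\in\CM\Gamma^{\op}=\proj\Gamma^{\op}$ by Proposition \ref{regproj} (as $\Gamma^{\op}$ is again a regular $R$-order), so $\Hom_\Lambda(M,X)\cong\Hom_\Lambda(M,X')$ for some $X'\in\add M$, whence $X\cong X'\in\add M$ since $X,X'\in\refl\Lambda$ and $\Hom_\Lambda(M,-)\colon\refl\Lambda\xrightarrow{\sim}\refl\Gamma^{\op}$ is an equivalence. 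The second equality I would deduce from the first: Lemma \ref{homcheck} gives $\Hom_\Lambda(X,M)\cong\Hom_\Lambda(\nu^{-}M,X)^\vee$, and since $\Hom_\Lambda(\nu^{-}M,X)$ is reflexive (being $(\mathrm{S}_2)$ by Lemma \ref{homdepth}, hence equal to its double $(-)^\vee$-dual) this shows $\Hom_\Lambda(X,M)\in\CM R\iff\Hom_\Lambda(\nu^{-}M,X)\in\CM R\iff\Hom_\Lambda(M,X)\in\CM R$ (the last step using $\add\nu^{-}M=\add M$), which by the first equality says $X\in\add M$.

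For the implication ``$M$ CT $\Rightarrow$ $\Lambda\in\add M$ and $\Gamma$ regular'', the assertions $\Lambda\in\add M$ (since $\Hom_\Lambda(\Lambda,M)=M\in\CM R$) and $\Gamma=\Hom_\Lambda(M,M)\in\CM R$ are immediate, so $\Gamma$ is an $R$-order, and by Proposition \ref{regproj} it remains to prove $\gl\Gamma=d$; the reverse inequality $\gl\Gamma\ge d$ holds since $\depth_R\Gamma=d$, by Auslander--Buchsbaum. As $R$ is complete, it suffices to bound $\pd_{\Gamma^{\op}}S_X\le d$ for each $X\in\ind\C$, where $S_X=\C(-,X)/J_\C(-,X)$. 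I would build a projective resolution of $S_X$ in $\Mod\C^{\op}$ by taking a projective cover $\C(-,C_1)\twoheadrightarrow J_\C(-,X)$, corresponding via Yoneda to a map $\phi_1\colon C_1\to X$, and then inductively projective covers corresponding to right $\add M$-approximations $\phi_i\colon C_i\to K_{i-1}$ with $K_i:=\ker\phi_i$, checking that $\Omega^{i}S_X\cong\Hom_\Lambda(-,K_{i-1})|_\C$ for $i\ge2$. Because $\Lambda\in\add M$ the $\phi_i$ with $i\ge2$ can be taken surjective; applying $\Hom_\Lambda(M,-)$ to the sequences $0\to K_i\to C_i\to K_{i-1}\to0$ (exact on $\Hom$'s by the approximation property) and running a depth count with Lemma \ref{homdepth} yields $\depth_RK_j\ge\min(d,j+1)$ and $\depth_R\Hom_\Lambda(M,K_j)\ge\min(d,j+1)$. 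Hence $K_{d-1}\in\CM\Lambda$ with $\Hom_\Lambda(M,K_{d-1})\in\CM R$, so the CT property forces $K_{d-1}\in\add M$; then $\Omega^{d}S_X\cong\C(-,K_{d-1})$ is projective, giving $\pd_{\Gamma^{\op}}S_X\le d$.

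The hard part is this second direction: converting the Cohen--Macaulay ``maximality'' of $\add M$ into a homological bound on $\gl\Gamma$. Concretely the work lies in (i) making precise the syzygy identification $\Omega^{i}S_X\cong\Hom_\Lambda(-,K_{i-1})|_\C$, so that the $d$-th syzygy of every simple is visibly of the form $\Hom_\Lambda(M,K)$ with $K\in\CM\Lambda$, and (ii) the depth bookkeeping along the approximation sequences showing this $K$ reaches $\CM R$ after exactly $d-1$ steps, where one uses $d\ge2$ and the surjectivity of the approximations (itself a consequence of $\Lambda\in\add M$). Once $K\in\add M$ is established, the CT hypothesis closes the argument in one stroke.
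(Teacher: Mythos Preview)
The paper does not prove this statement; it is cited from \cite[5.4]{IW}, so there is no in-text argument to compare against. Your two directions are correct and amount to the argument in that reference: the backward direction via the reflexive equivalence $\Hom_\Lambda(M,-)\colon\refl\Lambda\simeq\refl\Gamma^{\op}$ together with $\CM\Gamma^{\op}=\proj\Gamma^{\op}$ (Proposition~\ref{regproj}), and the forward direction via iterated right $\add M$-approximations and a depth count showing that the $(d-1)$-st kernel $K_{d-1}$ lands in $\CM\Lambda$ with $\Hom_\Lambda(M,K_{d-1})\in\CM R$, hence in $\add M$ by the CT hypothesis.

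One remark: you assume $R$ complete so that $\Gamma$ is semiperfect and it suffices to bound $\pd_{\Gamma^{\op}}S_X$ for simples $S_X$. The theorem as stated does not carry this hypothesis, but the gap is easily closed by passing to the completion: for finitely generated $N,N'$ one has $\Ext^i_\Gamma(N,N')\otimes_R\widehat R\cong\Ext^i_{\widehat\Gamma}(\widehat N,\widehat N')$, so faithful flatness gives $\gl\Gamma\le\gl\widehat\Gamma$, while $\gl\Gamma\ge d$ follows from $\Gamma\in\CM R$ as you note. In any case, completeness is in force everywhere the result is applied in the paper.
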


As a summary, we obtain the following implications for a reflexive module.
\[
\xymatrix@C=25mm{
(d-1)\text{-cluster tilting} \ar@{<=>}[r]^-{\Lambda\colon\text{isolated singularity}} & \text{CT} \ar@{<=>}[r] & \text{NCCR+generator}
}\]

Now we assume that $R$ is complete local and $M\in\CM\Lambda$ is a CT module. Let $\C:=\add M$ and $X\in\ind\C$. By Theorem \ref{CTEnd}, we can apply Theorem \ref{existAR} and thus there exists a $(d-1)$-almost split sequence $\nu X\to C_{d-2}\to\cdots\to C_0\to X$ in $\C$. In this situation, we can deduce the exactness of this complex.

\begin{Prop}\label{ARforCT}
Let $\nu X\overset{f_{d-1}}{\longrightarrow}C_{d-2}\overset{f_{d-2}}{\longrightarrow}\cdots\overset{f_1}{\longrightarrow}C_0\overset{f_0}{\longrightarrow}X$ be a $(d-1)$-almost split sequence in $\C$.
\begin{enumerate}
\item If $X\notin\proj\Lambda$, then the following complex is exact.
\[0\longrightarrow\nu X\overset{f_{d-1}}{\longrightarrow}C_{d-2}\overset{f_{d-2}}{\longrightarrow}\cdots\overset{f_1}{\longrightarrow}C_0\overset{f_0}{\longrightarrow}X\longrightarrow 0.\]
\item If $X\in\proj\Lambda$, then the following complex is exact.
\[0\longrightarrow\nu X\overset{f_{d-1}}{\longrightarrow}C_{d-2}\overset{f_{d-2}}{\longrightarrow}\cdots\overset{f_1}{\longrightarrow}C_0\overset{f_0}{\longrightarrow}X\longrightarrow X/\rad X\longrightarrow0.\]
\end{enumerate}
\end{Prop}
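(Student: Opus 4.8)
The plan is to transport the given complex across the equivalence $\Hom_\Lambda(M,-)$ into $\mod\Gamma^{\op}$, where $\Gamma:=\End_\Lambda(M)$ and Theorem \ref{existAR} already displays it as an initial segment of a minimal projective resolution of a simple $\Gamma$-module; then to transport it back along an exact ``contraction'' functor $\mod\Gamma^{\op}\to\mod\Lambda$, so that exactness of the complex is automatic and the only remaining task is to identify the cokernel of $f_0$.

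First I would recall from the proof of Theorem \ref{existAR} that, up to isomorphism of complexes, our $(d-1)$-almost split sequence is the complex inducing the minimal projective resolution
\[
0\longrightarrow\C(-,\nu X)\longrightarrow\C(-,C_{d-2})\longrightarrow\cdots\longrightarrow\C(-,C_0)\longrightarrow\C(-,X)\longrightarrow S_X\longrightarrow0
\]
of $S_X:=\C(-,X)/J_\C(-,X)\in\simp\C^{\op}$; this resolution has length exactly $d$, since $\Gamma$ is a regular, hence Gorenstein, order, so every simple $\Gamma$-module has projective dimension $d$ by Proposition \ref{Gorcond}. Applying the equivalence ${\rm ev}_M\colon\Mod\C^{\op}\xrightarrow{\ \sim\ }\Mod\Gamma^{\op}$ of Proposition \ref{projectivization}, which sends $\C(-,N)$ to $\Hom_\Lambda(M,N)$ and $S_X$ to a simple right $\Gamma$-module $\overline{S}_X$, this becomes an exact sequence of right $\Gamma$-modules
\[
0\longrightarrow\Hom_\Lambda(M,\nu X)\longrightarrow\cdots\longrightarrow\Hom_\Lambda(M,C_0)\longrightarrow\Hom_\Lambda(M,X)\longrightarrow\overline{S}_X\longrightarrow0 .
\]

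Next, since $M$ is a CT module we have $\Lambda\in\add M$ by Theorem \ref{CTEnd}, so $P:=\Hom_\Lambda(M,\Lambda)$ is a projective right $\Gamma$-module with $\End_{\Gamma^{\op}}(P)\cong\End_\Lambda(\Lambda)\cong\Lambda^{\op}$; hence $G:=\Hom_{\Gamma^{\op}}(P,-)\colon\Mod\Gamma^{\op}\to\Mod\Lambda$ is an exact functor with $G\circ\Hom_\Lambda(M,-)\cong\mathrm{id}$ on $\add M$. Applying $G$ to the exact sequence above gives an exact complex of $\Lambda$-modules
\[
0\longrightarrow\nu X\overset{f_{d-1}}{\longrightarrow}C_{d-2}\longrightarrow\cdots\longrightarrow C_0\overset{f_0}{\longrightarrow}X\longrightarrow G(\overline{S}_X)\longrightarrow0
\]
whose maps are the original ones up to isomorphism, so everything reduces to computing $G(\overline{S}_X)$. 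Since $\overline{S}_X$ is the top of the indecomposable projective $\Hom_\Lambda(M,X)$, we get $G(\overline{S}_X)=\Hom_{\Gamma^{\op}}(P,\overline{S}_X)\neq0$ if and only if $\Hom_\Lambda(M,X)$ is a direct summand of $P$, equivalently $X$ is an indecomposable summand of $\Lambda$, that is $X\in\proj\Lambda$. For $X\notin\proj\Lambda$ this gives $G(\overline{S}_X)=0$, which is assertion (1). For $X\in\proj\Lambda$, so that $X$ is an indecomposable projective $\Lambda$-module, exactness of $G$ gives $G(\overline{S}_X)\cong G(\Hom_\Lambda(M,X))/G(\rad\Hom_\Lambda(M,X))\cong X/G(\rad\Hom_\Lambda(M,X))$, and using $\rad\Hom_\Lambda(M,X)=\Hom_\Lambda(M,X)\cdot\rad\Gamma=\rad_{\add M}(M,X)$ one identifies $G(\rad\Hom_\Lambda(M,X))$ with $\rad_{\add M}(\Lambda,X)=\rad_{\proj\Lambda}(\Lambda,X)\cong\rad X$ inside $X$; hence $G(\overline{S}_X)\cong X/\rad X$, which is assertion (2).

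I expect the only genuinely delicate point to be this final bookkeeping: determining which simple $\Gamma$-module $\overline{S}_X$ is, matching it through $G$ with the indecomposable summand $X$ of $\Lambda$, and correctly identifying $\rad\Hom_\Lambda(M,X)$ with $\rad X$ in the projective case --- all of which calls for some care with the left/right-module and $(-)^{\op}$ conventions relating $\Gamma$, $\Gamma^{\op}$ and $\Lambda$. Exactness of the complex itself, by contrast, comes for free once the translation to functor categories is set up, so this is where the actual content --- in particular the appearance of the extra term $X/\rad X$ precisely when $X$ is projective --- is concentrated.
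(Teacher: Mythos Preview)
Your proof is correct and rests on the same idea as the paper's, but you have taken a detour. The paper simply observes that since $M$ is CT we have $\Lambda\in\C$, and then evaluates the exact sequence of functors
\[
0\longrightarrow\C(-,\nu X)\longrightarrow\C(-,C_{d-2})\longrightarrow\cdots\longrightarrow\C(-,C_0)\longrightarrow\C(-,X)\longrightarrow S_X\longrightarrow0
\]
at the object $\Lambda$; since $\C(\Lambda,N)=\Hom_\Lambda(\Lambda,N)\cong N$ this yields the desired complex immediately, with cokernel $S_X(\Lambda)=\C(\Lambda,X)/J_\C(\Lambda,X)$, which is $0$ if $X\notin\proj\Lambda$ and $X/\rad X$ if $X\in\proj\Lambda$. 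Your functor $G=\Hom_{\Gamma^{\op}}(P,-)$ with $P=\Hom_\Lambda(M,\Lambda)$ is precisely evaluation at $\Lambda$ transported across the equivalence ${\rm ev}_M$, so the two arguments are literally the same up to this translation; the ``delicate bookkeeping'' you flag with radicals and $(-)^{\op}$ conventions disappears once you stay on the functor-category side and evaluate at $\Lambda$ directly.
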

\begin{proof}
We obtain the assertions by substituting $\Lambda\in\C$ for the exact sequence
\[0\longrightarrow\C(-,\nu X)\longrightarrow\C(-,C_{d-2})\longrightarrow\cdots\longrightarrow\C(-,C_0)\to J_\C(-,X)\longrightarrow0.\qedhere\]
\end{proof}

In this setting, the exact sequence $0\to\nu X\to C_{d-2}\to\cdots\to C_0\to X\to0$ is called $(d-1)$-almost split sequence when $X\notin\proj\Lambda$. If $X\in\proj\Lambda$, then the exact sequence $0\to\nu X\to C_{d-2}\to\cdots\to C_0\to X$ is called {\it $(d-1)$-fundamental sequence} (see \cite[3.1]{Iya07b}).

\subsection{Auslander-Reiten quivers}

In this subsection, we assume that $R$ is complete local. We introduce the Auslander-Reiten quivers for NCCRs, which play an important role in the rest of this paper. 

\begin{Def}\cite[3.11]{INP}\label{valquiv}
\begin{enumerate}
\item A {\it valued quiver} is a triple $Q=(Q_0,d,d')$ consisting of a set $Q_0$ and maps $d,d'\colon Q_0\times Q_0\to\mathbb{Z}_{\geq0}\cup\{\infty\}$. We often visualize $Q$ by regarding elements of $Q_0$ as vertices and drawing a valued arrow $\xymatrix@C=5em{X \ar^{(d_{XY},d'_{XY})}[r] & Y}$ for each $(X,Y)\in Q_0\times Q_0$ with $(d_{XY},d'_{XY})\neq(0,0)$.
\item Let $\C$ be a skeletally small Krull-Schmidt category. For $X,Y\in\ind\C$, we let
\[D_X:=(\C/J_\C)(X,X),\ {\rm Irr}(X,Y):=(J_\C/J_\C^2)(X,Y),\]
\[d_{XY}:=\dim{\rm Irr}(X,Y)_{D_X},\ d'_{XY}:=\dim{}_{D_Y}{\rm Irr}(X,Y).\]
We define the {\it Auslander-Reiten quiver} $\AR(\C)$ of $\C$ as the valued quiver $(\ind\C,d,d')$.
\item If $\C=\add M$ where $M\in\refl\Lambda$ gives an NCCR, then we draw a dotted arrow $\xymatrix{X \ar@{.>}[r] & \nu X}$ in $\AR(\C)$ for each $X\in\ind\C$ with $X\notin\proj\Lambda$.
\end{enumerate}
\end{Def}

If $\Lambda$ is a symmetric order (i.e. $\Lambda\cong\Lambda^\vee$ holds as $(\Lambda,\Lambda)$-bimodules), then we have $\nu={\rm id_\C}$. Thus in this case we often omit the dotted arrows in (3). To draw the Auslander-Reiten quivers, it is essential to find sink or source maps.

\begin{Prop}\cite[3.12]{INP}
Let $\C$ be a skeletally small Krull-Schmidt category, $X,Y\in\ind\C$ and $Z\in\C$.
\begin{enumerate}
\item If $a\colon Z\to X$ is a sink map, then it induces an isomorphism $a\circ-\colon(\C/J_\C)(-,Z)\overset{\cong}\to(J_\C/J_\C^2)(-,X)$. Thus we have $Z\cong\bigoplus_{W\in\ind\C}W^{\oplus d_{WX}}$.
\item If $b\colon Y\to Z$ is a source map, then it induces an isomorphism $-\circ b\colon(\C/J_\C)(Z,-)\overset{\cong}\to(J_\C/J_\C^2)(Y,-)$. Thus we have $Z\cong\bigoplus_{W\in\ind\C}W^{\oplus d'_{YW}}$.
\end{enumerate}
\end{Prop}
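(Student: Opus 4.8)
The plan is to reduce both statements to the elementary fact that a projective cover $p\colon P\to M$ of modules induces an isomorphism on semisimple quotients, $P/\rad P\xrightarrow{\sim}M/\rad M$; the real work is then to identify the relevant radicals inside the functor categories $\Mod\C^{\op}$ and $\Mod\C$.

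I would begin with (1). By definition of a sink map, $a\circ-\colon\C(-,Z)\to J_\C(-,X)$ is a projective cover in $\Mod\C^{\op}$; in particular $a=a\circ{\rm id}_Z\in J_\C(Z,X)$, so postcomposition with $a$ sends $J_\C(-,Z)$ into $J_\C^2(-,X)$ and therefore descends to the morphism $(\C/J_\C)(-,Z)\to(J_\C/J_\C^2)(-,X)$ appearing in the statement. Now the radical of the representable $\C(-,Z)\in\Mod\C^{\op}$ is $J_\C(-,Z)$, and the radical of $J_\C(-,X)$ is $J_\C^2(-,X)$, both being instances of $\rad N=J_\C\cdot N$: the first because $J_\C$ is an ideal of $\C$, the second directly from the definition of $J_\C^2$. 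Hence the semisimple quotients of $\C(-,Z)$ and $J_\C(-,X)$ are $(\C/J_\C)(-,Z)$ and $(J_\C/J_\C^2)(-,X)$, and since $a\circ-$ is a projective cover (so it is onto and its kernel is contained in $J_\C(-,Z)$) it induces an isomorphism between them; by construction this isomorphism is exactly the map $a\circ-$ in the statement. For the decomposition of $Z$, I would evaluate this natural isomorphism at an arbitrary $W\in\ind\C$: writing $Z\cong\bigoplus_{V\in\ind\C}V^{\oplus n_V}$ and using $(\C/J_\C)(W,V)=0$ for non-isomorphic indecomposables together with $(\C/J_\C)(W,W)=D_W$, the left-hand side becomes $D_W^{\oplus n_W}$; naturality makes the evaluated isomorphism right $D_W$-linear, so comparing right $D_W$-dimensions with ${\rm Irr}(W,X)=(J_\C/J_\C^2)(W,X)$ gives $n_W=d_{WX}$, that is, $Z\cong\bigoplus_{W\in\ind\C}W^{\oplus d_{WX}}$.

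Statement (2) is the exact dual, carried out in $\Mod\C$ with the covariant representables $\C(Z,-)$ and $\C(Y,-)$ in place of $\C(-,Z)$ and $\C(-,X)$: here $\rad\,\C(Z,-)=J_\C(Z,-)$ and $\rad\,J_\C(Y,-)=J_\C^2(Y,-)$, the projective cover $-\circ b$ induces the isomorphism $(\C/J_\C)(Z,-)\cong(J_\C/J_\C^2)(Y,-)$, and comparing left $D_W$-dimensions at each $W\in\ind\C$ yields $Z\cong\bigoplus_{W\in\ind\C}W^{\oplus d'_{YW}}$. The one point demanding genuine care, and what I would regard as the main obstacle, is the functor-category bookkeeping: one must know that $\Mod\C^{\op}$ and $\Mod\C$ behave like categories of modules over a semiperfect ring, so that representables are finitely generated projectives with the stated radicals, projective covers exist, and kernels of projective covers are superfluous. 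This is precisely what the Krull-Schmidt hypothesis on $\C$ provides, with $J_\C$ its Jacobson radical and each $\End_\C(X)$ semiperfect; once this framework is granted, the argument is purely formal, the only computation being the two radical identifications.
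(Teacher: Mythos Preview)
The paper does not prove this proposition; it is cited from \cite[3.12]{INP} and used without argument. Your proof is correct and is the standard one. One small clarification: the equality $\rad J_\C(-,X)=J_\C^2(-,X)$ is not literally ``the definition of $J_\C^2$'' but follows because every simple object of $\Mod\C^{\op}$ is annihilated by $J_\C$ (giving $J_\C^2(-,X)\subseteq\rad J_\C(-,X)$) and $J_\C(-,X)/J_\C^2(-,X)$ is a module over the semisimple category $\C/J_\C$, hence itself semisimple (giving the reverse inclusion); both facts use only the Krull--Schmidt hypothesis on $\C$. With that in place, the projective-cover argument and the $D_W$-dimension count go through exactly as you wrote.
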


For example, if $\C=\add M$ where $M\in\refl\Lambda$ gives an NCCR, then we can draw $\AR(\C)$ by determining all $(d-1)$-almost split sequences.

\section{Auslander-Reiten theory for quotient singularities admitting field extensions}\label{ARquotient}

Let $d\geq2$ be an integer. We assume the following settings.
\begin{enumerate}
\item[(Q1)] $l/k$ is a finite Galois extension of fields.
\item[(Q2)] $G$ is a finite subgroup of $GL_d(l)\rtimes\Gal(l/k)$ with $|G|$ not divided by $\ch k$ and $k=l^G$.
\item[(Q3)] $S:=l[[x_1,\ldots , x_d]]$ is the formal power series ring and $\mathfrak{n}:=(x_1,\ldots , x_d)\subseteq S$ is the maximal ideal and $R:=S^G$ is the ring of invariants.
\end{enumerate}
These settings are justified by Proposition \ref{linear}. Remark that the action of $(A,\sigma)\in GL_d(l)\rtimes\Gal(l/k)$ on $S$ as a $k$-algebra is defined by
\[(A,\sigma)(\lambda):=\sigma(\lambda)\ (\lambda\in l)\ {\rm and}\ (A,\sigma)(x_j):=a_{1j}x_1+\cdots+a_{dj}x_d\ (A=[a_{ij}]_{i,j}).\]
Now we have the {\it Reynolds operator}
\[\rho\colon S\to R;s\mapsto\frac{1}{|G|}\sum_{g\in G}g(s).\]
Observe that this is a retraction of the inclusion map $R\hookrightarrow S$ as an $R$-module homomorphism.

\begin{Prop}
The ring $(R, \mathfrak{m}:=\mathfrak{n}\cap R, k)$ is a $d$-dimensional Cohen-Macaulay complete local normal domain and $S\in\CM R$ holds.
\end{Prop}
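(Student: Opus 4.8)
I would run the classical invariant-subring argument, organized around the Reynolds operator $\rho\colon S\to R$ introduced just above. The starting point is that $\rho$ is a retraction of the inclusion $R\hookrightarrow S$ in $\mod R$, so $S=R\oplus M$ with $M:=\Ker\rho$ a finitely generated $R$-module; this yields the purity relation $IS\cap R=I$ for every ideal $I\subseteq R$. From purity I would read off, in turn: $R$ is Noetherian (an ascending chain of ideals of $R$ extends to one of $S$, stabilizes there since $S$ is Noetherian, and contracts back); $R$ is a normal domain ($R\subseteq S$ is a subring of a domain, and if $x=a/a'\in\Frac R$ is integral over $R$ then $x\in S$ by normality of $S$, so $xa'=a\in a'S\cap R=a'R$, whence $x\in R$); and $(R,\mathfrak{m})$ with $\mathfrak{m}=\mathfrak{n}\cap R$ is local (if $I\subsetneq R$ is a proper ideal then $IS\neq S$ by purity, so $IS\subseteq\mathfrak{n}$, so $I\subseteq\mathfrak{n}\cap R$).

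Next I would pin down the residue field and the remaining invariants. Each $g\in G$ stabilizes $\mathfrak{n}$ and restricts on the coefficient field $l\subseteq S$ to an element of $\Gal(l/k)$, so $\rho$ maps $\mathfrak{n}$ into $\mathfrak{n}\cap R=\mathfrak{m}$ and maps $l$ into $l\cap S^G=l^G=k$; since $S=l\oplus\mathfrak{n}$ as $k$-vector spaces, applying $\rho$ yields $R=k+\mathfrak{m}$, and $k\cap\mathfrak{m}\subseteq l\cap\mathfrak{n}=0$, so $R/\mathfrak{m}\cong k$. For the dimension, $S$ is integral over $R$ because every $b\in S$ is a root of the monic polynomial $\prod_{g\in G}(T-g(b))\in R[T]$; moreover $G$ acts faithfully on $\Frac S$, so $\Frac S/(\Frac S)^G$ is Galois with group $G$ by Artin's theorem, and $(\Frac S)^G=\Frac(S^G)=\Frac R$, so $\Frac S/\Frac R$ is finite separable and $S$ is the module-finite integral closure of $R$ in $\Frac S$; hence $\dim R=\dim S=d$. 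For completeness, module-finiteness forces $S/\mathfrak{m}S$ to be a finite-dimensional $k$-algebra, so $\mathfrak{n}^N\subseteq\mathfrak{m}S$ for some $N$; thus the $\mathfrak{m}$-adic and $\mathfrak{n}$-adic topologies on $S$ coincide and $S$ is $\mathfrak{m}$-adically complete, and since $R$ is the summand of $S$ complementary to $M$ with $\mathfrak{m}^nS=\mathfrak{m}^nR\oplus\mathfrak{m}^nM$, its completion $\widehat R$ is the complement of $M$ in $\widehat S=S$ containing $R$, forcing $\widehat R=R$. Finally $\depth_RS=\depth S=d$ since $\sqrt{\mathfrak{m}S}=\mathfrak{n}$ and $S$ is regular of dimension $d$; as $\depth_RR\geq\depth_R(R\oplus M)=\depth_RS=d=\dim R$, the ring $R$ is Cohen-Macaulay, and $S$ is a finitely generated $R$-module with $\depth_RS=d=\dim R$, hence $S\in\CM R$.

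The points where the mixed nature of the action (only $k$-linear, not $l$-linear) actually intervenes are the residue-field computation, where one must check $\rho(l)\subseteq k$ using $l\cap S^G=l^G$ rather than the weaker $\rho(l)\subseteq l$, and the appeal to Artin's theorem, which needs $G$ to act faithfully on $\Frac S$; both are immediate from the linearization setup. The only step needing a small argument rather than a citation is the passage from ``$R$ is a direct $R$-summand of the $\mathfrak{m}$-adically complete module $S$'' to ``$R$ is $\mathfrak{m}$-adically complete''; everything else (purity $\Rightarrow$ Noetherian/normal/local, integrality $\Rightarrow$ dimension, finiteness of integral closure in a finite separable extension, depth of a direct summand) is the standard toolkit and carries over verbatim from the equicharacteristic classical case, so I expect no real obstacle here — the statement being essentially a bookkeeping consolidation of well-known facts in the present generality.
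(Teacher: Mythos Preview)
Your proof is correct and follows essentially the same approach as the paper: the paper treats the statement as routine (citing \cite[Chapter~5]{LW}) and only spells out the Cohen--Macaulay, completeness, and $S\in\CM R$ parts via the same Reynolds-operator/direct-summand/topology-comparison argument you give, while the remaining items (Noetherian, normal, local, dimension, residue field) are exactly the standard purity-and-integrality arguments you wrote out. Your version is simply a more detailed unpacking of what the paper deems routine, with the extra care about the residue field and faithfulness being correct but not singled out in the paper.
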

\begin{proof}
The proof is just a routine (see, for example, \cite[Chapter 5]{LW}). For the convenience of the reader, we prove $R$ is Cohen-Macaulay and complete and $S\in\CM R$.

Since $S\in\mod R$ holds, we have $\depth_RS=\depth_SS=d$. Thus $S\in\CM R$ holds. Moreover, $\depth_RR=d$ follows since $R$ is a direct summand of $S$. Since $(S/\mathfrak{m}S,\mathfrak{n}/\mathfrak{m}S)$ is a finite dimensional local $k$-algebra, there exists $n\in\mathbb{N}$ such that $\mathfrak{n}^n\subseteq\mathfrak{m}S$ holds. Thus the $\mathfrak{m}$-adic topology on $S$ coincides with the $\mathfrak{n}$-adic one. Thus $R$ is complete since it is a submodule of a finitely generated $R$-module $S$ which is $\mathfrak{m}$-adic complete.
\end{proof}


\subsection{$S$ is a CT module}

In this section, we prove that $S\in\CM R$ is a CT module (Theorem \ref{CT}). First, we show the two-dimensional version of Theorem \ref{CT} since it is easier than the higher dimensional case and is independent of Theorem \ref{Auslanderalgebra}.

\begin{Thm}\label{addgen}
Under the setting $(Q1),(Q2)$ and $(Q3)$, assume $d=2$. Then $S$ is an additive generator of $\CM R$.
\end{Thm}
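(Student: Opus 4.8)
The plan is to reduce the assertion to the single statement that $\End_R(S)$ is a regular $R$-order: granting this, the classical characterization of additive generators of $\CM R$ over two-dimensional local normal domains (see \cite[5.4]{IW}) gives at once that $S$ is an additive generator. Since $\depth_R S=2$, Lemma \ref{homdepth} shows $\End_R(S)=\Hom_R(S,S)\in\CM R$, so $\End_R(S)$ is already an $R$-order; and an $R$-order contained in $\CM R$ has global dimension $d=2$ as soon as it has finite global dimension (Auslander--Buchsbaum for orders, applied to a nonzero finite-length module). So the task is to produce a workable description of $\End_R(S)$ and read off that its global dimension is finite.

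First I would arrange that $H:=G\cap GL_2(l)$ is small. By Theorem \ref{makesmall} I may pass to $\phi$ and $\theta$, with $\im\theta=S^{\Ker\phi}=:T$ a power series ring over $l$ in two variables (an iterated ring of invariants of pseudo-reflection groups, Proposition \ref{allref}), $\phi(G)\cap GL_2(l)$ small, and $T^{G/\Ker\phi}=R$. At each step of the iteration defining $T$, the larger regular local ring is finite and flat, hence free, over the smaller one, so $S\cong T^{\oplus n}$ as $T$-modules for some $n\geq1$; restricting scalars along $R\subseteq T$ gives $\End_R(S)\cong\mathrm{M}_n(\End_R(T))$, and transporting the action along $\theta\colon S\xrightarrow{\sim}T$ identifies $\End_R(T)$ with the endomorphism ring of a power series ring in two variables over $l$ under a group whose linear part is small. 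Since being a regular order is Morita invariant, I am reduced to the case where $S$ itself is a power series ring over $l$ in two variables and $H=G\cap GL_2(l)$ is small.

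In that case $S$ is the integral closure of $R$ in $L:=\Frac S$ and $L/\Frac R$ is Galois with group $G$, so $(I1),(I2),(I3)$ hold for $A=R$, $B=S$. The key step is to check that $R\subseteq S$ is unramified in codimension one. Let $\mathfrak P\in\Spec S$ with $\height\mathfrak P=1$. Any $g\in T(\mathfrak P)$ has $g(\lambda)-\lambda\in\mathfrak P$ for all $\lambda\in l$, and since $l\hookrightarrow S/\mathfrak P$ this forces the image of $g$ in $\Gal(l/k)$ to be trivial, so $T(\mathfrak P)\subseteq H$. If $1\neq g\in T(\mathfrak P)$, then the ideal of $S$ generated by $\{g(s)-s\mid s\in S\}$ is contained in $\mathfrak P$; for $g\in GL_2(l)$ this ideal equals $(g(x_1)-x_1,\,g(x_2)-x_2)$, whose height is $\dim_l U_g$, so $\dim_l U_g\leq1$, i.e.\ $g$ is a pseudo-reflection (Definition \ref{small}), contradicting the smallness of $H$. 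Hence $T(\mathfrak P)=1$ for all height-one $\mathfrak P$, so $R\subseteq S$ is unramified in codimension one (Theorem \ref{inertia}), and Theorem \ref{end} yields a ring isomorphism $S*G\xrightarrow{\sim}\End_R(S)$.

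Finally I would check that $S*G$ is a regular $R$-order. As an $R$-module $S*G\cong S^{\oplus|G|}\in\CM R$, so it is an $R$-order; and since $|G|$ is invertible in $S$ and $\gl S=2$, the standard averaging argument (every $S*G$-module is a summand of a module induced from $S$, and induction from $S$ is exact and preserves projectivity) gives $\gl(S*G)\leq2<\infty$, hence $\gl(S*G)=2$ by the remark in the first paragraph. Therefore $\End_R(S)$ is a regular $R$-order and the proof is complete. I expect the main obstacle to be the unramifiedness-in-codimension-one computation in the third paragraph — precisely where the hypothesis that $H$ is small is used — together with the care needed in the reduction of the second paragraph, namely transferring the conclusion back from $\End_R(T)$ to $\End_R(S)$ via the freeness of $S$ over $T=S^{\Ker\phi}$.
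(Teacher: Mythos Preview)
Your argument is correct, but it is essentially the proof of the general Theorem~\ref{CT} (valid in all dimensions $d\geq 2$) rather than the paper's proof of Theorem~\ref{addgen}. You reduce to $H$ small via Theorem~\ref{makesmall}, establish unramifiedness in codimension one (this is Proposition~\ref{whenunram}), identify $\End_R(S)\cong S*G$ via Theorem~\ref{end}, compute $\gl(S*G)=2$ by averaging (Proposition~\ref{gldim}), and conclude by Theorem~\ref{CTEnd}. The paper, by contrast, gives a three-line direct argument that bypasses all of this: for $M\in\CM R$, the Reynolds operator $\rho$ makes $M\cong\Hom_R(R,M)$ a direct summand of $\Hom_R(S,M)$; Lemma~\ref{homdepth} gives $\depth_R\Hom_R(S,M)\geq 2$, hence $\Hom_R(S,M)\in\CM S=\proj S$ since $S$ is regular of dimension~$2$; therefore $M\in\add_R S$. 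The paper explicitly flags this distinction, remarking before Theorem~\ref{addgen} that the two-dimensional case ``is easier than the higher dimensional case and is independent of Theorem~\ref{Auslanderalgebra}.'' Your route has the advantage of generalizing verbatim to $d\geq 3$; the paper's route isolates exactly why $d=2$ is special (depth~$\geq 2$ for $\Hom$'s is automatic and already forces Cohen--Macaulayness) and needs no structure theory of $\End_R(S)$, no ramification, and no reduction to small~$H$.

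One small omission: when you invoke the characterization ``$\End_R(S)$ regular order $\Rightarrow$ $S$ additive generator,'' you need $R\in\add_R S$ (cf.\ Theorem~\ref{CTEnd}); this follows immediately from the Reynolds operator but you do not say so.
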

\begin{proof}
Take $M\in\CM R$. Observe that $M\cong\Hom_R(R,M)$ can be seen as a direct summand of $\Hom_R(S,M)$ as an $R$-module through a split monomorphism $\Hom_R(\rho,M)$. We know that $\Hom_R(S,M)\in\CM R$ holds by Lemma \ref{homdepth}(1). Thus as an $S$-module, $\Hom_R(S,M)\in\CM S=\proj S$ follows.
\end{proof}

Our goal in this subsection is to show the following theorem which is a generalization of \cite[5.7]{IW} or \cite[2.5]{Iya07b} by using Proposition \ref{CTEnd}.

\begin{Thm}\label{CT}
Under the setting $(Q1),(Q2)$ and $(Q3)$, 
\begin{enumerate}
\item The module $S\in\CM R$ is CT.
\item In particular, if $R$ is an isolated singularity, then $S\in\CM R$ is a $(d-1)$-cluster tilting module.
\end{enumerate}
\end{Thm}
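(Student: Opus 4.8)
The plan is to invoke Theorem~\ref{CTEnd}: regarding $R$ itself as an $R$-order, the module $S\in\CM R$ is CT if and only if $R\in\add_R S$ and $\End_R(S)$ is a regular $R$-order. The first condition is free, since the Reynolds operator $\rho\colon S\to R$ is a retraction of the inclusion $R\hookrightarrow S$, exhibiting $R$ as an $R$-module direct summand of $S$. Part (2) is then automatic from part (1): over an isolated singularity the classes of CT modules and of $(d-1)$-cluster tilting modules coincide, as recorded just before Theorem~\ref{CTEnd}. So the entire task is to prove that $\End_R(S)$ is a regular $R$-order, i.e. that $\End_R(S)\in\CM R$ and $\gl\End_R(S)=d$.

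I would first treat the case where $H:=G\cap GL_d(l)$ is small. Here I claim that $R=S^G\subseteq S$ is unramified in codimension one. Indeed, for a height-one prime $\mathfrak{P}$ of $S$ every element of the inertia group $T(\mathfrak{P})$ fixes $\kappa(\mathfrak{P})$ pointwise, hence acts trivially on $l\subseteq\kappa(\mathfrak{P})$, so $T(\mathfrak{P})\subseteq H$; moreover any $g\in T(\mathfrak{P})$ satisfies $g(x_i)-x_i\in\mathfrak{P}$ for all $i$, which forces $\dim_l\Im(g-I_d)\le 1$, i.e. $g=I_d$ or $g$ is a pseudo-reflection — the latter being excluded by smallness. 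Thus $|T(\mathfrak{P})|=1$ for every height-one $\mathfrak{P}$, so $R\subseteq S$ is unramified in codimension one by Theorem~\ref{inertia}, and Theorem~\ref{end} gives an isomorphism $\gamma\colon S*G\xrightarrow{\ \sim\ }\End_R(S)$. Finally $S*G$ is free of rank $|G|$ as a left $S$-module, so $S*G\cong S^{\oplus|G|}$ as $R$-modules and hence $S*G\in\CM R$; and since $|G|$ is a unit in $S$, the usual averaging argument gives $\gl(S*G)=\gl S=d$. Therefore $\End_R(S)\cong S*G$ is a regular $R$-order and $S$ is CT.

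For arbitrary $G$ I would reduce to the small case via Theorem~\ref{makesmall}, which replaces $S$ by the regular subring $S_0:=S^{\Ker\phi}$ — again a power series ring over $l$ — carrying an action of $G/\Ker\phi$ with the same invariant ring $R$ and with small linear part; by the previous paragraph $S_0$ is a CT $R$-module. It remains to pass from $S_0$ back to $S$. The inductive construction in Theorem~\ref{makesmall} realises $S_0\subseteq S$ as a tower of Chevalley--Shephard--Todd extensions, so $S$ is a free $S_0$-module; using the Reynolds operators one may choose a basis containing $1$, whence $S\cong S_0^{\oplus m}$ as $R$-modules with $S_0$ a direct summand, so $\add_R S=\add_R S_0$. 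Since the three subcategories in Definition~\ref{CTdef} depend only on $\add_R(-)$ of the module in question, CT-ness transfers from $S_0$ to $S$, completing the proof; the cluster tilting statement (2) then follows as noted above.

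The crux is the forced identification $\End_R(S)\cong S*G$: the naive depth estimate only gives $\depth_R\End_R(S)\ge 2$, so one cannot avoid computing $\End_R(S)$ exactly, and this needs both the codimension-one ramification analysis (hence the reduction to small $H$) and the global-dimension computation for the skew group ring. The surrounding bookkeeping — checking that the reduction of Theorem~\ref{makesmall} produces a Chevalley--Shephard--Todd tower, so that $\add_R S=\add_R S_0$ and CT-ness descends from $S_0$ to $S$ — is the remaining point requiring care.
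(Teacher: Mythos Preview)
Your proof is correct and follows essentially the same route as the paper: verify $R\in\add_R S$ via Reynolds, reduce to the small case via Theorem~\ref{makesmall}, identify $\End_R(S)\cong S*G$ there through the codimension-one unramifiedness (your argument is exactly Proposition~\ref{whenunram} and Theorem~\ref{Auslanderalgebra}), and conclude regularity from $\gl(S*G)=\gl S=d$ (Proposition~\ref{gldim}). The paper phrases the reduction step as a Morita equivalence $\End_R(S)\sim\End_{S^{\phi(G)}}(S)$ coming from $\add_{S^G}S=\add_{S^G}\Im\theta\simeq\add_{S^{\phi(G)}}S$, while you prove $S_0$ is CT and then transfer along $\add_R S=\add_R S_0$; these are the same fact viewed from two sides.

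Two small simplifications are available. First, the ``CST tower'' is unnecessary for freeness: since $\theta$ is a ring isomorphism onto $S_0=S^{\Ker\phi}$, the ring $S_0$ is regular local of dimension $d$, and $S$ is a finite $S_0$-module with $\depth_{S_0}S=d$ (a system of parameters for $S_0$ is an $\mathfrak{n}$-primary sequence in $S$, hence $S$-regular), so Auslander--Buchsbaum gives $S\in\proj S_0$. Second, the clause about choosing a basis containing $1$ is superfluous: once $S\cong S_0^{\oplus m}$ as $S_0$-modules (hence as $R$-modules), the equality $\add_R S=\add_R S_0$ is immediate.
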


Towards this theorem, first, we investigate ramification theory for the extension $R\subseteq S$. The following description of inertia group is our starting point.

\begin{Prop}\label{invinertia}
For $\mathfrak{P}\in\Spec S$ and $g=(A,\sigma)\in G\subseteq GL_d(l)\rtimes\Gal(l/k)$, the following conditions are equivalent.
\begin{enumerate}
\item The map $\sigma$ is the identity and $U_AS\subseteq\mathfrak{P}$ holds.
\item  The inertia group $T(\mathfrak{P})$ contains $g$.
\end{enumerate}
\end{Prop}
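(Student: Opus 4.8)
The plan is to unwind both conditions using the definition of the inertia group $T(\mathfrak{P})=\{g\in G\mid g(x)-x\in\mathfrak{P}\text{ for all }x\in S\}$ and the explicit action of $g=(A,\sigma)$ on $S=l[[x_1,\ldots,x_d]]$ recalled at the start of Section \ref{ARquotient}. First I would observe that since $S$ is topologically generated over the prime field by $l$ together with the variables $x_1,\ldots,x_d$, and since $\mathfrak{P}$ is closed in the $\mathfrak{n}$-adic topology (being prime, hence containing some power of $\mathfrak{n}$ or else lying in finitely many "coordinate" directions — more simply, $g$ is continuous and $S/\mathfrak{P}$ is separated), the condition $g(x)-x\in\mathfrak{P}$ for all $x\in S$ is equivalent to the two conditions: $g(\lambda)-\lambda\in\mathfrak{P}$ for all $\lambda\in l$, and $g(x_i)-x_i\in\mathfrak{P}$ for all $i$. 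This reduces checking membership in $T(\mathfrak{P})$ to a finite set of generators.

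Next I would analyze the first of these two sub-conditions. For $\lambda\in l$ we have $g(\lambda)=\sigma(\lambda)$, so $g(\lambda)-\lambda=\sigma(\lambda)-\lambda\in l$. Since $l\cap\mathfrak{P}=0$ (because $\mathfrak{P}$ is a proper ideal and $l$ is a field sitting inside $S$ via the chosen coefficient field, and any nonzero element of $l$ is a unit in $S$), the condition $\sigma(\lambda)-\lambda\in\mathfrak{P}$ for all $\lambda$ forces $\sigma(\lambda)=\lambda$ for all $\lambda\in l$, i.e. $\sigma=\mathrm{id}$. Conversely if $\sigma=\mathrm{id}$ this sub-condition is automatic. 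So the residue-field part of $T(\mathfrak{P})$-membership is exactly the requirement $\sigma=\mathrm{id}$ in both directions.

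For the remaining part, assume $\sigma=\mathrm{id}$, so $g$ acts $l$-linearly and $g(x_j)=\sum_i a_{ij}x_i$. Then $g(x_j)-x_j$ is precisely the $j$-th column of $A-I_d$ applied to the coordinate vector, i.e. the image of $x_j$ under the map $A-I_d\colon U\to U$ viewed inside $S$. Hence $\{g(x_j)-x_j\mid 1\le j\le d\}$ spans $U_A=\Im(A-I_d)$ as an $l$-vector space, so $g(x_i)-x_i\in\mathfrak{P}$ for all $i$ if and only if $U_A\subseteq\mathfrak{P}$, equivalently $U_AS\subseteq\mathfrak{P}$. Combining with the previous paragraph gives the stated equivalence. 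The only mildly delicate point — and the one I would treat most carefully — is the reduction from "for all $x\in S$" to "for the generators $l\cup\{x_i\}$"; this uses that $g$ is a continuous ring automorphism and that $S/\mathfrak{P}$ is $\mathfrak{n}(S/\mathfrak{P})$-adically separated, so that if $g-\mathrm{id}$ kills the generators modulo $\mathfrak{P}$ it kills every power series modulo $\mathfrak{P}$; I would phrase this via the fact that $\{s\in S\mid g(s)-s\in\mathfrak{P}\}$ is a closed subring of $S$ containing the generators, hence equals $S$.
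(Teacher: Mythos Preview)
Your proposal is correct and follows the same line as the paper's proof: use $l\cap\mathfrak{P}=0$ (equivalently $\mathfrak{P}\subseteq\mathfrak{n}$) to force $\sigma=\mathrm{id}$ from the condition $g(\lambda)-\lambda\in\mathfrak{P}$, and identify $g(x_j)-x_j$ with the generators of $U_A$ once $\sigma=\mathrm{id}$. The paper's version is terser --- it simply asserts that $U_AS\subseteq\mathfrak{P}$ is a rephrasing of $(A,\mathrm{id})\in T(\mathfrak{P})$ and then argues only the $\sigma$ part --- whereas you spell out the reduction to generators via the observation that $\{s\in S\mid g(s)-s\in\mathfrak{P}\}$ is a closed subring; this justification is implicit in the paper but your making it explicit is a harmless addition, not a different approach.
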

\begin{proof}
The condition $U_AS\subseteq\mathfrak{P}$ is just a rephrasing of $(A,{\rm id})\in T(\mathfrak{P})$. Thus we have only to prove $\sigma={\rm id}$ under (2). Assume $g\in T(\mathfrak{P})$. For $\lambda\in l$, we have $\sigma(\lambda)-\lambda=g(\lambda)-\lambda\in\mathfrak{P}\subseteq\mathfrak{n}$, which means that $\sigma(\lambda)=\lambda$ holds. Thus $\sigma$ is the identity.
\end{proof}

Let $H:=G\cap GL_d(l)$ be a normal subgroup of $G$. Using Proposition \ref{invinertia}, we can characterize when the extension $R\subseteq S$ is unramified in codimension one.

\begin{Prop}\label{whenunram}
The extension $R\subseteq S$ is unramified in codimension one if and only if the subgroup $H\subseteq GL_d(l)$ is small.
\end{Prop}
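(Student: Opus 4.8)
The plan is to reduce the statement to the description of inertia groups in Proposition \ref{invinertia} and the fact (Theorem \ref{inertia}) that unramifiedness at a prime $\mathfrak{P}$ is equivalent to $|T(\mathfrak{P})|=1$. First I would observe that the extension $R\subseteq S$ is unramified in codimension one if and only if $T(\mathfrak{P})$ is trivial for every height-one prime $\mathfrak{P}\in\Spec S$; here I use that codimension-one primes of $R$ correspond (via lying over, which preserves height since $S$ is module-finite over $R$) exactly to height-one primes of $S$, together with the standard fact that $\mathfrak{P}$ has height one over a height-one prime of $R$. So the equivalence to be proven becomes: $H\subseteq GL_d(l)$ is small $\iff$ $T(\mathfrak{P})=1$ for all $\mathfrak{P}\in\Spec S$ with $\height\mathfrak{P}=1$.

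For the direction ``$H$ not small $\Rightarrow$ ramified in codimension one'', suppose $(A,\mathrm{id})\in G$ is a pseudo-reflection with $A\neq I_d$. Then $U_A=\Im(A-I_d)$ is a one-dimensional (or zero-dimensional, but $A\neq I_d$ forces one-dimensional) $l$-subspace of $U=\mathfrak{n}/\mathfrak{n}^2$; pick a linear form $\ell\in\mathfrak{n}$ spanning a complement so that, after choosing coordinates adapted to $A$, the ideal $U_AS$ is generated by a single nonzero linear form, hence $\sqrt{U_AS}$ is a height-one prime $\mathfrak{P}$ of $S$. Wait — more carefully: $U_AS$ is a principal ideal generated by one linear element $f$, and $S/(f)$ is again a formal power series ring (a regular local ring of dimension $d-1$), so $(f)=\mathfrak{P}$ is itself prime of height one. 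By Proposition \ref{invinertia}, $(A,\mathrm{id})\in T(\mathfrak{P})$, so $|T(\mathfrak{P})|>1$ and $R\subseteq S$ is ramified at $\mathfrak{P}$ by Theorem \ref{inertia}.

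For the converse, assume $H$ is small and let $\mathfrak{P}\in\Spec S$ have height one; I must show $T(\mathfrak{P})=1$. Take any $g=(A,\sigma)\in T(\mathfrak{P})$. By Proposition \ref{invinertia}, $\sigma=\mathrm{id}$, so $g=(A,\mathrm{id})\in H$, and $U_AS\subseteq\mathfrak{P}$. The $l$-subspace $U_A\subseteq U$ generates the ideal $U_AS$, which is contained in the height-one prime $\mathfrak{P}$; since $\mathfrak{P}$ has height one and the ideal generated by $\dim_l U_A$ general linear forms has height $\dim_l U_A$ (regularity of $S$), we must have $\dim_l U_A\leq 1$ — otherwise $U_AS$ would be an ideal generated by a regular sequence of length $\geq 2$, hence of height $\geq 2$, which cannot sit inside a height-one prime. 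Thus $A$ is a pseudo-reflection in $H$, and smallness forces $A=I_d$, i.e.\ $g$ is trivial. Hence $T(\mathfrak{P})=1$ for all height-one $\mathfrak{P}$, so $R\subseteq S$ is unramified in codimension one by Theorem \ref{inertia}.

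The main obstacle I anticipate is the height bookkeeping in the converse: making precise that $U_AS\subseteq\mathfrak{P}$ with $\height\mathfrak{P}=1$ forces $\dim_l U_A\leq 1$. This requires knowing that in the regular local ring $S$ an ideal generated by $m$ linearly independent linear forms is prime of height exactly $m$ (it is the kernel of a surjection onto a smaller power series ring), so any prime containing it has height $\geq m$; since $\height\mathfrak{P}=1$ we get $m\leq 1$. Everything else is a direct translation through Proposition \ref{invinertia} and Theorem \ref{inertia}.
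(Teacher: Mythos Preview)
Your proof is correct and follows essentially the same approach as the paper: both reduce via Theorem \ref{inertia} to showing that some $T(\mathfrak{P})$ with $\height\mathfrak{P}=1$ is nontrivial if and only if $H$ contains a nontrivial pseudo-reflection, and both invoke Proposition \ref{invinertia} for the translation. The only cosmetic difference is that the paper phrases the key step as ``$\mathfrak{P}$ is principal, so $\dim_l U_A\le 1$'' while you argue via the height of the ideal generated by independent linear forms; these are the same observation.
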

\begin{proof}
By Theorem \ref{inertia}, we have only to show the following: for $g=(A,\sigma)\in G$, the following conditions are equivalent.
\begin{enumerate}
\item $A$ is a pseudo-reflection and $\sigma$ is the identity.
\item There exists $\mathfrak{P}\in\Spec S$ with $\height\mathfrak{P}=1$ such that $g\in T(\mathfrak{P})$ holds.
\end{enumerate}

(1)$\Rightarrow$(2) We may assume $U_A\neq0$. Take $f\neq0\in U_A$ and let $\mathfrak{P}:=(f)\in\Spec S$. Then $g\in T(\mathfrak{P})$ holds by Proposition \ref{invinertia} since $U_A=lf$ holds.

(2)$\Rightarrow$(1) Take $\mathfrak{P}\in\Spec S$ with $\height\mathfrak{P}=1$ and $g\in T(\mathfrak{P})$. By Proposition \ref{invinertia}, we have $\sigma={\rm id}$ and $U_AS\subseteq\mathfrak{P}$. This forces $\dim_l(U_A)\leq1$ since $\mathfrak{P}$ is generated by one element.
\end{proof}

Then we can give a description of $\End_R(S)$ which is crucial to Theorem \ref{CT}.

\begin{Thm}\label{Auslanderalgebra}
Under the setting $(Q1),(Q2)$ and $(Q3)$, if $H\subseteq GL_d(l)$ is small, then the canonical ring homomorphism 
\[S*G\to\End_R(S);\ s*g\mapsto(s'\mapsto sg(s'))\]
is an isomorphism.
\end{Thm}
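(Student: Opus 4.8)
The plan is to reduce the statement to the general ramification-theoretic fact recorded as Theorem~\ref{end}, after checking that the triple $(R,S,G)$ satisfies the hypotheses (I1)--(I3) of that section. First I would set up the framework: $R=S^G$ is a complete local normal domain (in particular a Noetherian normal domain), so (I1) holds. Put $K:=\Frac(R)$ and $L:=\Frac(S)$. The action of $G$ on $S$ is faithful — an element $(A,\sigma)\in G$ acting trivially on $S$ acts trivially on the residue field $l$, hence $\sigma=\mathrm{id}$, and trivially on $\mathfrak{n}/\mathfrak{n}^2$, hence $A=I_d$ — so $G$ embeds into $\mathrm{Aut}(L)$. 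A standard clearing-denominators argument (replace a fraction $a/b$ by $a\prod_{g\neq 1}g(b)\,/\,\prod_{g\in G}g(b)$, whose denominator already lies in $R$) shows $L^G=K$, so $L/K$ is finite Galois with $\Gal(L/K)=G$; this is (I2). Finally $S$ is normal, hence integrally closed in $L$, and module-finite over $R$ since $S\in\CM R$ is finitely generated, so $S$ is the integral closure of $R$ in $L$; this is (I3). Under this identification the canonical homomorphism $S*G\to\End_R(S)$, $s*g\mapsto(s'\mapsto sg(s'))$, in the statement is precisely the map $\gamma$ of Theorem~\ref{end}.

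Second, I would feed in the hypothesis that $H:=G\cap GL_d(l)$ is small. By Proposition~\ref{whenunram} this is equivalent to the extension $R\subseteq S$ being unramified in codimension one. Theorem~\ref{end} then applies verbatim and yields that $\gamma$ is an isomorphism, which is exactly the claim. (This is also the input needed for Theorem~\ref{CT}: once $\End_R(S)\cong S*G$ one can analyze its homological dimension to see that $S$ gives an NCCR, hence is a CT module.)

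In this route essentially all the substance is imported. The surjectivity of $\gamma$ after localizing at a height-one prime comes from separability of unramified extensions together with Proposition~\ref{sep} (ultimately \cite{IT}), the injectivity from a rank count, and the passage from codimension one to the whole of $\Spec R$ from the reflexive-gluing Lemma~\ref{s2isom}; the translation between smallness of $H$ and unramifiedness in codimension one rests on the inertia-group computation of Proposition~\ref{invinertia}. So the only genuine work in the write-up is the bookkeeping of the first paragraph, and the points I would be most careful about are: that the convention for $S*G$ and the formula for $\gamma$ in the ramification section match the one in the statement here (so that no opposite-side twist is introduced), and that "unramified in codimension one" as used in Theorem~\ref{end} is literally the conclusion supplied by Proposition~\ref{whenunram}. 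The conceptually hard part — identifying small subgroups with codimension-one-unramified extensions, and proving $\gamma$ is an isomorphism under that hypothesis — has already been carried out upstream, so there is no remaining obstacle beyond assembling these pieces.
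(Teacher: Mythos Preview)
Your proposal is correct and follows exactly the paper's own route: invoke Proposition~\ref{whenunram} to get that $R\subseteq S$ is unramified in codimension one when $H$ is small, then apply Theorem~\ref{end}. The paper's proof is literally those two sentences; your extra paragraph verifying (I1)--(I3) and faithfulness of the $G$-action just spells out background that the paper leaves implicit.
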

\begin{proof}
By Proposition \ref{whenunram}, the extension $R\subseteq S$ is unramified in codimension one. Thus the assertion follows from Theorem \ref{end}.
\end{proof}

Using Theorem \ref{Auslanderalgebra}, we can prove Theorem \ref{CT}. This immediately follows if $H\subseteq GL_d(l)$ is small. If not, we replace $G$ so that $H\subseteq GL_d(l)$ is small.
\begin{proof}[Proof of Theorem \ref{CT}]
By the existence of the Reynolds operator, we have $R\in\add S$. Thus we have only to show that $\End_R(S)$ is a regular $R$-order.

We can take a group homomorphism $\phi\colon G\to GL_d(l)\rtimes\Gal(l/k)$ and an $l$-algebra monomorphism $\theta\colon S\to S$ as in Theorem \ref{makesmall}. Through the induced isomorphism $S^{\phi(G)}\cong S^G$, we have
\[\add_{S^{\phi(G)}}S\simeq\add_{S^G}\Im\theta=\add_{S^G}S.\]
This means that $\End_R(S)$ is Morita equivalent to $\End_{S^{\phi(G)}}(S)$, which is isomorphic to $S*\phi(G)$ by Theorem \ref{Auslanderalgebra}. Thus the assertion holds by Proposition \ref{gldim}.
\end{proof}

Finally, we determine the singular locus of $R$ under the assumption that $H\subseteq GL_d(l)$ is small.

\begin{Thm}\label{invisol}
Under the setting $(Q1),(Q2)$ and $(Q3)$, if $H\subseteq GL_d(l)$ is small, then the singular locus of $R$ is the closed subset of $\Spec R$ defined by the ideal
\[I:=\bigcap_{I_d\neq A\in H}(U_AS\cap R)\subseteq R.\]
\end{Thm}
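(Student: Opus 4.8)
The plan is to identify the singular locus of $R$ with the image in $\Spec R$ of the ramification locus of the extension $R\subseteq S$, and then to translate this ramification locus into the ideal $I$ via the description of inertia groups in Proposition~\ref{invinertia}. Throughout I use the notation of Section~1.2 applied to $A=R$, $B=S$, $K=\Frac R$, $L=\Frac S$, so that $S$ is the integral closure of $R$ in $L$ and $L/K$ is finite Galois with group $G$.

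The core step is: for $\mathfrak p\in\Spec R$ and any $\mathfrak P\in\Spec S$ lying over $\mathfrak p$ (which exists by lying over and is unique up to the transitive $G$-action, so that $|T(\mathfrak P)|$ depends only on $\mathfrak p$), the local ring $R_\mathfrak p$ is regular if and only if $T(\mathfrak P)=\{1\}$. For the ``if'' direction, Theorem~\ref{inertia} gives that $R\subseteq S$ is unramified at $\mathfrak P$ and that $S_\mathfrak p$ is $R_\mathfrak p$-free; then by Theorem~\ref{regular} the ring $R_\mathfrak p$ is regular if and only if $S_\mathfrak P$ is, and $S_\mathfrak P$ is regular because it is a localization of the regular ring $S=l[[x_1,\ldots,x_d]]$. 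For the ``only if'' direction, assume $R_\mathfrak p$ is regular. Since $H$ is small, Proposition~\ref{whenunram} shows that $R\subseteq S$ is unramified in codimension one, and this localizes to say that $R_\mathfrak p\subseteq S_\mathfrak p$ has no branch point of height one (a height-one prime of $R_\mathfrak p$ comes from a height-one prime of $R$ contained in $\mathfrak p$, and ramification is detected locally). Applying the Zariski--Nagata purity of the branch locus to the finite extension $R_\mathfrak p\subseteq S_\mathfrak p$ of the regular local ring $R_\mathfrak p$ by the normal domain $S_\mathfrak p$ (with separable fraction-field extension $L/K$), the branch locus in $\Spec R_\mathfrak p$ is either empty or pure of codimension one; as it has no height-one point, it is empty, so $R_\mathfrak p\subseteq S_\mathfrak p$ is unramified at $\mathfrak P$ and hence $T(\mathfrak P)=\{1\}$ by Theorem~\ref{inertia}.

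Next I would combine this with Proposition~\ref{invinertia}: since every element of $T(\mathfrak P)$ has trivial component in $\Gal(l/k)$, we have $T(\mathfrak P)\neq\{1\}$ if and only if $U_AS\subseteq\mathfrak P$ for some $I_d\neq A\in H$. Hence $\mathfrak p$ lies in the singular locus of $R$ if and only if there exist $I_d\neq A\in H$ and a prime $\mathfrak P$ of $S$ over $\mathfrak p$ with $U_AS\subseteq\mathfrak P$, that is, if and only if $\mathfrak p$ belongs to the image of $\bigcup_{I_d\neq A\in H}V(U_AS)$ under the finite surjective morphism $\pi\colon\Spec S\to\Spec R$. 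Finally, since $H$ is finite we have $\bigcup_{A}V(U_AS)=V\bigl(\bigcap_A U_AS\bigr)$, and for any ideal $J\subseteq S$ the morphism $\pi$ carries $V(J)$ onto $V(J\cap R)$ (because $R/(J\cap R)\hookrightarrow S/J$ is a module-finite ring extension, so $\Spec(S/J)\to\Spec(R/(J\cap R))$ is surjective by lying over); therefore the singular locus of $R$ equals $V\bigl((\bigcap_A U_AS)\cap R\bigr)=V\bigl(\bigcap_A(U_AS\cap R)\bigr)=V(I)$, as claimed.

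The main obstacle is the ``only if'' half of the core step: deducing $T(\mathfrak P)=\{1\}$ from regularity of $R_\mathfrak p$. The delicate point is that $T(\mathfrak P)\subseteq H$ is small only as a group acting linearly on $\mathfrak n/\mathfrak n^2$, whereas it could in principle act through pseudo-reflections on the completion of $S_\mathfrak P$, so a direct Chevalley--Shephard--Todd argument at $\mathfrak P$ is not available; invoking purity of the branch locus circumvents this cleanly.
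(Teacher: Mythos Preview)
Your proof is correct and follows essentially the same approach as the paper: both establish the equivalence ``$R_\mathfrak p$ regular $\Leftrightarrow$ $T(\mathfrak P)=\{1\}$'' via Theorem~\ref{regular} in one direction and purity of the branch locus in the other, then translate via Proposition~\ref{invinertia}. The only cosmetic difference is in the final step: the paper argues directly (if $\mathfrak p\supseteq I$ then some $U_AS\cap R\subseteq\mathfrak p$, and going-up lets one choose $\mathfrak P$ with $U_AS\subseteq\mathfrak P$), whereas you package the same going-up argument as $\pi(V(J))=V(J\cap R)$ and then use $(\bigcap_A U_AS)\cap R=\bigcap_A(U_AS\cap R)$.
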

\begin{proof}
Let $\mathfrak{p}\in\Spec R$. Take $\mathfrak{P}\in\Spec S$ which is lying over $\mathfrak{p}$. If the extension $R\subseteq S$ is unramified at $\mathfrak{P}$, then $R_\mathfrak{p}$ is regular by Theorem \ref{regular}. Conversely, if $R_\mathfrak{p}$ is regular, then the extension $R\subseteq S$ is unramified at $\mathfrak{P}$ by the purity of the branch locus (see \cite[1.4]{Au62}\cite[B.12]{LW}). Therefore, combining with Theorem \ref{inertia}, the ring $R_\mathfrak{p}$ is regular if and only if $|T(\mathfrak{P})|=1$. This is equivalent to that there exists no $A\neq I_d\in H$ with $U_AS\subseteq\mathfrak{P}$ by Proposition \ref{invinertia}.

If $\mathfrak{p}\nsupseteq I$, then there exists no $A\neq I_d\in H$ with $U_AS\subseteq\mathfrak{P}$. Thus $R_\mathfrak{p}$ is regular. If $\mathfrak{p}\supseteq I$, then there exists $A\neq I_d\in H$ with $U_AS\cap R\subseteq\mathfrak{p}$. Then by the going-up theorem, we can retake $\mathfrak{P}$ so that $U_AS\subseteq\mathfrak{P}$ holds. Thus $R_\mathfrak{p}$ is singular.
\end{proof}

As a corollary, we get a necessary and sufficient condition for $R$ to be an isolated singularity.

\begin{Cor}
Under the setting $(Q1),(Q2)$ and $(Q3)$, if $H\subseteq GL_d(l)$ is small, then the following conditions are equivalent.
\begin{enumerate}
\item The ring $R$ is an isolated singularity.
\item Any element $A\neq I_d$ of $H$ does not have eigenvalue one.
\end{enumerate}
\end{Cor}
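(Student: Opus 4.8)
The plan is to read this corollary off from Theorem~\ref{invisol}. By that theorem the singular locus of $R$ equals $V(I)$ where $I=\bigcap_{I_d\neq A\in H}(U_AS\cap R)$, so $R$ is an isolated singularity if and only if $V(I)\subseteq\{\mathfrak{m}\}$, i.e.\ if and only if $I$ is $\mathfrak{m}$-primary or $I=R$ (the latter happening only when $H=\{I_d\}$, in which case both (1) and (2) hold trivially). So I would reduce the equivalence (1)$\Leftrightarrow$(2) to analysing the individual ideals $U_AS\cap R$.

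The main step is the following computation for a fixed $A\neq I_d$ in $H$. Put $r:=\dim_l U_A$. Since $U_A$ is an $l$-subspace of the space $U$ of linear forms of $S$, after an $l$-linear change of variables I may assume $U_A=\langle x_1,\dots,x_r\rangle_l$; then $U_AS=(x_1,\dots,x_r)$ and $S/U_AS\cong l[[x_{r+1},\dots,x_d]]$, a regular local ring, hence a domain, of dimension $d-r$. Thus $U_AS\in\Spec S$, and as $R\subseteq S$ is module-finite the contracted prime $\mathfrak{q}_A:=U_AS\cap R$ satisfies $\dim R/\mathfrak{q}_A=\dim S/U_AS=d-r$. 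In particular $\mathfrak{q}_A=\mathfrak{m}$ exactly when $r=d$, i.e.\ when $A-I_d\colon U\to U$ is surjective; since $\dim_l U<\infty$ this is equivalent to $A-I_d$ being injective, i.e.\ to $A$ not having eigenvalue one.

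Granting this, both implications are immediate. If (2) holds and $H\neq\{I_d\}$, then every $\mathfrak{q}_A$ equals $\mathfrak{m}$, so $I=\mathfrak{m}$ and $V(I)=\{\mathfrak{m}\}$, whence $R$ is an isolated singularity. Conversely, if some $A\neq I_d$ in $H$ has eigenvalue one, then $\dim R/\mathfrak{q}_A\geq1$, so $\mathfrak{q}_A\neq\mathfrak{m}$, and since $I\subseteq\mathfrak{q}_A$ the prime $\mathfrak{q}_A$ lies in $V(I)$; hence the singular locus of $R$ is not contained in $\{\mathfrak{m}\}$ and $R$ is not an isolated singularity. I do not anticipate a genuine obstacle here: the substance is already contained in Theorem~\ref{invisol}, and the only points requiring care are matching ``$A$ has eigenvalue one'' with $\dim_l U_A<d$ and dispatching the trivial case $H=\{I_d\}$.
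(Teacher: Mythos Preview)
Your proposal is correct and is precisely the derivation the paper has in mind: the corollary is stated without proof immediately after Theorem~\ref{invisol}, and your argument---reducing to whether each prime $U_AS\cap R$ equals $\mathfrak{m}$ by computing $\dim S/U_AS=d-\dim_l U_A$---is the natural way to extract it. The identification of ``$A$ has eigenvalue one'' with $\dim_l U_A<d$ via injectivity/surjectivity of $A-I_d$ on the finite-dimensional space $U$ is exactly the point, and your handling of the trivial case $H=\{I_d\}$ is fine.
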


\subsection{Auslander-Reiten sequences}\label{ARsequences}

From now on, we denote $\C(R):=\add_RS\subseteq\CM R$. In the rest of this paper, we further assume the following.
\begin{enumerate}
\item[(Q4)] $H:=G\cap GL_d(l)\subseteq GL_d(l)$ is small.
\end{enumerate}
Remark that even if $H\subseteq GL_d(l)$ is small, there may exist an element $(A,\sigma)\in G$ with $A\neq I_d$ pseudo-reflection (see Example \ref{typeCL}). Under the assumption $(Q4)$, we have $\End_R(S)\cong S*G$ by Theorem \ref{Auslanderalgebra}. Since $S*G\to(S*G)^{\rm op};s*g\mapsto g^{-1}(s)*g^{-1}$ gives a ring isomorphism, we have $\End_R(S)\cong(S*G)^{\op}$. Combining with Proposition \ref{projectivization}, we obtain the following.

\begin{Prop}\label{projskew}
We have a categorical equivalence 
\[\Phi:=\Hom_R(S,-)\colon\C(R)\to\proj S*G\]
where the left $S*G$-module structure on $\Hom_R(S,M)$ is given by $((s*g)f)(s'):=f(g^{-1}(ss'))$.
\end{Prop}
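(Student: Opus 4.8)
The goal is to establish the categorical equivalence $\Phi = \Hom_R(S,-)\colon \C(R) \to \proj S*G$, where $\C(R) = \add_R S$. The plan is to combine the general projectivization machinery of Proposition \ref{projectivization} with the concrete identification of the endomorphism ring of $S$ from Theorem \ref{Auslanderalgebra}.

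First I would observe that by Theorem \ref{CT}, the module $S \in \CM R$ is a CT module, and in particular $\C(R) = \add_R S$ is an idempotent-complete additive category (a direct summand of $S^{\oplus n}$ in $\mod R$ is again in $\CM R$ since $R$ is complete local, hence $\C(R)$ is Krull--Schmidt) with $S$ as an additive generator. Therefore Proposition \ref{projectivization} applies and gives a categorical equivalence
\[
\C(R)(S,-) = \Hom_R(S,-)\colon \C(R) \xrightarrow{\ \sim\ } \proj\End_R(S)^{\op}.
\]
Next I would invoke assumption (Q4), under which $H = G \cap GL_d(l)$ is small, so Theorem \ref{Auslanderalgebra} yields a ring isomorphism $S*G \xrightarrow{\sim} \End_R(S)$ sending $s*g \mapsto (s' \mapsto sg(s'))$. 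Composing with the ring isomorphism $S*G \to (S*G)^{\op}$, $s*g \mapsto g^{-1}(s)*g^{-1}$ (one checks this is a well-defined anti-homomorphism: $(s*g)(t*h) = sg(t)*gh$ maps to $(gh)^{-1}(sg(t))*(gh)^{-1} = h^{-1}g^{-1}(s)h^{-1}(t)*h^{-1}g^{-1}$, which equals the product in $(S*G)^{\op}$ of the images of $t*h$ and $s*g$), we get $\End_R(S) \cong (S*G)^{\op}$, hence $\End_R(S)^{\op} \cong S*G$. Thus $\proj\End_R(S)^{\op} \simeq \proj S*G$ and the composite is the desired equivalence.

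The only point requiring care is to verify that, under this chain of identifications, the transported left $S*G$-module structure on $\Hom_R(S,M)$ is exactly the one stated, namely $((s*g)f)(s') = f(g^{-1}(ss'))$. I would trace through the definitions: for $\psi \in \End_R(S)^{\op}$ acting on the right $\End_R(S)$-module $\Hom_R(S,M)$ (which is how $\Hom_R(S,-)$ naturally lands in $\proj\End_R(S)^{\op}$), the action is $(f \cdot \psi)(s') = f(\psi(s'))$; then pull back $\psi$ along $S*G \to \End_R(S)^{\op}$. Tracking an element $s*g \in S*G$ through $S*G \to (S*G)^{\op} \to \End_R(S)$ gives the endomorphism $s' \mapsto g^{-1}(s)g^{-1}(s') = g^{-1}(ss')$; feeding this into $(f\cdot\psi)(s') = f(\psi(s'))$ and rewriting the right action as a left action produces exactly $((s*g)f)(s') = f(g^{-1}(ss'))$. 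This bookkeeping with opposite rings and the conversion between left and right module structures is the main (and essentially the only) obstacle; everything else is a direct citation. Once the module structure is confirmed to match, the proof is complete.

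\begin{proof}
By Theorem \ref{CT}, $S$ is a CT module over $R$; in particular $\C(R) = \add_R S \subseteq \CM R$ is an idempotent-complete Krull--Schmidt category with additive generator $S$. Hence Proposition \ref{projectivization} gives a categorical equivalence $\Hom_R(S,-)\colon \C(R) \to \proj\End_R(S)^{\op}$, where $\Hom_R(S,M)$ carries its natural right $\End_R(S)$-module structure. By assumption $(Q4)$ and Theorem \ref{Auslanderalgebra}, $S*G \xrightarrow{\sim} \End_R(S)$, $s*g \mapsto (s'\mapsto sg(s'))$, is a ring isomorphism. Composing with the ring isomorphism $S*G \to (S*G)^{\op}$, $s*g \mapsto g^{-1}(s)*g^{-1}$, we obtain $S*G \cong \End_R(S)^{\op}$, so $\proj\End_R(S)^{\op} \simeq \proj S*G$ and $\Phi = \Hom_R(S,-)\colon \C(R) \to \proj S*G$ is a categorical equivalence. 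Finally, transporting the right $\End_R(S)$-action along these isomorphisms, an element $s*g$ acts on $f \in \Hom_R(S,M)$ by $((s*g)f)(s') = f(g^{-1}(ss'))$, as claimed.
\end{proof}
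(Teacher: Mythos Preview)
Your proof is correct and follows essentially the same approach as the paper: the paper's justification (given in the paragraph immediately preceding the proposition) uses Theorem \ref{Auslanderalgebra} to get $\End_R(S)\cong S*G$, the anti-isomorphism $s*g\mapsto g^{-1}(s)*g^{-1}$ to pass to $(S*G)^{\op}$, and then Proposition \ref{projectivization}. Your version is slightly more detailed in explicitly tracking the module structure through the chain of isomorphisms, which is a welcome addition; the appeal to Theorem \ref{CT} is harmless but not actually needed, since $\C(R)=\add_R S$ is by definition idempotent-complete with additive generator $S$.
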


Now we consider about a quasi-inverse of $\Phi$. Let
\[\Psi:=(-)^G\colon\proj S*G\to\C(R)\]
be a functor taking the $G$-invariant part. To see the image of $\Psi$ is contained in $\C(R)$, it is enough to show $\Psi(S*G)\cong S$, which will be proved in the following proposition since $S*G\cong\Phi(S)$ holds.

\begin{Prop}
The functor $\Psi$ is a quasi-inverse of $\Phi$.
\end{Prop}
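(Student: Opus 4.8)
The plan is to produce a natural isomorphism $\Psi\circ\Phi\cong\mathrm{id}_{\C(R)}$ and then deduce everything else formally from the fact that $\Phi$ is already an equivalence (Proposition \ref{projskew}). First I would settle well-definedness of $\Psi$ as a functor into $\C(R)$: the functor $(-)^G$ is additive, and a direct computation of the $G$-fixed part of $S*G$ under the left $S*G$-structure gives $(S*G)^G=\{\sum_{g\in G}g(s)*g\mid s\in S\}$, with $s\mapsto\sum_{g}g(s)*g$ an isomorphism of $R$-modules $S\xrightarrow{\sim}(S*G)^G$. Since $\proj S*G=\add_{S*G}(S*G)$, it follows that $\Psi(\proj S*G)\subseteq\add_R S=\C(R)$; this is the claim $\Psi(S*G)\cong S$ promised before the statement.

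Next, for $M\in\C(R)$ I would describe $\Psi\Phi(M)=\Hom_R(S,M)^G$ explicitly. Unwinding the $S*G$-module structure of Proposition \ref{projskew}, an $R$-linear map $f\colon S\to M$ lies in $\Hom_R(S,M)^G$ precisely when $f\circ g=f$ for all $g\in G$; for such $f$ the Reynolds operator $\rho\colon S\to R$ satisfies $f=f\circ\rho$, since $f(\rho(s))=\tfrac{1}{|G|}\sum_{g\in G}f(g(s))=f(s)$. I then take $\varepsilon_M\colon\Psi\Phi(M)\to M$ to be restriction along $R\hookrightarrow S$, i.e. $f\mapsto f(1)$ under $\Hom_R(R,M)\cong M$; this is visibly natural in $M$. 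Injectivity follows from $f=f\circ\rho$ together with $\rho(S)\subseteq R$, so that $f|_R=0$ forces $f=0$. Surjectivity follows because for $m\in M$ the map $f_m:=(s\mapsto\rho(s)m)$ is $R$-linear (as $\rho$ is), satisfies $f_m\circ g=f_m$ (because $\rho\circ g=\rho$ for every $g\in G$), and has $f_m(1)=m$. Hence $\varepsilon\colon\Psi\Phi\Rightarrow\mathrm{id}_{\C(R)}$ is a natural isomorphism.

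Finally I would conclude formally: by Proposition \ref{projskew} the functor $\Phi$ is an equivalence, so it admits a quasi-inverse $\Phi^{-}$; from $\Psi\Phi\cong\mathrm{id}_{\C(R)}$ we get $\Psi\cong\Psi\Phi\Phi^{-}\cong\Phi^{-}$, so $\Psi$ is itself a quasi-inverse of $\Phi$, and in particular $\Phi\Psi\cong\mathrm{id}_{\proj S*G}$. The only step carrying genuine content is the surjectivity of $\varepsilon_M$, which is exactly where the hypothesis $\ch k\nmid|G|$ enters via the Reynolds operator; the rest is bookkeeping. (One could instead construct the comparison map $N\to\Hom_R(S,N^G)$, $n\mapsto\bigl(s\mapsto\tfrac{1}{|G|}\sum_{g\in G}(g(s)*g)n\bigr)$, verify it is $S*G$-linear and an isomorphism at $N=S*G$, and invoke additivity; the route above avoids this slightly longer computation.)
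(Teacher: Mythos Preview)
Your proof is correct and follows essentially the same approach as the paper: both construct a natural isomorphism between $\mathrm{id}_{\C(R)}$ and $\Psi\Phi$ via the Reynolds operator and then rely on $\Phi$ being an equivalence (Proposition~\ref{projskew}) to conclude. The only cosmetic difference is direction: the paper defines $\eta_M:=-\circ\rho\colon M\cong\Hom_R(R,M)\to\Hom_R(S,M)^G$ and checks it is bijective, whereas you build its inverse $\varepsilon_M\colon f\mapsto f(1)$; your surjectivity witness $f_m=(s\mapsto\rho(s)m)$ is exactly the paper's $\eta_M$ applied to $m$, and your explicit computation $(S*G)^G\cong S$ is what the paper postpones and derives from $S*G\cong\Phi(S)$ together with $\Psi\Phi\cong\mathrm{id}$.
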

\begin{proof}
Let $\rho\colon S\to R$ be the Reynolds operator. For $M\in\C(R)$, we define $\eta_M:=-\circ\rho:\Hom_R(R,M)\to\Hom_R(S,M)^G$. This is well-defined since for $f\in\Hom_R(R,M)$, $g\in G$ and $s\in S$, we have
\[(g\cdot\eta_M(f))(s)=f(\rho(g^{-1}\cdot s))=f(\rho(s))=\eta_M(f)(s).\]
It is obvious that $\eta:{\rm Id}\Rightarrow\Psi\circ\Phi$ defines a natural homomorphism. We shall show that $\eta$ is a natural isomorphism. 

The injectivity of $\eta_M$ follows from the surjectivity of $\rho$. We show the surjectivity of $\eta_M$. For $h\in\Hom_R(S,M)^G$, we have
\[(\eta_M(h|_R))(s)=h(\rho(s))=h\Bigg(\frac{1}{|G|}\sum_{g\in G}g\cdot s\Bigg)=\frac{1}{|G|}\sum_{g\in G}(g^{-1}\cdot h)(s)=\frac{1}{|G|}\sum_{g\in G}h(s)=h(s).\]
Thus $h=\eta_M(h|_R)\in\Im\eta_M$ holds.
\end{proof}

From this equivalence, we can see that $\ind\C(R)$ corresponds to $\ind\proj S*G$ bijectively. Moreover, $\ind\proj S*G$ bijectively corresponds to $\simp S*G$, which is equal to $\simp l*G$ by the next lemma.

\begin{Lem}\label{Jac}
The Jacobson radical $J_{S*G}$ of $S*G$ is just $\mathfrak{n}(S*G)$. Thus $(S*G)/J_{S*G}=l*G$ holds.
\end{Lem}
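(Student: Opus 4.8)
The plan is to produce a two-sided ideal of $S*G$, namely $\mathfrak{n}(S*G)$, which is contained in the Jacobson radical $J_{S*G}$ and whose quotient $(S*G)/\mathfrak{n}(S*G)$ is semisimple; since an ideal $I\subseteq J_{S*G}$ satisfies $J_{S*G}/I=J\bigl((S*G)/I\bigr)$, these two facts together force $J_{S*G}=\mathfrak{n}(S*G)$. First I would record that $\mathfrak{n}$ is $G$-stable: each $g=(A,\sigma)\in G\subseteq GL_d(l)\rtimes\Gal(l/k)$ sends $\lambda\in l$ to $\sigma(\lambda)\in l$ and $x_j$ to $\sum_i a_{ij}x_i\in\mathfrak{n}$, so $g(\mathfrak{n})=\mathfrak{n}$. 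Hence for $m\in\mathfrak{n}$ one has $(s*g)m=s\,g(m)*g\in\mathfrak{n}(S*G)$, which gives $(S*G)\mathfrak{n}=\mathfrak{n}(S*G)$; so $\mathfrak{n}(S*G)$ is a two-sided ideal and $(\mathfrak{n}(S*G))^{n}=\mathfrak{n}^{n}(S*G)$ for every $n$.

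Next I would show $\mathfrak{n}(S*G)\subseteq J_{S*G}$. As $S*G$ is free of rank $|G|$ over $S$ and $S$ is $\mathfrak{n}$-adically complete and separated, $S*G$ is complete and separated for the $\mathfrak{n}(S*G)$-adic topology, i.e.\ $\bigcap_n\mathfrak{n}^{n}(S*G)=0$. Consequently, for any $\alpha\in\mathfrak{n}(S*G)$ the series $\sum_{n\geq0}\alpha^{n}$ converges in $S*G$ and provides a two-sided inverse of $1-\alpha$; since this holds for every such $\alpha$, the ideal $\mathfrak{n}(S*G)$ lies in $J_{S*G}$.

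Finally I would pass to the quotient $(S*G)/\mathfrak{n}(S*G)\cong(S/\mathfrak{n})*G=l*G$, with $G$ acting on $l$ through $G\to\Gal(l/k)$. This is a finite-dimensional $k$-algebra, and since $\ch k\nmid|G|$ the integer $|G|$ is invertible in $l$, so the Maschke averaging $\pi\mapsto\frac{1}{|G|}\sum_{g\in G}g\circ\pi\circ g^{-1}$ turns any $l$-linear retraction into an $l*G$-linear one; hence every $l*G$-module is semisimple and $J(l*G)=0$. Combined with the previous paragraph this yields $J_{S*G}=\mathfrak{n}(S*G)$, and therefore $(S*G)/J_{S*G}=l*G$. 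Every step here is routine; the only point worth a remark is that the Maschke argument for $l*G$ needs only invertibility of $|G|$ and conjugation by the $g\in G$, not $l$-linearity of the $G$-action, so it applies verbatim. I do not anticipate a genuine obstacle.
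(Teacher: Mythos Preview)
Your proof is correct and follows the same overall two-step strategy as the paper: show $\mathfrak{n}(S*G)\subseteq J_{S*G}$, then show the quotient $l*G$ is semisimple. The second step is identical in spirit (the paper invokes its Proposition~\ref{gldim}, which is Maschke in disguise, while you spell out the averaging).

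The one genuine difference is in the first step. The paper argues via the module-finite $R$-algebra structure: since $S*G$ is finite over the local ring $(R,\mathfrak{m})$ one has $\mathfrak{m}(S*G)\subseteq J_{S*G}$, and since $S/\mathfrak{m}S$ is Artinian local some power $\mathfrak{n}^i$ lands in $\mathfrak{m}S$, so $\mathfrak{n}(S*G)/\mathfrak{n}^i(S*G)$ is nilpotent in the quotient and hence $\mathfrak{n}(S*G)\subseteq J_{S*G}$. Your route bypasses $R$ entirely and uses $\mathfrak{n}$-adic completeness of $S*G$ to invert $1-\alpha$ by a geometric series. Your argument is a bit more self-contained (it does not need the auxiliary invariant ring $R$ or the relation $\mathfrak{n}^i\subseteq\mathfrak{m}S$), while the paper's argument ties the lemma back to the ambient setup and the standard fact $\mathfrak{m}\Lambda\subseteq J_\Lambda$ for module-finite algebras. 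Both are short and clean; neither has an advantage that matters here.
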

\begin{proof}
Since $(S/\mathfrak{m}S, \mathfrak{n}/\mathfrak{m}S)$ is a finite dimensional local $R/\mathfrak{m}$-algebra, there exists $i\in\mathbb{N}$ such that $\mathfrak{n}^i\subseteq\mathfrak{m}S$ holds. Thus $\mathfrak{n}^i(S*G)\subseteq\mathfrak{m}(S*G)$ follows. Since $S*G$ is a module-finite $R$-algebra, we have $\mathfrak{m}(S*G)\subseteq J_{S*G}$. Combining these results, we obtain $\mathfrak{n}(S*G)\subseteq J_{S*G}$ since in $(S*G)/\mathfrak{n}^i(S*G)(=(S/\mathfrak{n}^i)*G)$, the ideal $\mathfrak{n}(S*G)/\mathfrak{n}^i(S*G)$ is nilpotent. We can deduce $\mathfrak{n}(S*G)=J_{S*G}$ because $(S*G)/\mathfrak{n}(S*G)=l*G$ is semisimple by Proposition \ref{gldim}.
\end{proof}

In summary, we obtain the following bijections.

\begin{DefProp}
There exist bijections as follows:
\[\xymatrix@C=6em{
\ind\C(R)\ar@<.3ex>^-{\Hom_R(S,-)}[r]&\ind\proj S*G\ar@<.3ex>^-{(-)^G}[l]\ar@<.3ex>^-{(S/\mathfrak{n})\otimes_S-}[r]&\simp l*G\ar@<.3ex>^-{S\otimes_l-}[l]
}.\]
For $V\in\mod l*G$, we denote by
\[M(V):=(S\otimes_lV)^G\in\C(R)\]
the correspondent to $V$.
\end{DefProp}

For example, for the trivial representation $l\in\simp l*G$ (see Example \ref{trivrep}), we have $M(l)=(S\otimes_ll)^G=R$.

\begin{Rem}\label{rank}
Remark that the rank of $M(V)$ as an $R$-module coincides with the dimension of $V$ as an $l$-linear space since $\dim_lV=\rank_S\Hom_R(S,M(V))=\rank_RM(V)$.
\end{Rem}

Recall that $(d-1)$-almost split sequences in $\C(R)$ correspond to minimal projective resolutions of simple left $S*G$-modules (=simple left $l*G$-modules) (see Corollary \ref{sinksource}). This is given explicitly as follows.

\begin{Prop}\label{minproj}
We view $U:=\mathfrak{n}/\mathfrak{n}^2$ as a left $l*G$-module through the natural $G$-action.
The minimal projective resolution of $V\in\mod l*G$ in $\mod S*G$ is given by applying $-\otimes_l V$ to the Koszul complex
\[0\longrightarrow S\otimes_l\bigwedge^dU\overset{f_d}{\longrightarrow}\cdots\overset{f_3}{\longrightarrow}S\otimes_l\bigwedge^2U\overset{f_2}{\longrightarrow}S\otimes_lU\overset{f_1}{\longrightarrow}S\overset{f_0}{\longrightarrow}l\longrightarrow0.\]
That is, 
\[0\longrightarrow S\otimes_l\bigwedge^dU\otimes_lV\overset{f_d\otimes V}{\longrightarrow}\cdots\overset{f_3\otimes V}{\longrightarrow}S\otimes_l\bigwedge^2U\otimes_lV\overset{f_2\otimes V}{\longrightarrow}S\otimes_lU\otimes_lV\overset{f_1\otimes V}{\longrightarrow}S\otimes_lV\overset{f_0\otimes V}{\longrightarrow}V\longrightarrow0\]
is the minimal projective resolution of $V$ in $\mod S*G$.
\end{Prop}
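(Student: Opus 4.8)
The plan is to show that the Koszul complex of the regular sequence $x_1,\dots,x_d$ on $S$ is $G$-equivariant in the appropriate sense, tensor it with $V$, and then check minimality. First I would observe that for the maximal ideal $\mathfrak n=(x_1,\dots,x_d)$ of the regular local ring $S$, the Koszul complex
\[0\longrightarrow S\otimes_l\textstyle\bigwedge^dU\longrightarrow\cdots\longrightarrow S\otimes_lU\longrightarrow S\longrightarrow l\longrightarrow0\]
is the minimal free resolution of $l$ over $S$, where $U=\mathfrak n/\mathfrak n^2$ and the identification $S^{\oplus d}\cong S\otimes_lU$ is made so that the basis element $x_i$ of $U$ maps to $x_i\in S$. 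The differentials $f_p\colon S\otimes_l\bigwedge^pU\to S\otimes_l\bigwedge^{p-1}U$ are the usual Koszul differentials, $f_p(s\otimes x_{i_1}\wedge\cdots\wedge x_{i_p})=\sum_j(-1)^{j-1}sx_{i_j}\otimes x_{i_1}\wedge\cdots\widehat{x_{i_j}}\cdots\wedge x_{i_p}$; minimality is the statement that $f_p(S\otimes\bigwedge^pU)\subseteq\mathfrak n(S\otimes\bigwedge^{p-1}U)$, which is immediate since each $x_{i_j}\in\mathfrak n$.

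Next I would make the $G$-action explicit. Each $g\in G$ acts on $S$ by a $k$-algebra automorphism, hence acts semilinearly on $\mathfrak n/\mathfrak n^2=U$ (this is exactly the action making $U$ a left $l*G$-module as defined just before the statement), and therefore diagonally on each $S\otimes_l\bigwedge^pU$ and on $S\otimes_lV$. The key point is that the Koszul differentials $f_p$ are morphisms of left $S*G$-modules: this follows because the composite $S\otimes_lU\to S$, $s\otimes u\mapsto s\tilde u$ (lifting $u$ to $\tilde u\in\mathfrak n$) is, up to the choice of lift, $G$-equivariant modulo higher-order terms — more cleanly, $f_1$ is identified with the multiplication map $\mathfrak n\otimes$-data and one checks $g(s\,x_i)=g(s)\,g(x_i)$ and that $g(x_i)$ represents $g\cdot(x_i\bmod\mathfrak n^2)$ in $U$. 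Thus the whole Koszul complex is a complex of left $S*G$-modules, and since each $S\otimes_l\bigwedge^pU$ is free over $S$ with a $G$-action, it is a projective left $S*G$-module (indeed $S\otimes_l\bigwedge^pU\cong(S*G)\otimes_{l*G}\bigwedge^pU$ after tensoring suitably, or more directly: projectives over $S*G$ are summands of $(S*G)^{\oplus n}=S^{\oplus|G|n}$ and $S\otimes_lW$ for $W\in\mod l*G$ is such a summand because $S$ is free over $S^G=R$... — here one invokes Proposition~\ref{projskew}, under which $\add_R S\simeq\proj S*G$ and $S\otimes_lW=(S\otimes_l W)$ corresponds to a projective). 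Tensoring the complex over $l$ with $V\in\mod l*G$ (with the diagonal $G$-action) preserves $S$-freeness hence $S*G$-projectivity, preserves exactness since $-\otimes_lV$ is exact over the field $l$, and preserves minimality since $f_p\otimes V$ still lands in $\mathfrak n(S\otimes_l\bigwedge^{p-1}U\otimes_lV)$.

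The main obstacle I anticipate is the bookkeeping in the previous paragraph: verifying carefully that the Koszul differential is $S*G$-linear when $G$ acts semilinearly (not $l$-linearly) on $U$, and pinning down the precise sense in which $S\otimes_l\bigwedge^pU\otimes_lV$ is a projective $S*G$-module rather than merely a projective $S$-module. Both are really consequences of the equivalence $\Phi=\Hom_R(S,-)\colon\C(R)\xrightarrow{\ \sim\ }\proj S*G$ of Proposition~\ref{projskew} together with the fact that $S$ is a free $R=S^G$-module — so that $S\otimes_l W\in\add_R S$ for any $W\in\mod l*G$, whence its image is projective — but the identifications must be traced through consistently. Once these are in place, uniqueness of minimal projective resolutions over the semiperfect ring $S*G$ (it is module-finite over the complete local ring $R$) finishes the proof.
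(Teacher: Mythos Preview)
Your approach is correct and is the standard one; the paper states this proposition without proof, so there is nothing to compare against. Two points deserve tightening.

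First, the claim that ``$S$ is a free $R=S^G$-module'' is false in general, and in any case unnecessary. The projectivity of $S\otimes_l W$ over $S*G$ for any $W\in\mod l*G$ follows directly from the semisimplicity of $l*G$ (Proposition~\ref{gldim} applied with $A=l$): write $W$ as a summand of $(l*G)^{\oplus n}$ and use the isomorphism $S\otimes_l(l*G)\cong S*G$ of left $S*G$-modules given by $s\otimes g\mapsto s*g$. There is no need to route through $\C(R)$ or Proposition~\ref{projskew}.

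Second, the $G$-equivariance of the Koszul differential is cleaner than you suggest. The $l$-linear section $U\to\mathfrak n$, $\bar x_i\mapsto x_i$, is itself $G$-equivariant: since $G\subseteq GL_d(l)\rtimes\Gal(l/k)$, each $g\in G$ carries $x_i$ to an $l$-linear combination of the $x_j$, hence preserves the span $lx_1+\cdots+lx_d\subset S$, and this span is identified with $U$ compatibly with the $G$-actions. Then $f_1(s\otimes u)=s\cdot u$ satisfies $f_1(g(s)\otimes g(u))=g(s)\,g(u)=g(su)$, and the higher $f_p$ are the standard contractions built from this map, hence equally equivariant. Minimality is exactly Lemma~\ref{Jac}, as you say.
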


This proposition forces us to define the McKay quivers as follows.

\begin{Def}\label{McKay}
Take $V_1,V_2\in\simp l*G$. Let $d_G(V_1,V_2)\geq0$ be the number of $V_1$ appearing as direct summands of $U\otimes_lV_2$ where $U=\mathfrak{n}/\mathfrak{n}^2\in\mod l*G$. Let $d'_G(V_1,V_2)\geq0$ be the number of $V_2$ appearing as direct summands of $\bigwedge^{d-1}U\otimes_lV'_1$ where $V'_1\in\simp l*G$ is the irreducible representation satisfying $V_1=\bigwedge^dU\otimes_lV'_1$. We define the {\it McKay quiver of $l*G$} (we denote $\MK(l*G)$) as a valued quiver (see Definition \ref{valquiv}).
\[\MK(l*G):=(\simp l*G,d_G,d'_G)\]
\end{Def}

Combining with Corollary \ref{sinksource}, we obtain the following theorem.

\begin{Thm}\label{ARMcKay}
Under the setting $(Q1),(Q2)$ and $(Q3)$, we assume the condition $(Q4)$. Let $V\in\simp l*G$ and $U:=\mathfrak{n}/\mathfrak{n}^2\in\mod l*G$. Then $\nu(M(V))=M(\bigwedge^dU\otimes_lV)$ holds. Moreover, we have a $(d-1)$-almost split sequence of the following form.
\[M(\bigwedge^dU\otimes_lV)\longrightarrow M(\bigwedge^{d-1}U\otimes_lV)\longrightarrow\cdots\longrightarrow M(U\otimes_lV)\longrightarrow M(V)\]
Thus the Auslander-Reiten quiver $\AR(\C(R))$ coincides with $\MK(l*G)$.
\end{Thm}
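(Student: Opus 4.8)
The plan is to transport the explicit minimal projective resolution of $V$ in $\mod S*G$ provided by Proposition \ref{minproj} through the categorical equivalence of Proposition \ref{projskew}, and then recognize the result as a $(d-1)$-almost split sequence by invoking Corollary \ref{sinksource}. First I would observe that by Theorem \ref{CT} the module $S\in\CM R$ is a CT module, hence by Theorem \ref{CTEnd} it gives an NCCR of $R$; thus Theorem \ref{existAR} applies, so for every $X\in\ind\C(R)$ there is a $(d-1)$-almost split sequence $\nu X\to C_{d-2}\to\cdots\to C_0\to X$, and by Corollary \ref{sinksource} it suffices, for $X=M(V)$, to exhibit \emph{any} complex $Y\to C_{d-2}\to\cdots\to C_0\to M(V)$ in $\C(R)$ which is a sink sequence of $M(V)$: it is then automatically $(d-1)$-almost split with $Y\cong\nu M(V)$.

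Next I would use the equivalence $\Phi=\Hom_R(S,-)\colon\C(R)\to\proj S*G$ with quasi-inverse $\Psi=(-)^G$. By Proposition \ref{projectivization}, $\Phi$ induces an equivalence $\Mod\C(R)^{\op}\to\Mod(S*G)$ carrying representable functors to projectives and sending the simple functor $S_{M(V)}$ to the simple top $V$ of the indecomposable projective $\Phi(M(V))$. Under this dictionary a sink sequence of $M(V)$ corresponds precisely to the minimal projective resolution of $V$ in $\mod S*G$. By Proposition \ref{minproj} the latter is the Koszul complex tensored with $V$, whose $i$-th projective term is $S\otimes_l\bigwedge^iU\otimes_lV$. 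Applying $\Psi=(-)^G$ and using $\Psi(S\otimes_lW)=(S\otimes_lW)^G=M(W)$ for every $W\in\mod l*G$, this becomes the complex
\[
M(\bigwedge^dU\otimes_lV)\longrightarrow M(\bigwedge^{d-1}U\otimes_lV)\longrightarrow\cdots\longrightarrow M(U\otimes_lV)\longrightarrow M(V)
\]
in $\C(R)$. Hence this is a sink sequence of $M(V)$, so by Corollary \ref{sinksource} it is a $(d-1)$-almost split sequence and its leftmost term is $\nu(M(V))$; this proves both $\nu(M(V))=M(\bigwedge^dU\otimes_lV)$ and the displayed sequence.

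Finally, for the identification $\AR(\C(R))=\MK(l*G)$ I would check that vertices and valuations match. The vertex sets agree via $V\mapsto M(V)$, since $\ind\C(R)=\{M(V)\mid V\in\simp l*G\}$. For the valuations I would use the description of sink and source maps (\cite{INP}): the sink map $M(U\otimes_lV)\to M(V)$ forces $M(U\otimes_lV)\cong\bigoplus_{V_1\in\simp l*G}M(V_1)^{\oplus d_{M(V_1),M(V)}}$, so $d_{M(V_1),M(V)}$ is the multiplicity of $V_1$ in $U\otimes_lV$, i.e. $d_G(V_1,V)$ in Definition \ref{McKay}; dually, writing $V=\bigwedge^dU\otimes_lV'$ and noting that the source map of $M(V)=\nu(M(V'))$ is $M(V)\to M(\bigwedge^{d-1}U\otimes_lV')$, one gets $d'_{M(V),M(V_2)}=[\bigwedge^{d-1}U\otimes_lV':V_2]=d'_G(V,V_2)$. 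The dotted arrows $M(V)\to\nu(M(V))=M(\bigwedge^dU\otimes_lV)$ also match by Definitions \ref{valquiv} and \ref{McKay}. Therefore $\AR(\C(R))=\MK(l*G)$ as valued quivers.

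I expect the only genuinely delicate point to be the bookkeeping in the second paragraph: one must fix the left $S*G$-module structure on $\Hom_R(S,-)$ (coming from the isomorphism $S*G\cong(S*G)^{\op}$) so that $\Phi$ really matches representable functors over $\C(R)^{\op}$ with projective left $S*G$-modules, and so that the Koszul-type resolution of $V$ of Proposition \ref{minproj} is literally the image under $\Phi$ of a sink sequence. All the substantive input—higher almost split sequences for NCCRs, the isomorphism $\End_R(S)\cong S*G$, and the explicit Koszul resolution—has already been established, so once this dictionary is nailed down the proof is a matter of assembling cited results.
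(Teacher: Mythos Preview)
Your proposal is correct and follows essentially the same route as the paper: use Proposition \ref{minproj} to see that the displayed complex is a sink sequence of $M(V)$ (via the equivalence $\C(R)\simeq\proj S*G$), then invoke Corollary \ref{sinksource} to conclude it is a $(d-1)$-almost split sequence with leftmost term $\nu(M(V))$. The paper's proof is just a two-line version of your argument, and your additional unpacking of the quiver-valuation match is exactly what the paper leaves implicit in the final sentence.
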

\begin{proof}
By Proposition \ref{minproj}, the given complex is a sink sequence of $M(V)$. Thus the assertion follows from Corollary \ref{sinksource}.
\end{proof}

\begin{Cor}\label{canonical}
Under the setting $(Q1),(Q2)$ and $(Q3)$, if we assume the condition $(Q4)$, then we have
\[M(\bigwedge^dU)=\omega,\]
where $\omega\in\ind\C(R)$ denotes the canonical module.
\end{Cor}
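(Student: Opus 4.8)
The plan is to obtain this as an immediate consequence of Theorem~\ref{ARMcKay}, once one computes the Nakayama functor on the rank-one free module.

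First I would recall that the trivial representation of $l*G$ corresponds to $R$: indeed $M(l)=(S\otimes_l l)^G=S^G=R$, as observed right after the definition of $M(V)$. Thus $R\in\ind\C(R)$ is the object attached to the trivial representation $l\in\simp l*G$ under the bijections of the preceding Definition-Proposition.

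Second I would compute $\nu(R)$ directly. Since $R$ is a Cohen--Macaulay normal domain with $\dim R=d\geq 2$, it satisfies the $({\rm R}_1)$-condition, so the Nakayama functor $\nu=(-)^{*\vee}=\Hom_R(\Hom_R(-,R),\omega)$ is defined on $\refl R\supseteq\C(R)$, and $\nu(R)=\Hom_R(\Hom_R(R,R),\omega)=\Hom_R(R,\omega)=\omega$. This also shows $\omega\in\C(R)$; alternatively, since $S$ is a CT module (Theorem~\ref{CT}), $\add_R S$ contains $R^\vee=\omega$, and $\omega$ is indecomposable because $\End_R(\omega)\cong R$ is local, which justifies writing $\omega\in\ind\C(R)$ in the statement.

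Third, by Theorem~\ref{ARMcKay} applied to $V=l$ we have $\nu(M(l))=M(\bigwedge^d U\otimes_l l)=M(\bigwedge^d U)$; note that $\dim_l\bigwedge^d U=1$, so $\bigwedge^d U\in\simp l*G$ and $M(\bigwedge^d U)\in\ind\C(R)$ is well defined. Combining the last two steps with $R=M(l)$ gives
\[
\omega=\nu(R)=\nu(M(l))=M(\bigwedge^d U),
\]
which is the assertion. There is no real obstacle here: the content is already packaged in Theorem~\ref{ARMcKay} together with the elementary identity $\nu(R)=\omega$; the only point meriting a word of care is verifying that $\omega$ genuinely lies in $\C(R)$ so that the Nakayama functor applies, and this follows from the CT property of $S$ established in Theorem~\ref{CT}.
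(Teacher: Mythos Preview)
Your proof is correct and follows essentially the same approach as the paper: both apply Theorem~\ref{ARMcKay} with $V=l$ to obtain $\nu(M(l))=M(\bigwedge^dU)$, then use $M(l)=R$ and $\nu(R)=\omega$. Your version simply spells out the computation $\nu(R)=\omega$ and the justification that $\omega\in\ind\C(R)$ in more detail than the paper does.
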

\begin{proof}
By Theorem \ref{ARMcKay}, we have the following $(d-1)$-almost split sequence.
\[M(\bigwedge^dU)\longrightarrow M(\bigwedge^{d-1}U)\longrightarrow\cdots\longrightarrow M(U)\longrightarrow M(l)\]
Therefore we have
\[M(\bigwedge^dU)=\nu(M(l))=\nu(R)=\omega.\qedhere\]
\end{proof}

\section{The McKay quiver of $lH$ determines that of $l*G$}\label{ARquivers}

In the previous section, we proved $\AR(\C(R))=\MK(l*G)$ (see Theorem \ref{ARMcKay}). Our main theorem in this section (Theorem \ref{alg}) describes $\MK(l*G)$ by relating them to $\MK(lH)$.

\subsection{Preliminaries on skew group rings}

In this subsection, we summarize basic properties of skew group rings for the convenience of the reader. Let $A$ be a ring and $G$ a group acting on $A$ as a ring. Observe the following basic fact about modules over the skew group ring $A*G$.

\begin{Prop}
For an Abelian group $M$, there exists a bijection between the following.
\begin{enumerate}
\item Left $A*G$-module structures on $M$.
\item Pairs of a left $A$-module structure on $M$, and an action of $G$ on $M$ as an Abelian group satisfying $g\cdot(am)=(g\cdot a)(g\cdot m)$.
\end{enumerate}
\end{Prop}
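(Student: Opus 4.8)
The plan is to write down the two mutually inverse constructions explicitly; the statement is formal, so the only point requiring any thought is that the multiplication rule of $A*G$ translates exactly into the compatibility condition $g\cdot(am)=(g\cdot a)(g\cdot m)$ of (2).

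First I would pass from (1) to (2). Given a left $A*G$-module $M$, restrict the action along the ring homomorphism $A\to A*G$, $a\mapsto a*1$, to obtain a left $A$-module structure on $M$, and along the group homomorphism $G\to (A*G)^\times$, $g\mapsto 1*g$, to obtain an action of $G$ on $M$ by additive automorphisms. In $A*G$ one has the identity $(1*g)(a*1)=((g\cdot a)*1)(1*g)$; applying both sides to an element $m\in M$ yields $g\cdot(am)=(g\cdot a)(g\cdot m)$, which is precisely the required compatibility.

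Conversely, given a left $A$-module structure on $M$ together with a compatible $G$-action as in (2), I would define $(a*g)\cdot m:=a(g\cdot m)$ and extend additively over $A*G=\bigoplus_{g\in G}(A*g)$. It is immediate that $1*1$ acts as the identity; the substantive check is associativity, and this is the heart of the matter. For $a*g$ and $b*h$ one computes $(a*g)\cdot\bigl((b*h)\cdot m\bigr)=a\bigl(g\cdot(b(h\cdot m))\bigr)=a\,(g\cdot b)\bigl(g\cdot(h\cdot m)\bigr)=a(g\cdot b)\bigl((gh)\cdot m\bigr)$, where the middle equality is exactly the compatibility condition in (2) — the only place it is used. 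Since $(a*g)(b*h)=a(g\cdot b)*gh$, this matches $\bigl((a*g)(b*h)\bigr)\cdot m$, so the formula does define an $A*G$-module structure.

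Finally I would check that the two assignments are inverse to one another. Starting from an $A*G$-module, restricting and then reassembling recovers the original action because $a*g=(a*1)(1*g)$ in $A*G$; starting from a compatible pair $(A$-action, $G$-action$)$, the $A$-module structure and the $G$-action read off from $(a*g)\cdot m=a(g\cdot m)$ by setting $g=1$, respectively $a=1$, are the ones we began with. I do not expect any genuine obstacle: the proof is bookkeeping with the multiplication table of $A*G$, and the compatibility clause of (2) is the shadow of the single relation $(1*g)(a*1)=((g\cdot a)*1)(1*g)$.
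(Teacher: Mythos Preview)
Your proof is correct. The paper itself does not give a proof of this proposition: it is stated as a basic observation and left to the reader, so your explicit verification of the two mutually inverse constructions is exactly what is needed, and the key point you identify---that the relation $(1*g)(a*1)=((g\cdot a)*1)(1*g)$ in $A*G$ is equivalent to the compatibility $g\cdot(am)=(g\cdot a)(g\cdot m)$---is the whole content of the statement.
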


\begin{Ex}\label{trivrep}
We can view $A$ as a left $A*G$-module through the given $G$-action. We call this the {\it trivial representation} as in the case of usual group rings.
\end{Ex}

Especially when $M$ is a free left $A$-module of finite rank, we obtain the following.

\begin{Prop}\label{extend}
Left $A*G$-module structures on $A^m$ extending the natural left $A$-module structure bijectively correspond to group homomorphisms $G\to GL_m(A)\rtimes G$ which make the following diagram commute.
\[\xymatrix{
G \ar[d] \ar[rd]^{\rm id}\\
GL_m(A)\rtimes G \ar[r] & G
}\]
\end{Prop}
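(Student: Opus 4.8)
The plan is to pass through the previous Proposition, which identifies a left $A*G$-module structure on the abelian group $A^m$ \emph{extending} the natural left $A$-module structure with an action of $G$ on $A^m$ by additive bijections satisfying the twisted linearity law $g\cdot(av)=(g\cdot a)(g\cdot v)$ for all $a\in A$ and $v\in A^m$ (here $g\cdot a$ denotes the fixed action of $G$ on the ring $A$). So it suffices to set up a bijection between such twisted $G$-actions on $A^m$ and the sections $\phi\colon G\to GL_m(A)\rtimes G$ of the canonical projection $\pi\colon GL_m(A)\rtimes G\to G$, which are exactly the group homomorphisms making the stated triangle commute.

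First I would fix, for each $g\in G$, the ``standard'' $g$-semilinear bijection $\tau_g\colon A^m\to A^m$ applying $g$ to every coordinate; it satisfies $\tau_g\tau_h=\tau_{gh}$, $\tau_e={\rm id}$, and $\tau_g(Cv)=g(C)\tau_g(v)$ for every matrix $C\in M_m(A)$, where $g(C)$ is the entrywise action. Given a twisted $G$-action $v\mapsto g\cdot v$, each map $(g\cdot -)$ is a $g$-semilinear bijection, so $(g\cdot -)\circ\tau_g^{-1}$ is an $A$-linear automorphism of $A^m$, hence equals left multiplication by a unique $B_g\in GL_m(A)$; I then set $\phi(g):=(B_g,g)$, which is visibly a section of $\pi$. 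Conversely, any section of $\pi$ has the form $g\mapsto(B_g,g)$ for a unique family $(B_g)_g$ in $GL_m(A)$, and one recovers semilinear bijections by $g\cdot v:=B_g\tau_g(v)$. These two assignments are mutually inverse by construction, so the real content is that ``$(g\cdot -)$ is a $G$-action'' matches ``$\phi$ is a group homomorphism.''

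The one computation to carry out uses the semidirect-product law $(B,g)(C,h)=(B\,g(C),gh)$ together with $\tau_g(Cv)=g(C)\tau_g(v)$: one obtains $g\cdot(h\cdot v)=B_g\tau_g\!\bigl(B_h\tau_h(v)\bigr)=B_g\,g(B_h)\,\tau_{gh}(v)$, while $(gh)\cdot v=B_{gh}\tau_{gh}(v)$. Since $\tau_{gh}$ is bijective, associativity $(gh)\cdot v=g\cdot(h\cdot v)$ for all $v$ is equivalent to $B_{gh}=B_g\,g(B_h)$, i.e.\ to $\phi(g)\phi(h)=\phi(gh)$; and $e\cdot v=v$ is equivalent to $B_e=I_m$, which is forced once $\phi$ is a homomorphism. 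Combining this equivalence with the previous Proposition yields the asserted bijection. I do not anticipate a genuine obstacle here — this is a bookkeeping argument — the only care needed is to keep conventions consistent throughout (the entrywise action of $g$ on vectors and matrices, the composition order used to define $B_g$, and the chosen multiplication convention in $GL_m(A)\rtimes G$).
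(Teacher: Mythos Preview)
Your argument is correct. The paper does not actually supply a proof of this proposition: it is stated immediately after the preceding proposition and left without justification, presumably because the authors regard it as a routine unpacking of that result. Your write-up makes this unpacking explicit in the expected way --- factoring a $g$-semilinear bijection as an $A$-linear automorphism composed with the coordinatewise $\tau_g$, and checking that the cocycle identity $B_{gh}=B_g\,g(B_h)$ is exactly the homomorphism condition in $GL_m(A)\rtimes G$ --- so there is nothing to compare beyond noting that you have filled in what the paper omits.
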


Next, under the assumption that $A$ is commutative, we define left $A*G$-module structures on tensor products $-\otimes_A-$ and homomorphisms $\Hom_A(-,-)$ as in the case of usual group rings. This can be summarized as the category $\Mod A*G$ admits a closed symmetric monoidal structure.

\begin{Def}\label{tenhomdef}
Assume $A$ is commutative. Let $M, N$ be left $A*G$-modules.
\begin{enumerate}
\item We define a left $A*G$-module structure on $M\otimes_AN$ by
\[g\cdot(m\otimes n):=(g\cdot m)\otimes(g\cdot n).\]
Similarly, we define a left $A*G$-module structure on $\bigwedge^m M$.
\item We define a left $A*G$-module structure on $\Hom_A(M,N)$ by
\[(g\cdot f)(m):=g\cdot(f(g^{-1}\cdot m)).\]
\end{enumerate}
\end{Def}

We see that Hom-tensor adjoint holds as usual.

\begin{Prop}
Assume $A$ is commutative. Let $L, M, N$ be left $A*G$-modules. Then there exists a natural isomorphism
\[\Hom_{A*G}(L\otimes_A M,N)\cong\Hom_{A*G}(L,\Hom_A(M,N)).\]
\end{Prop}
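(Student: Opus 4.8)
The plan is to reduce the statement to the ordinary Hom-tensor adjunction over the commutative ring $A$ together with a check of $G$-equivariance. The key observation is that for left $A*G$-modules $X, Y$ one has
\[
\Hom_{A*G}(X,Y)=\Hom_A(X,Y)^G,
\]
where $G$ acts on $\Hom_A(X,Y)$ as in Definition \ref{tenhomdef}(2): indeed, an $A$-linear map $f$ satisfies $g\cdot f=f$ for all $g\in G$ if and only if $g\cdot(f(g^{-1}\cdot x))=f(x)$ for all $g,x$, which upon substituting $g\mapsto g^{-1}$ is exactly the condition $f(g\cdot x)=g\cdot f(x)$ that makes $f$ a morphism of $A*G$-modules. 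So it suffices to take the classical natural isomorphism of $A$-modules
\[
\Phi\colon\Hom_A(L\otimes_AM,N)\xrightarrow{\ \sim\ }\Hom_A(L,\Hom_A(M,N)),\qquad \Phi(f)(\ell)(m)=f(\ell\otimes m),
\]
and show it is $G$-equivariant, so that it restricts to an isomorphism on $G$-invariant parts.

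First I would set up $\Phi$ and its inverse $\Psi$, $\Psi(h)(\ell\otimes m)=h(\ell)(m)$, recalling that these are mutually inverse and natural in $L,M,N$ purely as $A$-module maps. Then I would verify $\Phi(g\cdot f)=g\cdot\Phi(f)$ by unwinding the three relevant $G$-actions: the action on $L\otimes_AM$ (namely $g\cdot(\ell\otimes m)=(g\cdot\ell)\otimes(g\cdot m)$), the action on the inner $\Hom_A(M,N)$, and the action on the outer $\Hom_A(L,-)$. Concretely, for $f\in\Hom_A(L\otimes_AM,N)$, $\ell\in L$, $m\in M$ one computes
\[
\bigl(\Phi(g\cdot f)(\ell)\bigr)(m)=(g\cdot f)(\ell\otimes m)=g\cdot f\bigl(g^{-1}\cdot(\ell\otimes m)\bigr)=g\cdot f\bigl((g^{-1}\cdot\ell)\otimes(g^{-1}\cdot m)\bigr),
\]
while on the other side
\[
\bigl((g\cdot\Phi(f))(\ell)\bigr)(m)=\bigl(g\cdot(\Phi(f)(g^{-1}\cdot\ell))\bigr)(m)=g\cdot\Bigl(\Phi(f)(g^{-1}\cdot\ell)\bigl(g^{-1}\cdot m\bigr)\Bigr)=g\cdot f\bigl((g^{-1}\cdot\ell)\otimes(g^{-1}\cdot m)\bigr),
\]
so the two agree. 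Since $\Phi$ is a $G$-equivariant $A$-isomorphism, it carries $\Hom_A(L\otimes_AM,N)^G$ isomorphically onto $\Hom_A(L,\Hom_A(M,N))^G$, which by the observation above is precisely the claimed isomorphism. Naturality of the resulting isomorphism in $L,M,N$ (as $A*G$-modules) is inherited from naturality of the classical $\Phi$.

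The only point requiring care — and the place where a slip is easiest — is the bookkeeping of the three nested $G$-actions in the equivariance computation, in particular making sure the inverses $g^{-1}$ in Definition \ref{tenhomdef}(2) are threaded correctly through both the outer and the inner $\Hom$; once the definitions are written out explicitly as above, there is no real obstacle, and everything else is formal.
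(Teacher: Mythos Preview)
Your proof is correct and follows exactly the same approach as the paper: the paper's proof simply states that one takes the usual $A$-module adjunction $\Hom_A(L\otimes_A M,N)\cong\Hom_A(L,\Hom_A(M,N))$ and then passes to $G$-invariants. Your version just makes explicit the identification $\Hom_{A*G}(-,-)=\Hom_A(-,-)^G$ and the $G$-equivariance check that the paper leaves implicit.
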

\begin{proof}
We have the usual adjointness $\Hom_A(L\otimes_A M,N)\cong\Hom_A(L,\Hom_A(M,N))$. By taking the $G$-invariant parts, we obtain the assertion.
\end{proof}

In the rest of this section, we compare the left global dimension of $A*G$ with that of $A$.

\begin{Prop}
The inequality $\lgl A*G\geq\lgl A$ holds.
\end{Prop}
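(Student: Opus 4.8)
The plan is to show that every left $A$-module can be "induced up" to an $A*G$-module in a way that preserves projective resolutions, so that projective dimension can only increase. Concretely, for a left $A$-module $M$, consider the induced module $A*G\otimes_A M$. As a left $A$-module this is a direct sum of copies of $M$ indexed by $G$ (namely $\bigoplus_{g\in G} g\otimes M$), so in particular $A*G\otimes_A-$ sends projective $A$-modules to projective $A*G$-modules, and it is exact. Hence if $M$ has a finite projective resolution over $A$ of length $n$, applying $A*G\otimes_A-$ yields a projective resolution of $A*G\otimes_A M$ over $A*G$ of length $\leq n$, giving $\pd_{A*G}(A*G\otimes_A M)\leq\pd_A M$. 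Running this over all $M$ gives $\lgl A*G\leq\lgl A$ — but that is the reverse inequality, so this is only the easy half of the companion statement; for the stated inequality $\lgl A*G\geq\lgl A$ I need the restriction functor instead.

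So the actual argument I would carry out uses $\Res\colon\Mod A*G\to\Mod A$. First I would observe that $A*G$ is free as a left $A$-module (with basis $G$), hence $\Res$ sends projective $A*G$-modules to projective $A$-modules; it is obviously exact. Therefore for any left $A*G$-module $N$, a projective resolution of $N$ over $A*G$ restricts to a projective resolution of $\Res N$ over $A$, giving $\pd_A(\Res N)\leq\pd_{A*G}N$. The key step is then to realize every left $A$-module as a restriction: here I would invoke that $|G|$ is invertible (this is where the standing hypothesis $\ch l\nmid|G|$, or more precisely that we are in a setting where $|G|\cdot 1_A$ is a unit, gets used — I should state this assumption explicitly, since without it the proposition can fail). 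Given a left $A$-module $M$, form the induced module $\Ind M:=A*G\otimes_A M$; then $M$ is a direct summand of $\Res\Ind M\cong\bigoplus_{g\in G}M$ via the averaging (Reynolds-type) splitting $m\mapsto\frac1{|G|}\sum_g g^{-1}\otimes(g\cdot m)$ of the multiplication map $\Res\Ind M\to M$. Since projective dimension is monotone under direct summands, $\pd_A M\leq\pd_A(\Res\Ind M)\leq\pd_{A*G}(\Ind M)\leq\lgl A*G$. Taking the supremum over all $M$ yields $\lgl A\leq\lgl A*G$.

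The step I expect to need the most care is the splitting of $\Res\Ind M\twoheadrightarrow M$ as an $A$-module map: one must check the averaging map is $A$-linear (it is, because conjugation permutes the summands $g^{-1}\otimes(g\cdot a m)$ compatibly) and is a genuine section, which is where invertibility of $|G|$ is essential. Everything else — exactness of $\Res$, freeness of $A*G$ over $A$, monotonicity of $\pd$ under summands — is routine. I would phrase the conclusion as: for every $M\in\Mod A$ we have $\pd_A M\leq\pd_{A*G}(A*G\otimes_A M)$, hence $\lgl A\leq\lgl A*G$.
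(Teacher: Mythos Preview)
Your overall approach matches the paper's: induce $M$ to $(A*G)\otimes_A M$, take a projective resolution over $A*G$, restrict to $A$ (using that $A*G$ is free as a left $A$-module so projectives restrict to projectives), and use that $M$ is a direct summand of $(A*G)\otimes_A M$ as a left $A$-module to bound $\pd_A M$. The key difference is that the paper proves the inequality \emph{unconditionally}---no hypothesis on $|G|$---whereas you impose invertibility of $|G|$ to construct a Reynolds-type section. That hypothesis is unnecessary: the projection $A*G \to A*e$ killing all $a*g$ with $g \neq e$ is already an $(A,A)$-bimodule retraction of the inclusion $A \hookrightarrow A*G$ (check right $A$-linearity: $(a*g)b = ag(b)*g$ maps to $0$ for $g\neq e$ and to $ab*e$ for $g=e$), so applying $-\otimes_A M$ splits off $M$ directly with no averaging required. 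Your averaging formula $m \mapsto \frac{1}{|G|}\sum_g g^{-1}\otimes (g\cdot m)$ is also ill-posed as written, since $M$ is merely an $A$-module and carries no $G$-action; relatedly, $\Res\Ind M$ is not $\bigoplus_g M$ but rather a direct sum of twists ${}^{g^{-1}}M$ on the non-identity summands---though the identity summand, which is all you actually need, is indeed $M$.
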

\begin{proof}
We may assume $\lgl A*G$ is finite. Take $M\in\Mod A$. Let
\[0\to P_n\to\cdots\to P_0\to(A*G)\otimes_AM\to 0\]
be a projective resolution in $\Mod A*G$ with $n=\pd_{A*G}((A*G)\otimes_AM)$. Here, this exact sequence also gives a projective resolution in $\Mod A$ since $A*G$ is free as a left $A$-module. Thus we can conclude $\pd_AM\leq n$ since $M$ is a direct summand of $(A*G)\otimes_AM$ as a left $A$-module.
\end{proof}

To obtain the equality in this proposition, we assume that $|G|\cdot 1_A$ is invertible in $A$.

\begin{Lem}\label{invex}
If $|G|\cdot 1_A\in A^\times$ holds, then the functor $(-)^G:\Mod A*G\to\Mod A^G$ is exact.
\end{Lem}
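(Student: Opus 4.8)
The plan is to build an $A^G$-linear natural retraction of the inclusion $M^G\hookrightarrow M$ and to read off exactness from it. Since $|G|\cdot 1_A\in A^\times$, for every left $A*G$-module $M$ we may define the Reynolds operator $\rho_M\colon M\to M$ by $\rho_M(m):=\tfrac{1}{|G|}\sum_{g\in G}g\cdot m$. First I would record its elementary properties: reindexing the sum by $h\cdot g$ shows $\rho_M(m)\in M^G$; $\rho_M$ restricts to the identity on $M^G$; for $a\in A^G$ we have $\rho_M(am)=\tfrac{1}{|G|}\sum_g(g\cdot a)(g\cdot m)=a\,\rho_M(m)$, so $\rho_M$ is $A^G$-linear; and if $f\colon M\to N$ is a morphism in $\Mod A*G$ then $f\circ\rho_M=\rho_N\circ f$, because $f$ commutes with the $G$-action. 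Thus $\rho_M$ corestricts to a natural $A^G$-linear surjection $M\to M^G$ splitting the inclusion.

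Next I would note that $(-)^G\colon\Mod A*G\to\Mod A^G$ is left exact for formal reasons: it is naturally isomorphic to $\Hom_{A*G}(A,-)$ with $A$ the trivial representation (Example \ref{trivrep}), or one checks directly that it preserves kernels. So it suffices to show that $(-)^G$ preserves epimorphisms. Given a surjection $f\colon M\to N$ in $\Mod A*G$ and an element $n\in N^G$, choose any $m\in M$ with $f(m)=n$; then $\rho_M(m)\in M^G$ and, by naturality of $\rho$ together with the fact that $\rho_N$ fixes $N^G$, we get $f(\rho_M(m))=\rho_N(f(m))=\rho_N(n)=n$. Hence $f^G\colon M^G\to N^G$ is surjective, and combined with left exactness this yields exactness of $(-)^G$.

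There is no real obstacle here; the only points requiring care are the $A^G$-linearity of $\rho_M$, which genuinely uses that we restrict scalars to the invariant subring (it would fail over $A$ itself), and the naturality square, which is precisely what makes the splitting functorial and lets invariant elements be lifted along surjections. I would therefore structure the write-up as: (i) define $\rho_M$ and verify it is a natural $A^G$-linear retraction of $M^G\hookrightarrow M$; (ii) observe that $(-)^G$ is left exact; (iii) lift invariant elements along a given surjection using $\rho$ to conclude.
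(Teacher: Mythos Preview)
Your proposal is correct and is essentially the same argument as the paper's: both lift an invariant element $n\in N^G$ to an arbitrary preimage in $M$ and then average over $G$ using the Reynolds operator to obtain an invariant preimage. The only difference is presentational---you package the averaging as a natural $A^G$-linear retraction $\rho$ before applying it, whereas the paper carries out the averaging directly in the surjectivity step.
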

\begin{proof}
Let $0\to L\xrightarrow{\phi} M\xrightarrow{\psi} N\to 0$ be an short exact sequence in $\Mod A*G$. We prove
$0\to L^G\to M^G\to N^G\to 0$ is also exact.

Only the exactness at $N^G$ is non-trivial. Take $z\in N^G$. By the surjectivity of $\psi$, there exists $y'\in M$ such that $\psi(y')=z$ holds. Let $y:=\frac{1}{|G|}\sum_{g\in G}g\cdot y'$, then $y\in M^G$ and $\psi(y)=z$ hold.
\end{proof}

From this lemma, we obtain the following.

\begin{Prop}\label{gldim}
Assume $|G|\cdot 1_A\in A^\times$. For $n\in\mathbb{N}$ and $M,N\in\Mod A*G$, we have
\[\Ext_A^n(M,N)^G=\Ext_{A*G}^n(M,N).\]
Especially, $\lgl A*G=\lgl A$ holds.
\end{Prop}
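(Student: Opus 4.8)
The plan is to compute $\Ext^n_{A*G}(M,N)$ from a projective resolution over $A*G$, transfer that resolution to $A$ for free, and then pull the functor $(-)^G$ through cohomology. First I would record the basic identification: for left $A*G$-modules $M,N$, a map $f\colon M\to N$ is $A*G$-linear precisely when it is $A$-linear and $G$-equivariant, and $G$-equivariance of $f$ is exactly the statement that $f$ is fixed by the conjugation action $(g\cdot f)(m)=g\cdot f(g^{-1}\cdot m)$ on $\Hom_A(M,N)$ (replace $m$ by $g\cdot m$ in $g\cdot f=f$). Hence $\Hom_A(M,N)^G=\Hom_{A*G}(M,N)$.

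Next, choose a projective resolution $P_\bullet\to M$ in $\Mod A*G$. Since $A*G$ is free as a left $A$-module, every projective left $A*G$-module is projective as a left $A$-module, so $P_\bullet\to M$ is simultaneously a projective resolution in $\Mod A$. Applying $\Hom_A(-,N)$ to $P_\bullet$ therefore yields a complex of abelian groups carrying the conjugation $G$-action whose cohomology is $\Ext^\bullet_A(M,N)$, while passing to $G$-invariants first gives the complex $\Hom_{A*G}(P_\bullet,N)$ by the previous paragraph, whose cohomology is $\Ext^\bullet_{A*G}(M,N)$.

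It remains to see that $H^n$ commutes with $(-)^G$ on the complex $\Hom_A(P_\bullet,N)$. Here the hypothesis $|G|\cdot 1_A\in A^\times$ does all the work: multiplication by $|G|$ is then invertible on $N$, hence on each $\Hom_A(P_i,N)$, so the Reynolds operator $e:=\frac{1}{|G|}\sum_{g\in G}g$ is a well-defined idempotent endomorphism of the complex $\Hom_A(P_\bullet,N)$ with image the subcomplex $\Hom_A(P_\bullet,N)^G$; equivalently, this is an instance of Lemma \ref{invex}, which asserts that $(-)^G$ is exact. In either formulation $(-)^G$ is an exact functor (a direct summand of the identity), so it commutes with cohomology, and we conclude $\Ext^n_{A*G}(M,N)=H^n(\Hom_A(P_\bullet,N)^G)=H^n(\Hom_A(P_\bullet,N))^G=\Ext^n_A(M,N)^G$.

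For the global-dimension statement, note that if $m>\lgl A$ then $\Ext^m_A(M,N)=0$ for all $M,N\in\Mod A*G$, hence $\Ext^m_{A*G}(M,N)=0$, so $\pd_{A*G}M\le\lgl A$ for every $M$ and therefore $\lgl A*G\le\lgl A$; combined with the reverse inequality from the preceding Proposition this gives $\lgl A*G=\lgl A$ (both sides being infinite simultaneously if either is). The only points requiring genuine care are the transfer of the resolution — i.e. that projective $A*G$-modules are projective over $A$, which uses freeness of $A*G$ over $A$ — and the exactness of $(-)^G$, which is precisely where invertibility of $|G|$ enters; everything else is formal.
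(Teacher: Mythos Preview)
Your proof is correct and follows essentially the same approach as the paper's: take a projective resolution in $\Mod A*G$, observe it is also projective over $A$, and use exactness of $(-)^G$ (Lemma \ref{invex}) to commute invariants with cohomology. Your version is in fact more thorough, as you explicitly justify the identification $\Hom_A(M,N)^G=\Hom_{A*G}(M,N)$ and the passage of projectivity from $A*G$ to $A$, which the paper leaves implicit.
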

\begin{proof}
Let $P_\bullet\to M\to0$ be a projective resolution in $\Mod A*G$. Then this is also gives a projective resolution in $\Mod A$. Thus $\Ext_A^n(M,N)$ is the $n$th cohomology of the complex $\Hom_A(P_\bullet,N)$. By the above lemma, $\Ext_A^n(M,N)^G$ is the $n$th cohomology of the complex $\Hom_A(P_\bullet,N)^G=\Hom_{A*G}(P_\bullet,N)$.
\end{proof}

\subsection{Irreducible representations of skew group algebras}

In this subsection, we see how we can determine irreducible representations of skew group algebras. We discuss in a general setting: let $l$ be a field, $G$ a finite group acting on $l$ as a field and $k:=l^G$. We do not impose any assumption on $\ch l$ or $|G|$. By Artin's theorem, we know that $l/k$ is a finite Galois extension and the natural group homomorphism $G\to\Gal(l/k)$ is surjective. We let
\[H:=\Ker(G\to\Gal(l/k))\]
be a normal subgroup of $G$. Our strategy is to relate $\mod l*G$ to $\mod lH$ since we can investigate modules over $lH$ more easily.

\begin{Def}
We define the {\it induction functor} $\Ind$ and the {\it restriction functor} $\Res$ by
\[\Ind:=(l*G)\otimes_{lH}-\colon\mod lH\to\mod l*G,\]
\[\Res:={}_{lH}(-)\colon\mod l*G\to\mod lH.\]
\end{Def}

For $g\in G$ and $W\in\mod lH$, we define an left $lH$-module $gW$ by twisting the action of $H$ on $W$ by the group automorphism $g^{-1}\cdot-\cdot g\colon H\to H$. Then this gives an auto-equivalence $g-\colon\mod lH\to\mod lH$ and the group $G$ acts on the category $\mod lH$. On the other hand, we define
\[g\otimes W:=\{g\otimes w\mid w\in W\}\subseteq(l*G)\otimes_{lH}W.\]
Then we have $g\otimes W\cong gW$ naturally. Since $h\otimes W\cong W$ holds for $h\in H$, we can conclude that $\Gal(l/k)\cong G/H$ acts on the set $\simp lH$.

\begin{Thm}\label{tab}
\begin{enumerate}
\item There exists a bijection
\[\Gal(l/k)\backslash\simp lH\cong\simp l*G.\]
\item Decompose $\simp lH$ into the disjoint union of the orbits with respect the $\Gal(l/k)$-action$\colon$
\[\simp lH=\bigsqcup_{i=1}^{s}\{W_{i1},\cdots, W_{it_i}\}.\]
We denote by $V_i\in\simp l*G$ the simple left $l*G$-module corresponding to $\{W_{i1},\cdots, W_{it_i}\}$. Then for each $1\leq i\leq s$, there exist integers $a_i, b_i\geq1$ satisfying
\begin{equation*}
\begin{split}
\Res V_i&\cong (W_{i1}\oplus\cdots\oplus W_{it_i})^{\oplus a_i},\\
\Ind W_{ij}&\cong V_i^{\oplus b_i}\text{for each $1\leq j\leq t_i$ and}\\
t_ia_ib_i&=[l:k].
\end{split}
\end{equation*}
\end{enumerate}
\end{Thm}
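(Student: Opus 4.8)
The plan is to run Clifford theory for the ring $l*G$, which is strongly $G/H$-graded with identity component $l*H=lH$, and to supply the two structural inputs specific to the Galois situation that make the bijection in (1) hold (for a general crossed product such a bijection fails). Throughout write $n:=[l:k]=|G/H|$ and identify $G/H=\Gal(l/k)$. First I would record the formal ingredients: $\Ind=(l*G)\otimes_{lH}-$ is exact, preserves projectives (as $l*G$ is $lH$-free) and is left adjoint to $\Res$; and, decomposing $l*G=\bigoplus_{\bar g\in G/H}u_{\bar g}\,lH$ over coset representatives, one gets a Mackey isomorphism $\Res\,\Ind W\cong\bigoplus_{\bar g\in G/H}gW$ of $lH$-modules by identifying $u_{\bar g}\otimes W$ with $gW$.

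Next I would handle the restriction of a simple module. For $V\in\simp l*G$, the $lH$-socle $\soc_{lH}V$ is $(l*G)$-stable, since twisting a semisimple $lH$-module by a ring automorphism of $lH$ keeps it semisimple; hence $\soc_{lH}V=V$ and $\Res V$ is semisimple. Applying the same reasoning to the isotypic components of $\Res V$ shows that $G/H$ permutes them transitively. Therefore, if $V_i$ is the simple attached to the orbit $\{W_{i1},\dots,W_{it_i}\}$, then $\Res V_i\cong(W_{i1}\oplus\cdots\oplus W_{it_i})^{\oplus a_i}$ with a uniform multiplicity $a_i\geq1$, and $W_{i1}$ occurs in $\Res V_i$.

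The crux consists of two lemmas. \emph{(a)} $\Ind W$ is a semisimple $l*G$-module for every $W\in\mod lH$: the ideal $\rad(lH)(l*G)=(l*G)\rad(lH)$ is a nilpotent two-sided ideal, hence lies in $\rad(l*G)$, and if it equals $\rad(l*G)$ then $\rad(l*G)\cdot\Ind W=(l*G)\otimes_{lH}\rad(lH)W=0$. The equality $\rad(l*G)=\rad(lH)(l*G)$ says exactly that the quotient $\overline{l*G}:=(l*G)/\rad(lH)(l*G)$, which is a crossed product of $\overline{lH}:=lH/\rad(lH)$ by $G/H$, is semisimple; I would prove this by descent: $\overline{lH}\cong l\otimes_{k_0}(k_0H/\rad(k_0H))$ (the radical commutes with the separable extension $l/k_0$ of the perfect prime field $k_0$) is a separable $l$-algebra, so $\bar k\otimes_l\overline{lH}$ is semisimple, and since $l/k$ is separable the base change $\bar k\otimes_k\overline{l*G}$ is an induced algebra $M_n(\bar k\otimes_l\overline{lH})$, hence semisimple, forcing $\rad(\overline{l*G})=0$. \emph{(b)} For $W\in\simp lH$ with stabiliser $\bar T:=\{\bar g\in G/H\mid gW\cong W\}$, the ring $\End_{l*G}(\Ind W)\cong\bigoplus_{\bar g\in\bar T}\Hom_{lH}(W,gW)$ is a crossed product of the division ring $D:=\End_{lH}(W)$ by $\bar T$, in which $\bar g$ acts on $D$ by an automorphism restricting on the central subfield $l\subseteq Z(D)$ to the tautological Galois automorphism; since the $G/H$-action on $l$ is faithful, no $1\neq\bar g\in\bar T$ acts on $D$ as an inner automorphism, and the standard minimal-support argument then shows that this crossed product has no nonzero proper two-sided ideal, hence is simple Artinian.

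Finally I would assemble the statement. By (a) each $\Ind W_{ij}$ is semisimple, and by (b) its endomorphism ring is simple Artinian, so $\Ind W_{ij}$ is isotypic; since $W_{ij}$ occurs in $\Res V_i$, the adjunction $\Hom_{l*G}(\Ind W_{ij},V_i)\cong\Hom_{lH}(W_{ij},\Res V_i)\neq0$ identifies its unique simple constituent with $V_i$, i.e. $\Ind W_{ij}\cong V_i^{\oplus b_{ij}}$, and comparing $l$-dimensions makes $b_{ij}=:b_i$ independent of $j$. The same adjunction shows that a simple $l*G$-module lies over the $\Gal(l/k)$-orbit of $W_{i1}$ if and only if it is a summand of the isotypic module $\Ind W_{i1}$, i.e. if and only if it is $V_i$; this gives the bijection (1) and the well-definedness of $V_i$. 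The relation $t_ia_ib_i=[l:k]$ is then a dimension count: $b_i\dim_lV_i=\dim_l\Ind W_{i1}=n\dim_lW_{i1}$ while $\dim_lV_i=a_i\sum_j\dim_lW_{ij}=a_it_i\dim_lW_{i1}$. The main obstacle is Lemma~(a)--(b): both the semisimplicity of $\overline{l*G}$ (a relative normal-basis/descent statement using separability of $lH/\rad(lH)$) and the simplicity of $\End_{l*G}(\Ind W)$ (using that $G/H$ acts faithfully on $l\subseteq Z(D)$) genuinely exploit the Galois structure, rather than being formal Clifford-theoretic bookkeeping.
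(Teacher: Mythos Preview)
Your approach is correct but takes a genuinely different and more laborious route than the paper. You run Clifford theory in full and supply two substantial lemmas---semisimplicity of $\Ind W$ via a descent argument computing $\rad(l*G)$, and simplicity of $\End_{l*G}(\Ind W)$ via outerness of the stabiliser action on $\End_{lH}(W)$---both of which correctly exploit the faithfulness of the $G/H$-action on $l$.

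The paper bypasses both lemmas with a single observation: there is a natural isomorphism $\Ind\Res\cong(-)^{\oplus n}$ of endofunctors on $\mod l*G$, where $n=[l:k]$. This comes from the ring isomorphism $l*\Gal(l/k)\cong\End_k(l)$ (the separable case of $B*G\cong\End_{B^G}(B)$), which yields ${}_{l*G}(l*(G/H))\cong l^{\oplus n}$ and hence $\Ind\Res V\cong(l*(G/H))\otimes_l V\cong V^{\oplus n}$. Given this, the argument is almost formal: for $V\in\simp l*G$ write $\Res V\cong\bigoplus_j W_j^{\oplus a^j}$ by your Clifford step, apply $\Ind$ to get $V^{\oplus n}\cong\bigoplus_j(\Ind W_j)^{\oplus a^j}$, and conclude by Krull--Schmidt that each $\Ind W_j$ is already a power of $V$---no semisimplicity of $\Ind W_j$ or analysis of its endomorphism ring is needed. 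Mackey then forces the $W_j$ to be a single $\Gal(l/k)$-orbit with common multiplicity $a$, and dimension counts give $b^j=b$ and $tab=n$. Your approach has the merit of making the Clifford-theoretic mechanism explicit and would adapt to more general strongly graded rings; the paper's approach trades that generality for a much shorter argument tailored to the Galois situation, packaging the entire ``Galois input'' into the single isomorphism $l*\Gal(l/k)\cong\End_k(l)$.
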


To prove Theorem \ref{tab}, we investigate $\Res\Ind\colon\mod lH\to\mod lH$ and $\Ind\Res\colon\mod l*G\to\mod l*G$. The first one is easy.

\begin{Prop}\label{resind}
Let $T\subseteq G$ be a subset with $G=\bigsqcup_{g\in T}gH$. We have a natural isomorphism
\[\Res\Ind=\bigoplus_{g\in T}g\otimes-\colon\mod lH\to\mod lH.\]
\end{Prop}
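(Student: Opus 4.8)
Proposition \ref{resind} asserts that $\Res\Ind = \bigoplus_{g\in T}(g\otimes -)$ as functors $\mod lH \to \mod lH$, where $T$ is a set of left coset representatives for $H$ in $G$.

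The plan is to unwind the definitions and track the $lH$-module structure carefully. First I would recall that $l*G$ is free as a right $lH$-module with basis $\{g \mid g \in T\}$, since $G = \bigsqcup_{g\in T} gH$ gives $l*G = \bigoplus_{g\in T} g\cdot(lH)$ as a right $lH$-module (each $g\cdot lH$ being a copy of $lH$ via $g\cdot h \mapsto h$, using that the $l$-coefficients can be absorbed — more precisely $g\lambda = (g\cdot\lambda)g$, so $g\cdot lH$ is spanned by the $gh$ with $h\in H$ and carries the obvious right $lH$-action). Hence for $W\in\mod lH$ we get, as $l$-vector spaces (indeed as abelian groups), a canonical decomposition $(l*G)\otimes_{lH}W = \bigoplus_{g\in T}(g\otimes W)$, where $g\otimes W := \{g\otimes w \mid w\in W\}$ as introduced just before the statement.

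Next I would identify the left $lH$-module structure on each summand $g\otimes W$. For $h\in H$ and $w\in W$ we compute $h\cdot(g\otimes w) = hg\otimes w$. Writing $hg = g\cdot(g^{-1}hg)$ and noting $g^{-1}hg\in H$ (since $H\trianglelefteq G$), this equals $g\cdot(g^{-1}hg)\otimes w = g\otimes (g^{-1}hg)\cdot w$. More generally for $\lambda\in l$, the element $\lambda h\in lH$ acts by $\lambda h\cdot(g\otimes w) = \lambda hg\otimes w = (g\cdot(g^{-1}\cdot\lambda))\,g\otimes(g^{-1}hg)w$, hmm — here I need to be a touch careful about where the scalar $\lambda$ lands. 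Since $g^{-1}\in H$ would be false in general; rather $g^{-1}\lambda = (g^{-1}\cdot\lambda)g^{-1}$ in $l*G$, so $\lambda g = g(g^{-1}\cdot\lambda)$ and thus $\lambda h g = \lambda g (g^{-1}hg) = g(g^{-1}\cdot\lambda)(g^{-1}hg)$, giving $\lambda h\cdot(g\otimes w) = g\otimes (g^{-1}\cdot\lambda)(g^{-1}hg)w$. This is exactly the action of $\lambda h$ on $gW$, the module obtained from $W$ by twisting the $H$-action (and the $l$-action) through the automorphism $x\mapsto g^{-1}xg$ of $lH$ — which is precisely the definition of $gW$ given in the paragraph preceding the statement. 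So the bijection $g\otimes w \leftrightarrow w$ is an isomorphism $g\otimes W \cong gW$ of left $lH$-modules, as already noted there.

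Finally I would check naturality: for a morphism $f\colon W\to W'$ in $\mod lH$, the induced map $\Ind f$ restricts on each summand to $g\otimes w\mapsto g\otimes f(w)$, which corresponds under the above identification to $g f\colon gW\to gW'$, so the decomposition is natural in $W$. Since this is all bookkeeping, the "hard part" is essentially nonexistent — the only genuine point requiring care is the scalar computation above, keeping straight that conjugation by $g$ twists both the group action \emph{and} the field action, which is exactly why the functor $g\otimes -$ equals the twist functor $g(-)$ rather than merely an abstract copy of $W$. Once that identification is pinned down, summing over $T$ yields $\Res\Ind \cong \bigoplus_{g\in T} (g\otimes -)$ as claimed.
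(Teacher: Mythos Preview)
Your proof is correct and follows essentially the same approach as the paper: both decompose $l*G$ according to the left cosets $gH$ and identify each piece tensored with $W$ as $g\otimes W$. The paper's proof compresses your argument into two lines by phrasing the decomposition $l*G=\bigoplus_{g\in T}l*(gH)$ as one of $(lH,lH)$-bimodules (so the left $lH$-structure comes for free), whereas you decompose first as a right $lH$-module and then verify the left action by hand; your explicit scalar computation $\lambda g = g(g^{-1}\cdot\lambda)$ is exactly the content hidden in the paper's bimodule claim.
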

\begin{proof}
We can decompose $l*G$ into $\bigoplus_{g\in T}l*(gH)$ as an $(lH, lH)$-bimodule. Therefore we have
\[\Res\Ind=\bigoplus_{g\in T}(l*(gH))\otimes_{lH}-=\bigoplus_{g\in T}g\otimes-.\qedhere\]
\end{proof}

Next, we consider $\Ind\Res$. Unlike Proposition \ref{resind}, the discussions depend strongly on our setting. We view $l*(G/H)$ as a left $l*G$-module through the natural ring homomorphism $l*G\to l*(G/H)$.

\begin{Lem}\label{nontrivial}
There exists an isomorphism
\[l*(G/H)\cong l^{\oplus[l:k]}\]
as a left $l*G$-module where $l$ in the right hand side is the trivial representation.
\end{Lem}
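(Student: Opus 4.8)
The plan is to reduce the statement to the classical structure of the skew group algebra of a Galois group acting on its top field, and then pull the resulting decomposition back along $l*G\twoheadrightarrow l*(G/H)$. Put $n:=[l:k]$. By Artin's theorem the canonical map $G/H\to\Gal(l/k)$ is an isomorphism, so $l*(G/H)=l*\Gal(l/k)$. Since $l/k$ is finite Galois, $l\otimes_kl$ is a product of copies of $l$, so the multiplication map $l\otimes_kl\to l$ exhibits $l$ as a direct summand of $l\otimes_kl$; hence the extension $k\subseteq l$ is separable in the sense of Section 1.2, and Proposition \ref{sep} (the special case $A=k$, $B=l$ of Theorem \ref{end}) shows that the canonical homomorphism
\[\gamma\colon l*\Gal(l/k)\longrightarrow\End_k(l),\qquad \lambda*\sigma\longmapsto(x\mapsto\lambda\sigma(x))\]
is an isomorphism of $k$-algebras.

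Next I would read off the module structure. Choosing a $k$-basis of $l$ identifies $\End_k(l)$ with $M_n(k)$, whose left regular module is a direct sum of $n$ copies of its unique simple module $k^n$; transporting this back along $\gamma$, the module $k^n$ corresponds to $l$ itself equipped with the action $(\lambda*\sigma)\cdot x=\lambda\sigma(x)$, so that
\[l*\Gal(l/k)\;\cong\;\bigl(l,\ (\lambda*\sigma)\cdot x=\lambda\sigma(x)\bigr)^{\oplus n}\]
as left $l*\Gal(l/k)$-modules. The module on the right is precisely the trivial representation of $l*\Gal(l/k)$ in the sense of Example \ref{trivrep}, since its $\Gal(l/k)$-action is the tautological Galois action on $l$.

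Finally I would restrict scalars along the surjection $l*G\to l*(G/H)=l*\Gal(l/k)$. This functor sends the left regular module of $l*\Gal(l/k)$ to $l*(G/H)$ viewed as a left $l*G$-module as in the statement, and it sends the trivial representation of $l*\Gal(l/k)$ to the trivial representation of $l*G$, because by construction the $G$-action on $l$ is induced from the $\Gal(l/k)$-action through $G\to\Gal(l/k)$. Applying restriction of scalars to the displayed isomorphism therefore yields $l*(G/H)\cong l^{\oplus[l:k]}$ as left $l*G$-modules, as claimed. The only substantive ingredient is the isomorphism $\gamma$; everything else is bookkeeping about restriction of scalars, and the single point to watch is the identification of the simple $M_n(k)$-module with the natural representation of $\Gal(l/k)$ on $l$, i.e. with the trivial representation.
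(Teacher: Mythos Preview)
Your proof is correct and follows essentially the same route as the paper: identify $l*(G/H)$ with $l*\Gal(l/k)\cong\End_k(l)$ via Proposition~\ref{sep}, decompose the left regular module of $\End_k(l)$ into $[l:k]$ copies of the natural module $l$, and restrict along $l*G\to l*(G/H)$. The paper phrases the decomposition as $\End_k(l)=\Hom_k(l_k,{}_{l*G}l_k)\cong\Hom_k(k^{\oplus[l:k]},{}_{l*G}l)$ rather than via $M_n(k)$, but this is only a cosmetic difference.
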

\begin{proof}
We have ring isomorphisms
\[l*(G/H)\cong l*\Gal(l/k)\cong\End_k(l),\]
where the last one is a special case of Proposition \ref{sep}. Therefore we have
\[_{l*G}(l*(G/H))\cong\ _{l*G}\End_k(l)=\Hom_k(l_k,\ _{l*G}l_k)\cong\Hom_k(k^{\oplus[l:k]}_k,\ _{l*G}l_k)\cong\ _{l*G}l^{\oplus[l:k]}.\qedhere\]
\end{proof}

\begin{Prop}\label{indres}
We have a natural isomorphism
\[\Ind\Res\cong (-)^{\oplus[l:k]}\colon\mod l*G\to\mod l*G.\]
\end{Prop}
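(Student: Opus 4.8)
The plan is to deduce the statement from Lemma \ref{nontrivial} by way of a projection formula (tensor identity) for the skew group ring $l*G$ over $lH$.

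First I would establish a natural isomorphism of functors
\[\Ind\Res\;\cong\;\bigl(l*(G/H)\bigr)\otimes_l(-)\colon\mod l*G\longrightarrow\mod l*G,\]
where $l*(G/H)$ is regarded as a left $l*G$-module through the ring surjection $l*G\to l*(G/H)$ and the tensor product on the right carries the diagonal $G$-action of Definition \ref{tenhomdef}(1). Explicitly, for $M\in\mod l*G$ I would define mutually inverse maps
\[\alpha_M\colon(l*G)\otimes_{lH}M\longrightarrow\bigl(l*(G/H)\bigr)\otimes_l M,\qquad(\lambda*g)\otimes m\longmapsto(\lambda*gH)\otimes(g\cdot m),\]
\[\beta_M\colon\bigl(l*(G/H)\bigr)\otimes_l M\longrightarrow(l*G)\otimes_{lH}M,\qquad(\lambda*gH)\otimes m\longmapsto(\lambda*g)\otimes(g^{-1}\cdot m),\]
and then check: that $\alpha_M$ factors through the balanced tensor product over $lH$; that $\beta_M$ is independent of the chosen representative $g$ of the coset $gH$ and is compatible with the left $l$-balancing; that both maps are left $l*G$-linear; that $\alpha_M\beta_M=\mathrm{id}$ and $\beta_M\alpha_M=\mathrm{id}$; and that everything is natural in $M$. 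Each of these is a short computation using only the multiplication rule $(\lambda*g)(\mu*g')=\lambda\,g(\mu)*gg'$ and the compatibility $g\cdot(\mu m)=(g\cdot\mu)(g\cdot m)$ built into every $l*G$-module. The one mildly delicate point, where I expect the real work to lie, is the well-definedness of $\beta_M$: moving an $l$-scalar $a$ across $\otimes_{lH}$ forces one to replace $a$ by $g^{-1}(a)$, and one must track this carefully against the $\otimes_l$-relation on the source.

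With the tensor identity in hand the rest is formal. Lemma \ref{nontrivial} gives an isomorphism of left $l*G$-modules $l*(G/H)\cong l^{\oplus[l:k]}$ with $l$ the trivial representation; applying $(-)\otimes_l M$ with the diagonal action yields
\[\bigl(l*(G/H)\bigr)\otimes_l M\;\cong\;\bigl(l\otimes_l M\bigr)^{\oplus[l:k]}\]
as left $l*G$-modules. Finally, the multiplication map $l\otimes_l M\to M$, $\lambda\otimes m\mapsto\lambda m$, is an isomorphism of left $l*G$-modules, since under the diagonal action $g\cdot(\lambda\otimes m)=(g\cdot\lambda)\otimes(g\cdot m)$ maps to $(g\cdot\lambda)(g\cdot m)=g\cdot(\lambda m)$, again by the module identity. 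Composing the three isomorphisms gives a natural isomorphism $\Ind\Res M\cong M^{\oplus[l:k]}$, which is the assertion. Thus the only genuine obstacle is the bookkeeping of scalar-balancing in the skew-group-ring tensor products needed to verify the projection formula; after that the proof is a two-line combination with Lemma \ref{nontrivial}.
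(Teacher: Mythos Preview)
Your proof is correct and takes essentially the same route as the paper: both construct the map $(l*G)\otimes_{lH}V\to (l*(G/H))\otimes_l V$, $(\lambda*g)\otimes v\mapsto(\lambda*gH)\otimes(g\cdot v)$, and then apply Lemma \ref{nontrivial}. The only difference is in how bijectivity is verified: the paper observes the map is surjective and compares $l$-dimensions ($[l:k]\dim_l V$ on both sides), whereas you write down the explicit inverse $\beta_M$ and check the scalar-balancing by hand; the dimension count saves you exactly the bookkeeping you flagged as the ``mildly delicate point''.
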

\begin{proof}
For $V\in\mod l*G$, we have an $l*G$-homomorphism $1\otimes-\colon V\to(l*(G/H))\otimes_lV$. This induces an $lH$-homomorphism $\Res V\to\Res((l*(G/H))\otimes_lV)$. By the adjointness, we obtain an $l*G$-homomorphism
\[\Ind\Res V=(l*G)\otimes_{lH}V\to(l*(G/H))\otimes_lV;(\lambda*g)\otimes v\mapsto(\lambda*(gH))\otimes g(v).\]
This is obviously surjective. The injectivity also follows since $\dim_l((l*G)\otimes_{lH}V)=[l:k]\dim_lV=\dim_l((l*(G/H))\otimes_lV)$ holds. Thus the assertion holds by Lemma \ref{nontrivial}.
\end{proof}

Finally, we see an analogue of Clifford's theorem \cite{Cli}. Observe that if $\ch l$ does not divide $|H|$, then this is obvious.

\begin{Lem}\label{Clifford}
For $V\in\simp l*G$, $\Res V\in\mod lH$ is semisimple.
\end{Lem}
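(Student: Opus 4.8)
The plan is to transcribe the classical proof of Clifford's theorem \cite{Cli} to the skew group algebra $l*G$; only the normality of $H$ in $G$ is used, and no hypothesis on $\ch l$ or $|G|$ is needed. First I would note that since $l/k$ is finite, $l*G$ is a finite-dimensional $k$-algebra, so $V$ is finite-dimensional over $k$, hence over $l$; in particular $\Res V$ is an $lH$-module of finite length and therefore contains a simple $lH$-submodule $W$.

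Second, for $g\in G$ I would consider $g\cdot W:=\{g\cdot w\mid w\in W\}\subseteq V$ and check that it is again an $lH$-submodule of $\Res V$. Indeed, for $h\in H$ one has $h\cdot(g\cdot w)=g\cdot((g^{-1}hg)\cdot w)$ with $g^{-1}hg\in H$ by normality, and for $\lambda\in l$ one has $\lambda\cdot(g\cdot w)=g\cdot(g^{-1}(\lambda)\cdot w)$ by a direct application of the multiplication rule of $l*G$; since $W$ is an $lH$-submodule, both results lie in $g\cdot W$. The assignments $M\mapsto g\cdot M$ and $N\mapsto g^{-1}\cdot N$ on the set of $lH$-submodules of $\Res V$ are mutually inverse (because $g\cdot(-)$ is a bijection of $V$ with inverse $g^{-1}\cdot(-)$), hence they are mutually inverse order isomorphisms of the submodule poset. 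Applying this to the interval below $W$ shows that $g\cdot W$ is a simple $lH$-submodule of $\Res V$; equivalently, $g\cdot W$ is the twist of $W$ by the autoequivalence $g\cdot(-)\colon\mod lH\to\mod lH$.

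Finally, the subspace $\sum_{g\in G}g\cdot W\subseteq V$ is stable under multiplication by $l$ and under the action of every $g'\in G$, since $g'\cdot(g\cdot W)=(g'g)\cdot W$; hence it is a nonzero $l*G$-submodule of $V$, and by simplicity of $V$ it equals $V$. Therefore $\Res V=\sum_{g\in G}g\cdot W$ is a sum of simple $lH$-submodules and is thus semisimple. The only slightly delicate point is the bookkeeping in the second step — verifying that each $g\cdot W$ is an $lH$-submodule and that passing to $g\cdot(-)$ preserves simplicity — but once the multiplication rule of $l*G$ is used carefully, the rest is formal.
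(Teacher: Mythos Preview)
Your proof is correct and follows essentially the same approach as the paper: pick a simple $lH$-submodule $W\subseteq\Res V$, observe that each $gW$ is again a simple $lH$-submodule, and conclude that $V=\sum_{g\in G}gW$ is semisimple over $lH$. You have simply filled in the verifications (that $gW$ is an $lH$-submodule, that it is simple, and that the sum is $l*G$-stable) which the paper leaves implicit.
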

\begin{proof}
Take a simple $lH$-submodule $W\subseteq\Res V$. For any $g\in G$, $gW\subseteq\Res V$ is also a simple $lH$-submodule. Put $V':=\sum_{g\in G}gW\subseteq\Res V$. This is obviously an $l*G$-submodule and thus $V'=V$ holds. Since $V'$ is a sum of simple $lH$-modules, we get the assertion.
\end{proof}

Under these preparations, we can prove Theorem \ref{tab}.
\begin{proof}[Proof of Theorem \ref{tab}]
Take $V\in\simp l*G$. By Lemma \ref{Clifford}, we have a decomposition $\Res V\cong W_1^{\oplus a^1}\oplus\cdots\oplus W_t^{\oplus a^t}$ into simple $lH$-modules. By Proposition \ref{indres}, we have
\[V^{\oplus[l:k]}\cong\Ind\Res V\cong \Ind W_1^{\oplus a^1}\oplus\cdots\oplus \Ind W_t^{\oplus a^t}.\]
Thus there exists $b^j\in\mathbb{N}$ such that $\Ind W_j\cong V^{\oplus b^j}$ by the Krull-Remak-Schmidt theorem. Observe that we have
\[\Res\Ind W_1\cong\Res V^{\oplus b^1}\cong W_1^{\oplus a^1b^1}\oplus\cdots\oplus W_t^{\oplus a^tb^1}.\]
Thus by Proposition \ref{resind}, the set $\{W_1,\cdots, W_t\}$ coincides with an orbit with respect to the action of $\Gal(l/k)$ on $\simp lH$ and $a:=a^1=\cdots=a^t$ holds. In particular, we obtain $\dim_lW_1=\cdots=\dim_lW_t$. By $\Ind W_j\cong V^{\oplus b^j}$, we have $b^j\dim_lV=[l:k]\dim_lW_j$. Thus we can conclude $b:=b^1=\cdots=b^t$. Therefore we can write
\[\Res\Ind W_1\cong (W_1\oplus\cdots\oplus W_t)^{\oplus ab}.\]
Since $\dim_l\Res\Ind W_1=[l:k]\dim_l W_1$, we obtain $tab=[l:k]$.
\end{proof}

In Section \ref{Examples}, we calculate $a_i$ by using the equation $t_ia_ib_i=[l:k]$.

\subsection{McKay quivers}

In this subsection, we adopt the settings $(Q1),(Q2),(Q3)$ and assume the condition $(Q4)$. Using Theorem \ref{tab}, we can describe an algorithm to determine $(d-1)$-almost split (fundamental) sequences in $\C(R)$ and draw the McKay quiver of $l*G$. Before that, as an application of the results in the previous subsection, we prove the following generalization of Watanabe's result \cite{W}.

\begin{Thm}\label{Gor}
Under the setting $(Q1),(Q2)$ and $(Q3)$, we assume the condition $(Q4)$. Then the ring $R=S^G$ is Gorenstein if and only if $H\subseteq SL_d(l)$ holds.
\end{Thm}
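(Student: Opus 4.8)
The plan is to translate the Gorenstein property into a statement about a single one-dimensional $l*G$-module and then invoke Hilbert's Theorem~90. Since $R$ is Cohen-Macaulay with canonical module $\omega$, it is Gorenstein if and only if $\omega\cong R$ as $R$-modules. By Corollary \ref{canonical} we have $\omega=M(\bigwedge^dU)$, and $M(l)=R$ for the trivial representation $l$; because $V\mapsto M(V)$ is a bijection between $\simp l*G$ and $\ind\C(R)$, this shows that $R$ is Gorenstein if and only if $\bigwedge^dU\cong l$ in $\mod l*G$.

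Next I would make the module $\bigwedge^dU$ explicit. It is one-dimensional over $l$, spanned by $x_1\wedge\cdots\wedge x_d$, and $g=(A,\sigma)\in G$ sends this generator to $\det(A)(x_1\wedge\cdots\wedge x_d)$ while acting $\sigma$-semilinearly on scalars. By Proposition \ref{extend} with $m=1$, the $l*G$-module structures on the $l$-line $l$ correspond to $1$-cocycles $\chi\colon G\to l^\times$ for the action of $G$ on $l^\times$ through $\Gal(l/k)$, and two of them yield isomorphic modules precisely when they differ by a coboundary. Under this correspondence $\bigwedge^dU$ is given by $\chi(A,\sigma)=\det A$ and the trivial representation by the constant cocycle $1$. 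Hence $R$ is Gorenstein if and only if there is some $c\in l^\times$ with $\det A=\sigma(c)\,c^{-1}$ for every $(A,\sigma)\in G$.

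The remaining two implications are then immediate. If such a $c$ exists, applying the identity to $g=(A,\mathrm{id})\in H$ gives $\det A=1$, so $H\subseteq SL_d(l)$. Conversely, assume $H\subseteq SL_d(l)$; then $\chi|_H\equiv 1$, and since $\chi$ is a cocycle it is constant on each $H$-coset, so it descends to a $1$-cocycle $\bar\chi\colon\Gal(l/k)\to l^\times$ along the canonical isomorphism $G/H\cong\Gal(l/k)$. By Hilbert's Theorem~90, $H^1(\Gal(l/k),l^\times)=0$, so $\bar\chi$ is a coboundary; writing $\bar\chi(\sigma)=\sigma(c)\,c^{-1}$ and pulling back yields $\det A=\sigma(c)\,c^{-1}$ for all $(A,\sigma)\in G$, whence $R$ is Gorenstein.

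The only non-formal ingredient is Hilbert's Theorem~90, which is exactly what lets the Galois part of $G$ contribute no obstruction and recovers the classical criterion $H\subseteq SL_d(l)$ of Watanabe. The points requiring care, rather than difficulty, are the bookkeeping in the cocycle--module dictionary (left versus right actions, $\sigma$ versus $\sigma^{-1}$) and the normalization of coboundaries, all of which only affect inessential inversions of $c$.
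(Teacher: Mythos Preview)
Your argument is correct. After the common first reduction (via Corollary~\ref{canonical} and the bijection $\simp l*G\to\ind\C(R)$) to the statement that $\bigwedge^d U\cong l$ in $\mod l*G$, you and the paper diverge. The paper applies Proposition~\ref{indres} to see that two simple $l*G$-modules are isomorphic precisely when their restrictions to $lH$ are, which reduces the question at once to $\Res\bigwedge^d U\cong\Res l$, i.e.\ $\det A=1$ for all $A\in H$. You instead classify one-dimensional $l*G$-modules by $1$-cocycles $G\to l^\times$ modulo coboundaries and invoke Hilbert's Theorem~90 to show the Galois part contributes no obstruction. Your route is more hands-on and independent of the $\Ind/\Res$ machinery, while the paper's is a one-line corollary of that machinery once it is available. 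At bottom the two are close cousins: Lemma~\ref{nontrivial}, on which Proposition~\ref{indres} rests, uses $l*\Gal(l/k)\cong\End_k(l)$, itself a Galois-descent statement in the same circle of ideas as Hilbert~90.
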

\begin{proof}
We have $M(l)=R$ and $M(\bigwedge^dU)=\omega$ by Corollary \ref{canonical}. Thus $R$ is Gorenstein if and only if $\bigwedge^dU\cong l$ as a left $l*G$-module. By Proposition \ref{indres}, this is equivalent to $\Res\bigwedge^dU\cong\Res l$. Here, for $A\in H$, we have $A(x_1\wedge\cdots\wedge x_d)=(\det A)\ x_1\wedge\cdots\wedge x_d$. Thus this is equivalent to that $\det A=1$ holds for all $A\in H$.
\end{proof}

Remark that Theorem \ref{Gor} fails if we drop the assumption that $H$ is small. In fact, for any finite subgroup $G\subseteq GL_d(l)$ generated by pseudo-reflections, $S^G$ is regular by Chevalley-Shephard-Todd Theorem (Proposition \ref{allref}). In addition, note that the condition $H\subseteq SL_d(l)$ is weaker than $G\subseteq SL_d(l)\rtimes\Gal(l/k)$ (see Example \ref{typeCL}).

Now we consider the McKay quivers. To state our main theorem in this subsection, we need the concept of quotient quivers.

\begin{Def}\label{quot}
Let $Q$ be a quiver and $G$ a group acting on $Q$. We define the quotient quiver $G\backslash Q$ as follows. The vertices of $G\backslash Q$ is the set of the orbits with respect to the action of $G$ on the vertices of $Q$. For vertices $i,j$ of $G\backslash Q$, there exists an arrow from $i$ to $j$ in $G\backslash Q$ if there exists an arrow from an element of $i$ to an element of $j$ in $Q$.
\end{Def}

We see that if we forget about valuations, the quiver $\MK(l*G)$ coincides with the quotient quiver $\Gal(l/k)\backslash\MK(lH)$. Moreover, we can determine valuations of $\MK(l*G)$ by those of $\MK(lH)$. We prepare some notations. Let $\MK(lH)=(\simp lH,d_H,d'_H)$ and fix $1\leq i,i'\leq s$. Choose $1\leq j\leq t_i$ and let
\[d_H(i,i'):=\sum_{j'=1}^{t_{i'}}d_H({W_{ij},W_{i'j'}})\]
which does not depend on the choice of $j$ because of the action of $\Gal(l/k)$ on $\MK(lH)$. Similarly, choose $1\leq j\leq t_i$ and let
\[d'_H(i,i'):=\sum_{j'=1}^{t_{i'}}d'_H({W_{ij},W_{i'j'}}).\]

\begin{Thm}\label{alg}
Observe that the group $\Gal(l/k)$ acts on $\MK(lH)$.
\begin{enumerate}
\item We have an equation $\MK(l*G)=\Gal(l/k)\backslash\MK(lH)$ as quivers without valuations.
\item Let $\MK(l*G)=(\simp l*G,d_G,d'_G)$ and $\MK(lH)=(\simp lH,d_H,d'_H)$. For $1\leq i,i'\leq s$, we have
\[d_G(V_i,V_{i'})=\dfrac{a_{i'}}{a_i}d_H(i,i')\ {\rm and}\ d'_G(V_i,V_{i'})=\dfrac{a_i}{a_{i'}}d'_H(i,i').\]
\end{enumerate}
\end{Thm}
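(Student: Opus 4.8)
The plan is to transport the defining data of the McKay quivers through the correspondence $\Gal(l/k)\backslash\simp lH\cong\simp l*G$ of Theorem \ref{tab}, using the compatibility of $\Res$ and $\Ind$ with the monoidal operations $U\otimes_l-$ and $\bigwedge^{d-1}U\otimes_l-$. The key point is that $U=\mathfrak{n}/\mathfrak{n}^2$, being a module over $l*G$, restricts to an $lH$-module which is exactly the $U$ used to form $\MK(lH)$, and the tensor/exterior powers are computed the same way in $\mod lH$ and $\mod l*G$; so the restriction functor intertwines the two McKay structures. Concretely, for $V\in\simp l*G$ one has $\Res(U\otimes_l V)\cong U\otimes_l\Res V$ as $lH$-modules (Definition \ref{tenhomdef}), and similarly for $\bigwedge^{d-1}U$.

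For part (1): by Theorem \ref{tab}(2) each vertex $V_i$ of $\MK(l*G)$ corresponds to the orbit $\{W_{i1},\dots,W_{it_i}\}\subseteq\simp lH$, so the vertex sets already match the orbit decomposition. For the arrows, I would show that there is an arrow $V_i\to V_{i'}$ in $\MK(l*G)$ (i.e. $V_i$ occurs in $U\otimes_l V_{i'}$) if and only if there is an arrow from some $W_{i'j'}$ to some $W_{ij}$ in $\MK(lH)$. Applying $\Res$ to a decomposition of $U\otimes_l V_{i'}$ and using $\Res V_{i'}\cong(W_{i'1}\oplus\cdots\oplus W_{i't_{i'}})^{\oplus a_{i'}}$ together with $\Res V_i\cong(W_{i1}\oplus\cdots)^{\oplus a_i}$, one sees that $V_i\mid U\otimes_l V_{i'}$ forces some $W_{ij}\mid U\otimes_l W_{i'j'}$; conversely, if $W_{ij}\mid U\otimes_l W_{i'j'}$ then applying $\Ind$ and Proposition \ref{indres}, together with $\Ind W_{i'j'}\cong V_{i'}^{\oplus b_{i'}}$ and $\Ind W_{ij}\cong V_i^{\oplus b_i}$, yields $V_i\mid U\otimes_l V_{i'}$ (using that $U\otimes_l-$ commutes with $\Ind$ up to the twist, which follows from $U$ being an $l*G$-module and the projection formula $\Ind(W)\otimes_l U\cong\Ind(W\otimes_l\Res U)$). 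This gives the equality of underlying quivers.

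For part (2): I would count multiplicities rather than mere occurrence. Since $\Res$ is additive and $\Res V_{i'}\cong(\bigoplus_{j'}W_{i'j'})^{\oplus a_{i'}}$, decomposing $U\otimes_l V_{i'}$ and applying $\Res$ gives on one side $(U\otimes_l\bigoplus_{j'}W_{i'j'})^{\oplus a_{i'}}$, whose multiplicity of a fixed $W_{ij}$ is $a_{i'}\,d_H(i,i')$ by definition of $d_H(i,i')$ (and independence of the chosen $j$ from the $\Gal(l/k)$-action). On the other side, if $V_i$ occurs with multiplicity $d_G(V_i,V_{i'})$ in $U\otimes_l V_{i'}$, then $\Res$ of that contributes multiplicity $d_G(V_i,V_{i'})\cdot a_i$ to $W_{ij}$; other summands $V_{i''}$ with $i''\neq i$ contribute $0$ to $W_{ij}$ since distinct orbits are disjoint. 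Equating, $a_i\,d_G(V_i,V_{i'})=a_{i'}\,d_H(i,i')$, which is the first formula. The second formula is obtained by the same bookkeeping applied to $\bigwedge^{d-1}U\otimes_l V_i'$ where $V_i'$ is the $l*G$-module with $\bigwedge^dU\otimes_l V_i'\cong V_i$; here one needs that the "shift by $\bigwedge^d U$" is compatible with restriction, i.e. $\Res(\bigwedge^dU\otimes_l V)\cong\bigwedge^dU\otimes_l\Res V$, and that $\bigwedge^dU$ is one-dimensional so tensoring by it permutes $\simp lH$ and $\simp l*G$ compatibly; then $d'_G(V_i,V_{i'})\cdot a_{i'}=a_i\,d'_H(i,i')$.

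The main obstacle I anticipate is the bookkeeping around the exterior power $\bigwedge^d U$ in the definition of $d'$: one must check carefully that the bijection $\simp lH\to\simp lH$, $W\mapsto \bigwedge^dU\otimes_l W$ descends to the orbit level compatibly with $V\mapsto\bigwedge^dU\otimes_l V$ on $\simp l*G$, and that the integers $a_i$ are unchanged under this shift (which follows since $\bigwedge^dU$ is one-dimensional, so $\Res(\bigwedge^dU\otimes_l V_i)\cong\bigwedge^dU\otimes_l\Res V_i$ has the same "multiplicity pattern" $a_i$, only permuting the orbit). Once that compatibility is in place, the two formulas are symmetric and the computation for $d'_G$ mirrors the one for $d_G$ verbatim.
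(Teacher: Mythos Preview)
Your proposal is correct and follows essentially the same route as the paper: the paper also computes $\Res(U\otimes_l V_{i'})$ in two ways---once via $\Res V_{i'}\cong(\bigoplus_{j'}W_{i'j'})^{\oplus a_{i'}}$ and the definition of $d_H$, and once via $U\otimes_l V_{i'}\cong\bigoplus_i V_i^{\oplus d_G(V_i,V_{i'})}$---and equates the multiplicity of $W_{ij}$ to obtain $a_i\,d_G(V_i,V_{i'})=a_{i'}\,d_H(i,i')$, then says the $d'_G$ formula is analogous. Your separate treatment of part~(1) and your careful remarks on the $\bigwedge^d U$-shift for $d'_G$ are more explicit than the paper, but not a different argument.
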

\begin{proof}
We let $U:=\mathfrak{n}/\mathfrak{n}^2\in\mod l*G$. By using $(\Res U)\otimes_lW_{i'j'}=\bigoplus_{i=1}^s\bigoplus_{j=1}^{t_i}W_{ij}^{\oplus d_H({W_{ij},W_{i'j'}})}\ (1\leq j'\leq t_{i'})$ and $\Res V_{i'}\cong (W_{i'1}\oplus\cdots\oplus W_{i't_{i'}})^{\oplus a_{i'}}$, we have 
\[\Res(U\otimes_lV_{i'})\cong\bigoplus_{j'=1}^{t_{i'}}(\Res U)\otimes_lW_{i'j'}^{\oplus a_{i'}}=\bigoplus_{i=1}^s\bigoplus_{j=1}^{t_i}\bigoplus_{j'=1}^{t_{i'}}W_{ij}^{\oplus a_{i'}d_H({W_{ij},W_{i'j'}})}=\bigoplus_{i=1}^s\bigoplus_{j=1}^{t_i}W_{ij}^{\oplus a_{i'}d_H(i,i')}\]
On the other hand, we have
\[\Res(U\otimes_lV_{i'})=\Res\bigg(\bigoplus_{i=1}^sV_i^{d_G(V_i,V_{i'})}\bigg)\cong\bigoplus_{i=1}^s\bigoplus_{j=1}^{t_i}W_{ij}^{\oplus a_id_G(V_i,V_{i'})}.\]
Thus we obtain $a_id_G(V_i,V_{i'})=a_{i'}d_H(i,i')$. The other equality can be proved in the same way.
\end{proof}

In fact, more strongly, we can determine each term of $(d-1)$-almost split (fundamental) sequences in $\C(R)$. In this view point, we can interpret Theorem \ref{alg} as a determination of the second and the second last terms of $(d-1)$-almost split (fundamental) sequences. Let $V\in\mod l*G$. By Proposition \ref{minproj}, the minimal projective resolution of $V$ in $\mod S*G$ is given by
\[0\longrightarrow S\otimes_l\bigwedge^dU\otimes_l V\longrightarrow\cdots\longrightarrow S\otimes_lU\otimes_lV\longrightarrow S\otimes_lV\longrightarrow V\longrightarrow0,\]
where $U:=\mathfrak{n}/\mathfrak{n}^2\in\mod l*G$. We view this exact sequence as one in $\mod S*H$, that is
\[0\longrightarrow S\otimes_l\bigwedge^d\Res U\otimes_l\Res V\longrightarrow\cdots\longrightarrow S\otimes_l\Res U\otimes_l\Res V\longrightarrow S\otimes_l\Res V\longrightarrow\Res V\longrightarrow0.\]
This is the minimal projective resolution of $\Res V$ in $\mod S*H$ by Proposition \ref{minproj}. Therefore, if $\Res V=\bigoplus_{j}W_j\ (W_j\in\simp lH)$, the minimal projective resolution of $V$ in $\mod S*G$ is obtained by taking a direct sum of minimal projective resolutions of $W_j$'s in $\mod S*H$. By this observation, we obtain the following.

\begin{Thm}\label{ARseqsum}
Take $V\in\simp l*G$ which is not the trivial representation and let $\Res V=(W_1\oplus\cdots\oplus W_t)^{\oplus a}$ where $W_1,\cdots,W_t\in\simp lH$ are non-isomorphic to each other. Assume
\[0\longrightarrow N(W_{j,d-1})\longrightarrow\cdots\longrightarrow N(W_{j,1})\longrightarrow N(W_{j,0})\longrightarrow N(W_j)\longrightarrow 0\]
is a $(d-1)$-almost split sequence in $\C(S^H)$ where $N(W):=(S\otimes_lW)^H\in\C(S^H)$ for $W\in\mod lH$. For each $0\leq p\leq d-1$, there exists a unique $V_p\in\mod l*G$ up to isomorphism such that $\Res V_p\cong(W_{1,p}\oplus\cdots\oplus W_{t,p})^{\oplus a}$ holds. Under this notation, there is a $(d-1)$-almost split sequence in $\C(R)$ of the form
\[0\longrightarrow M(V_{d-1})\longrightarrow\cdots\longrightarrow M(V_1)\longrightarrow M(V_0)\longrightarrow M(V)\longrightarrow 0.\]
As for $(d-1)$-fundamental sequences, the same assertion holds.
\end{Thm}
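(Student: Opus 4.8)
The plan is to obtain Theorem \ref{ARseqsum} as a transport of Theorem \ref{ARMcKay}, applied both to $G$ and to $H$, through the restriction functor, using that the algebras $l*G$ and $lH$ are semisimple. I would begin by settling the existence and uniqueness of the modules $V_p$. Since $\ch k=\ch l$ does not divide $|G|$ (nor $|H|$), Proposition \ref{gldim} gives $\lgl(l*G)=\lgl l=0$ and $\lgl(lH)=0$, so $l*G$ and $lH$ are semisimple; hence every module over either is a finite direct sum of simples, uniquely up to isomorphism. By Theorem \ref{tab} we have $\Res V_i\cong(W_{i1}\oplus\cdots\oplus W_{it_i})^{\oplus a_i}$, and the simple $lH$-summands of $\Res V_i$ and $\Res V_{i'}$ are disjoint for $i\ne i'$ (distinct $\Gal(l/k)$-orbits). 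Consequently, for $X,Y\in\mod l*G$, $\Res X\cong\Res Y$ forces $X\cong Y$: the multiplicity of $W_{i1}$ in the restriction of a semisimple $l*G$-module is exactly $a_i$ times the multiplicity of $V_i$ in it, so the latter is determined by $\Res X$. This yields the uniqueness of $V_p$.

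For existence I would set $V_p:=\bigwedge^{p+1}U\otimes_l V$ for $0\le p\le d-1$, where $U:=\mathfrak{n}/\mathfrak{n}^2\in\mod l*G$; in particular $V_{d-1}=\bigwedge^d U\otimes_l V$. It remains to identify the $lH$-modules $W_{j,p}$ of the hypothesis. I apply Theorem \ref{ARMcKay} to the subgroup $H\subseteq GL_d(l)=GL_d(l)\rtimes\Gal(l/l)$, which satisfies $(Q1)$--$(Q4)$ with $l^H=l$ and is small by $(Q4)$: the $(d-1)$-almost split sequence in $\C(S^H)$ ending at $N(W_j)$ is $0\to N(\bigwedge^d\Res U\otimes_l W_j)\to\cdots\to N(\Res U\otimes_l W_j)\to N(W_j)\to 0$, where $\Res U=\mathfrak{n}/\mathfrak{n}^2$ viewed as an $lH$-module. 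By uniqueness of $(d-1)$-almost split sequences, $W_{j,p}\cong\bigwedge^{p+1}\Res U\otimes_l W_j$ for all $0\le p\le d-1$. Since restriction commutes with $\otimes_l$, $\bigwedge_l$ and finite direct sums, $\Res V_p\cong\bigwedge^{p+1}\Res U\otimes_l\Res V\cong\bigwedge^{p+1}\Res U\otimes_l(W_1\oplus\cdots\oplus W_t)^{\oplus a}\cong(W_{1,p}\oplus\cdots\oplus W_{t,p})^{\oplus a}$, as required. One should also note here that, $V$ being non-trivial, the orbit $\{W_1,\dots,W_t\}$ corresponding to $V$ is not the singleton orbit of the trivial $lH$-module (which corresponds to the trivial $l*G$-module), so every $W_j$ is non-trivial and the $H$-side sequences really are $(d-1)$-almost split, matching the hypothesis.

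Finally, since $V$ is not the trivial $l*G$-module, $M(V)=(S\otimes_l V)^G$ is an indecomposable object of $\C(R)$ with $M(V)\ne R$, hence $M(V)\notin\proj R$; so by Theorem \ref{ARMcKay}(2) together with Proposition \ref{ARforCT}(1), the $(d-1)$-almost split sequence in $\C(R)$ ending at $M(V)$ is $0\to M(\bigwedge^d U\otimes_l V)\to\cdots\to M(U\otimes_l V)\to M(V)\to 0$, which is $0\to M(V_{d-1})\to\cdots\to M(V_0)\to M(V)\to 0$ by the identification $V_p\cong\bigwedge^{p+1}U\otimes_l V$ just established (recall also $\nu(M(V))=M(\bigwedge^d U\otimes_l V)=M(V_{d-1})$ from Theorem \ref{ARMcKay}). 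The assertion for $(d-1)$-fundamental sequences follows by the identical argument applied to $V$ the trivial $l*G$-module: then $M(V)=R$, $t=1$, $a=1$, $W_1$ is the trivial $lH$-module, and one invokes Proposition \ref{ARforCT}(2) and the $(d-1)$-fundamental sequence of $S^H$ in $\C(S^H)$ in place of their almost-split counterparts.

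I do not expect a serious obstacle: the theorem is essentially a repackaging of Theorem \ref{ARMcKay} via the decomposition law of Theorem \ref{tab}. The only points demanding care are the uniqueness of $V_p$ (which genuinely rests on semisimplicity of $l*G$ and the orbit-disjointness in Theorem \ref{tab}) and the bookkeeping of indices and of the trivial/non-trivial dichotomy across the three $(d-1)$-almost split sequences involved.
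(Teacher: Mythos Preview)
Your proposal is correct and follows essentially the same route as the paper. The paper's argument (given in the paragraph immediately preceding the theorem) works directly with the Koszul minimal projective resolution of $V$ over $S*G$ from Proposition~\ref{minproj}, restricts it to $S*H$, and observes that this is the direct sum of the minimal projective resolutions of the $W_j$'s; you instead invoke Theorem~\ref{ARMcKay} twice (for $G$ and for $H$) and match terms via $\Res(\bigwedge^{p+1}U\otimes_l V)\cong\bigwedge^{p+1}\Res U\otimes_l\Res V$, which is the same computation one level up. Your explicit treatment of the uniqueness of $V_p$ via semisimplicity and the orbit-disjointness of Theorem~\ref{tab} is a welcome addition that the paper leaves implicit (there it would follow from Proposition~\ref{indres}, since $\Ind\Res\cong(-)^{\oplus[l:k]}$ forces $\Res$ to reflect isomorphisms by Krull--Schmidt).
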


\subsection{Divisor class groups}

In this subsection we explain how to determine the divisor class group $\Cl(R)$. Here, we view $\Cl(R)$ as the set of isomorphism classes of reflexive $R$-modules whose ranks are one.

\begin{Lem}\label{refaddS}
\begin{enumerate}
\item For $N\in\mod S$, $N^*=\Hom_R(N,R)$ is reflexive as an $S$-module.
\item Every reflexive $R$-module of rank one is a direct summand of $S\in\mod R$.
\end{enumerate}
\end{Lem}
\begin{proof}
(1) Take a projective presentation $P_1\to P_0\to N\to 0$ in $\mod S$. Then we have an exact sequence $0\to N^*\to P_0^*\to P_1^*$. Here $P_i^*\in\proj S$ holds by $\End_R(S)\cong S*G$ (see Theorem \ref{Auslanderalgebra}). Thus the assertion follows.

(2) Take a reflexive module $M\in\mod R$ with $\rank_RM=1$. By the existence of the Reynolds operator, $M$ is a direct summand of $M\otimes_RS$ as an $R$-module. Thus $M\cong M^{**}$ is a direct summand of $(M\otimes_RS)^{**}$ as an $R$-module where $(-)^*:=\Hom_R(-,R)$. By (1), $(M\otimes_RS)^{**}$ is reflexive as an $S$-module. Since $\rank_S(M\otimes_RS)^{**}=1$, we can conclude $(M\otimes_RS)^{**}\in\Cl(S)=\{[S]\}$.
\end{proof}

By this lemma, we can view $\Cl(R)$ as a subset of $\ind\C(R)\cong\simp l*G$. Next, we see how to calculate the product in $\Cl(R)$ by using the functor $M\colon\mod l*G\to\C(R)$.

\begin{Prop}\label{tenhomcor}
Take $V,V'\in\mod l*G$.
\begin{enumerate}
\item We have $\Hom_R(M(V),M(V'))\cong M(\Hom_l(V,V'))$ (see Definition \ref{tenhomdef}).
\item We have $(M(V)\otimes_RM(V'))^{**}\cong M(V\otimes_lV')$ where $X^*:=\Hom_R(X,R)\in\mod R$ for $X\in\mod R$.
\end{enumerate}
\end{Prop}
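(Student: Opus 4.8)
The plan is to transport both statements across the equivalence $\Phi=\Hom_R(S,-)\colon\C(R)\to\proj S*G$ and its quasi-inverse $\Psi=(-)^G$ (Proposition \ref{projskew}), after which everything becomes a computation inside $\mod S*G$. The single fact that makes this work is the following:
\[
\text{for every }V\in\mod l*G,\qquad S\otimes_lV\in\proj S*G\quad\text{and}\quad\Phi(M(V))\cong S\otimes_lV
\]
as left $S*G$-modules. For the first assertion, $l*G$ is semisimple (Proposition \ref{gldim}), so $V$ is a direct summand of $(l*G)^{n}$ for some $n$; applying the additive functor $S\otimes_l-$ and using that $S\otimes_l(l*G)\cong S*G$ as left $S*G$-modules (via $s\otimes\xi\mapsto s\xi$, where $S\otimes_l(l*G)$ carries the diagonal structure and $l*G$ is the regular representation), we realize $S\otimes_lV$ as a summand of $(S*G)^{n}$. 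For the second assertion, $M(V)=\Psi(S\otimes_lV)$ by definition, so $\Phi(M(V))=\Phi\Psi(S\otimes_lV)\cong S\otimes_lV$.

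For (1): full faithfulness of $\Phi$ gives $\Hom_R(M(V),M(V'))\cong\Hom_{S*G}(S\otimes_lV,\,S\otimes_lV')$. Next I would produce a natural isomorphism of left $S*G$-modules $\Hom_S(S\otimes_lV,\,S\otimes_lV')\cong S\otimes_l\Hom_l(V,V')$: the underlying $S$-module isomorphism is the standard one (valid since $V,V'$ are finite dimensional over $l$), sending $s\otimes\varphi$ to the map $s'\otimes v\mapsto s's\otimes\varphi(v)$, and one checks directly from the action formulas in Definition \ref{tenhomdef} that it is $G$-equivariant. Taking $G$-invariants on both sides, and using $\Hom_{S*G}(A,B)=\Hom_S(A,B)^{G}$ (exactly as in the proof of the tensor--hom adjunction in Section \ref{ARquivers}), we get $\Hom_{S*G}(S\otimes_lV,\,S\otimes_lV')\cong(S\otimes_l\Hom_l(V,V'))^{G}=M(\Hom_l(V,V'))$, which is (1).

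For (2): note that $R=M(l)$ for the trivial representation, so $X^{*}=\Hom_R(X,M(l))$, and hence by (1) applied with $V'=l$ we have $M(W)^{*}\cong M(\Hom_l(W,l))$ for all $W\in\mod l*G$. Combining the tensor--hom adjunction over the commutative ring $R$ with (1) and this formula,
\begin{align*}
(M(V)\otimes_RM(V'))^{*}
&=\Hom_R\bigl(M(V)\otimes_RM(V'),\,M(l)\bigr)
\cong\Hom_R\bigl(M(V),\,\Hom_R(M(V'),M(l))\bigr)\\
&\cong\Hom_R\bigl(M(V),\,M(\Hom_l(V',l))\bigr)
\cong M\bigl(\Hom_l(V,\Hom_l(V',l))\bigr)
\cong M\bigl(\Hom_l(V\otimes_lV',\,l)\bigr),
\end{align*}
where the last isomorphism is the standard one of $l*G$-modules. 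Dualizing once more, using $M(W)^{*}\cong M(\Hom_l(W,l))$ again together with $\Hom_l(\Hom_l(V\otimes_lV',l),l)\cong V\otimes_lV'$ (finite dimension over $l$), we obtain $(M(V)\otimes_RM(V'))^{**}\cong M(V\otimes_lV')$, as claimed.

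I expect the only genuinely non-formal input to be the compatibility of the closed symmetric monoidal structure of Definition \ref{tenhomdef} with the constructions above: concretely, verifying that $S\otimes_lV$ is honestly projective over $S*G$ (not merely reflexive), and that the standard $S$-linear isomorphism $\Hom_S(S\otimes_lV,\,S\otimes_lV')\cong S\otimes_l\Hom_l(V,V')$ respects the twisted $G$-actions. Everything else --- the equivalences $\Phi,\Psi$, the two tensor--hom adjunctions, and the double-dual identities --- is bookkeeping, but these two identifications are where the twisting must be handled with care.
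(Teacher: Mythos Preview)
Your argument is correct and follows essentially the same route as the paper: part (1) is proved via the chain $\Hom_R(M(V),M(V'))\cong\Hom_{S*G}(S\otimes_lV,S\otimes_lV')\cong\Hom_S(S\otimes_lV,S\otimes_lV')^G\cong(S\otimes_l\Hom_l(V,V'))^G$, and part (2) is obtained from (1) by the same adjunction-and-dualize chain you wrote out. Your write-up is in fact more explicit than the paper's terse proof, in particular your verification that $S\otimes_l(l*G)\cong S*G$ and the remark about checking $G$-equivariance of $\Hom_S(S\otimes_lV,S\otimes_lV')\cong S\otimes_l\Hom_l(V,V')$ are exactly the points where care is needed.
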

\begin{proof}
(1) This follows from
\begin{equation*}
\begin{split}
\Hom_R((S\otimes_lV)^G,(S\otimes_lV')^G)&\cong\Hom_{S*G}(S\otimes_lV,S\otimes_lV')\\
\cong\Hom_S(S\otimes_lV,S\otimes_lV')^G&\cong(S\otimes_l\Hom_l(V,V'))^G.
\end{split}
\end{equation*}

(2) This follows from
\begin{equation*}
\begin{split}
(M(V)\otimes_RM(V'))^{**}&\cong\Hom_R(M(V),\Hom_R(M(V'),R))^*\cong\Hom_R(M(V),M(\Hom_l(V',l)))^*\\
&\cong(M(\Hom_l(V,\Hom_l(V',l))))^*\cong\Hom_R(M(\Hom_l(V\otimes_lV',l)),R)\\
&\cong M(\Hom_l(\Hom_l(V\otimes_lV',l),l))\cong M(V\otimes_lV').\qedhere
\end{split}
\end{equation*}
\end{proof}

As a consequence, we obtain the following theorem.

\begin{Thm}\label{divgrp}
Under the setting $(Q1),(Q2)$ and $(Q3)$, we assume the condition $(Q4)$.
\begin{enumerate}
\item We have a group isomorphism $\Cl(S^H)\cong\Hom_\mathbb{Z}(H^{\rm ab},l^\times)$ where $H^{\rm ab}$ is the Abelianization of $H$.
\item There exists an injective group homomorphism $\Cl(R)\to\Cl(S^H)$ induced by $\Res$. 
\end{enumerate}
\end{Thm}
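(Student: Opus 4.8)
The plan is to translate $\Cl(R)$ into representation-theoretic terms, deduce (1) by specializing this translation to the group $H$, and then obtain (2) from the monoidality of the restriction functor together with the orbit description in Theorem \ref{tab}. First I would reformulate $\Cl(R)$: by Lemma \ref{refaddS}(2) every reflexive $R$-module of rank one is a direct summand of $S$, hence lies in $\C(R)$, so the bijection $\ind\C(R)\cong\simp l*G$ together with the fact (Remark \ref{rank}) that $\rank_R M(V)=\dim_l V$ gives a bijection
\[\Cl(R)\ \cong\ \{V\in\simp l*G\mid\dim_l V=1\},\qquad M(V)\leftrightarrow V.\]
The multiplication on $\Cl(R)$ is $[M]\cdot[N]=[(M\otimes_R N)^{**}]$, and Proposition \ref{tenhomcor}(2) identifies this with $[M(V\otimes_l V')]$. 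Since a one-dimensional module is automatically simple, $\otimes_l$ preserves the set $\{V\in\simp l*G\mid\dim_l V=1\}$ (with unit the trivial representation $l$, corresponding to $M(l)=R$, and inverse $V\mapsto\Hom_l(V,l)$), so the displayed bijection is an isomorphism of groups, the right-hand side carrying the group law $\otimes_l$.

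For (1), I would apply this verbatim to the datum consisting of the trivial extension $l/l$, the small subgroup $H\subseteq GL_d(l)=GL_d(l)\rtimes\Gal(l/l)$ and $S$, which satisfies $(Q1)$–$(Q4)$ with $l*H=lH$. This yields $\Cl(S^H)\cong\{W\in\simp lH\mid\dim_l W=1\}$ as groups under $\otimes_l$. A one-dimensional $lH$-module is precisely a group homomorphism $H\to GL_1(l)=l^\times$, two such being isomorphic exactly when they coincide, and $\otimes_l$ corresponds to the pointwise product of characters; since $l^\times$ is abelian this group is $\Hom_\mathbb{Z}(H^{\mathrm{ab}},l^\times)$.

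For (2), note that $\Res\colon\mod l*G\to\mod lH$ is monoidal: $\Res(V\otimes_l V')$ and $\Res V\otimes_l\Res V'$ are both $V\otimes_l V'$ with the restricted $H$-action. Hence for $V\in\simp l*G$ with $\dim_l V=1$ the module $\Res V$ is one-dimensional, hence a simple $lH$-module, and $V\mapsto\Res V$ is a group homomorphism $\{V\in\simp l*G\mid\dim_l V=1\}\to\{W\in\simp lH\mid\dim_l W=1\}$; under the identifications above this is the asserted homomorphism $\Cl(R)\to\Cl(S^H)$ (concretely $[M(V)]\mapsto[N(\Res V)]$, since $(S\otimes_l V)^H=N(\Res V)$ depends only on $\Res V$). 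Injectivity follows from Theorem \ref{tab}: writing $V$ for the simple module attached to the $\Gal(l/k)$-orbit $\{W_{i1},\dots,W_{it_i}\}\subseteq\simp lH$, we have $\Res V\cong(W_{i1}\oplus\cdots\oplus W_{it_i})^{\oplus a_i}$, and $\dim_l V=1$ forces $a_i=t_i=1$ with $\dim_l W_{i1}=1$, so the orbit is a single $\Gal(l/k)$-fixed point $W_{i1}=\Res V$; thus $\Res V$ recovers the orbit, and hence $V$, by Theorem \ref{tab}(1).

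The step I expect to carry the most weight is transporting the group structure: the multiplication on $\Cl(R)$ involves reflexive hulls of tensor products and is a priori opaque, and what makes the whole argument work is Proposition \ref{tenhomcor}(2), which converts $(-\otimes_R-)^{**}$ into the elementary operation $\otimes_l$ on $l*G$-modules. Once that is granted, the remaining ingredients — applicability of $(Q1)$–$(Q4)$ to the pair $(l,H)$, monoidality of $\Res$, and the dimension count collapsing the relevant $\Gal(l/k)$-orbits to fixed points — are routine.
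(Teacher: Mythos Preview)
Your proof is correct and follows essentially the same approach as the paper: identify $\Cl(R)$ with the group of one-dimensional $l*G$-modules via Lemma \ref{refaddS}, Remark \ref{rank}, and Proposition \ref{tenhomcor}(2), deduce (1) by specializing to $H$, and obtain (2) from the monoidality of $\Res$. Your explicit justification of injectivity via the orbit collapse in Theorem \ref{tab} is precisely the observation the paper records immediately after its proof.
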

\begin{proof}
By Lemma \ref{refaddS} and Remark \ref{rank}, we have a bijection
\[\{V\in\simp l*G\mid\dim_lV=1\}\to\Cl(R);\ V\mapsto[M(V)].\]
For $V, V'\in\simp l*G$ with $\dim_lV=\dim_lV'=1$, we know $[M(V)]\cdot[M(V')]=[M(V\otimes_lV')]\in\Cl(R)$ by Proposition \ref{tenhomcor}.

(1) This follows by the fact that one-dimensional representations in $\simp lG$ corresponds bijectively to $\Hom_\mathbb{Z}(G^{\rm ab},l^\times)$. Observe that this correspondence preserves products.

(2) We  can regard $\Cl(R)$ as a subset of $\Cl(S^H)$ through
\[\{V\in\simp l*G\mid\dim_lV=1\}\to\{W\in\simp lH\mid\dim_lW=1\};\ V\mapsto\Res V.\]
Moreover, this turns out to be a subgroup since $\Res(V\otimes_lV')=(\Res V)\otimes_l(\Res V')$ holds. 
\end{proof}

Let $V\in\simp l*G$ and $\Res V\cong (W_1\oplus\cdots\oplus W_s)^{\oplus a}$ be the decomposition to simple modules in $\mod lH$. Then it is obvious that $\dim_lV=1$ is equivalent to that $s=1$, $a=1$ and $\dim_lW_1=1$ hold. Thus we can determine easily which elements of $\Cl(S^H)$ belong to $\Cl(R)$.

\section{Examples}\label{Examples}

In this section, we explain the method for drawing Auslander-Reiten quivers using Theorems \ref{tab} and \ref{ARseqsum} through several explicit examples. The calculations are divided into the following three steps.

Step(I) Determine the set $\simp lH$ and the action of $\Gal(l/k)$ on $\simp lH$.

Step(I\hspace{-1.2pt}I) Determine $a$'s and $b$'s.

Step(I\hspace{-1.2pt}I\hspace{-1.2pt}I) Determine $(d-1)$-almost split sequences and $(d-1)$-fundamental sequences.

As seen later, it is important to determine the image of $\Res\colon\mod l*G\to\mod lH$ in the Step(I\hspace{-1.2pt}I). We exhibit explicitly a necessary and sufficient condition for this.

\begin{Prop}\label{existV}
For a given $W\in\mod lH$ with $m=\dim_lW$, we denote by $\rho\colon H\to GL_m(l)$ the corresponding group homomorphism. Then the following conditions are equivalent.
\begin{enumerate}
\item There exists $V\in\mod l*G$ such that $W=\Res V$ holds.
\item There exists a group homomorphism $G\to GL_m(l)\rtimes\Gal(l/k)$ making the following diagram commute.
\[\xymatrix{
H \ar@{^{(}->}[r] \ar[d]_\rho & G \ar@{.>}[d] \ar[rd]\\
GL_m(l) \ar@{^{(}->}[r] & GL_m(l)\rtimes\Gal(l/k) \ar[r] & \Gal(l/k)
}\]
\end{enumerate}
\end{Prop}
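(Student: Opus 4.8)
The plan is to reduce the statement to the standard dictionary, recorded in Proposition \ref{extend}, between module structures on a free module $l^m$ and group cocycles. Since $l$ is a field we may assume that $W$, and any candidate $V$, has underlying $l$-module $l^m$, so it suffices to classify the $l*G$-module structures on $l^m$ that extend the tautological $l$-action and restrict along $lH\hookrightarrow l*G$ to the given one $\rho\colon H\to GL_m(l)$. Recall that $H=G\cap GL_d(l)=\Ker(G\to\Gal(l/k))$ acts trivially on the residue field $l$, so that $lH$ is the ordinary group algebra and, by Proposition \ref{extend}, $lH$-structures on $l^m$ are the same as homomorphisms $\rho\colon H\to GL_m(l)$, while $l*G$-structures on $l^m$ are the same as homomorphisms $\widetilde\phi\colon G\to GL_m(l)\rtimes G$ lying over $\mathrm{id}_G$.

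The crux is the identification: a homomorphism $\widetilde\phi\colon G\to GL_m(l)\rtimes G$ over $G$ is equivalent to the datum of a homomorphism $\phi\colon G\to GL_m(l)\rtimes\Gal(l/k)$ over $\Gal(l/k)$. Indeed, writing $\widetilde\phi(g)=(c(g),g)$, being a homomorphism is equivalent to the $1$-cocycle identity $c(gg')=c(g)\cdot{}^{g}c(g')$; since the $G$-action on $GL_m(l)$ factors through the projection $\pi\colon G\to\Gal(l/k)$, the formula $g\mapsto(c(g),\pi(g))$ defines a homomorphism $\phi$ into $GL_m(l)\rtimes\Gal(l/k)$ over $\Gal(l/k)$, and $\widetilde\phi\leftrightarrow\phi$ is visibly a bijection. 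Next I would trace $\Res$ through this dictionary: if $V$ is the $l*G$-module on $l^m$ attached to $\widetilde\phi$, then $h\in H$ acts on $\Res V$ as $c(h)$ followed by the trivial coordinatewise action of $h$, i.e. simply as $c(h)$, and $c|_H\colon H\to GL_m(l)$ is an honest homomorphism because the cocycle identity degenerates (as $h$ acts trivially on $l$) to $c(hh')=c(h)c(h')$. Thus $\Res V$ is the $lH$-module attached to $c|_H$, and the condition $\phi|_H=\iota\circ\rho$ (with $\iota\colon GL_m(l)\hookrightarrow GL_m(l)\rtimes\Gal(l/k)$ the inclusion) is precisely $c|_H=\rho$, i.e. $\Res V=W$.

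These observations give both implications simultaneously. Given $\phi$ as in (2), commutativity of the lower triangle says $\phi$ lies over $\Gal(l/k)$, so it corresponds to a $\widetilde\phi$ and hence, via Proposition \ref{extend}, to an $l*G$-module $V$ on $l^m$; commutativity of the square gives $c|_H=\rho$, so $\Res V=W$, proving (1). Conversely, given $V$ with $\Res V=W$, its underlying $l$-module is $l^m$, so it is attached to some $\widetilde\phi$ with cocycle $c$, and the $H$-action on $\Res V=W$ forces $c|_H=\rho$; the associated $\phi\colon G\to GL_m(l)\rtimes\Gal(l/k)$ then makes the diagram commute, proving (2). I do not expect a serious obstacle: the only delicate points are keeping the semidirect-product multiplication conventions consistent — so that the homomorphism conditions really translate into the cocycle identity with the correct twist — and checking that restriction of modules corresponds to restriction of the cocycle along $H\hookrightarrow G$, which rests on $H$ acting trivially on $l$.
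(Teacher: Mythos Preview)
Your proof is correct and takes essentially the same approach as the paper, which simply says ``It follows from Proposition \ref{extend} easily.'' You have spelled out precisely the intended details: the passage from homomorphisms $G\to GL_m(l)\rtimes G$ over $G$ to homomorphisms $G\to GL_m(l)\rtimes\Gal(l/k)$ over $\Gal(l/k)$ (using that the $G$-action on $l$ factors through $\Gal(l/k)$), and the identification of $\Res V$ with the restriction $c|_H$ of the cocycle.
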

\begin{proof}
It follows from Proposition \ref{extend} easily.
\end{proof}

\subsection{Two-dimensional examples}

First, we see two-dimensional examples where $H=G\cap GL_2(l)$ is cyclic.

\begin{Ex}($d=2$, Gorenstein, $\widetilde{CL}$)\label{typeCL}
Let $n$ be a positive integer and $l/k$ a field extension with $\ch k$ not dividing $2(2n+1)$ and $[l:k]=2$. We assume $\zeta:=\zeta_{2n+1}\in l$. For example, $\mathbb{C}/\mathbb{R}$ and $\mathbb{Q}(\zeta_{2n+1})/\mathbb{Q}(\cos\frac{2\pi}{2n+1})$ satisfy these conditions. We denonte $\Gal(l/k)=\{{\rm id},\sigma\}$. Let
\[\alpha:=\Big(\Big[\begin{smallmatrix}\zeta&0\\0&\zeta^{-1}\end{smallmatrix}\Big], {\rm id}\Big),\ \beta:=\Big(\Big[\begin{smallmatrix}0&1\\1&0\end{smallmatrix}\Big],\sigma\Big)\in GL_2(l)\rtimes\Gal(l/k)\]
and $G:=\langle\alpha,\beta\rangle\subseteq GL_2(l)\rtimes\Gal(l/k)$. Then we can write $\ind(\C(R))$ as $\{M_0=R, M_{\pm1}, \cdots, M_{\pm n}\}$ and draw $\AR(\C(R))$, which is of type $\widetilde{CL}_n$,  as follows. In addition, $\Cl(R)=1$ holds.
\[\xymatrix{
M_0 \ar@<0.5ex>[r]^-{(2,1)} & M_{\pm1} \ar@<0.5ex>[l]^-{(1,2)} \ar@<0.5ex>[r] & \cdots \ar@<0.5ex>[l] \ar@<0.5ex>[r] & M_{\pm n} \ar@<0.5ex>[l] \ar@(ur,dr)
}\]
Compare with $\AR(\C(S^H))$.
\[\xymatrix@R=1mm{
& N_1 \ar@<0.5ex>[dl] \ar@<0.5ex>[r] & \cdots \ar@<0.5ex>[l] \ar@<0.5ex>[r] & N_n \ar@<0.5ex>[l] \ar@<0.5ex>[dd] \\
N_0 \ar@<0.5ex>[ur] \ar@<0.5ex>[dr] & & & \\
& N_{-1} \ar@<0.5ex>[ul] \ar@<0.5ex>[r] & \cdots \ar@<0.5ex>[l] \ar@<0.5ex>[r] & N_{-n} \ar@<0.5ex>[l] \ar@<0.5ex>[uu]
}\]
\end{Ex}
\begin{proof}
Step(I) Put $\sigma(\zeta)=\zeta^m$. By easy calculations, we can check $G=\langle\alpha,\beta\mid\alpha^{2n+1}=\beta^2=1,\ \beta\alpha=\alpha^{-m}\beta\rangle$. Thus $H:=G\cap GL_2(l)=\langle\alpha\rangle\cong C_{2n+1}$ holds and so $H\subseteq SL_2(l)$. Let $\simp lH=\{W_j\}_{j\in\mathbb{Z}/(2n+1)\mathbb{Z}}$. Here, $W_j=l$ as an $l$-linear space and $\alpha\cdot-=\zeta^j\colon W_j\to W_j$. Then the map $W_{-j}\to\beta\otimes W_j;\lambda\mapsto\beta\otimes\sigma(\lambda)$ is an $lH$-isomorphism, so $\beta\otimes W_j=W_{-j}$ holds. Since $G=H\sqcup\beta H$, the orbit decomposition of $\simp lH$ with respect to the $G$-action is given by
\[\simp lH=\{W_0\}\sqcup\bigsqcup_{j=1}^s\{W_j, W_{-j}\}.\]
Thus by Theorem \ref{tab}, $\simp l*G$ can be written as $\{V_0, V_{\pm1}, \cdots, V_{\pm n}\}$, where $V_0$ and $V_{\pm j}$ correspond to $\{W_0\}$ and $\{W_j, W_{-j}\}$ respectively.

Step(I\hspace{-1.2pt}I) More explicitly, there exist $a_0, a_{\pm j}, b_0, b_{\pm j}\in\mathbb{N}\ (1\leq i\leq n)$ satisfying
\begin{gather}
a_0b_0=2a_{\pm j}b_{\pm j}=[l:k]=2\label{CLab}\\
\Res V_0=W_0^{\oplus a_0},\ \Res V_{\pm j}=(W_j\oplus W_{-j})^{\oplus a_{\pm j}}\notag\\
\Ind W_0=V_0^{\oplus b_0},\ \Ind W_j=\Ind W_{-j}=V_{\pm j}^{\oplus b_{\pm j}}.\notag
\end{gather}
We claim
\[a_0=a_{\pm j}=1.\]
In fact, by \eqref{CLab}, we have $a_{\pm j}=b_{\pm j}=1$. The trivial representation $l\in\simp l*G$ satisfies $\Res l=W_0$, so we have $V_0=l$ and hense $a_0=1, b_0=2$. From this, we can check $\Cl(R)\cong\{V_0\}$ easily.

Step(I\hspace{-1.2pt}I\hspace{-1.2pt}I) Remark that the minimal projective resolution of $W_j\in \mod S*H$ is given by
\[0\longrightarrow S\otimes_lW_j\longrightarrow S\otimes_l(W_{j-1}\oplus W_{j+1})\longrightarrow S\otimes_lW_j\longrightarrow W_j\longrightarrow 0.\]
By Theorem \ref{ARseqsum}, almost split sequences and fundamental sequences in $\C(R)$ have the following forms, where we put $M_0:=M(V_0)$ and $M_{\pm j}:=M(V_{\pm j})$.
\[\xymatrix@R=1mm@!C=18mm{
0 \ar[r] & M_0 \ar[r] & M_{\pm 1} \ar[r] & M_0\\
0 \ar[r] & M_{\pm 1} \ar[r] & M_0^{\oplus 2}\oplus M_{\pm 2} \ar[r] & M_{\pm 1} \ar[r] & 0\\
0 \ar[r] & M_{\pm i} \ar[r] & M_{\pm(i-1)}\oplus M_{\pm(i+1)} \ar[r] & M_{\pm i} \ar[r] & 0\ (1<i<n)\\
0 \ar[r] & M_{\pm n} \ar[r] & M_{\pm(n-1)}\oplus M_{\pm n} \ar[r] & M_{\pm n} \ar[r] & 0
}\]
Using these sequences, we can draw $\AR(\C(R))$.
\end{proof}

We see more examples where $H$ is cyclic.

\begin{Ex}($d=2$, Gorenstein, $\widetilde{C}$)\label{typeC}
Let $n\geq2$ be an integer and $l/k$ a field extension with $\ch k$ not dividing $2n$ and $[l:k]=2$. We assume $\zeta:=\zeta_{2n}\in l$. For example, $\mathbb{C}/\mathbb{R}$ and $\mathbb{Q}(\zeta_{2n})/\mathbb{Q}(\cos\frac{\pi}{n})$ satisfy these conditions. We denonte $\Gal(l/k)=\{{\rm id},\sigma\}$. Let
\[\alpha:=\Big(\Big[\begin{smallmatrix}\zeta&0\\0&\zeta^{-1}\end{smallmatrix}\Big], {\rm id}\Big),\ \beta:=\Big(\Big[\begin{smallmatrix}0&1\\1&0\end{smallmatrix}\Big],\sigma\Big)\in GL_2(l)\rtimes\Gal(l/k)\]
and $G:=\langle\alpha,\beta\rangle\subseteq GL_2(l)\rtimes\Gal(l/k)$. Then we can write $\ind(\C(R))$ as $\{M_0=R, M_{\pm1}, \cdots, M_{\pm n}\}$ and draw $\AR(\C(R))$, which is of type $\widetilde{C}_n$,  as follows. In addition, $\Cl(R)=1$ holds.
\[\xymatrix{
M_0 \ar@<0.5ex>[r]^-{(2,1)} & M_{\pm1} \ar@<0.5ex>[l]^-{(1,2)} \ar@<0.5ex>[r] & \cdots \ar@<0.5ex>[l] \ar@<0.5ex>[r] & M_{\pm(n-1)} \ar@<0.5ex>[l] \ar@<0.5ex>[r]^-{(1,2)} & M_n \ar@<0.5ex>[l]^-{(2,1)}
}\]
Compare with $\AR(\C(S^H))$.
\[\xymatrix@R=1mm{
& N_1 \ar@<0.5ex>[dl] \ar@<0.5ex>[r] & \cdots \ar@<0.5ex>[l] \ar@<0.5ex>[r] & N_{n-1} \ar@<0.5ex>[l] \ar@<0.5ex>[dr] \\
N_0 \ar@<0.5ex>[ur] \ar@<0.5ex>[dr] & & & &N_n \ar@<0.5ex>[ul] \ar@<0.5ex>[dl] \\
& N_{-1} \ar@<0.5ex>[ul] \ar@<0.5ex>[r] & \cdots \ar@<0.5ex>[l] \ar@<0.5ex>[r] & N_{-n+1} \ar@<0.5ex>[l] \ar@<0.5ex>[ur]
}\]
\end{Ex}
\begin{proof}
Step(I) Put $\sigma(\zeta)=\zeta^m$. By easy calculations, we can check $G=\langle\alpha,\beta\mid\alpha^{2n}=\beta^2=1,\ \beta\alpha=\alpha^{-m}\beta\rangle$. Thus $H:=G\cap GL_2(l)=\langle\alpha\rangle\cong C_{2n}$ holds and so $H\subseteq SL_2(l)$. Let $\simp lH=\{W_j\}_{j\in\mathbb{Z}/2n\mathbb{Z}}$. Here, $W_j=l$ as an $l$-linear space and $\alpha\cdot-=\zeta^j\colon W_j\to W_j$. Then the map $W_{-j}\to\beta\otimes W_j;\lambda\mapsto\beta\otimes\sigma(\lambda)$ is an $lH$-isomorphism, so $\beta\otimes W_j=W_{-j}$ holds. Since $G=H\sqcup\beta H$, the orbit decomposition of $\simp lH$ with respect to the $G$-action is given by
\[\simp lH=\{W_0\}\sqcup\bigsqcup_{j=1}^{n-1}\{W_j, W_{-j}\}\sqcup\{W_n\}.\]
Thus by Theorem \ref{tab}, $\simp l*G$ can be written as $\{V_0, V_{\pm1}, \cdots, V_{\pm(n-1)}, V_n\}$, where $V_0, V_{\pm j}$ and $V_n$ correspond to $\{W_0\}, \{W_j, W_{-j}\}$ and $\{W_n\}$ respectively.

Step(I\hspace{-1.2pt}I) More explicitly, there exist $a_0, a_{\pm j}, a_n, b_0, b_{\pm j}, b_n\in\mathbb{N}\ (1\leq j\leq n-1)$ satisfying
\begin{gather}
a_0b_0=2a_{\pm j}b_{\pm j}=a_nb_n=[l:k]=2\label{Cab}\\
\Res V_0=W_0^{\oplus a_0},\ \Res V_{\pm j}=(W_j\oplus W_{-j})^{\oplus a_{\pm j}},\ \Res V_n=W_n^{\oplus a_n}\notag\\
\Ind W_0=V_0^{\oplus b_0},\ \Ind W_j=\Ind W_{-j}=V_{\pm j}^{\oplus b_{\pm j}},\ \Ind W_n=V_n^{\oplus b_n}.\notag
\end{gather}
Let us calculate $a$'s. By \eqref{Cab}, we have $a_{\pm j}=b_{\pm j}=1$. As in the previous case, we have $V_0=l$, $a_0=1$ and $b_0=2$. Here, there exists $V\in\simp l*G$ such that $\Res V=W_n$ holds. In fact the $H$-action on $W_n$ is given by the group homomorphism $H\to l^\times;\alpha\mapsto -1$, and this can be extended to $G\to l^\times\rtimes\Gal(l/k);\alpha\mapsto(-1,{\rm id}),\ \beta\mapsto(1,\sigma)$ (see Proposition \ref{existV}). Thus we have $V_n=V$ and hence $a_n=1$, $b_n=2$. From this, we can check $\Cl(R)\cong\{V_0, V_n\}$ easily.

Step(I\hspace{-1.2pt}I\hspace{-1.2pt}I) Remark that the minimal projective resolution of $W_j\in\mod S*H$ is given by
\[0\longrightarrow S\otimes_lW_j\longrightarrow S\otimes_l(W_{j-1}\oplus W_{j+1})\longrightarrow S\otimes_lW_j\longrightarrow W_j\longrightarrow 0.\]
By Theorem \ref{ARseqsum}, almost split sequences and fundamental sequences in $\C(R)$ have the following forms, where we put $M_i:=M(V_i)\ (i=0,n)$ and $M_{\pm j}:=M(V_{\pm j})$.
\[\xymatrix@R=1mm@!C=18mm{
0 \ar[r] & M_0 \ar[r] & M_{\pm 1} \ar[r] & M_0\\
0 \ar[r] & M_{\pm 1} \ar[r] & M_0^{\oplus 2}\oplus M_{\pm 2} \ar[r] & M_{\pm 1} \ar[r] & 0\\
0 \ar[r] & M_{\pm i} \ar[r] & M_{\pm(i-1)}\oplus M_{\pm (i+1)} \ar[r] & M_{\pm i} \ar[r] & 0\ (1<i<n-1)\\
0 \ar[r] & M_{\pm(n-1)} \ar[r] & M_{\pm(n-2)}\oplus M_n^{\oplus 2} \ar[r] & M_{\pm(n-1)} \ar[r] & 0\\
0 \ar[r] & M_n \ar[r] & M_{\pm(n-1)} \ar[r] & M_n \ar[r] & 0
}\]
Using these sequences, we can draw $\AR(\C(R))$.
\end{proof}

\begin{Ex}($d=2$, Gorenstein, $\widetilde{A}_{11}$ and $\widetilde{BC}$)\label{typeBC}
Let $n$ be a positive integer and $l/k$ a field extension with $\ch k$ not dividing $2n$ and $[l:k]=2$. Write $2n=2^eN$ where $e$ is a positive integer and $N$ is an odd integer. We assume $\zeta_{2n}\in l$ and the image of the norm map $N_{l/k}\colon l\to k$ does not contain $-1\in k$. For example, $\mathbb{C}/\mathbb{R}$ and $\mathbb{Q}(\zeta_{2n})/\mathbb{Q}(\cos\frac{\pi}{n})$ satisfy these conditions. We denote $\Gal(l/k)=\{{\rm id},\sigma\}$. Let \[\alpha:=\Big(\Big[\begin{smallmatrix}\zeta_{2n}&0\\0&\zeta_{2n}^{-1}\end{smallmatrix}\Big], {\rm id}\Big),\ \beta:=\Big(\Big[\begin{smallmatrix}0&\zeta_{2^e}\\1&0\end{smallmatrix}\Big],\sigma\Big)\in GL_2(l)\rtimes\Gal(l/k)\]
and $G:=\langle\alpha,\beta\rangle\subseteq GL_2(l)\rtimes\Gal(l/k)$. Then we can write $\ind(\C(R))$ as $\{M_0=R, M_{\pm1}, \cdots, M_{\pm(n-1)}, M_n\}$ and draw $\AR(\C(R))$ as follows.

\[
\begin{array}{c|c|c|c}
\text{Case}&\AR(\C(R))&\text{Type}&\Cl(R)\\ \hline\hline
n=1&\xymatrix{M_0 \ar@<0.5ex>[r]^-{(4,1)} & M_1 \ar@<0.5ex>[l]^-{(1,4)}}&\widetilde{A}_{11}&1\\ \hline
n\geq2&\xymatrix{M_0 \ar@<0.5ex>[r]^-{(2,1)} & M_{\pm1} \ar@<0.5ex>[l]^-{(1,2)} \ar@<0.5ex>[r] & \cdots \ar@<0.5ex>[l] \ar@<0.5ex>[r] & M_{\pm(n-1)} \ar@<0.5ex>[l] \ar@<0.5ex>[r]^-{(2,1)} & M_n \ar@<0.5ex>[l]^-{(1,2)}}&\widetilde{BC}_n&1
\end{array}
\]
Compare with $\AR(\C(S^H))$.
\[\xymatrix@R=1mm{
& N_1 \ar@<0.5ex>[dl] \ar@<0.5ex>[r] & \cdots \ar@<0.5ex>[l] \ar@<0.5ex>[r] & N_{n-1} \ar@<0.5ex>[l] \ar@<0.5ex>[dr] \\
N_0 \ar@<0.5ex>[ur] \ar@<0.5ex>[dr] & & & &N_n \ar@<0.5ex>[ul] \ar@<0.5ex>[dl] \\
& N_{-1} \ar@<0.5ex>[ul] \ar@<0.5ex>[r] & \cdots \ar@<0.5ex>[l] \ar@<0.5ex>[r] & N_{-n+1} \ar@<0.5ex>[l] \ar@<0.5ex>[ur]
}\]
\begin{proof}
Step(I) By our assumption $-1\notin \Im N_{l/k}$, we can deduce $\sigma(\zeta_{2^e})=\zeta_{2^e}^{-1}$. Put $\sigma(\zeta_{2n})=\zeta_{2n}^m$. By easy calculations, we can check $G=\langle\alpha,\beta\mid\alpha^{2n}=1,\ \alpha^N=\beta^2,\ \beta\alpha=\alpha^{-m}\beta\rangle$. Thus $H:=G\cap GL_2(l)=\langle\alpha\rangle\cong C_{2N}$ holds and so $H\subseteq SL_2(l)$. Let $\simp lH=\{W_j\}_{j\in\mathbb{Z}/{2N}\mathbb{Z}}$. Here, $W_j=l$ as an $l$-linear space and $\alpha\cdot-=\zeta_{2N}^j\colon W_j\to W_j$. Then the map $W_{-j}\to\beta\otimes W_j;\lambda\mapsto\beta\otimes\sigma(\lambda)$ is an $lH$-isomorphism, so $\beta\otimes W_j=W_{-j}$ holds. Since $G=H\sqcup\beta H$, the orbit decomposition of $\simp lH$ with respect to the $G$-action is given by
\[\simp lH=\{W_0\}\sqcup\bigsqcup_{j=1}^{n-1}\{W_j, W_{-j}\}\sqcup\{W_n\}.\]
Thus by Theorem \ref{tab}, $\simp l*G$ can be written as $\{V_0, V_{\pm1}, \cdots, V_{\pm(n-1)}, V_n\}$, where $V_0, V_{\pm j}$ and $V_n$ correspond to $\{W_0\}, \{W_j, W_{-j}\}$ and $\{W_n\}$ respectively.

Step(I\hspace{-1.2pt}I) More explicitly, since $[l:k]=2$, we have
\[\Res V_0=W_0^{\oplus 1 {\rm or} 2},\ \Res V_{\pm j}=W_j\oplus W_{-j},\ \Res V_n=W_n^{\oplus 1 {\rm or} 2}.\]
As usual, $V_0$ is the trivial representation and $\Res V_0=W_0$ holds. In addition, we have $\Res V_n=W_n^{\oplus2}$. Otherwise, we have $\Res V_n=W_n$. Since the $H$-action on $W_n$ is given by $H\to l^\times;\alpha\mapsto-1$, Proposition \ref{existV} implies that there exists a group homomorphism $\rho:G\to l^\times\rtimes\Gal(l/k)$ such that $\rho(\alpha)=(-1,{\rm id})$ and $\rho(\beta)=(\lambda,\sigma)$ hold for some $\lambda\in l^\times$. Then we have $(-1,{\rm id})=\rho(\alpha^N)=\rho(\beta^2)=(N_{l/k}(\lambda),{\rm id})$, which contradicts to our assumption $-1\notin \Im N_{l/k}$. From this, we can check $\Cl(R)\cong\{V_0\}$ easily.

Step(I\hspace{-1.2pt}I\hspace{-1.2pt}I) Remark that the minimal projective resolution of $W_j\in\mod S*H$ is given by
\[0\longrightarrow S\otimes_lW_j\longrightarrow S\otimes_lW_{j+1}^{\oplus2}\longrightarrow S\otimes_lW_j\longrightarrow W_j\longrightarrow 0.\]
By Theorem \ref{ARseqsum}, almost split sequences and fundamental sequences in $\C(R)$ have the following forms, where we put $M_i:=M(V_i)\ (i=0,n)$ and $M_{\pm j}:=M(V_{\pm j})$.

(Case 1) $n=1$.
\[\xymatrix@R=1mm@!C=18mm{
0 \ar[r] & M_0 \ar[r] & M_1 \ar[r] & M_0\\
0 \ar[r] & M_1 \ar[r] & M_0^{\oplus4} \ar[r] & M_1 \ar[r] & 0
}\]

(Case 2) $n\geq2$.
\[\xymatrix@R=1mm@!C=18mm{
0 \ar[r] & M_0 \ar[r] & M_{\pm 1} \ar[r] & M_0\\
0 \ar[r] & M_{\pm 1} \ar[r] & M_0^{\oplus 2}\oplus M_{\pm 2} \ar[r] & M_{\pm 1} \ar[r] & 0\\
0 \ar[r] & M_{\pm i} \ar[r] & M_{\pm(i-1)}\oplus M_{\pm (i+1)} \ar[r] & M_{\pm i} \ar[r] & 0\ (1<i<n-1)\\
0 \ar[r] & M_{\pm(n-1)} \ar[r] & M_{\pm(n-2)}\oplus M_n \ar[r] & M_{\pm(n-1)} \ar[r] & 0\\
0 \ar[r] & M_n \ar[r] & M_{\pm(n-1)}^{\oplus 2} \ar[r] & M_n \ar[r] & 0
}\]
Using these sequences, we can draw $\AR(\C(R))$.
\end{proof}
\end{Ex}

Next we see an example whose Auslander-Reiten quiver has $(3,1)$ and $(1,3)$ as valuations.

\begin{Ex}($d=2$, Gorenstein, $\widetilde{G}_{22}$)\label{typeG22}
Let $l/k$ be a Galois extension with $[l:k]=3$ and $\ch k\neq2,3$. We assume $\zeta:=\zeta_{24}\in k$. For example, $\mathbb{Q}(\zeta_{72})/\mathbb{Q}(\zeta_{24})$ and $\mathbb{F}_{5^6}/\mathbb{F}_{5^2}$ satisfy these conditions. We let
\[\alpha:=\Big(\Big[\begin{smallmatrix} i&0\\0&-i\end{smallmatrix}\Big], {\rm id}\Big),\ \beta:=\Big(\Big[\begin{smallmatrix} 0&1\\-1&0\end{smallmatrix}\Big], {\rm id}\Big),\ \gamma:=\Big(\frac{1}{\sqrt{2}}\Big[\begin{smallmatrix} \zeta^7&\zeta^7\\\zeta^{13}&\zeta\end{smallmatrix}\Big], \sigma\Big)\in GL_2(l)\rtimes\Gal(l/k)\]
and $G:=\langle\alpha,\beta,\gamma\rangle\subseteq GL_2(l)\rtimes\Gal(l/k)$ where $\Gal(l/k)=\{{\rm id},\sigma,\sigma^2\}$. Then we can write $\ind(\C(R))$ as $\{M_0=R, M_1, M'\}$ and draw $\AR(\C(R))$, which is of type $\widetilde{G}_{22}$, as follows. In addition, $\Cl(R)=1$ holds.
\[\xymatrix{
M_0 \ar@<0.5ex>[r] & M' \ar@<0.5ex>[r]^{(3,1)} \ar@<0.5ex>[l] & M_1 \ar@<0.5ex>[l]^{(1,3)}
}\]
Compare with $\AR(\C(S^H))$.
\[\xymatrix@R=1mm{
& & N_{0,1} \ar@<0.5ex>[dl]\\
N_{0,0} \ar@<0.5ex>[r] & N' \ar@<0.5ex>[r] \ar@<0.5ex>[dr] \ar@<0.5ex>[ur] \ar@<0.5ex>[l] & N_{1,1} \ar@<0.5ex>[l]\\
& & N_{1,0} \ar@<0.5ex>[ul]
}\]
\begin{proof}
Step(I) Easy calculations show $G=\langle\alpha,\beta,\gamma\mid\alpha^4=1, \alpha^2=\beta^2, \gamma^3=1, \beta\alpha=\alpha\beta^3,\gamma\alpha=\beta\gamma,\gamma\beta=\alpha\beta\gamma\rangle$. Thus $H:=G\cap GL_2(l)=\langle\alpha,\beta\rangle$ holds and so $H\subseteq SL_2(l)$. Observe that this $H$ is binary dihedral. We can see that $H\to C_2\times C_2;\alpha\mapsto(1,0),\beta\mapsto(1,1)$ gives the Abelianization of $H$. Let $\{W_{j,k}\}_{j,k\in\mathbb{Z}/2\mathbb{Z}}\subseteq\simp lH$ be one-dimensional representations where $W_j=l$ as an $l$-linear space and $\alpha\cdot-=(-1)^j,\beta\cdot-=(-1)^{j+k}\colon W_{j,k}\to W_{j,k}$. We let $W'\in\mod lH$ be the two-dimensional representation determined by the inclusion $H\hookrightarrow GL_2(l)$. Then we can verify $\simp lH=\{W_{0,0},W_{0,1},W_{1,0},W_{1,1},W'\}$. Now we can see that $W_{k,j+k}\to\gamma\otimes W_{j,k};\lambda\mapsto\gamma\otimes\sigma^{-1}(\lambda)$ is an $lH$-isomorphism, so $\gamma\otimes W_{j,k}=W_{k,j+k}$ holds. In addition, $\gamma\otimes W'=W'$ holds since $W'$ is the unique two-dimensional irreducible representation. Since $G=H\sqcup\gamma H\sqcup\gamma^2 H$, the orbit decomposition of $\simp lH$ is given by
\[\simp lH=\{W_{0,0}\}\sqcup\{W_{0,1},W_{1,0},W_{1,1}\}\sqcup\{W'\}.\]
Thus by Theorem \ref{tab}, $\simp l*G$ can be written as $\{V_0, V_1, V'\}$, where $V_0, V_1$ and $V'$ correspond to $\{W_0\},\{W_{0,1},W_{1,0},W_{1,1}\}$ and $\{W'\}$ respectively.

Step(I\hspace{-1.2pt}I) More explicitly, since $[l:k]=3$, we have
\[\Res V_0=W_0^{\oplus 1 {\rm or} 3},\ \Res V_1=W_{0,1}\oplus W_{1,0}\oplus W_{1,1},\ \Res V'=W'^{\oplus 1 {\rm or} 3}.\]
As usual, $V_0$ is the trivial representation and $\Res V_0=W_0$ holds. Since $\Res\mathfrak{n}/\mathfrak{n}^2=W'$, $V'$ must be $\mathfrak{n}/\mathfrak{n}^2$ and $\Res V'=W'$ holds. From this, we can check $\Cl(R)\cong\{V_0\}$ easily.

Step(I\hspace{-1.2pt}I\hspace{-1.2pt}I) Remark that the minimal projective resolutions of $W_{j,k},W'\in\mod S*H$ are given as follows.
\[0\longrightarrow S\otimes_lW_{j,k}\longrightarrow S\otimes_lW'\longrightarrow S\otimes_lW_{j,k}\longrightarrow W_{j,k}\longrightarrow 0\]
\[0\longrightarrow S\otimes_lW'\longrightarrow S\otimes_l(W_{0,0}\oplus W_{0,1}\oplus W_{1,0}\oplus W_{1,1})\longrightarrow S\otimes_lW'\longrightarrow W'\longrightarrow 0\]
By Theorem \ref{ARseqsum}, almost split sequences and fundamental sequences in $\C(R)$ have the following forms, where we put $M_i:=M(V_i)\ (i=0,1)$ and $M':=M(V')$.
\[\xymatrix@R=1mm@!C=15mm{
0 \ar[r] & M_0 \ar[r] & M' \ar[r] & M_0\\
0 \ar[r] & M_1 \ar[r] & M'^{\oplus3} \ar[r] & M_1 \ar[r] & 0\\
0 \ar[r] & M' \ar[r] & M_0\oplus M_1 \ar[r] & M' \ar[r] & 0
}\]
Using these sequences, we can draw $\AR(\C(R))$.
\end{proof}
\end{Ex}

Now we see that the technique of tensor product representations is also useful to determine the decomposition laws. Observe that the ring in this example is not Gorenstein.

\begin{Ex}($d=2$, non-Gorenstein)\label{nongor}
We denote by $\sigma_i\ (i\in(\mathbb{Z}/8\mathbb{Z})^\times)$ the element of $\Gal(\mathbb{Q}(\zeta)/\mathbb{Q})\ (\zeta:=\zeta_8)$ determined by $\sigma_i(\zeta)=\zeta^i$. Let
\[\alpha:=\Big(\Big[\begin{smallmatrix} 0&\zeta\\1&0\end{smallmatrix}\Big], \sigma_3\Big),\ \beta:=\Big(\Big[\begin{smallmatrix} 0&\zeta^7\\\zeta&0\end{smallmatrix}\Big], \sigma_5\Big)\in GL_2(\mathbb{Q}(\zeta))\rtimes\Gal(\mathbb{Q(\zeta)}/\mathbb{Q})\]
and $G:=\langle\alpha,\beta\rangle\subseteq GL_2(\mathbb{Q}(\zeta))\rtimes\Gal(\mathbb{Q(\zeta)}/\mathbb{Q})$. Then we can write $\ind(\C(R))$ as $\{M_0=R, M_4=\omega, M_{1,3}, M_{2,6}, M_{5,7}\}$ and draw $\AR(\C(R))$ as follows. In addition, $\Cl(R)\cong C_2$ holds.
\[\xymatrix{
M_0 \ar[drr]^(0.37){(2,1)} & & M_4 \ar@<0.5ex>[dll]_(0.32){(2,1)} \ar@{.>}[ll]\\
M_{1,3} \ar[u]^-{(1,2)} \ar@<0.5ex>[dr] \ar@<0.5ex>@{.>}[rr] & & M_{5,7} \ar[u]_-{(1,2)} \ar@<0.5ex>[dl] \ar@<0.5ex>@{.>}[ll] \\
 & M_{2,6} \ar@<0.5ex>[ul] \ar@<0.5ex>[ur] \ar@(dl,dr)@{.>} & 
}\]
Compare with $\AR(\C(S^H))$.
\[
\begin{xy}
(-5.74,13.86)*+{N_0}="0",
(-13.86,5.74)*+{N_1}="1",
(-13.86,-5.74)*+{N_2}="2",
(-5.74,-13.86)*+{N_3}="3",
(5.74,-13.86)*+{N_4}="4",
(13.86,-5.74)*+{N_5}="5",
(13.86,5.74)*+{N_6}="6",
(5.74,13.86)*+{N_7}="7",
\ar "0";"7",
\ar "7";"6",
\ar "6";"5",
\ar "5";"4",
\ar "4";"3",
\ar "3";"2",
\ar "2";"1",
\ar "1";"0",

\ar "0";"5",
\ar "5";"2",
\ar "2";"7",
\ar "7";"4",
\ar "4";"1",
\ar "1";"6",
\ar "6";"3",
\ar "3";"0",

\ar@{.>} "4";"0",
\ar@<0.5ex>@{.>} "1";"5",
\ar@<0.5ex>@{.>} "5";"1",
\ar@<0.5ex>@{.>} "3";"7",
\ar@<0.5ex>@{.>} "7";"3",
\ar@<0.5ex>@{.>} "2";"6",
\ar@<0.5ex>@{.>} "6";"2",
\end{xy}
=
\begin{xy}
(-15,15)*+{N_0}="0",
(-30,8)*+{N_1}="1",
(0,-7)*+{N_2}="2",
(-30,0)*+{N_3}="3",
(15,15)*+{N_4}="4",
(30,8)*+{N_5}="5",
(0,-15)*+{N_6}="6",
(30,0)*+{N_7}="7",
\ar "0";"7",
\ar "7";"6",
\ar "6";"5",
\ar "5";"4",
\ar "4";"3",
\ar "3";"2",
\ar "2";"1",
\ar "1";"0",

\ar "0";"5",
\ar "5";"2",
\ar "2";"7",
\ar "7";"4",
\ar "4";"1",
\ar "1";"6",
\ar "6";"3",
\ar "3";"0",

\ar@{.>} "4";"0",
\ar@<0.5ex>@{.>} "1";"5",
\ar@<0.5ex>@{.>} "5";"1",
\ar@<0.5ex>@{.>} "3";"7",
\ar@<0.5ex>@{.>} "7";"3",
\ar@<0.5ex>@{.>} "2";"6",
\ar@<0.5ex>@{.>} "6";"2",
\end{xy}
\]

\begin{proof}
Step(I) Easy calculations such as $\alpha^2=\Big(\Big[\begin{smallmatrix} \zeta&0\\0&\zeta^3\end{smallmatrix}\Big], {\rm id}\Big)$ and $\beta^2=\Big(\Big[\begin{smallmatrix} -1&0\\0&-1\end{smallmatrix}\Big], {\rm id}\Big)$ show $G=\langle\alpha,\beta\mid\alpha^{16}=1, \alpha^8=\beta^2, \beta\alpha=\alpha^7\beta\rangle$. Thus $H:=G\cap GL_2(\mathbb{Q}(\zeta))=\langle\alpha^2\rangle\cong C_8$ holds and so $H\subseteq GL_2(\mathbb{Q}(\zeta))$ is small.

Let $\simp\mathbb{Q}(\zeta)H=\{W_j\}_{j\in\mathbb{Z}/8\mathbb{Z}}$. Here, $W_j=\mathbb{Q}(\zeta)$ as a $\mathbb{Q}(\zeta)$-linear space and $\alpha^2\cdot-=\zeta^j\colon W_j\to W_j$. By easy calculations as in Example \ref{typeCL}, we have $\alpha\otimes W_j=\beta\otimes W_j=W_{3j}$. Since $G=H\sqcup\alpha H\sqcup\beta H\sqcup\alpha\beta H$, the orbit decomposition of $\simp\mathbb{Q}(\zeta)H$ is given by
\[\simp\mathbb{Q}(\zeta)H=\{W_0\}\sqcup\{W_4\}\sqcup\{W_1, W_3\}\sqcup\{W_2, W_6\}\sqcup\{W_5, W_7\}.\]
Thus by Theorem \ref{tab}, $\simp\mathbb{Q}(\zeta)*G$ can be written as $\{V_0, V_4, V_{1,3}, V_{2,6}, V_{5,7}\}$, where $V_i$ and $V_{j,j'}$ correspond to $\{W_i\}$ and $\{W_j, W_{j'}\}$ respectively.

Step(I\hspace{-1.2pt}I) More explicitly, since $[\mathbb{Q}(\zeta):\mathbb{Q}]=4$, we have
\[\Res V_0=W_0^{\oplus 1 {\rm or} 2 {\rm or} 4},\ \Res V_4=W_4^{\oplus 1 {\rm or} 2 {\rm or} 4},\ \Res V_{1,3}=(W_1\oplus W_3)^{\oplus 1 {\rm or} 2},\]
\[\Res V_{2,6}=(W_2\oplus W_6)^{\oplus 1 {\rm or} 2},\ \Res V_{5,7}=(W_5\oplus W_7)^{\oplus 1 {\rm or} 2}.\]
As usual, $V_0$ is the trivial representation and $\Res V_0=W_0$ holds. Since $\Res\mathfrak{n}/\mathfrak{n}^2=W_1\oplus W_3$, $V_{1,3}$ must be $\mathfrak{n}/\mathfrak{n}^2$ and $\Res V_{1,3}=W_1\oplus W_3$ holds. Now let us calculate $\bigwedge^2 V_{1,3}$. Since
\[\Res\bigwedge^2V_{1,3}=\bigwedge^2\Res V_{1,3}=\bigwedge^2(W_1\oplus W_3)=W_4,\]
$V_4$ must be $\bigwedge^2V_{1,3}$ and $\Res V_4=W_4$ holds. Similarly, since
\[\Res(V_{1,3}\otimes V_{1,3})=(W_1\oplus W_3)\otimes(W_1\oplus W_3)=W_2\oplus W_6\oplus W_4^{\oplus 2},\]
$V_{1,3}\otimes V_{1,3}$ must be $V_{2,6}\oplus V_4^{\oplus 2}$ and $\Res V_{2,6}=W_2\oplus W_6$ holds. By the same argument, $V_{1,3}\otimes V_{2,6}=V_{1,3}\oplus V_{5,7}$ and $\Res V_{5,7}=W_5\oplus W_7$ hold. From this, we can check $\Cl(R)\cong\{V_0,V_4\}$ easily.

Step(I\hspace{-1.2pt}I\hspace{-1.2pt}I) Remark that the minimal projective resolution of $W_j\in\mod S*H$ is given by
\[0\longrightarrow S\otimes_{\mathbb{Q}(\zeta)}W_{j+4}\longrightarrow S\otimes_{\mathbb{Q}(\zeta)}(W_{j+1}\oplus W_{j+3})\longrightarrow S\otimes_{\mathbb{Q}(\zeta)}W_j\longrightarrow W_j\longrightarrow 0.\]
By Theorem \ref{ARseqsum}, almost split sequences and fundamental sequences in $\C(R)$ have the following forms, where we put $M_i:=M(V_i)\ (i=0,4)$ and $M_{j,j'}:=M(V_{j,j'})\ ((j,j')=(1,3),(2,6),(5,7))$.
\[\xymatrix@R=1mm@!C=15mm{
0 \ar[r] & M_4 \ar[r] & M_{1,3} \ar[r] & M_0\\
0 \ar[r] & M_0 \ar[r] & M_{5,7} \ar[r] & M_4 \ar[r] & 0\\
0 \ar[r] & M_{5,7} \ar[r] & M_4^{\oplus2}\oplus M_{2,6} \ar[r] & M_{1,3} \ar[r] & 0\\
0 \ar[r] & M_{2,6} \ar[r] & M_{1,3}\oplus M_{5,7} \ar[r] & M_{2,6} \ar[r] & 0\\
0 \ar[r] & M_{1,3} \ar[r] & M_0^{\oplus2}\oplus M_{2,6} \ar[r] & M_{5,7} \ar[r] & 0
}\]
Using these sequences, we can draw $\AR(\C(R))$ and confirm $\Cl(R)=\{[M_0],[M_4]\}$ easily.
\end{proof}
\end{Ex}

\subsection{Higher-dimensional examples}

Finally, we see higher-dimensional examples. The first example is a higher-dimensional generalization of Examples \ref{typeCL} and \ref{typeC}.

\begin{Ex}($d\geq2$, Gorenstein, isolated)
Let $d\geq2$ and $n\geq2$ be integers. Put $N:=\frac{n^d-1}{n-1}$. Let $l/k$ be a cyclic Galois extension with $\ch k$ not dividing $dN$ and $[l:k]=d$. We assume $\zeta:=\zeta_N\in l$. For example, if $d=3$ and $n=2$, then $\mathbb{Q}(\zeta)/\mathbb{Q}(\sqrt{-7})$ and $\mathbb{F}_8/\mathbb{F}_2$ satisfy these conditions. We denote $\Gal(l/k)=\langle\sigma\rangle$. Let
\[\alpha:=\big({\rm diag}(\zeta,\zeta^n,\zeta^{n^2},\cdots,\zeta^{n^{d-1}}), {\rm id}\big),\ \beta:=\big([e_2\ e_3\ \cdots\ e_d\ e_1],\sigma\big)\in GL_d(l)\rtimes\Gal(l/k)\]
and $G:=\langle\alpha,\beta\rangle\subseteq GL_d(l)\rtimes\Gal(l/k)$, where $e_i\in l^d$ denotes the $i$-th unit vector. Then the cyclic group $C_d$ acts on $\mathbb{Z}/N\mathbb{Z}$ by $i\cdot j:=n^ij$ and $\ind(\C(R))\cong C_d\backslash(\mathbb{Z}/N\mathbb{Z})$ holds. For example, if $d=3$ and $n=2$, then we can write $\ind(\C(R))=\{M_0=R, M_{1,2,4}, M_{3,5,6}\}$ and draw $\AR(\C(R))$ as follows. In addition, $\Cl(R)=1$ holds.
\[\xymatrix@R=1mm{
& M_0 \ar@<0.5ex>[ddr]^-{(3,1)} & \\
&&\\
M_{1,2,4}\ar@<0.5ex>[rr]\ar@<0.5ex>[uur]^-{(1,3)}\ar@(dl,ul)& & M_{3,5,6} \ar@<0.5ex>[ll]^-{(2,2)}\ar@(ur,dr)
}\]
Compare with $\AR(\C(S^H))$.
\[
\begin{xy}
(0,15)*+{N_0}="0",
(-11.73,9.35)*+{N_1}="1",
(-14.62,-3.338)*+{N_2}="2",
(-6.51,-13.51)*+{N_3}="3",
(6.51,-13.51)*+{N_4}="4",
(14.62,-3.338)*+{N_5}="5",
(11.73,9.35)*+{N_6}="6",
\ar "0";"6",
\ar "6";"5",
\ar "5";"4",
\ar "4";"3",
\ar "3";"2",
\ar "2";"1",
\ar "1";"0",

\ar "0";"5",
\ar "5";"3",
\ar "3";"1",
\ar "1";"6",
\ar "6";"4",
\ar "4";"2",
\ar "2";"0",

\ar "0";"3",
\ar "3";"6",
\ar "6";"2",
\ar "2";"5",
\ar "5";"1",
\ar "1";"4",
\ar "4";"0",
\end{xy}
=
\begin{xy}
(0,15)*+{N_0}="0",
(-30,1)*+{N_1}="1",
(-30,-7)*+{N_2}="2",
(20,-15)*+{N_3}="3",
(-20,-15)*+{N_4}="4",
(30,-7)*+{N_5}="5",
(30,1)*+{N_6}="6",
\ar "0";"6",
\ar "6";"5",
\ar "5";"4",
\ar "4";"3",
\ar "3";"2",
\ar "2";"1",
\ar "1";"0",

\ar "0";"5",
\ar "5";"3",
\ar "3";"1",
\ar "1";"6",
\ar "6";"4",
\ar "4";"2",
\ar "2";"0",

\ar "0";"3",
\ar "3";"6",
\ar "6";"2",
\ar "2";"5",
\ar "5";"1",
\ar "1";"4",
\ar "4";"0",
\end{xy}
\]

\begin{proof}
Step(I) Put $\sigma(\zeta)=\zeta^m$. Easy calculations show $G=\langle\alpha,\beta\mid\alpha^N=\beta^d=1, \beta\alpha=\alpha^{mn^{d-1}}\beta\rangle$. Thus $H:=G\cap GL_d(l)=\langle\alpha\rangle\cong C_N$ holds and so $H\subseteq SL_d(l)$. Let $\simp lH=\{W_j\}_{j\in\mathbb{Z}/N\mathbb{Z}}$. Here, $W_j=l$ as an $l$-linear space and $\alpha\cdot-=\zeta^j\colon W_j\to W_j$. By easy calculations as in Example \ref{typeCL}, we have $\beta\otimes W_j=W_{nj}$. Since $G=H\sqcup\beta H\sqcup\cdots\sqcup\beta^{d-1}H$, we can conclude $\ind(\C(R))\cong C_d\backslash(\mathbb{Z}/N\mathbb{Z})$. Below, we put $d=3$ and $n=2$ and continue calculations. Now the orbit decomposition of $\simp lH$ is given by
\[\simp lH=\{W_0\}\sqcup\{W_1, W_2,W_4\}\sqcup\{W_3, W_5, W_6\}.\]
Thus by Theorem \ref{tab}, $\simp l*G$ can be written as $\{V_0, V_{1,2,4}, V_{3,5,6}\}$.

Step(I\hspace{-1.2pt}I) Since $[l:k]=3$, we obtain $\Res V_0=W_0, \Res V_{1,2,4}=W_1\oplus W_2\oplus W_4$ and $\Res V_{3,5,6}=W_3\oplus W_5\oplus W_6$ as usual. From this, we can check $\Cl(R)\cong\{V_0\}$.

Step(I\hspace{-1.2pt}I\hspace{-1.2pt}I) Remark that the minimal projective resolution of $W_j\in\mod S*H$ is given by
\[0\longrightarrow S\otimes_lW_j\longrightarrow S\otimes_l(W_{j+3}\oplus W_{j+5}\oplus W_{j+6})\longrightarrow S\otimes_l(W_{j+1}\oplus W_{j+2}\oplus W_{j+4})\longrightarrow S\otimes_lW_j\longrightarrow W_j\longrightarrow 0.\]
By Theorem \ref{ARseqsum}, 2-almost split sequences and 2-fundamental sequences in $\C(R)$ have the following forms, where we put $M_0:=M(V_0), M_{1,2,4}:=M(V_{1,2,4})$ and $M_{3,5,6}:=M(V_{3,5,6})$.
\[
\begin{gathered}[b]
\xymatrix@R=1mm@C=5mm{
0 \ar[r] & M_0 \ar[r] & M_{3,5,6} \ar[r] & M_{1,2,4} \ar[r] & M_0\\
0 \ar[r] & M_{1,2,4} \ar[r] & M_0^{\oplus3}\oplus M_{1,2,4}\oplus M_{3,5,6} \ar[r] & M_{1,2,4}\oplus M_{3,5,6}^{\oplus2} \ar[r] & M_{1,2,4} \ar[r] & 0\\
0 \ar[r] & M_{3,5,6} \ar[r] & M_{1,2,4}^{\oplus2}\oplus M_{3,5,6} \ar[r] & M_0^{\oplus3}\oplus M_{1,2,4}\oplus M_{3,5,6} \ar[r] & M_{3,5,6} \ar[r] & 0
}
\\[-\dp\strutbox]
\end{gathered}
\qedhere\]
\end{proof}
\end{Ex}

In the next example, we deal with a ring which is neither Gorenstein nor an isolated singularity.

\begin{Ex}($d=3$, non-Gorenstein, non-isolated)
Let $n\geq2$ be an integer and $l/k$ a field extension with $\ch k$ not dividing $2n$. We assume $\zeta:=\zeta_{2n}\in l\backslash k$ and $[l:k]=2$. We denote $\Gal(l/k)=\{{\rm id},\sigma\}$. Let
\[\alpha:=\bigg(\bigg[\begin{smallmatrix}\zeta&&\\&-1&\\&&\zeta^{-1}\end{smallmatrix}\bigg], {\rm id}\bigg),\ \beta:=\bigg(\bigg[\begin{smallmatrix}&&1\\&1&\\1&&\end{smallmatrix}\bigg],\sigma\bigg)\in GL_3(l)\rtimes\Gal(l/k)\]
and $G:=\langle\alpha,\beta\rangle\subseteq GL_3(l)\rtimes\Gal(l/k)$. Then we can write $\ind(\C(R))=\{M_0=R, M_{\pm1}, \cdots, M_{\pm(n-1)},M_n=\omega\}$ and $\AR(\C(R))$ is as below. In addition, $\Cl(R)\cong C_2$ holds.
\[
\begin{array}{c|c}
\text{Case}&\AR(\C(R))\\ \hline\hline
n\colon{\rm even}&\xymatrix@R=1mm{M_0 \ar@<0.5ex>[r]^-{(2,1)} \ar@<0.5ex>[dd] & M_{\pm1} \ar@<0.5ex>[l]^-{(1,2)} \ar@<0.5ex>[r] \ar@<0.5ex>[dd] \ar@<1.5ex>@{.>}[dd] & \cdots \ar@<0.5ex>[l] \ar@<0.5ex>[r] & M_{\pm(\frac{n}{2}-1)} \ar@<0.5ex>[l] \ar@<0.5ex>[dr] \ar@<0.5ex>[dd] \ar@<1.5ex>@{.>}[dd] & \\
&&&&M_{\pm\frac{n}{2}} \ar@<0.5ex>[ul] \ar@<0.5ex>[dl] \ar@(ur,dr) \ar@(dr,dl)@{.>} \\
M_n \ar@<0.5ex>[r]^-{(2,1)} \ar@<0.5ex>[uu] \ar@<1.5ex>@{.>}[uu] & M_{\pm(n-1)} \ar@<0.5ex>[l]^-{(1,2)} \ar@<0.5ex>[r] \ar@<0.5ex>[uu] \ar@<1.5ex>@{.>}[uu] & \cdots \ar@<0.5ex>[l] \ar@<0.5ex>[r]  & M_{\pm(\frac{n}{2}+1)} \ar@<0.5ex>[l] \ar@<0.5ex>[ur] \ar@<0.5ex>[uu] \ar@<1.5ex>@{.>}[uu] & }\\ \hline
n\colon{\rm odd}&\xymatrix@R=5mm{M_0 \ar@<0.5ex>[r]^-{(2,1)} \ar@<0.5ex>[d] & M_{\pm1} \ar@<0.5ex>[l]^-{(1,2)} \ar@<0.5ex>[r] \ar@<0.5ex>[d] \ar@<1.5ex>@{.>}[d] & \cdots \ar@<0.5ex>[l] \ar@<0.5ex>[r] & M_{\pm\frac{n-3}{2}} \ar@<0.5ex>[l] \ar@<0.5ex>[r] \ar@<0.5ex>[d] \ar@<1.5ex>@{.>}[d] & M_{\pm\frac{n-1}{2}} \ar@<0.5ex>[l] \ar@<0.5ex>[d]^-{(2,2)} \ar@<1.5ex>@{.>}[d] \\
M_n \ar@<0.5ex>[r]^-{(2,1)} \ar@<0.5ex>[u] \ar@<1.5ex>@{.>}[u] & M_{\pm(n-1)} \ar@<0.5ex>[l]^-{(1,2)} \ar@<0.5ex>[r] \ar@<0.5ex>[u] \ar@<1.5ex>@{.>}[u] & \cdots \ar@<0.5ex>[l] \ar@<0.5ex>[r] & M_{\pm\frac{n+3}{2}} \ar@<0.5ex>[l] \ar@<0.5ex>[r] \ar@<0.5ex>[u] \ar@<1.5ex>@{.>}[u] & M_{\pm\frac{n+1}{2}} \ar@<0.5ex>[l] \ar@<0.5ex>[u]^-{(2,2)} \ar@<1.5ex>@{.>}[u] }
\end{array}
\]
\begin{proof}
Step(I) Easy calculations show $G=\langle\alpha,\beta\mid\alpha^{2n}=\beta^2=1, \beta\alpha=\alpha\beta\rangle$. Thus $H:=G\cap GL_3(l)=\langle\alpha\rangle\cong C_{2n}$ holds and so $H\subseteq GL_3(l)$ is small. Let $\simp lH=\{W_j\}_{j\in\mathbb{Z}/2n\mathbb{Z}}$. Here, $W_j=l$ as an $l$-linear space and $\alpha\cdot-=\zeta^j\colon W_j\to W_j$. By easy calculations as in Example \ref{typeCL}, we have $\beta\otimes W_j=W_{-j}$. Since $G=H\sqcup\beta H$, the orbit decomposition of $\simp lH$ is given by
\[\simp lH=\{W_0\}\sqcup\bigsqcup_{j=1}^{n-1}\{W_j, W_{-j}\}\sqcup\{W_n\}.\]
Thus by Theorem \ref{tab}, $\simp l*G$ can be written as $\{V_0, V_{\pm1}, \cdots, V_{\pm(n-1)}, V_n\}$, where $V_i$ and $V_{\pm j}$ corresponds to $\{W_i\}$ and $\{W_j, W_{-j}\}$ respectively.

Step(I\hspace{-1.2pt}I) As usual, we obtain $\Res V_0=W_0, \Res V_{\pm j}=W_j\oplus W_{-j}\ (1\leq j\leq n-1)$ and $\Res V_n=W_n$ easily. From this, we can check $\Cl(R)\cong\{V_0,V_n\}$.

Step(I\hspace{-1.2pt}I\hspace{-1.2pt}I) Remark that the minimal projective resolution of $W_j\in\mod S*H$ is given by
\[0\longrightarrow S\otimes_lW_{j+n}\longrightarrow S\otimes_l(W_j\oplus W_{j+n-1}\oplus W_{j-n+1})\longrightarrow S\otimes_l(W_{j+1}\oplus W_{j+n}\oplus W_{j-1})\longrightarrow S\otimes_lW_j\longrightarrow W_j\longrightarrow 0.\]
By Theorem \ref{ARseqsum}, 2-almost split sequences and 2-fundamental sequences in $\C(R)$ have the following forms, where we put $M_i:=M(V_i)\ (i=0,n)$ and $M_{\pm j}:=M(V_{\pm j})\ (1\leq j\leq n-1)$.
\[\xymatrix@R=1mm@C=5mm{
0 \ar[r] & M_n \ar[r] & M_0\oplus M_{\pm(n-1)} \ar[r] & M_{\pm 1}\oplus M_n \ar[r] & M_0\\
0 \ar[r] & M_{\pm(n-1)} \ar[r] & M_{\pm1}\oplus M_{\pm(n-2)}\oplus M_n^{\oplus2} \ar[r] & M_0^{\oplus2}\oplus M_{\pm2}\oplus M_{\pm(n-1)} \ar[r] & M_{\pm 1} \ar[r] & 0\\
0 \ar[r] & M_{\pm(n-j)} \ar[r] & M_{\pm j}\oplus M_{\pm(n-j-1)}\oplus M_{\pm(n-j+1)} \ar[r] & M_{\pm(j-1)}\oplus M_{\pm(j+1)}\oplus M_{\pm(n-j)} \ar[r] & M_{\pm j} \ar[r] & 0\\
0 \ar[r] & M_{\pm(n-1)} \ar[r] & M_0^{\oplus2}\oplus M_{\pm2}\oplus M_{\pm(n-1)} \ar[r] & M_{\pm1}\oplus M_{\pm(n-2)}\oplus M_n^{\oplus2} \ar[r] & M_{\pm(n-1)} \ar[r] & 0\\
0 \ar[r] & M_n \ar[r] & M_{\pm 1}\oplus M_n \ar[r] & M_0\oplus M_{\pm(n-1)} \ar[r] & M_n \ar[r] & 0
}\]
\rightline{$(1<j<\frac{n-1}{2},\frac{n+1}{2}<j<n-1)$}

(Case 1) $n$ is even.
\[\xymatrix@R=1mm@C=8.45mm{
0 \ar[r] & M_{\pm\frac{n}{2}} \ar[r] & M_{\pm(\frac{n}{2}-1)}\oplus M_{\pm\frac{n}{2}}\oplus M_{\pm(\frac{n}{2}+1)} \ar[r] & M_{\pm(\frac{n}{2}-1)}\oplus M_{\pm\frac{n}{2}}\oplus M_{\pm(\frac{n}{2}+1)} \ar[r] & M_{\pm\frac{n}{2}} \ar[r] & 0
}\]

(Case 2) $n$ is odd.
\[\xymatrix@R=1mm@C=13.1mm{
0 \ar[r] & M_{\pm\frac{n+1}{2}} \ar[r] & {M_{\pm\frac{n+1}{2}}}^{\oplus2}\oplus M_{\pm\frac{n+3}{2}} \ar[r] & M_{\pm\frac{n-3}{2}}\oplus {M_{\pm\frac{n+1}{2}}}^{\oplus2} \ar[r] & M_{\pm\frac{n-1}{2}} \ar[r] & 0\\
0 \ar[r] & M_{\pm\frac{n-1}{2}} \ar[r] & M_{\pm\frac{n-3}{2}}\oplus {M_{\pm\frac{n+1}{2}}}^{\oplus2} \ar[r] & {M_{\pm\frac{n+1}{2}}}^{\oplus2}\oplus M_{\pm\frac{n+3}{2}} \ar[r] & M_{\pm\frac{n+1}{2}} \ar[r] & 0
}\]

In both cases, we can draw $\AR(\C(R))$.
\end{proof}
\end{Ex}

\section{Two-dimensional rings of finite Cohen-Macaulay type of equicharacteristic zero}

The goal of this section is to prove that two-dimensional Cohen-Macaulay complete local rings of finite Cohen-Macaulay type of equicharacteristic zero are precisely quotient singularities admitting field extensions.

\begin{Thm}\label{frt}
Let $k$ be a field with $\ch k=0$ and $(R,\mathfrak{m},k)$ a two-dimensional Cohen-Macaulay complete local ring of finite Cohen-Macaulay type. Then there exist a finite Galois extension $l/k$ and a finite subgroup $G\subseteq GL_2(l)\rtimes\Gal(l/k)$ such that the following conditions are satisfied.
\begin{enumerate}
\item There exists a ring isomorphism $R\cong l[[x,y]]^G$.
\item The subgroup $G\cap GL_2(l)\subseteq GL_2(l)$ is small.
\item The natural group homomorphism $G\to\Gal(l/k)$ is surjective.
\end{enumerate}
\end{Thm}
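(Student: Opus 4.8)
The plan is to reduce to the algebraically closed case settled by Auslander and Esnault (Theorem~\ref{classicalfrt}) and then to recover the Galois part of $G$ by descent. First, $R$ is automatically a two-dimensional complete local \emph{normal domain}: by Auslander's theorem (see \cite{LW}) a complete Cohen--Macaulay local ring of finite Cohen--Macaulay type is an isolated singularity, so $R$ satisfies $({\rm R}_1)$; being Cohen--Macaulay of dimension two it also satisfies $({\rm S}_2)$, whence $R$ is normal by Serre's criterion and therefore a domain. Since $\ch k=0$ the ring $R$ is equicharacteristic, and I fix a coefficient field $k\hookrightarrow R$.

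Next I would pass to the algebraic closure to obtain a \emph{split linear} quotient singularity over a finite extension. Set $\widetilde R:=\widehat{R^{\mathrm{sh}}}$, the completion of a strict henselization of $R$; as $R$ is excellent, $\widetilde R$ is again a two-dimensional complete local normal domain, now with residue field $k^{\mathrm{sep}}=\overline k$, and finite Cohen--Macaulay type is preserved: it ascends along the ind-\'etale map $R\to R^{\mathrm{sh}}$ (an indecomposable object of $\CM R^{\mathrm{sh}}$ is a summand of $N\otimes_RR^{\mathrm{sh}}$ for some indecomposable $N\in\CM R$, and is already defined over some finite level $R\otimes_kl$) and along the completion of the excellent henselian ring $R^{\mathrm{sh}}$. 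By Theorem~\ref{classicalfrt} together with the $\overline k$-case of Theorem~\ref{makesmall} we obtain $\widetilde R\cong\overline k[[x,y]]^{G_0}$ with $G_0\subseteq GL_2(\overline k)$ a small finite subgroup. Writing $R^{\mathrm{sh}}=\varinjlim_l(R\otimes_kl)$ over finite subextensions $k\subseteq l\subseteq\overline k$, the finite amount of data needed --- the coordinates $x,y$, the generators of $G_0$, and the twist relating $R\otimes_kl$ to the split form $l[[x,y]]^{G_0}$ --- lives at a finite level; equivalently the Galois cohomology class distinguishing $R\otimes_kl$ from the split form is killed by an open subgroup of $\Gal(\overline k/k)$. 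Enlarging $l$ to a finite Galois extension of $k$ accordingly, $R_l:=R\otimes_kl$ is a two-dimensional complete local normal domain with residue field $l$, isomorphic to $l[[x,y]]^{H}$ with $H:=G_0\subseteq GL_2(l)$ small, and $R=R_l^{\Gal(l/k)}$ by ordinary Galois descent for $l/k$.

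Finally I would assemble $G$ and linearize. Put $S:=l[[x,y]]$, so that $\Gal(l/k)$ acts on $S^H\cong R_l$ with invariants $R$. Since $S^H\subseteq S$ is the integral closure of $S^H$ in the canonical codimension-one-unramified $H$-cover of its punctured spectrum (for $H$ small, $H$ is the local fundamental group, so this cover is functorial in $S^H$), every automorphism of $S^H$ lifts to an automorphism of $S$ normalizing $H$, any two lifts differing by an element of $H$. Choosing a lift $\widetilde\sigma$ of each $\sigma\in\Gal(l/k)$ and putting $G:=\langle H,\{\widetilde\sigma\}\rangle\subseteq\operatorname{Aut}_k(S)$ gives a finite group fitting in an exact sequence $1\to H\to G\to\Gal(l/k)\to1$, with $S^G=(S^H)^{\Gal(l/k)}=R$. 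Now Proposition~\ref{linear} replaces the $G$-action by a linear one, so $G\subseteq GL_2(l)\rtimes\Gal(l/k)$ with $S^G$ unchanged up to isomorphism, and Theorem~\ref{makesmall} further makes $G\cap GL_2(l)$ small; the composite $G\to\Gal(l/k)$ is the action on the residue field, hence surjective because $R=S^G$ has residue field $k$ (equivalently $l^G=k$). This produces the desired $R\cong l[[x,y]]^G$ with properties (1)--(3).

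The main obstacle is the second step: one must know both that finite Cohen--Macaulay type survives the passage to $\widehat{R^{\mathrm{sh}}}$ (ascent along strict henselization and along completion, using excellence) and that the resulting $\overline k$-presentation of $\widetilde R$, together with the $\Gal(\overline k/k)$-twist relating it to $R$, is already defined over a finite subextension of $\overline k/k$. The latter is a Galois-cohomology finiteness that has to be organized through the filtered system $R^{\mathrm{sh}}=\varinjlim_l(R\otimes_kl)$, because $R\otimes_k\overline k$ is itself neither local nor Noetherian and cannot be manipulated directly; the canonicity of the local-fundamental-group cover invoked in the last step is a secondary but essential ingredient.
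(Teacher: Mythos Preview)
Your route differs fundamentally from the paper's. The paper never passes to $\overline k$ or invokes the classical Auslander--Esnault theorem; it works inside an algebraic closure of $\Frac R$, partially ordering those finite Galois extensions $L/\Frac R$ whose integral closure $S_L$ is unramified over $R$ in codimension one. The key technical step is a strict counting inequality (Lemma~\ref{ineq}, resting on the description of $\simp l*G$ in Theorem~\ref{tab}): whenever $L\subsetneq L'$ with nontrivial inertia $T(L'/L)$ one has $\sharp\ind\add_RS_L<\sharp\ind\add_RS_{L'}$, so finite CM type forces a maximal $\Omega$. Flenner's converse to purity (Theorem~\ref{Flenner}) then shows $S_\Omega$ is regular, and Cohen plus Theorem~\ref{makesmall} finish. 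Thus the paper's hard inputs are Lemma~\ref{ineq} and Flenner, with no ascent of finite CM type and no descent from $\overline k$ required.

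Your Step~2 descent is a genuine gap, not merely a missing reference. The slogan ``finite data lives at a finite level'' is valid for finite-type schemes, but an isomorphism $\widehat{R^{sh}}\cong\overline k[[x,y]]^{G_0}$ of complete local rings carries infinitely many power-series coefficients; there is no locally-finitely-presented isomorphism functor to feed into a limit argument, and the ``Galois cohomology class'' you invoke has no specified coefficient group --- the relevant $\operatorname{Aut}_{\overline k}(\overline k[[x,y]]^{G_0})$ is an infinite-dimensional pro-algebraic group, so continuity and torsion have no evident meaning here. Knowing that $R\otimes_kl$ and $l[[x,y]]^{G_0}$ share the same $\widehat{(-)^{sh}}$ does not by itself force them to be isomorphic over any finite $l$.

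A repair that works bypasses descent and lands closer to the paper's argument. From $\widehat{R^{sh}}\cong\overline k[[x,y]]^{G_0}$ and invariance of the local fundamental group along $R\to R^{sh}\to\widehat{R^{sh}}$ one gets that $\pi_1(\Spec R\setminus\{\mathfrak m\})$ is an extension of $\Gal(\overline k/k)$ by the finite group $G_0$; any open normal subgroup $U$ with $U\cap G_0=1$ (which exists since $G_0$ is finite and $\pi_1$ profinite) yields a finite cover $S'/R$, \'etale in codimension one, with $\widehat{(S')^{sh}}\cong\overline k[[x,y]]$, hence $S'$ is regular by faithfully flat descent of regularity. Then $R=(S')^{\pi_1/U}$, and Proposition~\ref{linear} together with Theorem~\ref{makesmall} finish, rendering your Step~3 superfluous. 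This replaces Flenner and Lemma~\ref{ineq} by Auslander--Esnault over $\overline k$, ascent of finite CM type to $\widehat{R^{sh}}$, and the $\pi_1$-comparison under completion of the strict henselization --- the latter two being precisely what you flagged as unproven.
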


In the proof of Theorem \ref{frt}, the following inequality is the most technical part which is rather non-trivial than one used in the classical case. Observe that we cannot prove this inequality without our description of irreducible representations of skew group algebras (Theorem \ref{tab}).

\begin{Lem}\label{ineq}
Let $G'$ be a finite group acting on a field $l'$ and $N\subseteq G'$ a normal subgroup. Put $k:=l'^{G'}$ and $l:=l'^N$. Observe that $G:=G'/N$ acts on $l$. Assume that $|G'|$ is not divided by $\ch l$ and that there exists an element of $N$ different from the unit which acts on $l'$ trivially. In this setting, we have the following inequality.
\[\sharp\simp l*G<\sharp\simp l'*G'\]
\end{Lem}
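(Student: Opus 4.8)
The plan is to insert an intermediate quotient between $G'$ and $G$ and isolate a nontrivial block of $l'*G'$. Set
$N_0:=\{n\in N\mid n\text{ acts trivially on }l'\}=N\cap\Ker(G'\to\Gal(l'/k))$, a normal subgroup of $G'$ (an intersection of two normal subgroups) which by hypothesis contains an element $\neq 1$, so $|N_0|\geq 2$. Since $|G'|$ is prime to $\ch l$, the averaging element $e_{N_0}:=\tfrac1{|N_0|}\sum_{n\in N_0}1*n\in l'*G'$ is an idempotent, and the first key point is that it is \emph{central} in $l'*G'$: the identity $(\lambda*1)e_{N_0}=e_{N_0}(\lambda*1)$ for $\lambda\in l'$ uses that $N_0$ acts trivially on $l'$, and $(1*g)e_{N_0}=e_{N_0}(1*g)$ for $g\in G'$ uses $N_0\triangleleft G'$. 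As $|N_0|\geq 2$ we have $e_{N_0}\neq 0,1$, hence $l'*G'\cong e_{N_0}(l'*G')\times(1-e_{N_0})(l'*G')$ as rings, with both factors nonzero. The quotient map $l'*G'\twoheadrightarrow l'*(G'/N_0)$, $\lambda*g\mapsto\lambda*\bar g$ (well defined since $N_0$ is trivial on $l'$), sends $e_{N_0}\mapsto 1$ and annihilates $(1-e_{N_0})(l'*G')$, so it identifies $e_{N_0}(l'*G')\cong l'*(G'/N_0)$ (a dimension count over $l'$ on the two ideals finishes this). Therefore $\sharp\simp l'*G'=\sharp\simp l'*(G'/N_0)+\sharp\simp(1-e_{N_0})(l'*G')\geq\sharp\simp l'*(G'/N_0)+1$.

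It then remains to prove $\sharp\simp l'*(G'/N_0)=\sharp\simp l*G$. Put $\bar G:=G'/N_0$ and $\bar N:=N/N_0$; then $\bar G$ acts on $l'$, the normal subgroup $\bar N$ acts \emph{faithfully} on $l'$ with $l'^{\bar N}=l$, and $\bar G/\bar N=G$ acts on $l$ with $l^G=k$. By Proposition \ref{sep}, applied to the finite separable extension $l\subseteq l'$, one has $l'*\bar N\cong\End_l(l')$, a matrix algebra over $l$; a primitive idempotent $f\in l'*\bar N$ then satisfies $(l'*\bar G)f(l'*\bar G)\supseteq(l'*\bar N)f(l'*\bar N)=l'*\bar N\ni 1$, so $f$ is a full idempotent of $l'*\bar G$, and one checks $f(l'*\bar G)f\cong l*G$; thus $l'*\bar G$ is Morita equivalent to $l*G$, and in particular they have equally many simple modules. (Alternatively this equality can be obtained by applying Theorem \ref{tab}(1) to $(l',\bar G,k)$ and to $(l,G,k)$ and matching the descriptions $\Gal(l'/k)\backslash\simp l'\bar H$ and $\Gal(l/k)\backslash\simp lH$, using that $\bar H:=\Ker(\bar G\to\Gal(l'/k))\cong H:=\Ker(G\to\Gal(l/k))$, that $\bar H\cap\bar N=1$ so $\bar H$ and $\bar N$ commute, and that $\Gal(l'/l)=\bar N$ acts on $\simp l'\bar H=\simp(l'\otimes_l lH)$ only through the coefficient field.) Chaining the two steps gives $\sharp\simp l'*G'\geq\sharp\simp l*G+1>\sharp\simp l*G$.

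The main obstacle is this last step, i.e. establishing $\sharp\simp l'*(G'/N_0)=\sharp\simp l*G$ in the faithful case: whether one exhibits the isomorphism $f(l'*\bar G)f\cong l*G$ directly or carries out the Galois-descent bookkeeping via Theorem \ref{tab}, one must keep careful track of the $\bar N$-action and of which Galois group acts where. By contrast, the centrality of $e_{N_0}$, the block decomposition, and the dimension counts are entirely formal. Finally, note that the strictness of the inequality genuinely requires the hypothesis $N_0\neq\{1\}$, which is precisely what forces the factor $(1-e_{N_0})(l'*G')$ to be nonzero.
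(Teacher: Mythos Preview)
Your proof is correct and takes a cleaner route than the paper. The paper works with the non-central idempotent $e=\frac{1}{|N|}\sum_{n\in N}n$: it identifies $e(l'*G')e\cong l*G$, obtaining an injection $\simp l*G\hookrightarrow\simp l'*G'$, and then invokes Theorem~\ref{tab} to show this injection is not surjective by tracking its image inside $\Gal(l'/k)\backslash\simp l'H'$ and using that $l'H'\twoheadrightarrow l'H$ is not an isomorphism (precisely because $H'\cap N=N_0\neq1$). Your key observation is that passing from $N$ to $N_0$ makes the averaging idempotent $e_{N_0}$ \emph{central} (this is exactly what fails for $e$ when $N$ acts nontrivially on $l'$), so you obtain an honest ring splitting $l'*G'\cong l'*\bar G\times(1-e_{N_0})(l'*G')$ and strictness is immediate from the nonzero second factor. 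For the remaining equality $\sharp\simp l'*\bar G=\sharp\simp l*G$, your Morita argument is cleanest if you take the specific primitive idempotent $f=\bar e:=\frac{1}{|\bar N|}\sum_{\bar n\in\bar N}\bar n$: it is full in $l'*\bar G$ because $l'*\bar N\cong\End_l(l')$ is simple, and the identification $\bar e(l'*\bar G)\bar e\cong l*G$ is literally the same computation the paper carries out for $e(l'*G')e$. A pleasant consequence is that this first approach avoids Theorem~\ref{tab} entirely, so the paper's remark preceding the lemma---that the inequality cannot be proved without that theorem---is too strong. Your alternative route via two applications of Theorem~\ref{tab} also works; the bookkeeping you flag amounts to the Galois-descent statement $\Gal(l'/l)\backslash\simp(l'\otimes_l lH)\cong\simp lH$ for the semisimple $l$-algebra $lH$, together with compatibility of the residual $\Gal(l/k)$-actions.
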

\begin{proof}
Let $H:=\Ker(G\to\Gal(l/k))$ and $H':=\Ker(G'\to\Gal(l'/k))$ be normal subgroups of $G$ and $G'$. Observe that we have the following commutative diagram whose rows and columns are exact.
\[\xymatrix{
& 1 \ar[d] & 1 \ar[d] & 1 \ar[d] & \\
1 \ar[r] & H'\cap N \ar[r] \ar[d] & H' \ar[r] \ar[d] & H \ar[r] \ar[d] & 1 \\
1 \ar[r] & N \ar[r] \ar[d] & G' \ar[r] \ar[d] & G \ar[r] \ar[d] & 1 \\
1 \ar[r] & \Gal(l'/l) \ar[r] \ar[d] & \Gal(l'/k) \ar[r] \ar[d] & \Gal(l/k) \ar[r] \ar[d] & 1 \\
& 1 & 1 & 1 &
}\]

Let $e:=\frac{1}{|N|}\sum_{n\in N}n\in l'*G'$ be an idempotent. Then we can see that
\[e(l'*G')e=\bigg\{\sum_{g'\in G'}\lambda'_{g'}g'\in l'*G'\mid \lambda'_{g'}\in l, g'_1N=g'_2N\Rightarrow\lambda'_{g'_1}=\lambda'_{g'_2}\bigg\}\cong l*G\]
holds. Therefore we obtain an injective map $(l'*G')e\otimes_{l*G}-\colon\simp l*G\to\simp l'*G'$. From now on, we show that this map is not surjective. By Theorem \ref{tab}, we have the following bijection.
\[\simp l'*G'\cong\Gal(l'/k)\backslash\simp l'H'\]
Since we have the natural surjective ring homomorphism $l'H'\to l'H$, we may view $\mod l'H\subseteq\mod l'H'$. Recall that the action $\Gal(l'/k)\curvearrowright\mod l'H'$ is induced by the action $G'\curvearrowright\mod l'H'$. By easy calculations, we can see that the action $\Gal(l'/k)\curvearrowright\mod l'H'$ restricts to $\Gal(l'/k)\curvearrowright\mod l'H$, so we can regard $\Gal(l'/k)\backslash\simp l'H\subseteq\Gal(l'/k)\backslash\simp l'H'$. Since the surjective ring homomorphism $l'H'\to l'H$ between semisimple algebras is not an isomorphism due to $H'\cap N\neq1$, the inclusion map $\Gal(l'/k)\backslash\simp l'H\hookrightarrow\Gal(l'/k)\backslash\simp l'H'$ is not surjective. Thus it is enough to show that the map $\simp l*G\to\simp l'*G'\cong\Gal(l'/k)\backslash\simp l'H'$ factors through the inclusion map $\Gal(l'/k)\backslash\simp l'H\hookrightarrow\Gal(l'/k)\backslash\simp l'H'$.
\[\xymatrix@C=6em{
\simp l*G \ar^-{(l'*G')e\otimes_{l*G}-}[rr] \ar@{-}_\cong[d] & & \simp l'*G' \ar@{-}^\cong[d] \\
\Gal(l/k)\backslash\simp lH \ar@{.>}[r] & \Gal(l'/k)\backslash\simp l'H\ar@{^{(}->}[r] & \Gal(l'/k)\backslash\simp l'H'
}\]
Since the bijection $\simp l'*G'\cong\Gal(l'/k)\backslash\simp l'H'$ is obtained by the restriction functor ${\rm Res}_{\it H'}\colon\mod l'*G'\to\mod l'H'$, it is enough to show that ${\rm Res}_{\it H'}(l'*G')e\in\mod l'H\subseteq\mod l'H'$ holds. For $\sum_{h'\in H'}\mu'_{h'}h'\in l'H'$ and $(\lambda'*g')e\in(l'*G')e$, we have
\[\bigg(\sum_{h'\in H'}\mu'_{h'}h'\bigg)(\lambda'*g')e=\sum_{h'\in H'}(\mu'_{h'}\lambda'*h'g')e.\]
Observe that if $h'_1,h'_2\in H'$ satisfies $\overline{h_1'}=\overline{h_2'}\in H$, then $h_1'g'e=h_2'g'e$ holds. Thus if $\sum_{h'\in H'}\mu'_{h'}h'\in l'H'$ belongs to $\Ker(l'H'\to l'H)$, we have $\bigg(\sum_{h'\in H'}\mu'_{h'}h'\bigg)(\lambda'*g')e=0$. This means ${\rm Res}_{\it H'}(l'*G')e\in\mod l'H$. In conclusion, we obtain the desired inequality.
\end{proof}

The following fact due to \cite{Fle} is crucial to our main theorem.

\begin{Thm}\cite[1.7]{Fle}\label{Flenner}
For a two-dimensional complete local normal domain $(S,\mathfrak{n},l)$ with $\ch l=0$, the following conditions are equivalent.
\begin{enumerate}
\item The ring $S$ is regular.
\item The ring $S$ is pure, i.e. the natural group homomorphism $\pi_1^{et}(\Spec S\backslash \{\mathfrak{n}\})\to\pi_1^{et}(\Spec S)$ is an isomorphism.
\item Let $L:=\Frac S$ be the field of fractions. For an arbitrary a field extension $L'/L$ of finite degree, let $S'\subseteq L'$ be the integral closure of $S$ in $L'$. If the extension $S'/S$ is unramified in codimension one, then it is unramified.
\end{enumerate}
\end{Thm}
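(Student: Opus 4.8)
\emph{Strategy.} The plan is to establish the cycle of implications $(1)\Rightarrow(2)\Rightarrow(3)\Rightarrow(1)$. Throughout put $X:=\Spec S$ and $U:=X\setminus\{\mathfrak n\}$; since $S$ is a two-dimensional normal domain, $U$ is exactly the set of primes of height $\le 1$, it is dense and schematically dense in $X$, and the points of $U$ are the codimension-one points. For the first implication, assume $S$ is regular. Then $\{\mathfrak n\}$ is closed of codimension two in the regular scheme $X$. Given a finite étale cover $V\to U$, let $Y'\to X$ be the normalization of $X$ in the function field of $V$; it is finite, $Y'$ is normal, and $Y'\to X$ is étale over $U$. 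By the Zariski--Nagata purity of the branch locus (the same statement used in the proof of Theorem~\ref{invisol}, cf.\ \cite[1.4]{Au62}\cite[B.12]{LW}), the branch locus of $Y'\to X$ is pure of codimension one; being contained in $\{\mathfrak n\}$ it is empty, so $Y'\to X$ is finite étale and restricts to $V$. Together with full faithfulness of restriction (automatic for $X$ normal and $U$ dense), this shows $\mathbf{FEt}(X)\to\mathbf{FEt}(U)$ is an equivalence, i.e.\ $\pi_1^{et}(U)\to\pi_1^{et}(X)$ is an isomorphism, which is $(2)$.

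\emph{The equivalence $(2)\Leftrightarrow(3)$} is a dictionary. A finite extension $L'/L$ together with its integral closure $S'$ in $L'$ unramified over $S$ in codimension one is the same datum as a finite étale cover of $U$: passing to a Galois closure if necessary (ramification in codimension one is stable under towers, and by Theorem~\ref{inertia} an unramified Galois extension of a normal domain is free, hence étale), $S'$ restricted to $U$ is étale in codimension one over the normal base $U$ and therefore étale over all of $U$. Moreover $S'/S$ is unramified (everywhere) if and only if $\Spec S'\to X$ is finite étale, i.e.\ if and only if the cover of $U$ extends to $X$. Hence $(3)$ asserts precisely that $\mathbf{FEt}(X)\to\mathbf{FEt}(U)$ is essentially surjective; combined with full faithfulness (as above) and the automatic surjectivity $\pi_1^{et}(U)\twoheadrightarrow\pi_1^{et}(X)$ for the dense open $U$, this is equivalent to $(2)$.

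\emph{The implication $(3)\Rightarrow(1)$} is the substantive one; I would argue by contraposition, so assume $S$ is not regular and produce a finite $L'/L$ with $S'/S$ unramified in codimension one but ramified at $\mathfrak n$. First reduce to $l=\mathbb C$: base change to $\bar l$ affects $S'$ only by the analogous base change and changes neither regularity nor codimension-one ramification at the exceptional point, and then by a spreading-out argument the whole package --- the singularity $\Spec S$, a resolution $\pi\colon\widetilde X\to\Spec S$ (available since $\dim R=2$), and any candidate cover --- descends to a finitely generated $\mathbb Q$-algebra and may be base changed to $\mathbb C$ without altering $\pi_1^{et}$ of the punctured spectrum. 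Over $\mathbb C$ the exceptional divisor $E=\bigcup_i E_i$ of the minimal resolution is a connected configuration of curves with negative-definite intersection matrix and no $(-1)$-curves ($E\neq\emptyset$ since $S$ is singular), and $U$ has the homotopy type of the link $M$ of the singularity, a plumbed $3$-manifold. By Mumford's theorem on the topology of normal surface singularities, such an $M$ is simply connected only if the plumbing graph is empty, i.e.\ only if $S$ was already regular; hence $\pi_1^{\mathrm{top}}(M)\neq 1$, so $\pi_1^{et}(U)=\widehat{\pi_1^{\mathrm{top}}(M)}\neq 1$. Choosing a nontrivial finite quotient gives a connected finite étale $V\to U$; its normalization yields the desired $S'$, which is unramified over $S$ in codimension one but cannot be finite étale over $X$ (otherwise, $S$ being henselian local with $l=\bar l$, the cover would extend and hence be trivial). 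This contradicts $(3)$.

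\emph{Main obstacle.} The heart of the argument, and the only nonformal ingredient, is $(3)\Rightarrow(1)$, specifically the fact that a non-regular normal surface singularity in characteristic zero has nontrivial local étale fundamental group (Mumford's theorem). Two points require care: (a) making the reduction to $\mathbb C$ rigorous --- spreading out the ring, the resolution, and the covers, and invoking invariance of $\pi_1^{et}$ under algebraically closed base extension in characteristic zero; and (b) the combinatorial core of Mumford's proof, that a simply connected plumbed $3$-manifold coming from a negative-definite graph must be $S^3$, proved by repeated reduction of the graph without appeal to the Poincaré conjecture. I note that the easier variant --- producing an \emph{abelian} cover --- succeeds whenever the discriminant group $\operatorname{coker}\big((E_i\cdot E_j)_{i,j}\big)\cong\Cl(S)$ is nontrivial, so the genuinely hard case is that of a unimodular negative-definite intersection form (e.g.\ a $\widetilde E_8$-type configuration), where nonabelian covers are unavoidable.
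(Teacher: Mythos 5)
First, a point of calibration: the paper does not actually prove this statement. Theorem \ref{Flenner} is quoted from \cite[1.7]{Fle}, and the surrounding text only records that (3) is a rephrasing of (2) (via \cite[3.3.6]{Fu}) and that $(1)\Rightarrow(3)$ is the purity of the branch locus; the hard implication $(3)\Rightarrow(1)$ is taken from Flenner. So your proposal is being measured against Flenner's proof rather than against anything written in the paper. Your outline follows the natural route, and the nonformal input you isolate (Mumford's theorem that a non-regular normal complex surface singularity has nontrivial local fundamental group) is indeed what underlies the result in the algebraically closed case. The formal parts are fine: $(1)\Rightarrow(2)$ by Zariski--Nagata purity, and the dictionary $(2)\Leftrightarrow(3)$, using that $U=\Spec S\setminus\{\mathfrak n\}$ is a one-dimensional regular scheme (so finite torsion-free covers of $U$ are flat, and unramified in codimension one already means \'etale over $U$) together with the freeness statement of Theorem \ref{inertia} at the closed point.

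The genuine gap is inside $(3)\Rightarrow(1)$, exactly at the point you label (a) but do not carry out. Your nontrivial cover is produced only after replacing $S$ by a complete local ring with residue field $\mathbb{C}$ (or $\bar l$), and the contradiction you reach --- a connected finite \'etale cover of degree $>1$ of a henselian local ring with algebraically closed residue field --- lives over that base change. Nothing in the argument transports this back to a finite extension $L'/L$ of $\Frac(S)$ violating (3) for the original $S$: condition (3) for $S$ does not formally imply the analogous purity statement for $S\widehat{\otimes}_l\bar l$, and a cover of the geometric punctured spectrum need not descend. What is required (and is where Flenner does real work for non-algebraically-closed $l$, which is precisely the case this paper needs) is either a Galois-descent argument producing the cover already over a finite extension of $l$ and then over $l$, or a homotopy-exact-sequence comparison identifying $\Ker(\pi_1^{et}(U)\to\Gal(\bar l/l))$ with $\pi_1^{et}$ of the geometric punctured spectrum, so that (2) for $S$ forces that geometric group to vanish and Mumford's theorem applies. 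Relatedly, the identification $\pi_1^{et}(U)\cong\widehat{\pi_1^{\mathrm{top}}(M)}$ over $\mathbb{C}$ needs Artin algebraization of the complete local ring plus the Riemann existence theorem for the punctured spectrum; you assert it without argument. (A small side remark: your parenthetical ${\rm coker}\,(E_i\cdot E_j)\cong\Cl(S)$ is only correct for rational singularities, but nothing rests on it.) As written, the proposal is a correct high-level reconstruction of the intended proof with the decisive descent/comparison step missing.
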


Remark that (3) is just a rephrasing of (2) (see, for instance, \cite[3.3.6]{Fu}). The implication $(1)\Rightarrow(3)$ is nothing but the purity of the branch locus (see \cite[1.4]{Au62}). Now we can prove Theorem \ref{frt} using the implication $(3)\Rightarrow(1)$. The proof appears, at first glance, similar to the classical one, but it is actually quite different.

\begin{proof}[Proof of Theorem \ref{frt}]
Observe that $R$ is a normal domain since is of finite Cohen-Macaulay type (see, for instance, \cite{AU86}\cite[7.12]{LW}). Let $K$ be the field of fractions of $R$ and fix an algebraic closure $\overline{K}$ of $K$. For a finite Galois extension $L/K$ in $\overline{K}$, we denote by $S_L$ the integral closure of $R$ in $L$. Remark that $S_L^{\Gal(L/K)}=R$ holds since $R$ is normal. By the finiteness of the integral closure, $S_L$ is finitely generated as an $R$-module, so is a Noetherian normal domain. In addition, $S_L\in\CM R$ holds. Moreover, since $R$ is complete local, so is $S_L$. Let $\mathfrak{n}_L$ be the maximal ideal of $S_L$ and $l_L:=S_L/\mathfrak{n}_L$. Since $l_L^{\Gal(L/K)}=k$ holds, the extension $l_L/k$ is Galois and the natural group homomorphism $\Gal(L/K)\to\Gal(l_L/k)$ is surjective by Artin's theorem. We let $T(L/K):=\Ker(\Gal(L/K)\to\Gal(l_L/k))$ be the inertia group. Consider the following condition $(*)$ for $L$.
\[(*)\ \text{The extension }S_L/R\text{ is unramified in codimension one.}\]
We let $X$ be the set of finite Galois extensions $L$ of $K$ in $\overline{K}$ satisfying condition $(*)$.

Take $L\in X$. By Theorem \ref{end}, we have $\End_R(S_L)\cong S_L*G_L$, where we put $G_L:=\Gal(L/K)$. This means $\add_RS_L\simeq\proj(S_L*G_L)^{\op}\simeq\proj S_L*G_L$ (see Proposition \ref{projskew}). Since $S_L*G_L/J_{S_L*G_L}=l_L*G_L$ holds as in Lemma \ref{Jac}, we have the following bijections.
\[\ind\add_RS_L\cong\ind\proj S_L*G_L\cong\simp l_L*G_L\]

We define a partial order $\prec$ on $X$ as
\[L\prec L':\Leftrightarrow L\subseteq L',\ T(L'/L)\neq1\ (L,L'\in X).\]
Take $L,L'\in X$ with $L\prec L'$. Observe that we have the following commutative diagram whose rows and columns are exact (see \cite[I.7.22]{S}).
\[\xymatrix{
& 1 \ar[d] & 1 \ar[d] & 1 \ar[d] & \\
1 \ar[r] & T(L'/L) \ar[r] \ar[d] & T(L'/K) \ar[r] \ar[d] & T(L/K) \ar[r] \ar[d] & 1 \\
1 \ar[r] & \Gal(L'/L) \ar[r] \ar[d] & \Gal(L'/K) \ar[r] \ar[d] & \Gal(L/K) \ar[r] \ar[d] & 1 \\
1 \ar[r] & \Gal(l_{L'}/l_L) \ar[r] \ar[d] & \Gal(l_{L'}/k) \ar[r] \ar[d] & \Gal(l_L/k) \ar[r] \ar[d] & 1 \\
& 1 & 1 & 1 &
}\]
Since $T(L'/L)\neq1$, we can apply Lemma \ref{ineq} and obtain $\sharp\simp(l_L*\Gal(L/K))<\sharp\simp(l_{L'}*\Gal(L'/K))$. Thus we obtain the following inequality.
\[\sharp\ind\add_RS_L<\sharp\ind\add_RS_{L'}\]

By the above arguments and the assumption $\sharp\ind\CM R<\infty$, we can conclude that the partial ordered set $(X,\prec)$ has a maximal element $\Omega\in X$. Now we show the condition (3) of Theorem \ref{Flenner} for $S_\Omega$. Take a field extension $\Omega'/\Omega$ of finite degree in $\overline{K}$ with the extension $S_{\Omega'}/S_\Omega$ unramified in codimension one. By taking the Galois closure, we may assume that the extension $\Omega'/K$ is Galois. Now the extension $S_{\Omega'}/R$ is unramified in codimension one since so are $S_{\Omega'}/S_\Omega$ and $S_\Omega/R$. Thus we have $\Omega'\in X$. By the maximality of $\Omega$, we can conclude $T(\Omega'/\Omega)=1$. Thus the extension $S_{\Omega'}/S_\Omega$ is unramified by Proposition \ref{inertia}. Therefore we obtain the desired condition and thus $S_\Omega$ is regular by Theorem \ref{Flenner}. By Cohen's structure theorem, we have $S_\Omega\cong l[[x,y]]\ (l:=l_\Omega)$. Since $S_\Omega^{\Gal(\Omega/K)}=R$ holds, there exists a finite subgroup $G\subseteq GL_2(l)\rtimes\Gal(l/k)$ such that $R\cong l[[x,y]]^G$ holds and $G\cap GL_2(l)\subseteq GL_2(l)$ is small by Theorem \ref{makesmall}.
\end{proof}

\section{Two-dimensional Gorenstein rings of finite Cohen-Macaulay type of equicharacteristic zero}\label{Gorquiv}

In this section, we determine all quivers that may appear as $\AR(\CM R)$ where $(R,\mathfrak{m},k)$ is a two-dimensional Gorenstein complete local ring of finite Cohen-Macaulay type with $\ch k=0$ (Theorem \ref{classfyquiver}). First, we consider the case where $k$ is algebraically closed. Then it is well-known that $R$ is isomorphic to $k[[x,y]]^G$ where $G\subseteq SL_2(k)$ is a finite subgroup. Here, we have the following Klein's classification of finite subgroups of $SL_2(k)$ up to conjugacy by the ADE Dynkin diagrams.
\[
\begin{array}{c|c}
	\text{Type}&\text{Generator(s)}\\ \hline\hline
	A_n&\begin{bmatrix}\zeta_{n+1}&0\\0&\zeta_{n+1}^{-1}\end{bmatrix}\\ \hline
	D_n&A_{2n-5}, \begin{bmatrix}0&\zeta_4\\ \zeta_4&0\end{bmatrix}\\ \hline
	E_6&D_4, \dfrac{1}{\sqrt{2}}\begin{bmatrix}\zeta_8&\zeta_8^3\\ \zeta_8&\zeta_8^7\end{bmatrix}\\ \hline
	E_7&E_6, \begin{bmatrix}\zeta_8^3&0\\ 0&\zeta_8^5\end{bmatrix}\\ \hline
	E_8&\dfrac{1}{\sqrt{5}}\begin{bmatrix}\zeta_5^4-\zeta_5&\zeta_5^2-\zeta_5^3\\ \zeta_5^2-\zeta_5^3&\zeta_5-\zeta_5^4\end{bmatrix}, \dfrac{1}{\sqrt{5}}\begin{bmatrix}\zeta_5^2-\zeta_5^4&\zeta_5^4-1\\ 1-\zeta_5&\zeta_5^3-\zeta_5\end{bmatrix}
\end{array}
\]

It is also well-known that the quiver $\AR(\CM k[[x,y]]^G)$ coincides with the double of the corresponding extended Dynkin diagram. Therefore in this case, we can say that quivers that may appear as $\AR(\CM R)$ are just doubles of simply laced extended Dynkin diagrams.

Next, towards Theorem \ref{classfyquiver}, we exhibit Auslander-Reiten quivers for some examples without proofs.

\begin{Ex}($d=2$, Gorenstein)\label{mathbbR}
Let $H\subseteq SL_2(\mathbb{C})$ be a finite subgroup and $G:=\langle H,\Gal(\mathbb{C}/\mathbb{R})\rangle\subseteq GL_2(\mathbb{C})\rtimes\Gal(\mathbb{C}/\mathbb{R})$. By the above list, $H$ is classified by the ADE Dynkin diagrams. We exhibit $\AR(\C(R))$ and $\Cl(R)$.
\[
\begin{array}{c|c|c|c}
\text{Type of }H&\AR(\C(R))&\text{Type of }\AR(\C(R))&\Cl(R)\\ \hline\hline
A_n&\xymatrix@R=5mm{& & R \ar@<0.5ex>[dll] \ar@<0.5ex>[drr] & & \\
\circ \ar@<0.5ex>[urr] \ar@<0.5ex>[r] & \circ \ar@<0.5ex>[l] \ar@<0.5ex>[r] & \cdots \ar@<0.5ex>[l] \ar@<0.5ex>[r] & \circ \ar@<0.5ex>[l] \ar@<0.5ex>[r] & \circ \ar@<0.5ex>[ull] \ar@<0.5ex>[l]}&\widetilde{A}_n&C_{n+1}\\ \hline
D_n\ (n\colon{\rm even})&\xymatrix@R=1mm{R \ar@<0.5ex>[dr] & & & & \circ \ar@<0.5ex>[dl]\\
& \circ \ar@<0.5ex>[ul] \ar@<0.5ex>[dl] \ar@<0.5ex>[r] & \cdots \ar@<0.5ex>[l] \ar@<0.5ex>[r] & \circ \ar@<0.5ex>[l]\ar@<0.5ex>[ur] \ar@<0.5ex>[dr] & \\
\circ \ar@<0.5ex>[ur] & & & & \circ \ar@<0.5ex>[ul]}&\widetilde{D}_n&C_2\times C_2\\ \hline
D_n\ (n\colon{\rm odd})&\xymatrix@R=1mm{R \ar@<0.5ex>[dr] & & & & \\
& \circ \ar@<0.5ex>[ul] \ar@<0.5ex>[dl] \ar@<0.5ex>[r] & \cdots \ar@<0.5ex>[l] \ar@<0.5ex>[r] & \circ \ar@<0.5ex>[l]\ar@<0.5ex>[r]^-{(2,1)} & \circ \ar@<0.5ex>[l]^-{(1,2)}\\
\circ \ar@<0.5ex>[ur] & & & &}&\widetilde{BD}_{n-1}&C_2\\ \hline
E_6&\xymatrix{R \ar@<0.5ex>[r] & \circ \ar@<0.5ex>[l] \ar@<0.5ex>[r] & \circ \ar@<0.5ex>[l] \ar@<0.5ex>[r]^-{(2,1)} & \circ \ar@<0.5ex>[l]^-{(1,2)} \ar@<0.5ex>[r] &\circ \ar@<0.5ex>[l]}&\widetilde{F}_{42}&1\\ \hline
E_7&\xymatrix@R=5mm{&&&\circ \ar@<0.5ex>[d]&&&\\
R \ar@<0.5ex>[r] & \circ \ar@<0.5ex>[l] \ar@<0.5ex>[r] & \circ \ar@<0.5ex>[l] \ar@<0.5ex>[r] & \circ \ar@<0.5ex>[l] \ar@<0.5ex>[r] \ar@<0.5ex>[u] & \circ \ar@<0.5ex>[l] \ar@<0.5ex>[r] & \circ \ar@<0.5ex>[l] \ar@<0.5ex>[r] & \circ \ar@<0.5ex>[l]}&\widetilde{E}_7&C_2\\ \hline
E_8&\xymatrix@R=5mm{&&&&&\circ \ar@<0.5ex>[d]&&\\
R \ar@<0.5ex>[r] & \circ \ar@<0.5ex>[l] \ar@<0.5ex>[r] & \circ \ar@<0.5ex>[l] \ar@<0.5ex>[r] & \circ \ar@<0.5ex>[l] \ar@<0.5ex>[r] & \circ \ar@<0.5ex>[l] \ar@<0.5ex>[r] & \circ \ar@<0.5ex>[l] \ar@<0.5ex>[r] \ar@<0.5ex>[u] & \circ \ar@<0.5ex>[l] \ar@<0.5ex>[r] & \circ \ar@<0.5ex>[l]}&\widetilde{E}_8&1
\end{array}
\]
We can see that if $H$ is of type $D_n$ ($n\colon$odd) or $E_6$, then the Auslander-Reiten quiver has non-trivial valuations and thus is different from the usual one.
\end{Ex}

\begin{Ex}($d=2$, Gorenstein, $\widetilde{BD}$)\label{typeBD}
Let $n\geq2$ be an integer and $l/k$ a Galois extension with $\ch k$ not dividing $2n$ and $[l:k]=2$ such that $l=k(\zeta)$ where $\zeta:=\zeta_{4n}$. We denote $\Gal(l/k)=\{{\rm id},\sigma\}$ and assume $\sigma(\zeta)=-\zeta^{-1}$. For example, $\mathbb{Q}(\zeta)/\mathbb{Q}(i\sin\frac{\pi}{2n})$ satisfies these conditions. We let
\[\alpha:=\Big(\Big[\begin{smallmatrix} 0&1\\-1&0\end{smallmatrix}\Big], {\rm id}\Big),\ \beta:=\Big(\Big[\begin{smallmatrix} 0&-\zeta^3\\\zeta&0\end{smallmatrix}\Big], \sigma\Big)\in GL_2(l)\rtimes\Gal(l/k)\]
and $G:=\langle\alpha,\beta\rangle\subseteq GL_2(l)\rtimes\Gal(l/k)$. Then we can write $\ind(\C(R))$ as $\{M_0=R, M_1, M_{2,3}, M'_1, \cdots, M'_{N-1}\}$ and draw $\AR(\C(R))$, which is of type $\widetilde{BD}_{n+1}$, as follows. In addition, $\Cl(R)\cong C_2$ holds.
\[\xymatrix@R=1mm{
M_0 \ar@<0.5ex>[dr] & & & & \\
& M'_1 \ar@<0.5ex>[ul] \ar@<0.5ex>[dl] \ar@<0.5ex>[r] & \cdots \ar@<0.5ex>[l] \ar@<0.5ex>[r] & M'_{n-1} \ar@<0.5ex>[l]\ar@<0.5ex>[r]^-{(2,1)} & M_{2,3} \ar@<0.5ex>[l]^-{(1,2)}\\
M_1 \ar@<0.5ex>[ur] & & & &
}\]
Compare with $\AR(\C(S^H))$.
\[\xymatrix@R=1mm{
N_0 \ar@<0.5ex>[dr] & & & & N_2 \ar@<0.5ex>[dl]\\
& N'_1 \ar@<0.5ex>[ul] \ar@<0.5ex>[dl] \ar@<0.5ex>[r] & \cdots \ar@<0.5ex>[l] \ar@<0.5ex>[r] & N'_{n-1} \ar@<0.5ex>[l]\ar@<0.5ex>[dr] \ar@<0.5ex>[ur] & \\
N_1 \ar@<0.5ex>[ur] & & & & N_3 \ar@<0.5ex>[ul]
}\]

\end{Ex}

Here, we exhibit the list of all extended Dynkin diagrams with their imaginary roots labeling the vertices (see, for instance, \cite{Ka}).
\[
\begin{array}{cc}
\widetilde{A}_{11}&\xymatrix{1 \ar@{-}^{(4,1)}[r] & 2}\\
\widetilde{A}_{12}&\xymatrix{1 \ar@{-}^{(2,2)}[r] & 1}\\
\widetilde{A}_n&\xymatrix@R=5mm{
& & 1 \ar@{-}[dll] \ar@{-}[drr] & & \\
1 \ar@{-}[r] & 1 \ar@{-}[r] & \cdots \ar@{-}[r] & 1 \ar@{-}[r] & 1
}\\
\widetilde{B}_n&\xymatrix{1 \ar@{-}^{(1,2)}[r] & 1 \ar@{-}[r] & \cdots \ar@{-}[r] & 1 \ar@{-}^{(2,1)}[r] & 1}\\
\widetilde{C}_n&\xymatrix{1 \ar@{-}^{(2,1)}[r] & 2 \ar@{-}[r] & \cdots \ar@{-}[r] & 2 \ar@{-}^{(1,2)}[r] & 1}\\
\widetilde{BC}_n&\xymatrix{1 \ar@{-}^{(2,1)}[r] & 2 \ar@{-}[r] & \cdots \ar@{-}[r] & 2 \ar@{-}^{(2,1)}[r] & 2}\\
\widetilde{BD}_n&\xymatrix@R=1mm{
1 \ar@{-}[dr] & & & & \\
& 2 \ar@{-}[dl] \ar@{-}[r] & \cdots \ar@{-}[r] & 2 \ar@{-}[r]^-{(2,1)} & 2\\
1 & & & &
}\\
\widetilde{CD}_n&\xymatrix@R=1mm{
1 \ar@{-}[dr] & & & & \\
& 2 \ar@{-}[dl] \ar@{-}[r] & \cdots \ar@{-}[r] & 2 \ar@{-}[r]^-{(1,2)} & 1\\
1 & & & &
}\\
\widetilde{D}_n&\xymatrix@R=1mm{
1 \ar@{-}[dr] & & & & 1 \\
& 2 \ar@{-}[dl] \ar@{-}[r] & \cdots \ar@{-}[r] & 2 \ar@{-}[ur] \ar@{-}[dr] &\\
1 & & & & 1
}\\
\widetilde{E}_6&\xymatrix@R=5mm{
&&1 \ar@{-}[d]&&\\
&&2 \ar@{-}[d]&&\\
1 \ar@{-}[r] & 2 \ar@{-}[r] & 3\ar@{-}[r] & 2 \ar@{-}[r] & 1
}\\
\widetilde{E}_7&\xymatrix@R=5mm{
&&&2 \ar@{-}[d]&&&\\
1 \ar@{-}[r] & 2 \ar@{-}[r] & 3 \ar@{-}[r] & 4 \ar@{-}[r] & 3 \ar@{-}[r] & 2 \ar@{-}[r] & 1
}\\
\widetilde{E}_8&\xymatrix@R=5mm{
&&&&&3 \ar@{-}[d]&&\\
1 \ar@{-}[r] & 2 \ar@{-}[r] & 3 \ar@{-}[r] & 4 \ar@{-}[r] & 5 \ar@{-}[r] & 6 \ar@{-}[r] & 4 \ar@{-}[r] & 2
}\\
\widetilde{F}_{41}&\xymatrix{
1 \ar@{-}[r] & 2 \ar@{-}[r] & 3 \ar@{-}[r]^-{(1,2)} & 2 \ar@{-}[r] & 1
}\\
\widetilde{F}_{42}&\xymatrix{
1 \ar@{-}[r] & 2 \ar@{-}[r] & 3 \ar@{-}[r]^-{(2,1)} & 4 \ar@{-}[r] & 2
}\\
\widetilde{G}_{21}&\xymatrix{
1 \ar@{-}[r] & 2 \ar@{-}[r]^-{(1,3)} & 1
}\\
\widetilde{G}_{22}&\xymatrix{
1 \ar@{-}[r] & 2 \ar@{-}[r]^-{(3,1)} & 3
}
\end{array}
\]
Our theorem below gives a complete classification of the quivers which may appear as $\AR(\CM R)$. These are either doubles of extended Dynkin diagrams or the quivers $\widetilde{A}_0$ or $\widetilde{CL}_n$ (see \cite{BES}). Note that some of extended Dynkin diagrams (type $\widetilde{B}_n, \widetilde{CD}_n, \widetilde{F}_{41}$ and $\widetilde{G}_{21}$) do not appear.

\begin{Thm}\label{classfyquiver}
Let $(R,\mathfrak{m},k)$ be a two-dimensional Gorenstein complete local ring of finite Cohen-Macaulay type with $\ch k=0$. Then quivers which may appear as $\AR(\CM R)$ are precisely listed below with trivial Auslander-Reiten translations. They are either doubles of all extended Dynkin diagrams except for type $\widetilde{B}_n, \widetilde{CD}_n, \widetilde{F}_{41}$ and $\widetilde{G}_{21}$, or the quivers $\widetilde{A}_0$ or $\widetilde{CL}_n$ having loops. Moreover, we can also determine the divisor class group $\Cl(R)$.
\[
\begin{array}{c|c|c}
Type&\AR(\CM R)&\Cl(R)\\ \hline\hline
\widetilde{A}_0&\xymatrix{
R \ar@(ur,dr)^{(2,2)}
}&1\\ \hline
\widetilde{A}_{11}&\xymatrix{
R \ar@<0.5ex>[r]^-{(4,1)} & \circ \ar@<0.5ex>[l]^-{(1,4)}
}&1\\ \hline
\widetilde{A}_{12}&\xymatrix{
R \ar@<0.5ex>[r]^-{(2,2)} & \circ \ar@<0.5ex>[l]^-{(2,2)}
}&C_2\\ \hline
\widetilde{A}_n(n\geq2)&\xymatrix@R=5mm{
& & R \ar@<0.5ex>[dll] \ar@<0.5ex>[drr] & & \\
\circ \ar@<0.5ex>[urr] \ar@<0.5ex>[r] & \circ \ar@<0.5ex>[l] \ar@<0.5ex>[r] & \cdots \ar@<0.5ex>[l] \ar@<0.5ex>[r] & \circ \ar@<0.5ex>[l] \ar@<0.5ex>[r] & \circ \ar@<0.5ex>[ull] \ar@<0.5ex>[l]
}&C_{n+1}\\ \hline
\widetilde{C}_n&\xymatrix{
R \ar@<0.5ex>[r]^-{(2,1)} & \circ \ar@<0.5ex>[l]^-{(1,2)} \ar@<0.5ex>[r] & \cdots \ar@<0.5ex>[l] \ar@<0.5ex>[r] & \circ \ar@<0.5ex>[l] \ar@<0.5ex>[r]^-{(1,2)} & \circ \ar@<0.5ex>[l]^-{(2,1)}
}&C_2\\ \hline
\widetilde{BC}_n&\xymatrix{
R \ar@<0.5ex>[r]^-{(2,1)} & \circ \ar@<0.5ex>[l]^-{(1,2)} \ar@<0.5ex>[r] & \cdots \ar@<0.5ex>[l] \ar@<0.5ex>[r] & \circ \ar@<0.5ex>[l] \ar@<0.5ex>[r]^-{(2,1)} & \circ \ar@<0.5ex>[l]^-{(1,2)}
}&1\\ \hline
\widetilde{BD}_n&\xymatrix@R=1mm{
R \ar@<0.5ex>[dr] & & & & \\
& \circ \ar@<0.5ex>[ul] \ar@<0.5ex>[dl] \ar@<0.5ex>[r] & \cdots \ar@<0.5ex>[l] \ar@<0.5ex>[r] & \circ \ar@<0.5ex>[l]\ar@<0.5ex>[r]^-{(2,1)} & \circ \ar@<0.5ex>[l]^-{(1,2)}\\
\circ \ar@<0.5ex>[ur] & & & &
}&C_2\\ \hline
\widetilde{D}_n\ (n\colon{\rm odd})&\xymatrix@R=1mm{
R \ar@<0.5ex>[dr] & & & & \circ \ar@<0.5ex>[dl]\\
& \circ \ar@<0.5ex>[ul] \ar@<0.5ex>[dl] \ar@<0.5ex>[r] & \cdots \ar@<0.5ex>[l] \ar@<0.5ex>[r] & \circ \ar@<0.5ex>[l]\ar@<0.5ex>[ur] \ar@<0.5ex>[dr] & \\
\circ \ar@<0.5ex>[ur] & & & & \circ \ar@<0.5ex>[ul]
}&C_4\\ \hline
\widetilde{D}_n\ (n\colon{\rm even})&\xymatrix@R=1mm{
R \ar@<0.5ex>[dr] & & & & \circ \ar@<0.5ex>[dl]\\
& \circ \ar@<0.5ex>[ul] \ar@<0.5ex>[dl] \ar@<0.5ex>[r] & \cdots \ar@<0.5ex>[l] \ar@<0.5ex>[r] & \circ \ar@<0.5ex>[l]\ar@<0.5ex>[ur] \ar@<0.5ex>[dr] & \\
\circ \ar@<0.5ex>[ur] & & & & \circ \ar@<0.5ex>[ul]
}&C_2\times C_2\\ \hline
\widetilde{E}_6&\xymatrix@R=5mm{
&&\circ \ar@<0.5ex>[d]&&\\
&&\circ \ar@<0.5ex>[d] \ar@<0.5ex>[u]&&\\
R \ar@<0.5ex>[r] & \circ \ar@<0.5ex>[l] \ar@<0.5ex>[r] & \circ \ar@<0.5ex>[l] \ar@<0.5ex>[r] \ar@<0.5ex>[u] & \circ \ar@<0.5ex>[l] \ar@<0.5ex>[r] & \circ \ar@<0.5ex>[l]
}&C_3\\ \hline
\widetilde{E}_7&\xymatrix@R=5mm{
&&&\circ \ar@<0.5ex>[d]&&&\\
R \ar@<0.5ex>[r] & \circ \ar@<0.5ex>[l] \ar@<0.5ex>[r] & \circ \ar@<0.5ex>[l] \ar@<0.5ex>[r] & \circ \ar@<0.5ex>[l] \ar@<0.5ex>[r] \ar@<0.5ex>[u] & \circ \ar@<0.5ex>[l] \ar@<0.5ex>[r] & \circ \ar@<0.5ex>[l] \ar@<0.5ex>[r] & \circ \ar@<0.5ex>[l]
}&C_2\\ \hline
\widetilde{E}_8&\xymatrix@R=5mm{
&&&&&\circ \ar@<0.5ex>[d]&&\\
R \ar@<0.5ex>[r] & \circ \ar@<0.5ex>[l] \ar@<0.5ex>[r] & \circ \ar@<0.5ex>[l] \ar@<0.5ex>[r] & \circ \ar@<0.5ex>[l] \ar@<0.5ex>[r] & \circ \ar@<0.5ex>[l] \ar@<0.5ex>[r] & \circ \ar@<0.5ex>[l] \ar@<0.5ex>[r] \ar@<0.5ex>[u] & \circ \ar@<0.5ex>[l] \ar@<0.5ex>[r] & \circ \ar@<0.5ex>[l]
}&1\\ \hline
\widetilde{F}_{42}&\xymatrix{
R \ar@<0.5ex>[r] & \circ \ar@<0.5ex>[l] \ar@<0.5ex>[r] & \circ \ar@<0.5ex>[l] \ar@<0.5ex>[r]^-{(2,1)} & \circ \ar@<0.5ex>[l]^-{(1,2)} \ar@<0.5ex>[r] &\circ \ar@<0.5ex>[l]
}&1\\ \hline
\widetilde{G}_{22}&\xymatrix{
R \ar@<0.5ex>[r] & \circ \ar@<0.5ex>[r]^{(3,1)} \ar@<0.5ex>[l] & \circ \ar@<0.5ex>[l]^{(1,3)}
}&1\\ \hline
\widetilde{CL}_n&\xymatrix{
R \ar@<0.5ex>[r]^-{(2,1)} & \circ \ar@<0.5ex>[l]^-{(1,2)} \ar@<0.5ex>[r] & \cdots \ar@<0.5ex>[l] \ar@<0.5ex>[r] & \circ \ar@<0.5ex>[l] \ar@(ur,dr)
}&1

\end{array}
\]
\end{Thm}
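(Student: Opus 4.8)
The plan is to deduce everything from Theorems~\ref{frt}, \ref{ARMcKay} and \ref{alg}, turning the classification into a finite combinatorial enumeration of quotients of doubled extended Dynkin diagrams by graph automorphisms, and then to read off realizability from the examples of Section~\ref{Examples}.

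\emph{Step 1: reduction to a quotient singularity over a splitting field.} First I would use Theorem~\ref{frt} to write $R\cong l[[x,y]]^G$ with $H:=G\cap GL_2(l)$ small and $G\twoheadrightarrow\Gal(l/k)$; since $R$ is Gorenstein, Theorem~\ref{Gor} forces $H\subseteq SL_2(l)$. As $\ch k=0$ one may then enlarge $l$ to a finite Galois extension $l'/k$ splitting $H$ and replace $G$ by its preimage $G'$ under $GL_2(l)\rtimes\Gal(l'/k)\to GL_2(l)\rtimes\Gal(l/k)$: this does not change $R$ and still has $G'\cap GL_2(l')=H$ small, so we may assume $l$ splits $H$. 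By Klein's classification $H$ is cyclic, binary dihedral, or binary tetra/octa/icosahedral, and $\MK(lH)=\MK(\bar lH)$ is the double of the simply-laced extended Dynkin diagram $\widetilde{\Delta}_H$ of type $A$, $D$ or $E$ (with $\widetilde A_0$ when $H=1$). By Theorems~\ref{ARMcKay} and \ref{alg}, since $R$ is Gorenstein (so $\nu=\mathrm{id}$ and all Auslander--Reiten translations are trivial),
\[\AR(\CM R)=\MK(l*G)=\Gal(l/k)\backslash\MK(lH),\]
with the valuations determined by the integers $a_i$ of Theorem~\ref{tab}.

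\emph{Step 2: constraints on the Galois action and the enumeration.} I would next isolate three principles governing the admissible actions of $\Gamma:=\Gal(l/k)$ on the vertex set $\simp lH$ of $\widetilde{\Delta}_H$: (a) the trivial representation restricts to the trivial $lH$-module, so the vertex $[R]$ (which occupies an affine/leaf node of $\widetilde{\Delta}_H$ and is the unique neighbour of the node $U=\mathfrak n/\mathfrak n^2$) is $\Gamma$-fixed with $a=1$; (b) every $(A,\sigma)\in G$ normalizes $H$, which forces $\sigma$-twisting to preserve the isomorphism class of $U$, so the induced automorphism of $\widetilde{\Delta}_H$ fixes the node $U$; (c) a $\Gamma$-orbit $\{W_{i1},\dots,W_{it_i}\}$ of distinct simples yields a vertex of $R$-rank $a_it_i\dim_lW_{i1}\ge\sum_j\dim_lW_{ij}$ (Theorem~\ref{tab}, Remark~\ref{rank}), with $a_i$ computable via Proposition~\ref{existV}. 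Using the automorphism groups $\mathrm{Aut}(\widetilde A_n)$ (dihedral), $\mathrm{Aut}(\widetilde D_n)$, $\mathrm{Aut}(\widetilde E_6)=S_3$, $\mathrm{Aut}(\widetilde E_7)=S_2$, $\mathrm{Aut}(\widetilde E_8)=1$ together with (a)--(c), a case-by-case analysis over the finitely many admissible pairs $(\widetilde{\Delta}_H,\ \Gamma\text{-action})$ produces exactly the diagrams listed: on $\widetilde A_n$ only the reflection fixing $[R]$ can occur (rotations move $[R]$), giving $\widetilde A_n,\widetilde C_n,\widetilde{BC}_n,\widetilde{CL}_n,\widetilde A_{11},\widetilde A_{12},\widetilde A_0$; on $\widetilde D_n$ one gets $\widetilde D_n,\widetilde{BD}_n$ and, for $\widetilde D_4$ via $S_3$, $\widetilde G_{22}$; on $\widetilde E_6$ the $S_3$ is cut down to $S_2$ by (a), giving $\widetilde E_6$ and $\widetilde F_{42}$; $\widetilde E_7$ and $\widetilde E_8$ admit no nontrivial admissible action, since (a) kills the arm swap of $\widetilde E_7$. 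Principles (a) and (c) also rule out the excluded diagrams: $\widetilde B_n$ and one incarnation of $\widetilde F_{41}$ would require folding the fork/arm containing $[R]$, while $\widetilde{CD}_n$, $\widetilde G_{21}$ and the other incarnation of $\widetilde F_{41}$ would require a folded orbit of strictly smaller total rank than the sum of its members' dimensions. The group $\Cl(R)$ is read off in parallel via Theorem~\ref{divgrp} as $\{V\in\simp l*G\mid\dim_lV=1\}$.

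\emph{Step 3: realizability.} For the converse it suffices to exhibit each entry of the table, which the examples already do: the cyclic cases by Examples~\ref{typeCL}, \ref{typeC}, \ref{typeBC} (together with $H=1$ and split $H=C_m$ for $\widetilde A_0,\widetilde A_{12},\widetilde A_n$), the binary dihedral cases by Examples~\ref{mathbbR}, \ref{typeBD}, \ref{typeG22}, and the binary tetra/octa/icosahedral cases by Example~\ref{mathbbR} and the split cases over $k$ itself. The main obstacle I anticipate is Step~2: determining precisely when a $\Gamma$-fixed simple acquires $a_i\ge2$ (hence a non-trivial valuation) --- which reduces, via Proposition~\ref{existV}, to solvability of explicit extension problems such as whether $-1$ lies in the image of a relevant norm map --- and verifying rigorously that the four excluded affine diagrams genuinely never arise.
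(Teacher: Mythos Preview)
Your proposal is correct and follows essentially the same route as the paper: reduce via Theorem~\ref{frt} and Theorem~\ref{Gor}, enlarge $l$ so that $H$ splits and is in standard Klein form, and then enumerate the admissible $\Gal(l/k)$-actions on $\MK(lH)$ case by case, invoking the examples of Section~\ref{Examples} for realizability. The paper organizes the necessity direction purely as a type-by-type enumeration using only the constraint that the trivial representation (your principle~(a)) is fixed, whereas you package the exclusions more conceptually via the rank inequality~(c); both come to the same list.

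One minor point: your principle~(b) as stated is not quite well-posed in type~$A$, since there $\Res U = W_1\oplus W_{-1}$ is not a single simple and hence not a ``node'' of $\widetilde\Delta_H$; what is true (and what you actually need) is that $\Res U$ is $\Gamma$-stable as an $lH$-module, which for types $D,E$ does pin down a fixed vertex but is already forced by~(a) since that vertex is the unique neighbour of the trivial one in a tree. So~(b) is harmless but redundant. Your honest flag at the end---that the delicate part is computing the $a_i$ at $\Gamma$-fixed simples to distinguish e.g.\ $\widetilde C_n$ from $\widetilde{BC}_n$ or $\widetilde A_{11}$ from $\widetilde A_{12}$---matches the paper exactly: it works this out in full only for type $A_{2n}$ and states the remaining cases without detailed proof.
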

\begin{proof}
Observe that these quivers certainly appear as $\AR(\CM R)$ (see Examples \ref{typeCL}, \ref{typeC}, \ref{typeBC}, \ref{typeG22}, \ref{mathbbR} and \ref{typeBD}). Below we show the necessity.

By Theorem \ref{frt}, there exists a finite Galois extension $l/k$ and a finite subgroup $G\subseteq GL_2(l)\rtimes\Gal(l/k)$ such that $R\cong l[[x,y]]^G$ holds. By Theorem \ref{Gor}, we know that $H:=G\cap\Gal(l/k)$ is contained in $SL_2(l)$. Let $\bar{l}$ be an algebraic closure of $l$. If we view $H\subseteq SL_2(\bar{l})$, then there exists $P\in GL_2(\bar{l})$ such that $H':=P^{-1}HP\subseteq SL_2(\bar{l})$ coincides with one in the list exhibited at the first of this section. Let $l'/l$ be a field extension of finite degree in $\bar{l}$ such that $P\in GL_2(l')$ and $H'\subseteq SL_2(l')$ holds. By taking the Galois closure, we may assume that the extension $l'/k$ is Galois. If we let $G'\subseteq GL_2(l)\rtimes\Gal(l'/k)\subseteq GL_2(l')\rtimes\Gal(l'/k)$ be the preimage of $G$ under the natural group homomorphism $GL_2(l)\rtimes\Gal(l'/k)\to GL_2(l)\rtimes\Gal(l/k)$, then we have $G'\cap GL_2(l')=H$ and $l[[x,y]]^G=l'[[x,y]]^{G'}$. Therefore by expanding the field $l$, we may assume that $H$ is of the form of one in the list exhibited at the first of this section. Moreover, by expanding $l$ more, we may also assume that $H$ splits over $l$, i.e. the endomorphism ring of any simple $lH$-module is isomorphic to $l$.

If $H$ is trivial, then $\AR(\CM S^H)$ is 
\[\xymatrix{
S^H \ar@(ur,dr)^{(2,2)}
}.\]
Thus $\AR(\CM R)$ is also of this form.

Assume that $H$ is of type $A_{2n}$, i.e. $H=\bigg\langle\begin{bmatrix}\zeta_{2n+1}&0\\0&\zeta_{2n+1}^{-1}\end{bmatrix}\bigg\rangle$. Then $\simp lH=\{W_j\}_{j\in\mathbb{Z}/(2n+1)\mathbb{Z}}$ holds and $\AR(\CM S^H)$ is as follows, where $N_0=S^H$.
\[\xymatrix@R=5mm{
& & N_0 \ar@<0.5ex>[dll] \ar@<0.5ex>[drr] & & \\
N_1 \ar@<0.5ex>[urr] \ar@<0.5ex>[r] & N_2 \ar@<0.5ex>[l] \ar@<0.5ex>[r] & \cdots \ar@<0.5ex>[l] \ar@<0.5ex>[r] & N_{2n-1} \ar@<0.5ex>[l] \ar@<0.5ex>[r] & N_{2n} \ar@<0.5ex>[ull] \ar@<0.5ex>[l]
}\]
Since the action of $G$ on $\AR(\CM S^H)$ fixes $N_0$, it must be either: the trivial action or the action of swapping the left and right sides.

(Case 1) The action $G\curvearrowright\AR(\CM S^H)$ is trivial.

We have $\simp l*G=\{V_i\}_{i\in\mathbb{Z}/(2n+1)\mathbb{Z}}$ and $\Res V_i=W_i^{\oplus a_i}$. Let us calculate $a_i$'s. First, $V_0$ is the trivial representation and $a_0=1$ holds as in Example \ref{typeCL}. Second, $V_1\oplus V_{2n}=\mathfrak{n}/\mathfrak{n}^2$ and $a_1=a_{2n}=1$ hold since $\Res\mathfrak{n}/\mathfrak{n}^2=W_1\oplus W_{2n}$. Next, we get $a_2=1$ by $\Res(V_1\otimes_lV_1)=W_0\oplus W_2$. Continuing this operation, we have $a_j=1$ for all $j$. Thus the minimal projective resolution of $V_i\in\mod S*G$ must be
\[0\to S\otimes_lV_i\to S\otimes_l(V_{i-1}\oplus V_{i+1})\to S\otimes_lV_i\to V_i\to 0.\]
Therefore $\AR(\CM R)$ is as follows.
\[\xymatrix@R=5mm{
& & M_0 \ar@<0.5ex>[dll] \ar@<0.5ex>[drr] & & \\
M_1 \ar@<0.5ex>[urr] \ar@<0.5ex>[r] & M_2 \ar@<0.5ex>[l] \ar@<0.5ex>[r] & \cdots \ar@<0.5ex>[l] \ar@<0.5ex>[r] & M_{2n-1} \ar@<0.5ex>[l] \ar@<0.5ex>[r] & M_{2n} \ar@<0.5ex>[ull] \ar@<0.5ex>[l]
}\]

(Case 2) The action $G\curvearrowright\AR(\CM S^H)$ swaps the left and right sides.

We have $\simp l*G=\{V_0, V_{\pm1}, \cdots, V_{\pm n}\}$, $\Res V_0=W_0^{\oplus a_0}$ and $\Res V_{\pm i}=(W_i\oplus W_{-i})^{\oplus a_i}$. Similar as in the previous case, we can conclude $a_0=a_1=\cdots a_n=1$. Thus the minimal projective resolution of $V_i\in\mod S*G$ must be
\[0\to S\otimes_lV_0\to S\otimes_lV_{\pm1}\to S\otimes_lV_0\to V_0\to 0,\]
\[0\to S\otimes_lV_{\pm1}\to S\otimes_l(V_0^{\oplus2}\oplus V_{\pm2})\to S\otimes_lV_{\pm1}\to V_{\pm1}\to 0,\]
\[0\to S\otimes_lV_{\pm i}\to S\otimes_l(V_{\pm(i-1)}\oplus V_{\pm(i+1)})\to S\otimes_lV_{\pm i}\to V_{\pm i}\to 0\ (1<i<n)\]
and
\[0\to S\otimes_lV_{\pm n}\to S\otimes_l(V_{\pm(n-1)}\oplus V_{\pm n})\to S\otimes_lV_{\pm n}\to V_{\pm n}\to 0.\]
Therefore $\AR(\CM R)$ is as follows.
\[\xymatrix{
M_0 \ar@<0.5ex>[r]^-{(2,1)} & M_{\pm1} \ar@<0.5ex>[l]^-{(1,2)} \ar@<0.5ex>[r] & \cdots \ar@<0.5ex>[l] \ar@<0.5ex>[r] & M_{\pm n} \ar@<0.5ex>[l] \ar@(ur,dr)
}\]

When $H$ is of another type, we can deduce that $\AR(\CM R)$ must be any in the list by classifying possible actions of $G$ on $\AR(\CM S^H)$. We present the results without detailed proofs.

If $H$ is of type $A_{2n-1}$, then $\AR(\CM S^H)$ is as follows.
\[\xymatrix@R=5mm{
& & S^H \ar@<0.5ex>[dll] \ar@<0.5ex>[drr] & & \\
\circ \ar@<0.5ex>[urr] \ar@<0.5ex>[r] & \circ \ar@<0.5ex>[l] \ar@<0.5ex>[r] & \cdots \ar@<0.5ex>[l] \ar@<0.5ex>[r] & \circ \ar@<0.5ex>[l] \ar@<0.5ex>[r] & \circ \ar@<0.5ex>[ull] \ar@<0.5ex>[l]
}\]
Since the action of $G$ on $\AR(\CM S^H)$ fixes $S^H$, it must be either: the trivial action or the action of swapping the left and right sides. If the action is trivial, then $\AR(\CM R)$ is any one of the list below.
\[
\begin{array}{cc}
\xymatrix{
R \ar@<0.5ex>[r]^-{(2,2)} & \circ \ar@<0.5ex>[l]^-{(2,2)}
}&\xymatrix{
R \ar@<0.5ex>[r]^-{(4,1)} & \circ \ar@<0.5ex>[l]^-{(1,4)}
}
\end{array}
\]
\[\xymatrix@R=5mm{
& & R \ar@<0.5ex>[dll] \ar@<0.5ex>[drr] & & \\
\circ \ar@<0.5ex>[urr] \ar@<0.5ex>[r] & \circ \ar@<0.5ex>[l] \ar@<0.5ex>[r] & \cdots \ar@<0.5ex>[l] \ar@<0.5ex>[r] & \circ \ar@<0.5ex>[l] \ar@<0.5ex>[r] & \circ \ar@<0.5ex>[ull] \ar@<0.5ex>[l]
}\]
If the action swaps the left and right sides, then $\AR(\CM R)$ is any one of the list below.
\[
\begin{array}{cc}
\xymatrix{
R \ar@<0.5ex>[r]^-{(2,1)} & \circ \ar@<0.5ex>[l]^-{(1,2)} \ar@<0.5ex>[r] & \cdots \ar@<0.5ex>[l] \ar@<0.5ex>[r] & \circ \ar@<0.5ex>[l] \ar@<0.5ex>[r]^-{(1,2)} & \circ \ar@<0.5ex>[l]^-{(2,1)}
}&\xymatrix{
R \ar@<0.5ex>[r]^-{(2,1)} & \circ \ar@<0.5ex>[l]^-{(1,2)} \ar@<0.5ex>[r] & \cdots \ar@<0.5ex>[l] \ar@<0.5ex>[r] & \circ \ar@<0.5ex>[l] \ar@<0.5ex>[r]^-{(2,1)} & \circ \ar@<0.5ex>[l]^-{(1,2)}
}
\end{array}
\]

If $H$ is of type $D_n$, then $\AR(\CM S^H)$ is as follows.
\[\xymatrix@R=1mm{
S^H \ar@<0.5ex>[dr] & & & & \circ \ar@<0.5ex>[dl]\\
& \circ \ar@<0.5ex>[ul] \ar@<0.5ex>[dl] \ar@<0.5ex>[r] & \cdots \ar@<0.5ex>[l] \ar@<0.5ex>[r] & \circ \ar@<0.5ex>[l]\ar@<0.5ex>[ur] \ar@<0.5ex>[dr] & \\
\circ \ar@<0.5ex>[ur] & & & & \circ \ar@<0.5ex>[ul]
}\]
Since the action of $G$ on $\AR(\CM S^H)$ fixes $S^H$, it must be one of the following: the trivial action, the action of swapping the rightmost two vertices or when $n=4$, the action of replacing the outer three vertices except for $S^H$. If the action is trivial, then $\AR(\CM R)$ is as follows.
\[\xymatrix@R=1mm{
R \ar@<0.5ex>[dr] & & & & \circ \ar@<0.5ex>[dl]\\
& \circ \ar@<0.5ex>[ul] \ar@<0.5ex>[dl] \ar@<0.5ex>[r] & \cdots \ar@<0.5ex>[l] \ar@<0.5ex>[r] & \circ \ar@<0.5ex>[l]\ar@<0.5ex>[ur] \ar@<0.5ex>[dr] & \\
\circ \ar@<0.5ex>[ur] & & & & \circ \ar@<0.5ex>[ul]
}\]
If the action swaps the rightmost two vertices, then $\AR(\CM R)$ is as follows.
\[\xymatrix@R=1mm{
R \ar@<0.5ex>[dr] & & & & \\
& \circ \ar@<0.5ex>[ul] \ar@<0.5ex>[dl] \ar@<0.5ex>[r] & \cdots \ar@<0.5ex>[l] \ar@<0.5ex>[r] & \circ \ar@<0.5ex>[l]\ar@<0.5ex>[r]^-{(2,1)} & \circ \ar@<0.5ex>[l]^-{(1,2)}\\
\circ \ar@<0.5ex>[ur] & & & &
}\]
If $n=4$ and the action replaces the outer three vertices except for $S^H$, then $\AR(\CM R)$ is as follows.
\[\xymatrix{
R \ar@<0.5ex>[r] & \circ \ar@<0.5ex>[r]^{(3,1)} \ar@<0.5ex>[l] & \circ \ar@<0.5ex>[l]^{(1,3)}
}\]

If $H$ is of type $E_6$, then $\AR(\CM S^H)$ is as follows.
\[\xymatrix@R=5mm{
&&\circ \ar@<0.5ex>[d]&&\\
&&\circ \ar@<0.5ex>[d] \ar@<0.5ex>[u]&&\\
S^H \ar@<0.5ex>[r] & \circ \ar@<0.5ex>[l] \ar@<0.5ex>[r] & \circ \ar@<0.5ex>[l] \ar@<0.5ex>[r] \ar@<0.5ex>[u] & \circ \ar@<0.5ex>[l] \ar@<0.5ex>[r] & \circ \ar@<0.5ex>[l]
}\]
Since the action of $G$ on $\AR(\CM S^H)$ fixes $S^H$, it must be either: the trivial action or the action of swapping the two branches that do not contain $S^H$. If the action is trivial, then $\AR(\CM R)$ is as follows.
\[\xymatrix@R=5mm{
&&\circ \ar@<0.5ex>[d]&&\\
&&\circ \ar@<0.5ex>[d] \ar@<0.5ex>[u]&&\\
R \ar@<0.5ex>[r] & \circ \ar@<0.5ex>[l] \ar@<0.5ex>[r] & \circ \ar@<0.5ex>[l] \ar@<0.5ex>[r] \ar@<0.5ex>[u] & \circ \ar@<0.5ex>[l] \ar@<0.5ex>[r] & \circ \ar@<0.5ex>[l]
}\]
If the action swaps the two branches that do not contain $S^H$, then $\AR(\CM R)$ is as follows.
\[\xymatrix{
R \ar@<0.5ex>[r] & \circ \ar@<0.5ex>[l] \ar@<0.5ex>[r] & \circ \ar@<0.5ex>[l] \ar@<0.5ex>[r]^-{(2,1)} & \circ \ar@<0.5ex>[l]^-{(1,2)} \ar@<0.5ex>[r] &\circ \ar@<0.5ex>[l]
}\]

If $H$ is of type $E_7$, then $\AR(\CM S^H)$ is as follows.
\[\xymatrix@R=5mm{
&&&\circ \ar@<0.5ex>[d]&&&\\
S^H \ar@<0.5ex>[r] & \circ \ar@<0.5ex>[l] \ar@<0.5ex>[r] & \circ \ar@<0.5ex>[l] \ar@<0.5ex>[r] & \circ \ar@<0.5ex>[l] \ar@<0.5ex>[r] \ar@<0.5ex>[u] & \circ \ar@<0.5ex>[l] \ar@<0.5ex>[r] & \circ \ar@<0.5ex>[l] \ar@<0.5ex>[r] & \circ \ar@<0.5ex>[l]
}\]
Since the action of $G$ on $\AR(\CM S^H)$ fixes $S^H$, it must be trivial. Then $\AR(\CM S^H)$ is as follows.
\[\xymatrix@R=5mm{
&&&\circ \ar@<0.5ex>[d]&&&\\
R \ar@<0.5ex>[r] & \circ \ar@<0.5ex>[l] \ar@<0.5ex>[r] & \circ \ar@<0.5ex>[l] \ar@<0.5ex>[r] & \circ \ar@<0.5ex>[l] \ar@<0.5ex>[r] \ar@<0.5ex>[u] & \circ \ar@<0.5ex>[l] \ar@<0.5ex>[r] & \circ \ar@<0.5ex>[l] \ar@<0.5ex>[r] & \circ \ar@<0.5ex>[l]
}\]

If $H$ is of type $E_8$, then $\AR(\CM S^H)$ is as follows.
\[\xymatrix@R=5mm{
&&&&&\circ \ar@<0.5ex>[d]&&\\
S^H \ar@<0.5ex>[r] & \circ \ar@<0.5ex>[l] \ar@<0.5ex>[r] & \circ \ar@<0.5ex>[l] \ar@<0.5ex>[r] & \circ \ar@<0.5ex>[l] \ar@<0.5ex>[r] & \circ \ar@<0.5ex>[l] \ar@<0.5ex>[r] & \circ \ar@<0.5ex>[l] \ar@<0.5ex>[r] \ar@<0.5ex>[u] & \circ \ar@<0.5ex>[l] \ar@<0.5ex>[r] & \circ \ar@<0.5ex>[l]
}\]
Since the action of $G$ on $\AR(\CM S^H)$ fixes $S^H$, it must be trivial. Then $\AR(\CM S^H)$ is as follows.
\[
\begin{gathered}[b]
\xymatrix@R=5mm{
&&&&&\circ \ar@<0.5ex>[d]&&\\
R \ar@<0.5ex>[r] & \circ \ar@<0.5ex>[l] \ar@<0.5ex>[r] & \circ \ar@<0.5ex>[l] \ar@<0.5ex>[r] & \circ \ar@<0.5ex>[l] \ar@<0.5ex>[r] & \circ \ar@<0.5ex>[l] \ar@<0.5ex>[r] & \circ \ar@<0.5ex>[l] \ar@<0.5ex>[r] \ar@<0.5ex>[u] & \circ \ar@<0.5ex>[l] \ar@<0.5ex>[r] & \circ \ar@<0.5ex>[l]
}
\\[-\dp\strutbox]
\end{gathered}
\qedhere
\]
\end{proof}

\begin{appendix}

\end{appendix}

\bibliographystyle{amsplain} 
\bibliography{reference}

\end{document}